\newtheorem{theorem}{Theorem}[section]
\newtheorem{corollary}[theorem]{Corollary}
\newtheorem{lemma}[theorem]{Lemma}
\newtheorem{proposition}[theorem]{Proposition}
\newtheorem{definition}[theorem]{Definition}
\newtheorem{remark}[theorem]{Remark}
\newtheorem{example}[theorem]{Example}
\newtheorem{conjecture}[theorem]{Conjecture}
\newcommand{\hooklongrightarrow}{\lhook\joinrel\longrightarrow}
\newcommand{\twoheadlongrightarrow}{\relbar\joinrel\twoheadrightarrow}
\newcommand{\us}{\upsilon}
\newcommand{\ra}{\rightarrow}
\newcommand{\lra}{\longrightarrow}
\newcommand{\ul}{\underline}
\newcommand{\bA}{\mathbb A}
\newcommand{\bC}{\mathbb C}
\newcommand{\bG}{\mathbb G}
\newcommand{\bH}{\mathbb H}
\newcommand{\Q}{\mathbb Q}
\newcommand{\R}{\mathbb R}
\newcommand{\Z}{\mathbb Z}
\newcommand{\bS}{\mathbb S}
\newcommand{\bP}{\mathbb P}
\newcommand{\cN}{\mathcal N}
\newcommand{\cL}{\mathcal L}
\newcommand{\co}{\mathcal O}
\newcommand{\cR}{\mathcal R}
\newcommand{\cH}{\mathcal H}
\newcommand{\cC}{\mathcal C}
\newcommand{\cS}{\mathcal S}
\newcommand{\cW}{\mathcal W}
\newcommand{\cM}{\mathcal M}
\newcommand{\cV}{\mathcal V}
\newcommand{\cE}{\mathcal E}
\newcommand{\cU}{\mathcal U}
\newcommand{\cZ}{\mathcal Z}
\newcommand{\fh}{\mathfrak h}
\newcommand{\fm}{\mathfrak{m}}
\newcommand{\fl}{\mathfrak l}
\newcommand{\fj}{\mathfrak j}
\newcommand{\fc}{\mathfrak c}
\newcommand{\sN}{\mathscr N}
\DeclareMathOperator{\tr}{\mathrm tr}
\DeclareMathOperator{\GL}{\mathrm GL}
\DeclareMathOperator{\et}{\text{\'et}}
\DeclareMathOperator{\Res}{\mathrm Res}
\DeclareMathOperator{\Sym}{\mathrm Sym}
\DeclareMathOperator{\Gal}{\mathrm Gal}
\DeclareMathOperator{\Hom}{\mathrm Hom}
\DeclareMathOperator{\End}{\mathrm End}
\DeclareMathOperator{\cris}{\mathrm cris}
\DeclareMathOperator{\rig}{\mathrm rig}
\DeclareMathOperator{\an}{\mathrm an}
\DeclareMathOperator{\Spec}{\mathrm Spec}
\DeclareMathOperator{\dR}{\mathrm dR}
\DeclareMathOperator{\Frob}{\mathrm Frob}
\DeclareMathOperator{\Ind}{\mathrm Ind}
\DeclareMathOperator{\unr}{\mathrm unr}
\DeclareMathOperator{\Ker}{\mathrm Ker}
\DeclareMathOperator{\LT}{\mathrm{LT}}
\DeclareMathOperator{\Ext}{\mathrm Ext}
\DeclareMathOperator{\Spm}{\mathrm Spm}
\DeclareMathOperator{\Ima}{\mathrm Im}
\DeclareMathOperator{\lalg}{\mathrm lalg}
\DeclareMathOperator{\id}{\mathrm id}
\DeclareMathOperator{\dett}{\mathrm det}
\DeclareMathOperator{\alg}{\mathrm alg}
\DeclareMathOperator{\cyc}{\mathrm cyc}
\DeclareMathOperator{\soc}{\mathrm soc}
\DeclareMathOperator{\red}{\mathrm red}
\DeclareMathOperator{\st}{\mathrm st}
\DeclareMathOperator{\St}{\mathrm St}
\DeclareMathOperator{\Art}{\mathrm Art}
\DeclareMathOperator{\ab}{\mathrm ab}
\DeclareMathOperator{\ur}{\mathrm ur}
\DeclareMathOperator{\rk}{\mathrm rk}
\DeclareMathOperator{\wt}{\mathrm wt}
\begin{document}
\title{$\cL$-invariants, partially de Rham families, and local-global compatibility}
\author{Yiwen Ding}
\maketitle
\begin{abstract}
  Let $F_{\wp}$ be a finite extension of $\Q_p$. By considering partially de Rham families, we establish a Colmez-Greenberg-Stevens formula (on Fontaine-Mazur $\cL$-invariants) for (general) $2$-dimensional semi-stable non-crystalline $\Gal(\overline{\Q_p}/F_{\wp})$-representations. As an application, we prove local-global compatibility results for completed cohomology of quaternion Shimura curves, and in particular the equality of Fontaine-Mazur $\cL$-invariants and Breuil's $\cL$-invariants, in critical case.
\end{abstract}
\tableofcontents
\section*{Introduction}
Let $F$ be a totally real number field, $B$ a quaternion algebra of center $F$ such that there exists only one real place of $F$ where $B$ is split. One can associate to $B$ a system of quaternion Shimura curves $\{M_K\}_{K}$,  proper and smooth over $F$, indexed by compact open subgroups $K$ of $(B\otimes_{\Q} \bA^{\infty})^{\times}$. We fix a prime number $p$, and suppose that there exists only one place $\wp$ of $F$ above $p$, let $\Sigma_{\wp}$ be the set of embeddings of $F_{\wp}$ in $\overline{\Q_p}$.  Suppose $B$ is split at $\wp$, i.e.  $(B\otimes_{\Q} \Q_p)^{\times} \cong \GL_2(F_{\wp})$ (where $F_{\wp}$ denotes the completion of $F$ at $\wp$). Let $E$ be a finite extension of $\Q_p$ sufficiently large containing all the embeddings of $F_{\wp}$ in $\overline{\Q_p}$, with $\co_E$ its ring of integers and $\varpi_E$ a uniformizer of $\co_E$.

Let $\rho$ be a $2$-dimensional continuous representation of $\Gal_F:=\Gal(\overline{F}/F)$ over $E$ such that $\rho$ appears in the \'etale cohomology of $M_K$ for $K$ sufficiently small (so $\rho$ is associated to certain Hilbert eigenforms). By Emerton's completed cohomology theory \cite{Em1}, one can associate to $\rho$ a unitary admissible Banach representation $\widehat{\Pi}(\rho)$ of $\GL_2(F_{\wp})$ as follows: put
\begin{equation*}
  \widetilde{H}^1(K^p,E):=\Big(\varprojlim_{n}\varinjlim_{K_p'} H^1_{\et}\big(M_{K^pK_p'} \times_{F} \overline{F}, \co_E/\varpi_E^n\big)\Big)\otimes_{\co_E} E
\end{equation*}
where $K^p$ denotes the component of $K$ outside $p$, and $K_p'$ runs over open compact subgroups of $\GL_2(F_{\wp})$. This is an $E$-Banach space equipped with a continuous action of $\GL_2(F_{\wp}) \times \Gal_F \times \cH^{p}$, where $\cH^{p}$ denotes certain commutative Hecke algebra outside $p$ over $E$. Put
\begin{equation*}
  \widehat{\Pi}(\rho):=\Hom_{\Gal(\overline{F}/F)}\big(\rho, \widetilde{H}^1(K^{p},E)\big).
\end{equation*}
This is an admissible unitary Banach representation of $\GL_2(F_{\wp})$ over $E$, which plays an important role in $p$-adic Langlands program \cite{Br0}. In \cite{Ding3}, it's proved that if  the local Galois representation  $\rho_{\wp}:=\rho|_{\Gal_{F_{\wp}}}$ (where $\Gal_{F_{\wp}}:=\Gal(\overline{F_{\wp}}/F_{\wp})$) is semi-stable non-crystalline and \emph{non-critical}, then one could find the Fontaine-Mazur $\cL$-invariants $(\cL_{\sigma})_{\sigma\in \Sigma_{\wp}}$ of $\rho_{\wp}$ (which are \emph{invisible} in classical local Langlands correspondence) in $\widehat{\Pi}(\rho)$, generalizing some of Breuil's results in \cite{Br10}.

However, when $F_{\wp}$ is different from $\Q_p$, a new phenomenon is that there exist $2$-dimensional semi-stable non-crystalline $\Gal_{F_{\wp}}$-representations which are \emph{critical} (or more precisely, critical for some embeddings in $\Sigma_{\wp}$). We consider this case in this paper. Denote by $S_c(\rho_{\wp})$ (resp. $S_n(\rho_{\wp})$) the set of embeddings where $\rho_{\wp}$ is critical (resp. non-critical), one can associate to $\rho_{\wp}$ the Fontaine-Mazur $\cL$-invariants $\cL_{\sigma}$ but only for embeddings $\sigma$ in $S_n(\rho_{\wp})$. In this paper, we prove these $\cL$-invariants can be found in $\widehat{\Pi}(\rho)$, meanwhile, we prove a partial result on Breuil's locally analytic socle conjecture \cite{Br13I} for embeddings in $S_c(\rho_{\wp})$.

One important ingredient in \cite{Ding3} is Zhang's generalization \cite[Thm.1.1]{Zhang} of Colmez-Greenberg-Stevens formula \cite{Colm10} (on $\cL$-invariants) in $F_{\wp}$-case. But the results in \cite{Zhang} are only for non-critical case. The following theorem generalizes such a formula in general case, which is of interest in its own right.
\begin{theorem}[$\text{cf. Cor.\ref{cor: lpl-rvs}}$]\label{thm: lpl-yee}
  Let $A$ be an affinoid $E$-algebra, $V$ be a locally free $A$-module of rank $2$ equipped with a continuous $A$-linear action of $\Gal_{F_{\wp}}$, let $z$ be an $E$-point of $A$, suppose
  \begin{enumerate}\item $V$ is trianguline with a triangulation given by
  \begin{equation*}
  0 \lra \cR_A(\delta_1) \lra D_{\rig}(V) \lra \cR_A(\delta_2) \lra 0,
\end{equation*}
where $\delta_i$ are continuous characters of $\Gal_{F_{\wp}}$ in $A^{\times}$, \item $V_z:=z^*V$ is semi-stable non-crystalline with $\cL_{\sigma}\in E$ for $\sigma\in S_n(V_z)$ the associated Fontaine-Mazur $\cL$-invariants (cf. \S \ref{sec: lpl-1.3}, where $S_n(V_z)$ denotes the set of embeddings where $V_z$ is non-critical),
  \item $V$ is $S_c(V_z)$-de Rham (cf. \S \ref{sec: lpl-2}, where $S_c(V_z)=\Sigma_{\wp}\setminus S_n(V_z)$); \end{enumerate}
then the differential form
\begin{equation*}
   d \log\big(\delta_1\delta_2^{-1}(p)\big) + \sum_{\sigma\in S_n(V_z)} \cL_{\sigma} d\big(\wt(\delta_1\delta_2^{-1})_{\sigma}\big)\in \Omega^1_{A/E}
\end{equation*}
vanishes at the point $z$.
\end{theorem}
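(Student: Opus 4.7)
The plan is to split the embeddings $\Sigma_\wp$ into the non-critical and critical sets, use the partially de Rham hypothesis to show that the critical embeddings contribute nothing to the differential at $z$, reduce to first-order deformations, and finally run a Colmez--Greenberg--Stevens type argument generalizing \cite{Zhang} on the non-critical part.

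\textbf{Step 1 (rigidity at critical embeddings).} I would first observe that since $\cR_A(\delta_1)$ is a sub and $\cR_A(\delta_2)$ a quotient of $D_{\rig}(V)$, both inherit the $S_c(V_z)$-de Rham property from $V$. In particular, for each $\sigma\in S_c(V_z)$ the Hodge--Tate--Sen weight $\wt(\delta_i)_\sigma\in A$ is a continuous $A$-valued function that takes an integer value at every point of $\Spm A$, hence is locally constant near $z$. Therefore $d\wt(\delta_i)_\sigma$ vanishes at $z$ for every $\sigma\in S_c(V_z)$, and it is harmless to extend the sum in the statement to all $\sigma\in\Sigma_\wp$ (putting any auxiliary $\cL_\sigma\in E$ for $\sigma\in S_c(V_z)$): so it suffices to prove the formula with the full sum indexed by $\Sigma_\wp$.

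\textbf{Step 2 (first-order reduction).} Testing the vanishing of the differential at $z$ against each tangent vector, I would pull back $V$ along an arbitrary $E$-algebra map $A\to E[\epsilon]/(\epsilon^2)$ sending $\fm_z\mapsto(\epsilon)$. The pullback is a trianguline first-order deformation $\widetilde V$ of $V_z$ with parameters $\widetilde\delta_i=\delta_{i,z}(1+\epsilon\alpha_i)$ for additive characters $\alpha_i\colon\Gal_{F_\wp}\to E$, still $S_c(V_z)$-de Rham; the target identity becomes a single $E$-linear relation between $\alpha_1-\alpha_2$ evaluated against the logarithm-at-$p$ functional and against the Sen-weight functionals $\wt(\cdot)_\sigma$ for $\sigma\in S_n(V_z)$.

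\textbf{Step 3 (Colmez--Greenberg--Stevens).} For $\sigma\in S_n(V_z)$ the Fontaine--Mazur $\cL$-invariant $\cL_\sigma$ is characterized by Fontaine's theory of $D_{\st}(V_z)$ as the slope, in the $\sigma$-component of the Hodge filtration, of the line opposite to the $N$-invariant line of $D_{\st}(V_z)$. On the deformation side, the tangent space to the trianguline deformation functor of $V_z$ is an $\Ext^1$ in the category of $(\varphi,\Gamma)$-modules over $\cR_E$ whose components are naturally indexed by $\Sigma_\wp\cup\{p\}$, and Zhang's argument in \cite{Zhang} identifies $\cL_\sigma$ as the unique coefficient expressing the $p$-component in terms of the $\sigma$-weight component for any deformation that remains semi-stable to first order at $\sigma$. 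Applying this to $\widetilde V$: the $\sigma$-components for $\sigma\in S_c(V_z)$ are killed by Step~1, while for $\sigma\in S_n(V_z)$ the trianguline structure of $\widetilde V$ is exactly the hypothesis under which Zhang's identification of $\cL_\sigma$ applies. Summing the contributions yields the required vanishing.

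\textbf{Main obstacle.} The delicate point is Step~3: in \cite{Zhang} non-criticality is used at \emph{every} embedding to identify the $\Ext^1$-components and to guarantee that the trianguline deformation is, in each $\sigma$-direction, genuinely deforming a semi-stable structure. In our mixed setting, at $\sigma\in S_c(V_z)$ the triangulation is in the wrong order with respect to the Hodge filtration, and the usual $\cL$-invariant is undefined; one has to check carefully that the partially de Rham hypothesis of Step~1 cuts out exactly the subspace of the trianguline $\Ext^1$ on which Zhang's per-embedding argument goes through without change on the $S_n(V_z)$-part, with the $S_c(V_z)$-part contributing zero rather than an anomalous term. I expect this to come down to a local computation of $\Ext^1$-groups of $(\varphi,\Gamma)$-modules $\cR_E(\delta_{1,z}\delta_{2,z}^{-1})$ decomposed by embedding, together with a small additional input on deformations of $\sigma$-de Rham structures for $\sigma$ critical.
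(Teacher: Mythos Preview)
Your reduction in Step~2 to first-order deformations over $E[\epsilon]/\epsilon^2$ is exactly how the paper proceeds (this is Thm.~\ref{thm: lpl-fee}), and your Step~1 observation that $a_\sigma=0$ for $\sigma\in S_c(V_z)$ is correct and recorded in the paper as Rem.~\ref{rem: lpl-mle}(2). But the paper explicitly warns there that \emph{this is not enough}: the formula (\ref{equ: lpl-lre}) can fail if one only assumes $a_\sigma=0$ for critical $\sigma$ while dropping the full $S_c$-de Rham hypothesis on $X$. So your Step~3, which treats the critical embeddings as having been ``killed'' by Step~1 and then tries to run Zhang's argument on the remaining embeddings, is where the gap lies.

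Concretely, the obstruction is this. Write $\chi=\delta_{1,z}\delta_{2,z}^{-1}$. In the non-critical case the connecting map $\delta$ in (\ref{equ: lpl-ectn}) is nonzero (since $\dim_E H^1(\Gal_{F_\wp},B_{E[\epsilon]/\epsilon^2}(\widetilde\chi))=2d+1$ by Lem.~\ref{equ: lpl-ebn}), and $\delta([X_0])=0$ is exactly the desired relation via the cup-product. In the critical case, however, $\dim_E H^1(\Gal_{F_\wp},B_{E[\epsilon]/\epsilon^2}(\widetilde\chi))=2d$, so $\kappa$ is \emph{surjective} and $\delta\equiv 0$: there is no relation whatsoever without a further constraint on $[X]$. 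Zhang's per-embedding computation cannot be salvaged here because the $\cL$-invariants in the critical case are not defined by the naive cup-product (\ref{equ: lpl-ice}) (indeed $H^2(\Gal_{F_\wp},B_E(\chi))=0$), but by the modified pairing (\ref{equ: lpl-e1c}) built from the auxiliary character $\chi^\sharp:=\chi\prod_{\sigma\in S_c(\chi)}\sigma^{1-k_\sigma}$.

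The paper's actual mechanism is: the $S_c$-de Rham hypothesis on $X$ is equivalent to $a_\sigma=0$ for $\sigma\in S_c$ \emph{together with} $[X]\in H^1_{g,S_c(\chi)}(\Gal_{F_\wp},B_{E[\epsilon]/\epsilon^2}(\widetilde\chi))$. This latter subspace has dimension $2d+1-2|S_c(\chi)|$ (Lem.~\ref{lem: lpl-1gb}), and the natural map $\fj$ from $H^1$ of the non-critical character $\widetilde\chi^\sharp$ surjects onto it. One lifts $[X]$ to $[X^\sharp]\in H^1(\Gal_{F_\wp},B_{E[\epsilon]/\epsilon^2}(\widetilde\chi^\sharp))$, applies the non-critical argument (\ref{equ: lpl-ana}) to $[X_0^\sharp]$, and reads off the formula via Def.~\ref{def: lpl-eln} and Rem.~\ref{rem: lpl-tng}. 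Your ``main obstacle'' paragraph correctly senses that the partially de Rham condition must do real work beyond Step~1, but the resolution is not an embedding-by-embedding $\Ext^1$ decomposition; it is this passage to $\chi^\sharp$.
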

Such formula was firstly established by Greenberg-Stevens \cite[Thm.3.14]{GS93} in the case of $2$-dimensional ordinary $G_{\Q_p}$-representations by Galois cohomology computations. In \cite{Colm10}, Colmez generalized \cite[Thm.3.14]{GS93} to $2$-dimensional trianguline $G_{\Q_p}$-representations case by Galois cohomology computations and computations in Fontaine's rings. The theorem \ref{thm: lpl-yee} in non-critical case (i.e. $S_c(V_z)=\emptyset$) was obtained by Zhang in \cite{Zhang}, by generalizing Colmez's method. In \cite{Pot}, Pottharst generalized \cite[Thm.3.14]{GS93} to rank $2$ triangulable $(\varphi,\Gamma)$-modules (in $\Q_p$ case) by studying cohomology of $(\varphi,\Gamma)$-modules.

The hypothesis (3) in Thm.\ref{thm: lpl-yee} is new but crucial. In fact, the statement does\emph{ not} hold (in general) if the condition (3) is replaced by (only) fixing the Hodge-Tate weights for $\sigma\in S_c(V_z)$ \big(namely, replacing the $S_c(V_z)$-de Rham family by $S_c(V_z)$-Hodge-Tate family\big).
Partially de Rham families appear naturally in the study of $p$-adic automorphic forms, e.g.  one encounters such families when studying locally analytic vectors in completed cohomology of Shimura curves (e.g. see Prop.\ref{prop: lpl-uoc}), or certain families of overconvergent Hilbert modular forms (e.g. see App.\ref{sec: lpl-app}, in particular Conj.\ref{conj: lpl-fyl}).  Note Thm.\ref{thm: lpl-yee} also applies for families of \emph{$F_{\wp}$-analytic} $\Gal_{F_{\wp}}$-representations (cf. \cite{Ber14}, which in fact  can be viewed as special cases of partially de Rham families).
Indeed, this theorem also includes the case of parallel Hodge-Tate weights for some embeddings \big(and such embeddings would be contained in $S_c(V_z)$\big).

Return to the global setting before Thm.\ref{thm: lpl-yee}, and suppose moreover $\rho$ is absolutely irreducible modulo $\varpi_E$, and $\rho_{\wp}$ is of Hodge-Tate weights $\ul{h}_{\Sigma_{\wp}}:=(\frac{w-k_{\sigma}+2}{2}, \frac{w+k_{\sigma}}{2})_{\sigma\in \Sigma_{\wp}}$ with $w\in 2\Z$, $k_{\sigma}\in 2\Z_{\geq 1}$ (where we use the convention that the Hodge-Tate weight of the cyclotomic character is $-1$). Since $\rho_{\wp}$ is semi-stable non-crystalline, there exists $\alpha\in E^{\times}$, such that the eigenvalues of $\varphi^{d_0}$ \big(where $d_0$ is the degree of the maximal unramified extension in $F_{\wp}$ over $\Q_p$\big) on $D_{\st}(\rho_{\wp})$ are given by $\{\alpha,p^{d_0}\alpha\}$. Put $\alg(\ul{h}_{\Sigma_{\wp}}):=\otimes_{\sigma\in \Sigma_{\wp}} \big(\Sym^{k_{\sigma}-2} E^2 \otimes_E \dett^{-\frac{2-w-k_{\sigma}}{2}}\big)^{\sigma}$, which is an algebraic representation of $\Res_{F_{\wp}/\Q_p} \GL_2$ over $E$, and
\begin{equation*}
  \St(\alpha,\ul{h}_{\Sigma_{\wp}}):=\unr(\alpha)\circ \dett \otimes_{E} \St \otimes_E  \alg(\ul{h}_{\Sigma_{\wp}}),
\end{equation*}
which is in fact the locally algebraic representation of $\GL_2(F_{\wp})$ associated to $\rho_{\wp}$ via classical local Langlands correspondence, where $\unr(\alpha)$ denotes the unramified character of $F_{\wp}^{\times}$ sending uniformizers to $\alpha$, and $\St$ denotes the usual smooth Steinberg representation of $\GL_2(F_{\wp})$. Moreover it's known $\St(\alpha,\ul{h}_{\Sigma_{\wp}})\hookrightarrow \widehat{\Pi}(\rho)$. By Schraen's results (\cite{Sch10}) on  Breuil's $\cL$-invariants, one can associate to $\rho_{\wp}$ a locally $\Q_p$-analytic representation $\Sigma(\alpha,\ul{h}_{\Sigma_{\wp}}, \ul{\cL}_{S_n(\rho_{\wp})})$ of $\GL_2(F_{\wp})$ over $E$ \big(cf. \S \ref{sec: lpl-3}, as suggested by the notation, this representation is determined by $\alpha$, $\ul{h}_{\Sigma_{\wp}}$ and $\ul{\cL}_{S_n(\rho_{\wp})}$\big), whose socle is exactly $\St(\alpha,\ul{h}_{\Sigma_{\wp}})$.

\begin{theorem}[cf. Thm.\ref{thm: lpl-giao} (2), Cor.\ref{cor: lpl-luni}]\label{thm: lpl-esi}
 Keep the above notation, $\Sigma(\alpha, \ul{h}_{\Sigma_{\wp}}, \ul{\cL}_{S_n(\rho_{\wp})})$ is a subrepresentation of $\widehat{\Pi}(\rho)_{\Q_p-\an}$. Moreover, $\Sigma(\alpha,\ul{h}_{\Sigma_{\wp}}, \ul{\cL}'_{S_n(\rho_{\wp})}) \hookrightarrow \widehat{\Pi}(\rho)_{\Q_p-\an}$ if and only if $\ul{\cL}'_{S_n(\rho_{\wp})}=\ul{\cL}_{S_n(\rho_{\wp})}$.
\end{theorem}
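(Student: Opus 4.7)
The plan is to combine three main ingredients: (i) the classical local-global compatibility $\St(\alpha, \ul{h}_{\Sigma_{\wp}}) \hookrightarrow \widehat{\Pi}(\rho)$, (ii) the eigenvariety machinery attached to $\widehat{\Pi}(\rho)_{\Q_p-\an}$ via Emerton's Jacquet-Emerton functor, and (iii) the new Colmez-Greenberg-Stevens formula of Theorem \ref{thm: lpl-yee}. The overall strategy mirrors that of \cite{Ding3} in the non-critical case, but the presence of critical embeddings forces us to track the partially de Rham condition at points of the eigenvariety, which is precisely where the generalized formula of Theorem \ref{thm: lpl-yee} enters.

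For existence, the embedding $\St(\alpha, \ul{h}_{\Sigma_{\wp}}) \hookrightarrow \widehat{\Pi}(\rho)$ furnishes, via Emerton's adjunction, a classical point $x_{\rho}$ on the associated eigenvariety $\cE$. Passing to an affinoid neighborhood $\Spm A$ of $x_{\rho}$ in $\cE$, one obtains a rank-$2$ trianguline family $V$ over $A$ deforming $\rho_{\wp}$, together with a triangulation $(\delta_1,\delta_2)$ in $A^{\times}$ extending the $\alpha$-ordering (which is non-\'etale at each $\sigma\in S_c(\rho_{\wp})$). Infinitesimal analysis along chosen one-parameter subfamilies, combined with standard extension-class computations for locally analytic principal series of $\GL_2(F_{\wp})$, yields nonsplit analytic extensions of $\St(\alpha,\ul{h}_{\Sigma_{\wp}})$ inside $\widehat{\Pi}(\rho)_{\Q_p-\an}$, whose parameters are expressible in terms of $d\wt(\delta_1\delta_2^{-1})_{\sigma}$ and $d\log(\delta_1\delta_2^{-1})(p)$ at $x_{\rho}$. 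Reassembling these extensions along the socle filtration of $\Sigma(\alpha, \ul{h}_{\Sigma_{\wp}}, \ul{\cL}'_{S_n(\rho_{\wp})})$ (using Schraen's presentation, cf.\ \S \ref{sec: lpl-3}) produces an embedding for \emph{some} $\ul{\cL}'_{S_n(\rho_{\wp})}$; the identification $\ul{\cL}'_{S_n(\rho_{\wp})}=\ul{\cL}_{S_n(\rho_{\wp})}$ will then follow from the next step.

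For the uniqueness of the $\cL$-parameters (which also supplies the matching needed in the first part), assume an embedding $\Sigma(\alpha, \ul{h}_{\Sigma_{\wp}}, \ul{\cL}'_{S_n(\rho_{\wp})}) \hookrightarrow \widehat{\Pi}(\rho)_{\Q_p-\an}$. Applying $J_B$ to the first layer of the socle filtration and interpreting the output on $\cE$, one obtains a trianguline family $V$ as above, with the infinitesimal variation of $(\delta_1,\delta_2)$ at $x_{\rho}$ dictated explicitly by $\ul{\cL}'$. The crucial input is Proposition \ref{prop: lpl-uoc}: the family $V$ coming from locally analytic vectors in completed cohomology is automatically $S_c(\rho_{\wp})$-de Rham, so hypothesis (3) of Theorem \ref{thm: lpl-yee} is satisfied. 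Applying the CGS formula to tangent vectors of $\Spm A$ along which each $\wt(\delta_1\delta_2^{-1})_{\sigma}$ (for $\sigma\in S_n(\rho_{\wp})$) varies independently yields, one embedding at a time, the relations $\cL'_{\sigma}=\cL_{\sigma}$.

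The main obstacle is the partially de Rham input encoded in Proposition \ref{prop: lpl-uoc}: one must prove that the rank-$2$ Galois families produced from the locally analytic vectors of the completed cohomology of the quaternion Shimura curves are automatically $\{\sigma\}$-de Rham at each critical embedding $\sigma\in S_c(\rho_{\wp})$. This technical step is what replaces the purely non-critical arguments of \cite{Zhang} and \cite{Ding3}; once in place, Theorem \ref{thm: lpl-yee} closes the argument.
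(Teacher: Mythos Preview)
Your outline matches the paper's strategy in broad strokes: the eigenvariety construction, the adjunction formula, one tangent direction per $\tau\in S_n(\rho_{\wp})$, and the CGS formula of Thm.~\ref{thm: lpl-yee} to identify the $\cL$-invariants. But you misidentify the main obstacle, and this conceals a genuine gap.

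You say the key difficulty is proving that the family over the eigenvariety is $S_c(\rho_{\wp})$-de Rham. In fact that is \emph{not} the hard part: Prop.~\ref{prop: lpl-uoc} (which you cite) already gives that the family over $\cE(\ul{k}_{\Sigma_{\wp}^{\tau}},w)_{\overline{\rho}}$ is $\Sigma_{\wp}\setminus\{\tau\}$-de Rham, hence $S_c$-de Rham since $\tau\in S_n$. The real issue is that the global triangulation you invoke need not restrict to an honest triangulation over $E[\epsilon]/\epsilon^2$ at the critical point $z_{\rho}$. The theorems of \cite{KPX}, \cite{Liu} only give an exact sequence
\[
0\to \cR_{\co(U_{\tau})}(\cdots)\to D_{\rig}(\rho_{U_{\tau},\wp})\to \cR_{\co(U_{\tau})}(\cdots)\to Q\to 0
\]
with $Q$ a $t$-torsion module supported on the locus where the triangulation parameter ``jumps''. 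If some critical embedding $\sigma\in S_c$ satisfies $\Sigma_{z'}\not\ni\sigma$ for generic $z'$ near $z_{\rho}$ on your one-parameter curve (the paper's case $S_0\neq\emptyset$), then $Q$ is supported exactly at $z_{\rho}$, and pulling back along $t_{\tau}:\Spec E[\epsilon]/\epsilon^2\to U_{\tau}$ yields a four-term sequence, not a triangulation. Thus hypothesis (1) of Thm.~\ref{thm: lpl-yee} is not available for $D_{\rig}(\widetilde{\rho}_{\tau,\wp})$ itself.

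The paper resolves this by constructing an auxiliary $(\varphi,\Gamma)$-module $D$ over $\cR_{E[\epsilon]/\epsilon^2}$: one first shrinks $D_{\rig}(\widetilde{\rho}_{\tau,\wp})$ to the preimage $D'$ of the correct rank-one quotient, then pushes out along $\Ker(f)\hookrightarrow \cR_{E[\epsilon]/\epsilon^2}(\widetilde{\delta}_1')$. One then checks that $D$ is genuinely triangulable with parameters $(\widetilde{\delta}_1',\widetilde{\delta}_2')$, reduces to $D_{\rig}(\rho_{\wp})$ mod $\epsilon$, and remains $S_c$-de Rham (because $D'$ sits inside $D_{\rig}(\widetilde{\rho}_{\tau,\wp})$ with full rank over $\cR_E$, so their $W_{\dR}$'s agree). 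Only then does Thm.~\ref{thm: lpl-fee} apply to $D$ and give the relation $\gamma-\eta=-2\cL_{\tau}\mu$. Your proposal skips this ``modification'' step entirely; without it the argument does not go through in the presence of critical embeddings whose critical locus is isolated on the chosen curve.
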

This theorem shows the equality of Fontaine-Mazur $\cL$-invariants and Breuil's $\cL$-invariants.
As in \cite{Ding3}, we use $p$-adic family arguments on both Galois side and $\GL_2(F_{\wp})$ side. 
 The main objects are eigenvarieties, where live the locally analytic $T(F_{\wp})$-representations and $\Gal_F$-representations.
On one hand, we use the global triangulation theory to relate the $\Gal_{F_{\wp}}$-representations and $T(F_{\wp})$-representations; on the other hand, the locally analytic $T(F_{\wp})$-representations and locally analytic $\GL_2(F_{\wp})$-representations  are linked by the theory of  Jacquet-Emerton functor (\cite{Em11}, \cite{Em2}). Roughly speaking, we get a picture as follows:
\begin{equation*}
 \bigg\{\substack{\text{Trianguline}\\ \text{$\Gal_{F_\wp}$-representations}}\bigg\} \xleftrightarrow{\text{\tiny{Triangulation}}} \bigg\{\substack{\text{Locally analytic } \\ \text{$T(F_{\wp})$-representations}\\ \text{(Eigenvarieties)}}\bigg\} \autoleftrightharpoons{\tiny{Jacquet-Emerton functor}}{\tiny{Adjunction formula}} \bigg\{\substack{\text{Locally analytic }\\ \text{$\GL_2(F_{\wp})$-representations}}\bigg\}
\end{equation*}
All these relations are in family. The global triangulation theory and Thm.\ref{thm: lpl-yee} allow one to find the $\cL$-invariants in the related $T(F_{\wp})$-representations. Via the second arrow, one can thus find the $\cL$-invariants on $\GL_2$-side.
A key fact is that the family of Galois representations associated to locally $\tau$-analytic vectors of $\widehat{H}^1(K^p,E)$ is \emph{$\Sigma_{\wp}\setminus \{\tau\}$-de Rham } (cf. Prop.\ref{prop: lpl-ocj}), which ensures Thm.\ref{thm: lpl-yee} to apply (this observation, together with Schraen's results \cite{Sch10} on Breuil's $\cL$-invariants, in fact leads to the discovery of the hypothesis (3) in Thm.\ref{thm: lpl-yee}).


For the critical embeddings, using global triangulation theory and Bergdall's method, we prove some results on Breuil's locally analytic socle conjecture (\cite{Br13I}). Namely, for each $\sigma\in S_c(\rho_{\wp})$, one can associate a locally $\Q_p$-analytic representation $I_{\sigma}^c(\alpha,\ul{h}_{\Sigma_{\wp}})$ of $\GL_2(F_{\wp})$ (see \S \ref{sec: lpl-3}), which can be viewed as a \emph{$\sigma$-companion} representation of $\St(\alpha,\ul{h}_{\Sigma_{\wp}})$.
\begin{theorem}[cf. Thm.\ref{thm: lpl-giao} (1)]\label{thm: lpl-comp}
  Keep the notation as in Thm.\ref{thm: lpl-esi}, $I_{\sigma}^c(\alpha,\ul{h}_{\Sigma_{\wp}})$ is a subrepresentation of $\widehat{\Pi}(\rho)$ if and only if $\sigma\in S_c(\rho_{\wp})$.
\end{theorem}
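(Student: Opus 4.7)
The plan is to work on the eigenvariety $\cE$ attached to $\widetilde{H}^1(K^p,E)$, combining the global triangulation theory on the Galois side with the Jacquet--Emerton adjunction on the $\GL_2(F_{\wp})$ side; Bergdall's method drives the \emph{if} direction and Thm.\ref{thm: lpl-yee} drives the \emph{only if} direction.

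For the \emph{if} direction, I start from the classical embedding $\St(\alpha,\ul{h}_{\Sigma_{\wp}}) \hookrightarrow \widehat{\Pi}(\rho)$, which via Jacquet--Emerton adjunction produces a classical point $x_0 \in \cE$ whose triangulation parameter encodes $\alpha$ together with the non-critical refinement of $\rho_{\wp}$. For each $\sigma \in S_c(\rho_{\wp})$, the very definition of criticality says that $\rho_{\wp}$ admits a second triangulation with parameter $(\delta_1^{\sigma},\delta_2^{\sigma})$ obtained by swapping the two Hodge--Tate weights at $\sigma$. Following Bergdall's approach, I would use global triangulation theory on a neighbourhood of $x_0$ in $\cE$ and the existence of this critical refinement to produce a companion point $x_{\sigma} \in \cE$ carrying $(\delta_1^{\sigma},\delta_2^{\sigma})$. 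Running Jacquet--Emerton adjunction backwards at $x_{\sigma}$ then yields a nonzero $\GL_2(F_{\wp})$-equivariant map from the locally $\Q_p$-analytic principal series attached to $(\delta_1^{\sigma},\delta_2^{\sigma})$ into $\widehat{\Pi}(\rho)_{\Q_p-\an}$; the subrepresentation $I_{\sigma}^c(\alpha,\ul{h}_{\Sigma_{\wp}})$ of this principal series then embeds into $\widehat{\Pi}(\rho)$.

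For the \emph{only if} direction, suppose for contradiction $I_{\sigma}^c(\alpha,\ul{h}_{\Sigma_{\wp}}) \hookrightarrow \widehat{\Pi}(\rho)$ with $\sigma \in S_n(\rho_{\wp})$. Passing to locally analytic vectors and applying Jacquet--Emerton adjunction produces a point $z \in \cE$ whose triangulation parameter $(\delta_1,\delta_2)$ is the critical-at-$\sigma$ character, differing from the classical one at $x_0$ precisely by a weight swap at $\sigma$. By Prop.\ref{prop: lpl-ocj} the family of Galois representations over the locally $\tau$-analytic part of $\widetilde{H}^1(K^p,E)$ is $\Sigma_{\wp}\setminus\{\tau\}$-de Rham for each $\tau$, so hypothesis (3) of Thm.\ref{thm: lpl-yee} is satisfied along the component through $z$. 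The theorem then yields
\begin{equation*}
  d\log\bigl(\delta_1\delta_2^{-1}(p)\bigr) + \sum_{\tau \in S_n(\rho_{\wp})} \cL_{\tau}\, d\bigl(\wt(\delta_1\delta_2^{-1})_{\tau}\bigr) = 0 \quad \text{at } z,
\end{equation*}
while the analogous identity also holds at $x_0$ with the non-critical parameter. Since the two parameters differ exactly by a sign flip of $\wt(\cdot)_{\sigma}$ (and agree at all other embeddings), subtracting the two relations forces an algebraic identity on $\cL_{\sigma}$ incompatible with the finiteness of the Fontaine--Mazur $\cL$-invariant $\cL_{\sigma}$ for $\sigma \in S_n(\rho_{\wp})$, giving the required contradiction.

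The main obstacle is verifying that the Galois-side family through the hypothetical companion point $z$ really does satisfy the $\Sigma_{\wp}\setminus\{\sigma\}$-de Rham hypothesis of Thm.\ref{thm: lpl-yee}: one must identify the locally analytic subspace of $\widetilde{H}^1(K^p,E)_{\an}$ through which $I_{\sigma}^c \hookrightarrow \widehat{\Pi}(\rho)$ must factor, and match the resulting family with one of the partially de Rham families of Prop.\ref{prop: lpl-ocj}. Once this matching is in place, the contradiction in the \emph{only if} direction and the companion-point construction in the \emph{if} direction are formal consequences of Thm.\ref{thm: lpl-yee} and Bergdall-type smoothness/triangulation arguments respectively.
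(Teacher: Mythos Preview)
Your proposal has the two directions essentially reversed in difficulty, and the tool you invoke for the \emph{only if} direction does not apply.

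\textbf{The only if direction.} In the paper this is the easy half and does \emph{not} use Thm.~\ref{thm: lpl-yee} at all. An embedding $I_{\sigma}^c(\alpha,\ul{h}_{\Sigma_{\wp}})\hookrightarrow\widehat{\Pi}(\rho)$ gives, via Jacquet--Emerton, a companion point $(z_{\rho})_{\sigma}^c$ on the eigenvariety. Global triangulation at that point produces a triangulation of $\rho_{\wp}$ whose parameter is the $\sigma$-swapped character; by Nakamura's uniqueness of triangulations (\cite[Thm.~3.7]{Na}, cf.\ Cor.~\ref{cor lpl-cpc}) this forces $\sigma\in\Sigma_{z_{\rho}}=S_c(\rho_{\wp})$. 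Your attempt to run Thm.~\ref{thm: lpl-yee} at the hypothetical companion point fails at hypothesis~(3): with the $\sigma$-swapped parameter one has $\sigma\in S_c(V_z)$, so you would need the family through $z$ to be $\sigma$-de Rham, but the only family available (the $\cE(\ul{k}_{\Sigma_{\wp}\setminus\{\sigma\}},w)$-family) is by Prop.~\ref{prop: lpl-ocj} de Rham precisely at the embeddings \emph{other than} $\sigma$. This is not a matching problem you can resolve; the hypothesis is simply not satisfied. The subsequent ``subtracting the two relations'' step is also not well-defined: Thm.~\ref{thm: lpl-yee} gives a linear relation among tangent directions at a single point, not a scalar identity one can compare across distinct points.

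\textbf{The if direction.} Your sketch invokes Bergdall's method but misdescribes the mechanism. Criticality at $\sigma$ does \emph{not} give $\rho_{\wp}$ a second triangulation (the triangulation of a semi-stable non-crystalline representation is unique); rather, it means the unique triangulation has the ``wrong'' weight ordering at $\sigma$. The paper's argument is: on $\cE(\ul{k}_{\Sigma_{\wp}\setminus\{\sigma\}},w)$, pull back the global triangulation to any tangent vector at $z_{\rho}$ and show (via a $D_{\st}$-variant of \cite[Lem.~9.6]{Br13II}) that the $\sigma$-Sen weight of the family is constant, so the tangent map to weight space vanishes. Thus the weight map is not \'etale at $z_{\rho}$; by Prop.~\ref{prop: lpl-njz} the point is not $\{\sigma\}$-very classical, which by definition (Lem.~\ref{lem: lpl-ccl}) yields the embedding $I_{\sigma}^c(\alpha,\ul{h}_{\Sigma_{\wp}})\hookrightarrow\Pi^{\cH^p=\lambda_{\rho}}$. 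The companion point is produced \emph{indirectly} through non-\'etaleness, not by exhibiting a second refinement.
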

Thus from $\widehat{\Pi}(\rho)$, we can read out $S_c(\rho_{\wp})$ by Thm.\ref{thm: lpl-comp}, and then $\ul{\cL}_{S_n(\rho_{\wp})}$ by Thm.\ref{thm: lpl-esi}. Since $\rho_{\wp}$ is determined by $\{\alpha,\ul{h}_{\Sigma_{\wp}},S_n(\rho_{\wp}), \ul{\cL}_{S_n(\rho_{\wp})}\}$, we see:
\begin{corollary}
  The local Galois representation $\rho_{\wp}$ is determined by $\widehat{\Pi}(\rho)$.
\end{corollary}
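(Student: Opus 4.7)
The proof is essentially a bookkeeping argument combining the two main theorems with the standard input from classical local Langlands, so I will just organize the four pieces of data in turn.

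The plan is to recover, purely from the Banach representation $\widehat{\Pi}(\rho)$, each of the four invariants $\alpha$, $\ul{h}_{\Sigma_{\wp}}$, $S_n(\rho_{\wp})$, $\ul{\cL}_{S_n(\rho_{\wp})}$ that together determine $\rho_{\wp}$ (as recalled just before the statement). First I would look at the locally algebraic vectors $\widehat{\Pi}(\rho)^{\lalg}$. By the recalled embedding $\St(\alpha,\ul{h}_{\Sigma_{\wp}})\hookrightarrow \widehat{\Pi}(\rho)$ (and the fact that this is an irreducible locally algebraic constituent, determined up to isomorphism by $\alpha$ and $\ul{h}_{\Sigma_{\wp}}$ via classical local Langlands on its smooth part $\unr(\alpha)\circ\dett\otimes \St$ and its algebraic part $\alg(\ul{h}_{\Sigma_{\wp}})$), the pair $(\alpha,\ul{h}_{\Sigma_{\wp}})$ is read off from $\widehat{\Pi}(\rho)^{\lalg}$.

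Next, with $\alpha$ and $\ul{h}_{\Sigma_{\wp}}$ in hand, for each embedding $\sigma\in \Sigma_{\wp}$ I can consider the intrinsically defined locally $\Q_p$-analytic representation $I_{\sigma}^c(\alpha,\ul{h}_{\Sigma_{\wp}})$ of $\GL_2(F_{\wp})$, and ask whether it embeds into $\widehat{\Pi}(\rho)$. By Thm.\ref{thm: lpl-comp}, this happens exactly when $\sigma\in S_c(\rho_{\wp})$; so the subset $S_c(\rho_{\wp})\subset \Sigma_{\wp}$, and hence its complement $S_n(\rho_{\wp})$, is determined by $\widehat{\Pi}(\rho)$ together with the data $(\alpha,\ul{h}_{\Sigma_{\wp}})$ already recovered.

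Finally, with $(\alpha,\ul{h}_{\Sigma_{\wp}},S_n(\rho_{\wp}))$ recovered, for each tuple $\ul{\cL}'_{S_n(\rho_{\wp})}\in E^{S_n(\rho_{\wp})}$ I can form the locally analytic representation $\Sigma(\alpha,\ul{h}_{\Sigma_{\wp}}, \ul{\cL}'_{S_n(\rho_{\wp})})$ and test whether it embeds into $\widehat{\Pi}(\rho)_{\Q_p-\an}$. By Thm.\ref{thm: lpl-esi}, such an embedding exists iff $\ul{\cL}'_{S_n(\rho_{\wp})}=\ul{\cL}_{S_n(\rho_{\wp})}$, so the $\cL$-invariants are pinned down by $\widehat{\Pi}(\rho)$. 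Assembling the four pieces of data yields $\rho_{\wp}$. No serious obstacle is expected beyond checking that the $\lalg$-socle argument in the first step indeed recovers $\alpha$ and $\ul{h}_{\Sigma_{\wp}}$ unambiguously (this uses only that $\St(\alpha,\ul{h}_{\Sigma_{\wp}})$ is irreducible and that classical local Langlands is a bijection on the relevant side), which is routine.
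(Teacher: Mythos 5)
Your proposal is correct and follows essentially the same route as the paper: the paper likewise recovers $(\alpha,\ul{h}_{\Sigma_{\wp}})$ from the locally algebraic vectors via $\widehat{\Pi}(\rho)_{\lalg}\cong \St(\alpha,\ul{h}_{\Sigma_{\wp}})^{\oplus r}$, then reads off $S_c(\rho_{\wp})$ from Thm.\ref{thm: lpl-comp} and $\ul{\cL}_{S_n(\rho_{\wp})}$ from Thm.\ref{thm: lpl-esi} (Cor.\ref{cor: lpl-luni}), and concludes since $\rho_{\wp}$ is determined by $\{\alpha,\ul{h}_{\Sigma_{\wp}},S_n(\rho_{\wp}),\ul{\cL}_{S_n(\rho_{\wp})}\}$. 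Your version is only marginally more explicit about the first step, which the paper leaves implicit.
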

We refer the body of the text for more detailed and more precise statements.

In \S \ref{sec: lpl-1}, we recall (and define) the Fontaine-Mazur $\cL$-invariants in terms of $B$-pairs, and develop some partially de Rham Galois cohomology theory for $B$-pairs. We prove Thm.\ref{thm: lpl-yee} in \S \ref{sec: lpl-2}. In \S \ref{sec: lpl-3}, we recall Schraen's theory on Breuil's $\cL$-invariants of locally $\Q_p$-analytic representations of $\GL_2(F_{\wp})$. These three sections are purely local. In the last section, we prove Thm.\ref{thm: lpl-esi} and Thm.\ref{thm: lpl-comp}. In Appx.\ref{sec: lpl-app}, we study some partially de Rham trianguline representations.

\addtocontents{toc}{\protect\setcounter{tocdepth}{1}}
\subsection*{Acknowledgements}
I would like to thank Christophe Breuil for suggesting to consider the critical case, thank Ruochuan Liu for answering my questions, thank Liang Xiao for drawing my attention to partially de Rham Galois representations, and thank Yuancao Zhang for pointing to me the formula in \cite{Zhang} does not hold in general in critical case. Part of this work was written when the author was visiting Beijing international center for mathematical research. I would like to thank Ruochuan Liu and B.I.C.M.R. for their hospitality. This work is partially supported by EPSRC grant EP/L025485/1.
\addtocontents{toc}{\protect\setcounter{tocdepth}{2}}

\section{Fontaine-Mazur $\cL$-invariants}\label{sec: lpl-1}
In this section, we recall (and define) Fontaine-Mazur $\cL$-invariants for $2$-dimensional $B$-pairs (see Def.\ref{def: lpl-eln} below). Let $F_{\wp}$ be a finite extension of $\Q_p$ of degree $d$ with $\co_{\wp}$ the ring of integers and $\varpi$ a uniformizer, $\Sigma_{\wp}:=\{\sigma: {F_{\wp}} \hookrightarrow \overline{\Q_p}\}$, $\Gal_{F_{\wp}}:=\Gal(\overline{\Q_p}/{F_{\wp}})$.
We fix an embedding $\iota: {F_{\wp}}\hookrightarrow B_{\dR}^+$ (and hence embeddings $\iota: {F_{\wp}} \hookrightarrow \bC_p$, $B_{\dR}$), and view $B_{\dR}^+$, $B_{\dR}$, $\bC_p$ as ${F_{\wp}}$-algebra via $\iota$. Let $E$ be a finite extension of $\Q_p$ sufficiently large containing all the embeddings of ${F_{\wp}}$ in $\overline{\Q_p}$. For an ${F_{\wp}}$-algebra $R$ and $\sigma\in \Sigma_{\wp}$, put $R_{\sigma}:=R\otimes_{{F_{\wp}},\sigma} E$ (e.g. we get $E$-algebras $B_{\dR,\sigma}^+$, $B_{\dR,\sigma}$, $\bC_{p,\sigma}$); for an $R$-module $M$, put $M_{\sigma}:=M \otimes_{R} R_{\sigma}$.
\subsection{Preliminaries on $B$-pairs}\label{sec: lpl-1.1}Let $B_e:=B_{\cris}^{\varphi=1}$, recall
\begin{definition}[$\text{cf. \cite[\S 2]{Ber08}}$]\label{def: lpl-sre}
(1) A $B$-pair of $\Gal_{F_{\wp}}$ is a couple $W=(W_e, W_{\dR}^+)$ where $W_e$ is a finite free $B_e$-module equipped with a semi-linear continuous action of $\Gal_{F_{\wp}}$, and $W_{\dR}^+$ is a $\Gal_{F_{\wp}}$-stable $B_{\dR}^+$-lattice of $W_{\dR}:= W_e \otimes_{B_e} B_{\dR}^+$. Let $r\in \Z_{>0}$, we say that $W$ is (a $B$-pair) of rank $r$ if $\rk_{B_e} W_e=r$.

(2) Let $W$, $W'$ be two $B$-pairs, a morphism $f: W \ra W'$ is defined to be a $B_e$-linear $\Gal_{F_{\wp}}$-invariant map $f_e: W_e \ra W_e'$ such that the induced $B_{\dR}$-linear map $f_{\dR}:=f_e \otimes \id: W_{\dR} \ra W'_{\dR}$ sends $W_{\dR}^+$ to $(W')_{\dR}^+$. Moreover, we say that $f$ is strict if the $B_{\dR}^+$-module $(W')_{\dR}^+/f_{\dR}^+(W_{\dR}^+)$ is torsion free, where $f_{\dR}^+:=f_{\dR} |_{W_{\dR}^+}$.
\end{definition}
By \cite[Thm. 2.2.7]{Ber08}, there exists an equivalence of categories between the category of $B$-pairs and that of $(\varphi,\Gamma)$-modules over the Robba ring $B_{\rig,{F_{\wp}}}^{\dagger}$ (e.g. see \cite[\S 1.1]{Ber08}).

Let $A$ be a local artinian $E$-algebra with residue field $E$.
\begin{definition}[$\text{cf. \cite[Def.2.11, Lem.2.12]{Na2}}$]
(1) An $A$-$B$-pair is a $B$-pair $W=(W_e, W_{\dR}^+)$ such that $W_e$ is a finite free $B_e \otimes_{\Q_p} A$-module, and $W_{\dR}^+$ is a $\Gal_{F_{\wp}}$-stable finite free $B_{\dR}^+ \otimes_{\Q_p} A$-submodule of $W_{\dR}:=W_e \otimes_{B_e} B_{\dR}$, which generates $W_{\dR}$. We say $W$ is (an $A$-$B$-pair) of rank $r$ if $\rk_{B_e \otimes_{\Q_p} A} W_e=r$.

(2) Let $W$, $W'$ be two $A$-$B$-pairs, a morphism $f: W\ra W'$ is defined to be a morphism of $B$-pairs such that $f_e: W_e\ra W'_e$ (cf. Def.\ref{def: lpl-sre} (2)) is moreover $B_e \otimes_{\Q_p} A$-linear.
\end{definition}
As in \cite[Thm.1.36]{Na}, one can deduce from \cite[Thm.2.2.7]{Ber08} an equivalence of categories between the category of $A$-$B$-pairs and that of $(\varphi,\Gamma)$-modules free over $\cR_A:=B_{\rig,{F_{\wp}}}^{\dagger}\widehat{\otimes}_{\Q_p} A$.

Let $W$ be an $A$-$B$-pair of rank $r$. By using the isomorphism
\begin{equation}\label{equ: lpl-pqm}F_{\wp} \otimes_{\Q_p} A \xrightarrow{\sim} \prod_{\sigma\in \Sigma_{\wp}} A, \ a\otimes b \mapsto (\sigma(a)b)_{\sigma\in \Sigma_{\wp}},\end{equation}
one gets $B_{\dR}^{*} \otimes_{\Q_p} A\xrightarrow{\sim} \oplus_{\sigma\in \Sigma_{\wp}} B_{\dR,\sigma}^{*}$ and  $W_{\dR}^*\xrightarrow{\sim} \oplus_{\sigma\in \Sigma_{\wp}} W_{\dR,\sigma}^*$ where $*\in \{\emptyset, +\}$. Put $D_e(W):=W_e^{\Gal_{F_{\wp}}}$ and $D_{\dR}(W):=W_{\dR}^{\Gal_{F_{\wp}}}$. The last one is thus a finite ${F_{\wp}}\otimes_{\Q_p} A$-module, and admits a decomposition (according to (\ref{equ: lpl-pqm})) $D_{\dR}(W)\xrightarrow{\sim} \prod_{\sigma\in \Sigma_{\wp}} D_{\dR}(W)_{\sigma}$. For $\sigma\in \Sigma_{\wp}$, one has in fact $D_{\dR}(W)_{\sigma}\cong W_{\dR,\sigma}^{\Gal_{F_{\wp}}}$.
\begin{definition}\label{def: lpl-pnt}Keep the above notation, let $\sigma\in \Sigma_{\wp}$, $W$ is called $\sigma$-de Rham if $D_{\dR}(W)_{\sigma}$ is a free $A$-module of rank $r$; for $S\subseteq \Sigma_{\wp}$, $W$ is called $S$-de Rham if $W$ is $\sigma$-de Rham for all $\sigma\in S$ (thus $W$ is de Rham if $W$ is $\Sigma_{\wp}$-de Rham).
\end{definition}
\begin{remark}
  Let $W$ be an $A$-$B$-pair, for $\sigma\in \Sigma_{\wp}$, $W$ is $\sigma$-de Rham if and only if $W$ is $\sigma$-de Rham as an $E$-$B$-pair. The ``only if" part is trivial. Suppose $W$ is $\sigma$-de Rham as an $E$-$B$-pair, denote by $\fm_A$ the maximal ideal of $A$, and $d_A:=\dim_E A$, thus $\dim_E D_{\dR}(W)_{\sigma}=rd_A$. Consider the exact sequence $0 \ra \fm_A D_{\dR}(W)_{\sigma} \ra D_{\dR}(W)_{\sigma} \ra D_{\dR}(W/\fm_A)_{\sigma}$, we deduce the last map is surjective and $\dim_E D_{\dR}(W/\fm_A)_{\sigma}=r$ by dimension calculation \big(since $\dim_E \fm_A D_{\dR}(W)_{\sigma}=\dim_E D_{\dR}(\fm_A W)_{\sigma}\leq (d_A-1)r$\big), from which we deduce $D_{\dR}(W)_{\sigma}$ is a free $A$-module.
\end{remark}
\begin{definition}An $A$-$B$-pair $W$ of rank $r$ is called triangulable if it's an successive extension of $A$-$B$-pairs of rank $1$ , i.e. $W$ admits an increasing filtration of sub-$A$-$B$-pairs $W_i$ for $0 \leq i \leq r$ such that $W_0=0$, $W_r=W$, and $W_i/W_{i-1}$ is an $A$-$B$-pair of rank $1$.
\end{definition}
Denote by $B_A:=(B_e\otimes_{\Q_p} A, B_{\dR}^+ \otimes_{\Q_p} A)$ the trivial $A$-$B$-pair. Let $\chi$ be a continuous character of $F_{\wp}^{\times}$ in $A^{\times}$, following \cite[\S 2.1.2]{Na2}, one can associate to $\chi$ an $A$-$B$-pair of rank $1$, denoted by $B_A(\chi)$ (and we refer to \emph{loc. cit.} for details). By \cite[Prop.2.16]{Na2}, all the rank $1$ $A$-$B$-pairs can be obtained in this way: let $W$ be an $A$-$B$-pair of rank $1$, then there exists a unique continuous character $\chi: F_{\wp}^{\times} \ra A^{\times}$ such that $W\xrightarrow{\sim} B_A(\chi)$.
For a continuous representation $V$ of $\Gal_{F_{\wp}}$ over $A$, denote by $W(V):=(B_e \otimes_{\Q_p} V, B_{\dR}^+ \otimes_{\Q_p} V)$ the associated $A$-$B$-pair. The $\Gal_{F_{\wp}}$-representation $V$ is called \emph{trianguline} if $W(V)$ is triangulable.

\subsection{Cohomology of $B$-pairs}Recall the cohomology of $E$-$B$-pairs (note that $A$-$B$-pairs can also be viewed as $E$-$B$-pairs). Let $W=(W_e, W_{\dR}^+)$ be an $E$-$B$-pair, following \cite[\S 2.1]{Na}, consider the following complex (of $\Gal_{F_{\wp}}$-modules)
\begin{equation*}
  C^{\bullet}(W):= W_e \oplus W_{\dR}^+ \xlongrightarrow{(x,y)\mapsto x-y} W_{\dR}.
\end{equation*}
Put $H^i(\Gal_{F_{\wp}},W):=H^i(\Gal_{F_{\wp}},C^{\bullet}(W))$ (cf. \cite[Def.2.1]{Na}). By definition, one has a long exact sequence
\begin{multline}\label{equ: lpl-ewr}
  0 \ra H^0(\Gal_{F_{\wp}},W) \ra H^0(\Gal_{F_{\wp}},W_e)\oplus H^0(\Gal_{F_{\wp}}, W_{\dR}^+) \ra H^0(\Gal_{F_{\wp}},W_{\dR})\\ \xrightarrow{\delta} H^1(\Gal_{F_{\wp}},W)\ra H^1(\Gal_{F_{\wp}},W_e) \oplus H^1(\Gal_{F_{\wp}},W_{\dR}^+) \ra H^1(\Gal_{F_{\wp}},W_{\dR}).
\end{multline}
For an $E$-$B$-pair $W$, denote by $W^{\vee}$ the dual of $W$:
\begin{equation*}W^{\vee}:=\Big(W_e^{\vee}:=\Hom_{B_e\otimes_{\Q_p} E}(W_e,B_e\otimes_{\Q_p} E), (W^{\vee})_{\dR}^+:=\Hom_{B_{\dR}^+\otimes_{\Q_p} E}(W_{\dR}^+, B_{dR}^+\otimes_{\Q_p} E)\Big)\end{equation*}
where $W^{\vee}_e$, $(W^{\vee})_{\dR}^+$ are equipped with a natural $\Gal_{F_{\wp}}$-action. One can check $W^{\vee}$ is also an $E$-$B$-pair.
\begin{remark}\label{rem: lpl-orl}
  As in  \cite[Def.1.9 (3)]{Na}, one can also consider the dual $W'$ of $W$ as $B$-pair with $W'_e:=\Hom_{B_e}(W_e,B_e)$ and $(W')_{\dR}^+:=\Hom_{B_{\dR}^+}(W_{\dR}^+, B_{\dR}^+)$ (equipped with a natural $\Gal_{F_{\wp}}$-action). Moreover, $W'_e$, $(W')_{\dR}^+$ can be equipped with a natural $E$-action: $(a\cdot f)(v):=f(av)$. One can check  this action realizes $W'$ as an $E$-$B$-pair. Moreover, the trace map $\tr_{E/\Q_p}: E\ra \Q_p$, induces bijections $W^{\vee}_e \xrightarrow{\sim} W'_e$ and  $(W^{\vee})_{\dR}^+ \xrightarrow{\sim} (W')_{\dR}^+$: $f\mapsto \tr_{E/\Q_p} \circ f$, and these bijections give an isomorphism $W^{\vee}\xrightarrow{\sim} W'$ as $E$-$B$-pairs.
\end{remark}
Denote by $W(1)$ the twist of $W$ by $W(\chi_{\cyc})$ where $\chi_{\cyc}$ is the cyclotomic character of $\Gal_{F_{\wp}}$ (base change to $E$):
\begin{equation*}
  W(1):=\Big(W(1)_e:=W_e \otimes_{B_e\otimes_{\Q_p}E} W(\chi_{\cyc})_e, W(1)_{\dR}^+:=W_{\dR}^+ \otimes_{B_{\dR}^+ \otimes_{\Q_p} E} W(\chi_{\cyc})_{\dR}^+\Big).
\end{equation*}
By \cite[\S 2]{Na} and \cite[\S 5]{Na2}, one has
\begin{proposition}\label{prop: lpl-pww} (1) $H^i(\Gal_{F_{\wp}},W)=0$ if $i\notin\{0,1,2\}$, and
$  \sum_{i=0}^2 (-1)^i \dim_E H^i(\Gal_{F_{\wp}},W)=-d(\rk W)$.

(2) There exists a natural isomorphism $H^1(\Gal_{F_{\wp}},W)\xrightarrow{\sim} \Ext^1(B_E,W)$, where $\Ext^1(B_E,W)$ denotes the group of extensions of $E$-$B$-pairs of $B_E$ by $W$.

  (3) Let $V$ be a finite dimensional  continuous $\Gal_{F_{\wp}}$-representation over $E$, then we have natural isomorphisms $H^i(\Gal_{F_{\wp}},W(V))\cong H^i(\Gal_{F_{\wp}}, V)$ for all $i\in \Z_{\geq 0}$.

  (4) The cup-product (see \cite[\S 5]{Na2} for details)
  \begin{equation}\label{equ: lpl-wgl}\cup: H^i(\Gal_{F_{\wp}},W)\times H^{2-i}(\Gal_{F_{\wp}},W^{\vee}(1))\lra H^2(\Gal_{F_{\wp}},B_E(1))\cong H^2(\Gal_{F_{\wp}}, \chi_{\cyc})\cong E\end{equation}
  is a perfect pairing for $i=0, 1, 2$.
\end{proposition}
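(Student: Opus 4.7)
The plan is to reduce all four assertions to standard results on the cohomology of $(\varphi,\Gamma)$-modules over the Robba ring $\cR_E$, via the equivalence of categories from \cite[Thm.2.2.7]{Ber08} recalled above. Through this equivalence, the hypercohomology of $C^{\bullet}(W)$ should correspond to the Herr-Koszul complex computing the $(\varphi,\Gamma)$-cohomology of the associated module $D$; verifying (or invoking) this compatibility is the core technical step, after which the four parts fall into place in a fairly standard way.

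For (1), vanishing in degrees $\geq 3$ is immediate from the two-term shape of $C^{\bullet}(W)$ once one knows that the continuous Galois cohomology of $W_e$, $W_{\dR}^+$, $W_{\dR}$ vanishes in degrees $\geq 2$; this in turn reduces to the cohomology of $B_e$, $B_{\dR}^+$, $B_{\dR}$ together with the fact that $\Gal_{F_{\wp}}$ has $p$-cohomological dimension $2$. For the Euler characteristic, I would apply the analogue of Tate's local Euler characteristic formula for $(\varphi,\Gamma)$-modules over $\cR_E$ to $D$, which delivers exactly $-d\cdot\rk W$. For (2), I would exhibit the map $H^1(\Gal_{F_{\wp}},W)\to\Ext^1(B_E,W)$ by the standard Baer-Yoneda recipe: a cocycle $(x,y,z)\in W_e\oplus W_{\dR}^+\oplus W_{\dR}$ with $z=x-y$ produces an extension whose $e$- and $\dR^+$-parts are $W_e\oplus B_e$ and $W_{\dR}^+\oplus B_{\dR}^+$, with $\Gal_{F_{\wp}}$-action twisted by $(x,y)$ and gluing dictated by $z$; the inverse is obtained by choosing splittings of a given extension at the $e$- and $\dR^+$-levels, and a direct check shows both assignments are mutually inverse.

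For (3), the plan is to tensor the fundamental exact sequence
\[
0 \lra \Q_p \lra B_e \oplus B_{\dR}^+ \lra B_{\dR} \lra 0
\]
with $V$ (exactness is preserved since $V$ is a finite-dimensional $\Q_p$-vector space) to obtain a quasi-isomorphism $V[0]\xrightarrow{\sim} C^{\bullet}(W(V))$ of complexes of continuous $\Gal_{F_{\wp}}$-modules; taking Galois hypercohomology gives the claimed isomorphism in every degree.

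For (4), which I expect to be the main obstacle, the cup product arises from the tensor structure on $B$-pairs $W\otimes W^{\vee}(1)\to B_E(1)$, followed by the identification $H^2(\Gal_{F_{\wp}},B_E(1))\cong H^2(\Gal_{F_{\wp}},\chi_{\cyc})\cong E$ (the first isomorphism supplied by (3), the second being classical local Tate duality). Perfectness of the pairing I would establish by transferring to the $(\varphi,\Gamma)$-module side and invoking local Tate-Herr duality for Robba-ring $(\varphi,\Gamma)$-modules, as packaged in \cite[\S 5]{Na2}. The delicate point, where I anticipate most of the work lying, is checking that the cup product defined intrinsically on $B$-pairs matches the Tate-Herr cup product on the $(\varphi,\Gamma)$-module side under the equivalence of categories; once this compatibility is pinned down, perfectness for $i=0,1,2$ follows directly from the corresponding statement for $(\varphi,\Gamma)$-modules, completing the proof.
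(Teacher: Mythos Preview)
Your outline is sound and, in spirit, matches what the paper does: it simply cites \cite[\S 2]{Na} for (1)--(3) and \cite[\S 5]{Na2} for (4), and those references carry out essentially the program you describe (the $H^1\cong\Ext^1$ statement is \cite[Prop.~2.2]{Na}, the comparison with Galois cohomology is \cite[Prop.~2.5]{Na} via exactly the fundamental exact sequence you wrote, and the Euler characteristic and duality go through the $(\varphi,\Gamma)$-module side).

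There is one point in (4) you have glossed over that the paper actually singles out. The duality in \cite[\S 5]{Na2} is stated for the $\Q_p$-linear dual $W'=\Hom_{B_e}(W_e,B_e)$ etc.\ (see Remark~\ref{rem: lpl-orl}), with target $H^2(\Gal_{F_{\wp}},\Q_p(1))\cong\Q_p$, not the $E$-linear dual $W^{\vee}$ with target $E$ as in the statement. The paper bridges this in Remark~\ref{rem: lpl-arw}(1): the trace map $\tr_{E/\Q_p}$ furnishes an isomorphism $W^{\vee}\xrightarrow{\sim}W'$ of $E$-$B$-pairs, and under this identification the $\Q_p$-valued pairing equals the $E$-valued pairing composed with $\tr_{E/\Q_p}:E\to\Q_p$; nondegeneracy of the trace then transfers perfectness. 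Since you cite \cite[\S 5]{Na2} directly while working with $W^{\vee}$, you should either insert this trace argument or replace the citation by one establishing Tate--Herr duality $E$-linearly over $\cR_E$. Apart from this, your plan is correct.
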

\begin{remark}\label{rem: lpl-arw}
(1) In fact, in  \cite[\S 5]{Na2}, it's shown that the cup-product $H^i(\Gal_{F_{\wp}},W)\times H^{2-i}(\Gal_{F_{\wp}}, W'(1))\ra H^2(\Gal_{F_{\wp}}, \Q_p(1))\cong \Q_p$ is a perfect pairing (see Rem.\ref{rem: lpl-orl} for $W'$). By discussions in  Rem.\ref{rem: lpl-orl}, identifying $W^{\vee}$ and $W'$, this pairing then is equal to the composition of (\ref{equ: lpl-wgl}) with the trace map $\tr_{E/\Q_p}$, from which one deduces (\ref{equ: lpl-wgl}) is also perfect.

(2) Let $W$ be an $E$-$B$-pair, for an exact sequence of $E$-$B$-pairs
\begin{equation*}\label{equ: lpl-3ar}
  0 \lra W_1 \lra W_2 \lra W_3 \lra 0,
\end{equation*}
one has the following commutative diagram
\begin{equation*}
  \begin{CD}
    H^i(\Gal_{F_{\wp}},W_3) @. \times @. H^j(\Gal_{F_{\wp}},W) @> \cup >> H^{i+j}(\Gal_{F_{\wp}},W_3 \otimes W) \\
    @V \delta VV @. @| @V\delta VV \\
    H^{i+1}(\Gal_{F_{\wp}},W_1) @. \times @. H^j(\Gal_{F_{\wp}},W) @> \cup >> H^{i+j+1}(\Gal_{F_{\wp}},W_1 \otimes W)
  \end{CD},
\end{equation*}
where the $\delta$'s denote the connecting maps, $\cup$ the cup-products, and $W_i\otimes W$ is the $E$-$B$-pair given by  $(W_i\otimes W)_e:=(W_i)_e\otimes_{B_e\otimes_{\Q_p} E} W_e$, $(W_i\otimes W)_{\dR}^+:=(W_i)_{\dR}^+\otimes_{B_{\dR}^+ \otimes_{\Q_p} E} W_{\dR}^+$. 

 (3) If $W$ is moreover an $A$-$B$-pair, by the same argument as in \cite[\S 2.1]{Na}, one can show there exists a natural isomorphism $H^1(\Gal_{F_{\wp}},W)\xrightarrow{\sim} \Ext^1(B_A,W)$ as $A$-modules, where $\Ext^1(B_A,W)$ denotes the group of extensions of $A$-$B$-pairs of $B_A$ by $W$.
\end{remark}
Put  (cf. \cite[Def.2.4]{Na})\begin{eqnarray*}
  H^1_g(\Gal_{F_{\wp}},W)&:=&\Ker [H^1(\Gal_{F_{\wp}},W) \ra H^1(\Gal_{F_{\wp}}, W_{\dR})], \\
  H^1_e(\Gal_{F_{\wp}},W)&:=&\Ker [H^1(\Gal_{F_{\wp}},W) \ra H^1(\Gal_{F_{\wp}}, W_e)],
  \end{eqnarray*}
  where the above maps are induced from the natural maps $C^{\bullet}(W) \ra [W_e\ra 0]\ra [W_{\dR}\ra 0]$. Note that by (\ref{equ: lpl-ewr}), the map $H^1(\Gal_{F_{\wp}},W) \ra H^1(\Gal_{F_{\wp}}, W_{\dR})$ factors through (up to $\pm 1$) the natural map $H^1(\Gal_{F_{\wp}}, W) \ra H^1(\Gal_{F_{\wp}}, W_{\dR}^+)$. If $W$ is a de Rham $A$-$B$-pair, let $[X]\in H^1(\Gal_{F_{\wp}},W)\cong \Ext^1(B_A,W)$, then $X$ is de Rham if and only if $[X]\in H^1_{g}(\Gal_{F_{\wp}},W)$. Moreover, in this case, by \cite[Lem.2.6]{Na}, the natural map $H^1(\Gal_{F_{\wp}},W_{\dR}^+) \ra H^1(\Gal_{F_{\wp}},W_{\dR})$ is injective, thus $H^1_g(\Gal_{F_{\wp}},W)=\Ker [H^1(\Gal_{F_{\wp}},W) \ra H^1(\Gal_{F_{\wp}}, W_{\dR}^+)]$.
One has as in  \cite[Prop.2.10]{Na}
\begin{proposition}\label{prop: lpl-psc}Suppose $W$ is de Rham, the perfect pairing (\ref{equ: lpl-wgl}) induces an isomorphism \begin{equation*}H^1_g(\Gal_{F_{\wp}},W)\xlongrightarrow{\sim} H^1_e(\Gal_{F_{\wp}},W^{\vee}(1))^{\perp}.\end{equation*}
\end{proposition}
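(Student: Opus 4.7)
The plan is to adapt the Bloch--Kato/Nakamura strategy (following the model of \cite[Prop.2.10]{Na}) by combining an orthogonality step with a dimension count.

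For orthogonality, I would work at the cochain level with the mapping cone complex $C^\bullet(W) = [W_e \oplus W_{\dR}^+ \to W_{\dR}]$ and its analogue for $W^{\vee}(1)$. A class in $H^1(\Gal_{F_{\wp}}, W)$ is represented by a cocycle $(c_e, c_{\dR}^+, b)$ with $c_e \in Z^1(\Gal_{F_{\wp}}, W_e)$, $c_{\dR}^+ \in Z^1(\Gal_{F_{\wp}}, W_{\dR}^+)$, $b \in W_{\dR}$, and $c_e - c_{\dR}^+ = db$. Because $W$ is de Rham, the injectivity $H^1(\Gal_{F_{\wp}}, W_{\dR}^+) \hookrightarrow H^1(\Gal_{F_{\wp}}, W_{\dR})$ from \cite[Lem.2.6]{Na} (quoted in the text) identifies $H^1_g(\Gal_{F_{\wp}}, W)$ with $\Ker[H^1(\Gal_{F_{\wp}}, W) \to H^1(\Gal_{F_{\wp}}, W_{\dR}^+)]$, so any class in $H^1_g(\Gal_{F_{\wp}}, W)$ admits a cocycle representative with $c_{\dR}^+ = 0$ after subtracting a suitable coboundary. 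Dually, any class in $H^1_e(\Gal_{F_{\wp}}, W^{\vee}(1))$ admits a representative whose $W_e^{\vee}(1)$-component vanishes. The explicit cup-product formula on these mapping cones, constructed as in \cite[\S 5]{Na2}, then pairs such representatives to zero in $H^2(\Gal_{F_{\wp}}, B_E(1)) \cong E$, yielding
\[
H^1_g(\Gal_{F_{\wp}}, W) \subseteq H^1_e(\Gal_{F_{\wp}}, W^{\vee}(1))^{\perp}.
\]

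For the dimension count, I would assemble (\ref{equ: lpl-ewr}) applied to both $W$ and $W^{\vee}(1)$, the Euler--Poincar\'e formula of Prop.\ref{prop: lpl-pww}(1), and the Tate-style dualities between the ``constituent'' cohomologies $H^i(\Gal_{F_{\wp}}, W_e)$, $H^i(\Gal_{F_{\wp}}, W_{\dR}^+)$, $H^i(\Gal_{F_{\wp}}, W_{\dR})$ and those of $W^{\vee}(1)$. Together these yield
\[
\dim_E H^1_g(\Gal_{F_{\wp}}, W) = \dim_E H^1(\Gal_{F_{\wp}}, W) - \dim_E H^1_e(\Gal_{F_{\wp}}, W^{\vee}(1)),
\]
and the right-hand side equals $\dim_E H^1_e(\Gal_{F_{\wp}}, W^{\vee}(1))^{\perp}$ since the pairing (\ref{equ: lpl-wgl}) is perfect by Prop.\ref{prop: lpl-pww}(4). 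Combined with the orthogonality inclusion, equality of dimensions promotes the inclusion to an isomorphism.

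The main obstacle is the bookkeeping in the dimension count: one must carefully match up the individual local dualities on the auxiliary Galois modules $W_e$, $W_{\dR}^+$, $W_{\dR}$ with the global cup-product pairing of Prop.\ref{prop: lpl-pww}(4), verifying naturality with respect to the inclusions and the connecting maps in (\ref{equ: lpl-ewr}) (cf.\ Rem.\ref{rem: lpl-arw}(2)). The de Rham hypothesis on $W$ is essential both for the clean description of $H^1_g$ as the kernel of a single map and for pinning down $\dim_E D_{\dR}(W) = d \cdot \rk W$; it is precisely this numerology that makes the alternating sums of dimensions arising from the two long exact sequences balance out, at which point the conclusion is formal.
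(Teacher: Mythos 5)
The paper offers no independent proof of this proposition---it simply invokes \cite[Prop.2.10]{Na}---and your outline (orthogonality of well-chosen cocycle representatives, then a dimension count via the long exact sequence (\ref{equ: lpl-ewr}), the Euler--Poincar\'e formula and the perfectness of (\ref{equ: lpl-wgl})) is exactly the Bloch--Kato-style argument underlying that reference, so you are taking essentially the same route. One small caution on the orthogonality step: a representative of an $H^1_g$-class with vanishing $W_{\dR}^+$-component cupped against a representative of an $H^1_e$-class with vanishing $W^{\vee}(1)_e$-component vanishes \emph{at the cochain level} only for a suitable choice of the chain-level pairing on the mapping cones (the one sending the degree-$(1,0)$ part of the first factor against the degree-$(0,1)$ part of the second via the $W_{\dR}^+$-component, and the degree-$(0,1)$ part of the first against the degree-$(1,0)$ part of the second via the $W_e$-component); since all such choices are homotopic this suffices, but the formula should be fixed before asserting the vanishing.
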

For $J\subseteq \Sigma_{\wp}$, $J\neq \emptyset$, put
\begin{equation*}
    H^1_{g,J}(\Gal_{F_{\wp}},W):=\Ker [H^1(\Gal_{F_{\wp}},W) \ra \oplus_{\sigma\in J}H^1(\Gal_{F_{\wp}}, W_{\dR,\sigma})],
\end{equation*}
where the map is induced by $C^{\bullet}(W)\ra [W_{\dR}\ra 0]\ra [\oplus_{\sigma\in J}W_{\dR,\sigma} \ra 0]$. Thus $H^1_{g,\Sigma_{\wp}}(\Gal_{F_{\wp}}, W)=H^1_g(\Gal_{F_{\wp}},W)$, $H^1_{g,J}(\Gal_{F_{\wp}},W)\cong \cap_{\sigma\in J} H^1_{g,\sigma}(\Gal_{F_{\wp}}, W)$, and $H^1(\Gal_{F_{\wp}}, W) \ra \oplus_{\sigma\in J} H^1(\Gal_{F_{\wp}}, W_{\dR,\sigma})$ factors through (up to $\pm 1$) the natural map $H^1(\Gal_{F_{\wp}},W)\ra \oplus_{\sigma\in J} H^1(\Gal_{F_{\wp}}, W_{\dR,\sigma}^+)$ (see the discussion above Prop.\ref{prop: lpl-psc}). Moreover, suppose $W$ is a $J$-de Rham $A$-$B$-pair, for $[X]\in H^1(\Gal_{F_{\wp}},W)\cong \Ext^1(B_A, W)$, $X$ is $J$-de Rham if and only if $[X]\in H^1_{g,J}(\Gal_{F_{\wp}},W)$.
  By the same argument as in \cite[Lem.2.6]{Na}, one has
\begin{lemma}\label{lem: lpl-jqt}
  Let $J\subseteq \Sigma_{\wp}$, $J\neq \emptyset$, suppose $W$ is $J$-de Rham, then the map $\oplus_{\sigma\in J}H^1(\Gal_{F_{\wp}}, W_{\dR,\sigma}^+) \ra \oplus_{\sigma\in J}H^1(\Gal_{F_{\wp}}, W_{\dR,\sigma})$ is injective.
\end{lemma}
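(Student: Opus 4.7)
The plan is to reduce to a single-embedding statement and then imitate the argument of \cite[Lem.2.6]{Na} in the $\sigma$-de Rham setting. Since the map in question decomposes as a direct sum indexed by $\sigma\in J$, its injectivity is equivalent to injectivity of each summand $H^1(\Gal_{F_{\wp}}, W_{\dR,\sigma}^+) \to H^1(\Gal_{F_{\wp}}, W_{\dR,\sigma})$ separately. I therefore fix $\sigma\in J$ and argue that the $\sigma$-de Rham hypothesis alone (namely that $D_{\dR}(W)_\sigma$ is a free $A$-module of rank $r=\rk W$) forces this single summand to be injective.

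Passing to the long exact cohomology sequence of
\begin{equation*}
0 \lra W_{\dR,\sigma}^+ \lra W_{\dR,\sigma} \lra W_{\dR,\sigma}/W_{\dR,\sigma}^+ \lra 0,
\end{equation*}
the kernel of $H^1(W_{\dR,\sigma}^+) \to H^1(W_{\dR,\sigma})$ coincides with the image of the connecting map from $H^0(W_{\dR,\sigma}/W_{\dR,\sigma}^+)$, so it suffices to show that the natural map $H^0(\Gal_{F_{\wp}}, W_{\dR,\sigma}) \to H^0(\Gal_{F_{\wp}}, W_{\dR,\sigma}/W_{\dR,\sigma}^+)$ is surjective. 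To establish this I would use the $\sigma$-de Rham hypothesis to produce, via the $A$-coefficient analogue of standard Fontaine theory (compare \cite[Prop.2.7]{Na}), a $\Gal_{F_{\wp}}$-equivariant identification $W_{\dR,\sigma} \xrightarrow{\sim} D_{\dR}(W)_\sigma \otimes_A (B_{\dR,\sigma} \otimes_E A)$ with trivial Galois action on the $D_{\dR}$-factor, under which $W_{\dR,\sigma}^+$ corresponds to a $(B_{\dR,\sigma}^+\otimes_E A)$-lattice cut out by a Hodge-type filtration $\Fil^\bullet$ on $D_{\dR}(W)_\sigma$. Filtering $W_{\dR,\sigma}/W_{\dR,\sigma}^+$ by the ascending sequence $t^{-n}W_{\dR,\sigma}^+/W_{\dR,\sigma}^+$, the successive graded pieces become Tate twists of Hodge-Tate $A$-modules of the form $\gr^i D_{\dR}(W)_\sigma \otimes_A (\bC_{p,\sigma}\otimes_E A)(j)$. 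Tate's vanishing $\bC_p^{\Gal_{F_{\wp}}}(j) = 0$ for $j\neq 0$, promoted to $A$-coefficients by the flatness of $A$ over $E$, kills all contributions with $j\neq 0$; a devissage then identifies $H^0(W_{\dR,\sigma}/W_{\dR,\sigma}^+)$ with a natural quotient of $D_{\dR}(W)_\sigma$ onto which $H^0(W_{\dR,\sigma}) = D_{\dR}(W)_\sigma$ maps as the obvious projection, which is manifestly surjective.

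The main technical obstacle is to make the $\sigma$-de Rham comparison isomorphism and the Hodge-filtration description of $W_{\dR,\sigma}^+$ work cleanly in the $A$-coefficient setting, and to check that Tate-Sen vanishing extends after tensoring with $A$; once these structural inputs are secured, the devissage is a routine adaptation of the $E$-coefficient argument of \cite[Lem.2.6]{Na}. A useful alternative formulation, which avoids picking the comparison explicitly, is to filter $W_{\dR,\sigma}/W_{\dR,\sigma}^+$ first by $t^{-n}W_{\dR,\sigma}^+/W_{\dR,\sigma}^+$ and compute $H^0$ and the connecting map together in the spirit of Remark~\ref{rem: lpl-arw}(2), noting that a Galois-invariant element of the quotient can be lifted as soon as the corresponding Hodge-Tate obstruction vanishes, and the $\sigma$-de Rham assumption is precisely what guarantees this obstruction vanishing at the $\sigma$-level.
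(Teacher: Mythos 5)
Your proof is correct and is essentially the paper's own argument: the paper simply invokes ``the same argument as in [Na, Lem.\,2.6]'' embedding-by-embedding, which is exactly the reduction to each $\sigma\in J$ followed by the de Rham trivialization of $W_{\dR,\sigma}$ and the Tate-twist d\'evissage showing $H^0(\Gal_{F_{\wp}},W_{\dR,\sigma})\to H^0(\Gal_{F_{\wp}},W_{\dR,\sigma}/W_{\dR,\sigma}^+)$ is surjective. The $A$-coefficient worry you flag is moot, since injectivity of $E$-linear maps can be checked on the underlying $E$-$B$-pair and the remark after Def.\,\ref{def: lpl-pnt} reduces $\sigma$-de Rhamness to that case.
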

Thus if $W$ is $J$-de Rham, then one has
\begin{equation}\label{equ: lpl-gno}
  H^1_{g,J}(\Gal_{F_{\wp}}, W) \cong \Ker[H^1(\Gal_{F_{\wp}}, W) \ra \oplus_{\sigma\in J} H^1(\Gal_{F_{\wp}}, W_{\dR,\sigma}^+)].
\end{equation}
\begin{lemma}\label{lem: lpl-tse}
  Let $J\subseteq \Sigma_{\wp}$, $J\neq \emptyset$, suppose $W$ is $J$-de Rham, if $H^0(\Gal_{F_{\wp}}, W_{\dR,\sigma}^+)=0$ for all $\sigma\in J$, then $H^1_{g,J}(\Gal_{F_{\wp}}, W)\xrightarrow{\sim} H^1(\Gal_{F_{\wp}},W)$.
\end{lemma}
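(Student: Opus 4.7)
The plan is to reduce the statement to showing $H^1(\Gal_{F_\wp}, W_{\dR,\sigma}^+) = 0$ for each $\sigma \in J$. Granting this, the identification (\ref{equ: lpl-gno}) realises $H^1_{g,J}(\Gal_{F_\wp}, W)$ as the kernel of the map $H^1(\Gal_{F_\wp}, W) \to \oplus_{\sigma\in J} H^1(\Gal_{F_\wp}, W_{\dR,\sigma}^+)$, which is then forced to be all of $H^1(\Gal_{F_\wp}, W)$; the natural inclusion $H^1_{g,J}(\Gal_{F_\wp}, W) \hookrightarrow H^1(\Gal_{F_\wp}, W)$ is thus an isomorphism.

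To prove the vanishing, I would first exploit the $\sigma$-de Rham hypothesis. The natural comparison $W_{\dR,\sigma} \xrightarrow{\sim} B_{\dR,\sigma} \otimes_E D_{\dR,\sigma}(W)$ realises $W_{\dR,\sigma}^+$ as a $\Gal_{F_\wp}$-stable $B_{\dR,\sigma}^+$-lattice corresponding to a Hodge filtration on the $E$-vector space $D_{\dR,\sigma}(W)$. Choosing a basis of $D_{\dR,\sigma}(W)$ adapted to this filtration (possible since $E$ is a field), one obtains a $\Gal_{F_\wp}$-equivariant decomposition $W_{\dR,\sigma}^+ \xrightarrow{\sim} \oplus_{i=1}^r B_{\dR,\sigma}^+(n_{\sigma,i})$ for certain integers $n_{\sigma,i}$, where $(n)$ denotes the Tate twist by $\chi_{\cyc}^n$.

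By standard devissage through the $t$-adic filtration, $H^0(\Gal_{F_\wp}, B_{\dR,\sigma}^+(n))$ is nonzero precisely when $n \leq 0$, so the hypothesis $H^0(\Gal_{F_\wp}, W_{\dR,\sigma}^+) = 0$ forces $n_{\sigma,i} \geq 1$ for all $i$. It therefore suffices to establish the companion vanishing $H^1(\Gal_{F_\wp}, B_{\dR,\sigma}^+(n)) = 0$ for all $n \geq 1$. This is a standard computation: the short exact sequence $0 \to B_{\dR,\sigma}^+(n+1) \to B_{\dR,\sigma}^+(n) \to \bC_{p,\sigma}(n) \to 0$ together with Tate's vanishing $H^i(\Gal_{F_\wp}, \bC_{p,\sigma}(n)) = 0$ for $n \neq 0$, $i \leq 1$ gives $H^1(\Gal_{F_\wp}, B_{\dR,\sigma}^+(n+1)) \xrightarrow{\sim} H^1(\Gal_{F_\wp}, B_{\dR,\sigma}^+(n))$ for $n \geq 1$; one then concludes by verifying that the natural map $H^1(\Gal_{F_\wp}, B_{\dR,\sigma}^+) \to H^1(\Gal_{F_\wp}, \bC_{p,\sigma})$ is an isomorphism of one-dimensional $E$-vector spaces, which forces $H^1(\Gal_{F_\wp}, B_{\dR,\sigma}^+(1)) = 0$ and inductively $H^1(\Gal_{F_\wp}, B_{\dR,\sigma}^+(n)) = 0$ for all $n\geq 1$.

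The conceptual point is that on rank-one pieces of $W_{\dR,\sigma}^+$ the conditions $H^0 = 0$ and $H^1 = 0$ are equivalent; combined with the $\sigma$-de Rham splitting this forces $H^1(\Gal_{F_\wp}, W_{\dR,\sigma}^+) = 0$. No serious obstacle is anticipated; the argument closely parallels (and can in fact be extracted from) the proof of Lem.\ref{lem: lpl-jqt}.
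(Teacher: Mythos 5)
Your proposal is correct and follows essentially the same route as the paper: both use the $\sigma$-de Rham hypothesis together with $H^0(\Gal_{F_{\wp}},W_{\dR,\sigma}^+)=0$ to split $W_{\dR,\sigma}^+$ into copies of $t^iB_{\dR,\sigma}^+$ with $i\geq 1$, invoke the vanishing $H^1(\Gal_{F_{\wp}},t^iB_{\dR,\sigma}^+)=0$ for $i\geq 1$, and conclude via (\ref{equ: lpl-gno}). The only difference is that you spell out the standard d\'evissage behind that vanishing, which the paper simply asserts.
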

\begin{proof}
  It's sufficient to prove $H^1_{g,\sigma}(\Gal_{F_{\wp}},W)\xrightarrow{\sim} H^1(\Gal_{F_{\wp}},W)$ for all $\sigma\in J$. Since $W$ is $\sigma$-de Rham and $H^0(\Gal_{F_{\wp}}, W_{\dR,\sigma}^+)=0$, we see $W_{\dR,\sigma}^+ \cong \oplus_{i\in \Z_{\geq 1}} (t^i B_{\dR,\sigma}^+)^{n_i}$ where $n_i=0$ for all but finite many $i$. However, for $i\in \Z_{\geq 1}$, $H^1(\Gal_{F_{\wp}}, t^i B_{\dR,\sigma}^+)=0$, thus $H^1(\Gal_{F_{\wp}}, W_{\dR,\sigma}^+)=0$, from which (and (\ref{equ: lpl-gno})) the lemma follows.
\end{proof}
For an $E$-$B$-pair $W$, $\delta:F_{\wp}^{\times}\ra E^{\times}$, put $W(\delta):=W\otimes B_E(\delta)$ \big(see Rem.\ref{rem: lpl-arw} (2) for tensor products of $E$-$B$-pairs, and \S \ref{sec: lpl-1.1} for $B_E(\delta)$\big). If there exist $k_{\sigma}\in \Z$ for all $\sigma\in\Sigma_{\wp}$ such that $\delta=\prod_{\sigma\in \Sigma_{\wp}}\sigma^{k_{\sigma}}$, then by \cite[Lem.2.12]{Na}, one has natural isomorphisms
\begin{equation}\label{equ: lpl-atl}
  W(\delta)_e \cong W_e,  \ W(\delta)_{\dR}^+ \cong \oplus_{\sigma\in \Sigma_{\wp}} W(\delta)_{\dR,\sigma}^+ \cong \oplus_{\sigma\in \Sigma_{\wp}} t^{k_{\sigma}} W_{\dR,\sigma}^+.
\end{equation}
Thus if $k_{\sigma}\in \Z_{\geq 0}$ for all $\sigma\in \Sigma_{\wp}$, one gets a natural morphism
\begin{equation}\label{equ: lpl-fjj} \fj: W(\delta) \lra W
\end{equation}
with $\fj_e=\id$ and $\fj_{\dR}^+$ the natural injection $\oplus_{\sigma\in \Sigma_{\wp}} t^{k_{\sigma}} W_{\dR,\sigma}^+\hookrightarrow \oplus_{\sigma\in \Sigma_{\wp}} W_{\dR,\sigma}^+$.

Let $J\subseteq \Sigma_{\wp}$, $J\neq \emptyset$, $W$ be a $J$-de Rham $E$-$B$-pair, let $k_\sigma\in \Z_{\geq 0}$, such that $(t^{k_{\sigma}} W_{\dR,\sigma}^+)^{\Gal_{F_{\wp}}}=0$ for $\sigma\in J$ \big(thus $t^{k_{\sigma}}W_{\dR,\sigma}^+\cong \oplus_{i\in \Z_{\geq 1}} (t^i B_{\dR,\sigma}^+)^{\oplus n_i}$ with $n_i=0$ for all but finite many $i$ for $\sigma\in J$\big), let $\delta:=\prod_{\sigma\in J}\sigma^{k_{\sigma}}$. The morphism (\ref{equ: lpl-fjj}) induces an exact sequence of $\Gal_{F_{\wp}}$-complexes
\begin{equation*}
  0 \ra [W(\delta)_e\oplus W(\delta)_{\dR}^+ \ra W(\delta)_{\dR}]\ra [W_e \oplus W_{\dR}^+ \ra W_{\dR}]\ra [\oplus_{\sigma\in J} (W_{\dR,\sigma}^+/t^{k_{\sigma}} W_{\dR,\sigma}^+) \ra 0].
\end{equation*}
Taking $\Gal_{F_{\wp}}$-cohomology, one gets
\begin{multline*}
  0 \ra H^0(\Gal_{F_{\wp}}, W(\delta))\ra H^0(\Gal_{F_{\wp}}, W) \ra \oplus_{\sigma\in J} H^0(\Gal_{F_{\wp}},W_{\dR,\sigma}^+/t^{k_{\sigma}})\\  \ra H^1(\Gal_{F_{\wp}}, W(\delta)) \ra H^1(\Gal_{F_{\wp}}, W) \ra \oplus_{\sigma\in J} H^1(\Gal_{F_{\wp}},W_{\dR,\sigma}^+/t^{k_{\sigma}}).
\end{multline*}
By our assumption on $k_{\sigma}$, $H^0(\Gal_{F_{\wp}}, W(\delta))=0$ and $H^i(\Gal_{F_{\wp}}, W_{\dR,\sigma}^+)\xrightarrow{\sim} H^i(\Gal_{F_{\wp}}, W_{\dR,\sigma}^+/t^{k_{\sigma}})$ for $i=0, 1$,  from which and Lem.\ref{lem: lpl-jqt} (and the discussion above it), one gets
\begin{equation}\label{equ: lpl-jgw}
0 \ra  H^0(\Gal_{F_{\wp}}, W) \ra \oplus_{\sigma\in J} H^0(\Gal_{F_{\wp}}, W_{\dR,\sigma}^+) \ra H^1(\Gal_{F_{\wp}}, W(\delta)) \ra H^1_{g,J}(\Gal_{F_{\wp}}, W)\ra 0,
\end{equation}
which would be useful to calculate $H^1_{g,J}(\Gal_{F_{\wp}},W)$.
At last, note that a morphism of $E$-$B$-pairs $W_1\ra W_2$ induces a map $H^1(\Gal_{F_{\wp}},W_1) \ra H^1(\Gal_{F_{\wp}},W_2)$ which restricts to maps $H^1_{*}(\Gal_{F_{\wp}},W_1)\ra H^1_{*}(\Gal_{F_{\wp}},W_2)$ with $*\in \{e,g, \{g,J\}\}$.

\subsection{Fontaine-Mazur $\cL$-invariants}\label{sec: lpl-1.3}

Let $\chi$ be a continuous character of $F_{\wp}^{\times}$ in $E^{\times}$, $\chi$ is called \emph{special} if there exist $k_{\sigma}\in \Z$ for all $\sigma\in \Sigma_{\wp}$ such that $\chi=\unr(q^{-1}) \prod_{\sigma\in \Sigma_{\wp}} \sigma^{k_{\sigma}}=\chi_{\cyc} \prod_{\sigma\in \Sigma_{\wp}} \sigma^{k_{\sigma}-1}$ where $\unr(z)$ denotes the unramified character of $F_{\wp}^{\times}$ sending uniformizers to $z$. In this section, we associate to  $[X]\in H^1_g(\Gal_{F_{\wp}},B_E(\chi))$ the so-called \emph{Fontaine-Mazur $\cL$-invariants} for special characters $\chi$.

Let $\chi=\chi_{\cyc} \prod_{\sigma\in \Sigma_{\wp}} \sigma^{k_{\sigma}-1}$,  by \cite[Lem.2.12]{Na}, $B_E(\chi)_e\cong (B_e\otimes_{\Q_p} E)t$, $B_E(\chi)_{\dR}^+ \cong \oplus_{\sigma\in \Sigma_{\wp}} t^{k_{\sigma}} B_{\dR,\sigma}^+$. Put $\eta:=\prod_{\sigma\in \Sigma_{\wp}} \sigma^{1-k_{\sigma}}$, thus $B_E(\eta)\cong B_E(\chi)^{\vee}(1)$, $B_E(\eta)_e \cong B_e\otimes_{\Q_p} E$ and $B_E(\eta)_{\dR}^+ \cong \oplus_{\sigma\in \Sigma_{\wp}} t^{1-k_{\sigma}} B_{\dR,\sigma}^+$.  Put
\begin{equation}\label{equ: lpl-scia}\begin{cases}S_c(\chi):=\{\sigma\in \Sigma_{\wp}\ |\ k_{\sigma}\in \Z_{\leq 0}\},\\ S_n(\chi):=\{\sigma\in \Sigma_{\wp}\ |\ k_{\sigma}\in \Z_{\geq 1}\},\end{cases}\end{equation} thus $B_E(\chi)$ is \emph{non-$S_n(\chi)$-critical} (cf. Def.\ref{def: pdeR-lir} below). By \cite[Prop.2.15, Lem.4.2 and Lem.4.3]{Na}, one has
\begin{lemma}\label{lem: lpl-sit}Keep the above notation.

(1) If $S_c(\chi)=\emptyset$, then we have $\dim_E H^0(\Gal_{F_{\wp}},B_E(\eta))=1$, $\dim_E H^2(\Gal_{F_{\wp}},B_E(\chi))=1$, and $\dim_E H^1(\Gal_{F_{\wp}},B_E(\chi))=\dim_E H^1(\Gal_{F_{\wp}},B_E(\eta))=d+1$;  if $S_c(\chi)\neq\emptyset$, then $\dim_E H^i(\Gal_{F_{\wp}},B_E(\chi))=\dim_E H^i(\Gal_{F_{\wp}},B_E(\eta))=0$ for $i=0$, $2$, and $\dim_E H^1(\Gal_{F_{\wp}},B_E(\chi))=\dim_E H^1(\Gal_{F_{\wp}},B_E(\eta))=d$ ($=[F_{\wp}:\Q_p]$).

(2)  $\dim_E H^1_e(\Gal_{F_{\wp}},B_E(\chi))=d-|S_c(\chi)|$, and $\dim_E H^1_g(\Gal_{F_{\wp}},B_E(\chi))=d+1-|S_c(\chi)|$. If $S_c(\chi)=\emptyset$, $\dim_E H^1_e(\Gal_{F_{\wp}},B_E(\eta))=0$; if $S_c(\chi)\neq \emptyset$, $\dim_E H^1_e(\Gal_{F_{\wp}},B_E(\eta))=|S_c(\chi)|-1$.
\end{lemma}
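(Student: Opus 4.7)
The plan is to derive all the dimensions from the long exact sequence (\ref{equ: lpl-ewr}) together with the Euler characteristic formula and Tate duality (Proposition~\ref{prop: lpl-pww}) and the local duality $H^1_g(W) \cong H^1_e(W^{\vee}(1))^{\perp}$ (Proposition~\ref{prop: lpl-psc}). First I would tabulate $H^0$ of each building block: Tate's theorems give $\dim_E H^0(\Gal_{F_{\wp}}, t^k B_{\dR,\sigma}^+) = 1$ when $k \leq 0$ and $0$ otherwise, and $\dim_E H^0(\Gal_{F_{\wp}}, B_{\dR,\sigma}) = 1$; combined with the explicit formulas $B_E(\chi)_{\dR}^+ \cong \oplus_{\sigma} t^{k_{\sigma}} B_{\dR,\sigma}^+$ and $B_E(\eta)_{\dR}^+ \cong \oplus_{\sigma} t^{1-k_{\sigma}} B_{\dR,\sigma}^+$, this gives $\dim_E H^0(\Gal_{F_{\wp}}, B_E(\chi)_{\dR}^+) = |S_c(\chi)|$ and $\dim_E H^0(\Gal_{F_{\wp}}, B_E(\eta)_{\dR}^+) = |S_n(\chi)|$. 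The delicate input is on the $B_e$-side: for $\eta$ (no cyclotomic factor, hence trivial $\varphi$-slope) one has $\dim_E H^0(\Gal_{F_{\wp}}, B_E(\eta)_e) = 1$, whereas for $\chi$ (carrying $\chi_{\cyc}$) one has $\dim_E H^0(\Gal_{F_{\wp}}, B_E(\chi)_e) = 0$; this is essentially \cite[Prop.~2.15]{Na}.

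Plugging these into the initial segment of (\ref{equ: lpl-ewr}) immediately yields $H^0(\Gal_{F_{\wp}}, B_E(\chi)) = 0$, and $H^0(\Gal_{F_{\wp}}, B_E(\eta)) = E$ precisely when the image of $H^0(\Gal_{F_{\wp}}, B_E(\eta)_e)$ in $H^0(\Gal_{F_{\wp}}, B_E(\eta)_{\dR})$ lies in $H^0(\Gal_{F_{\wp}}, B_E(\eta)_{\dR}^+)$, which happens iff $S_c(\chi) = \emptyset$ (where these two spaces coincide since all $\sigma$-components are accounted for). Tate duality together with $B_E(\chi)^{\vee}(1) \cong B_E(\eta)$ then gives $H^2$, and the Euler characteristic relation $\dim H^0 - \dim H^1 + \dim H^2 = -d$ (for rank one) determines $\dim H^1$, completing part~(1). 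For part~(2), since both $B_E(\chi)$ and $B_E(\eta)$ are de Rham, Lemma~\ref{lem: lpl-jqt} together with the factorisation of $H^1(W)\to H^1(W_e)\to H^1(W_{\dR})$ through $H^1(W_{\dR}^+)\to H^1(W_{\dR})$ noted before Proposition~\ref{prop: lpl-psc} force $H^1_e(W) \subseteq H^1_g(W)$, and hence $H^1_e(W) = \mathrm{image}(\delta\colon H^0(W_{\dR}) \to H^1(W))$, whose dimension is read off from (\ref{equ: lpl-ewr}) as
\[
\dim_E H^1_e(\Gal_{F_{\wp}},W) = \dim H^0(W_{\dR}) - \dim H^0(W_e) - \dim H^0(W_{\dR}^+) + \dim H^0(\Gal_{F_{\wp}},W).
\]
This evaluates to $d - |S_c(\chi)|$ for $W = B_E(\chi)$, and to either $0$ or $|S_c(\chi)| - 1$ for $W = B_E(\eta)$ according as $S_c(\chi) = \emptyset$ or not. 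Finally, Proposition~\ref{prop: lpl-psc} applied to the pair $(B_E(\chi), B_E(\eta))$ gives $\dim H^1_g(B_E(\chi)) = \dim H^1(B_E(\chi)) - \dim H^1_e(B_E(\eta)) = d + 1 - |S_c(\chi)|$ in both cases.

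The main obstacle is verifying the dichotomy $\dim_E H^0(\Gal_{F_{\wp}}, B_E(\delta)_e) \in \{0, 1\}$ for rank-one $B$-pairs according to whether the character $\delta$ carries a cyclotomic factor; the off-by-one shift between the non-critical and critical cases for $\dim H^1_e(B_E(\eta))$ is directly traceable to this single dimension of $B_e$-invariants.
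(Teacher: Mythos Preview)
Your argument is correct. The paper itself does not give a proof here but simply cites \cite[Prop.~2.15, Lem.~4.2, Lem.~4.3]{Na}; your write-up supplies exactly the computation behind those references, using only the long exact sequence~(\ref{equ: lpl-ewr}), the Euler characteristic and Tate duality of Proposition~\ref{prop: lpl-pww}, and the $H^1_g/H^1_e$ orthogonality of Proposition~\ref{prop: lpl-psc}. One small point of exposition: when you identify $H^1_e(W)$ with the image of $\delta$, you are implicitly using that $H^1(W)\to H^1(W_{\dR})$ factors through $H^1(W_e)$ \emph{as well as} through $H^1(W_{\dR}^+)$ (both projections $(x,y)\mapsto x$ and $(x,y)\mapsto -y$ realise the same map on cohomology up to sign); the passage before Proposition~\ref{prop: lpl-psc} only states the $W_{\dR}^+$-factorisation, so it is worth making the $W_e$-factorisation explicit.
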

Suppose first $S_c(\chi)=\emptyset$ \big(thus $H^1_g(\Gal_{F_{\wp}},B_E(\chi))\xrightarrow{\sim} H^1(\Gal_{F_{\wp}},B_E(\chi))$\big), we would use the cup-product
\begin{equation}\label{equ: lpl-ice}
\langle\cdot, \cdot\rangle: H^1(\Gal_{F_{\wp}},B_E(\chi))\times H^1(\Gal_{F_{\wp}},B_E) \longrightarrow H^2(\Gal_{F_{\wp}},B_E(\chi))\cong E
\end{equation}
to define $\cL$-invariants for elements in $H^1(\Gal_{F_{\wp}},B_E(\chi))$.
\begin{lemma}\label{lem: lpl-tng}
  The cup-product (\ref{equ: lpl-ice}) is a perfect pairing.
\end{lemma}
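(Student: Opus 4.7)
The plan is to compare the pairing (\ref{equ: lpl-ice}) with Nakamura's perfect duality pairing from Proposition \ref{prop: lpl-pww}(4) via a natural morphism of $E$-$B$-pairs $\fj : B_E(\chi) \to B_E^\vee(1)$.

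Since $S_c(\chi) = \emptyset$, each $k_\sigma$ satisfies $k_\sigma \geq 1$. Writing $\delta := \prod_\sigma \sigma^{k_\sigma - 1}$ (which has non-negative weights), we have $\chi = \chi_{\cyc}\delta$, so (\ref{equ: lpl-fjj}) applied to $W = B_E(\chi_{\cyc})$ produces $\fj : B_E(\chi) \to B_E(\chi_{\cyc})$ which is the identity on the $B_e$-part and, on the $B_{\dR}^+$-part, the componentwise inclusion $t^{k_\sigma} B_{\dR,\sigma}^+ \hookrightarrow t B_{\dR,\sigma}^+$. Under the canonical identification $B_E^\vee(1) \cong B_E(\chi_{\cyc})$, this becomes $\fj : B_E(\chi) \to B_E^\vee(1)$. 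By Lemma \ref{lem: lpl-sit}(1) applied to both $\chi$ and $\chi_{\cyc}$ (both special with empty $S_c$), $\dim_E H^1(\Gal_{F_\wp}, B_E(\chi)) = \dim_E H^1(\Gal_{F_\wp}, B_E^\vee(1)) = d+1$ and the corresponding $H^2$'s are $1$-dimensional; Proposition \ref{prop: lpl-pww}(3) and local Tate theory give $\dim_E H^1(\Gal_{F_\wp}, B_E) = d+1$ as well. So the pairing is between two $(d+1)$-dimensional spaces with values in a $1$-dimensional space, and perfectness reduces to non-degeneracy on one side.

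The ``cokernel'' of $\fj$, viewed as a map of $\Gal_{F_\wp}$-complexes of defining data, is the complex concentrated in degree zero equal to the Galois module $C := \bigoplus_\sigma tB_{\dR,\sigma}^+/t^{k_\sigma} B_{\dR,\sigma}^+$, which admits a finite filtration with graded pieces $\bC_{p,\sigma}(j)$ for $1 \leq j \leq k_\sigma - 1$. Tate's classical vanishing theorem gives $H^i(\Gal_{F_\wp}, \bC_p(j)) = 0$ for $i = 0, 1$ and $j \neq 0$, whence $H^0(\Gal_{F_\wp}, C) = H^1(\Gal_{F_\wp}, C) = 0$. The resulting long exact sequence then forces $\fj_*$ to be injective on both $H^1$ and $H^2$, and hence an isomorphism by the dimension count of the previous paragraph.

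By bifunctoriality of the cup product (cf.\ Remark \ref{rem: lpl-arw}(2)), the diagram
\[
\begin{CD}
H^1(\Gal_{F_\wp}, B_E(\chi)) \times H^1(\Gal_{F_\wp}, B_E) @>\cup>> H^2(\Gal_{F_\wp}, B_E(\chi)) \\
@V{\fj_* \times \id}VV @VV{\fj_*}V \\
H^1(\Gal_{F_\wp}, B_E^\vee(1)) \times H^1(\Gal_{F_\wp}, B_E) @>\cup>> H^2(\Gal_{F_\wp}, B_E(1)) \cong E
\end{CD}
\]
commutes, the bottom row being the Nakamura pairing for $W = B_E$, perfect by Proposition \ref{prop: lpl-pww}(4). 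Since the vertical maps are isomorphisms, the top pairing (\ref{equ: lpl-ice}) is also perfect. The main subtlety I anticipate is in the middle paragraph: the cokernel $C$ is not itself a $B$-pair (its $e$-part vanishes while its $\dR^+$-part does not), so one must work with the genuine short exact sequence of underlying $\Gal_{F_\wp}$-complexes and verify that the relevant Tate vanishings are available in the correct generality.
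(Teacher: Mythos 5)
Your proposal is correct and follows essentially the same route as the paper: compare the pairing with the perfect Nakamura pairing for $B_E(1)$ via the natural map $\fj: B_E(\chi)\to B_E(1)$, whose cokernel complex $\oplus_\sigma tB_{\dR,\sigma}^+/t^{k_\sigma}B_{\dR,\sigma}^+$ has vanishing cohomology, so that $\fj_*$ is an isomorphism on $H^1$ and $H^2$ and the commutative diagram transfers perfectness. The only cosmetic difference is that the paper quotes the vanishing of $H^i$ of the cokernel in all degrees, whereas you use degrees $0,1$ together with a dimension count for $H^2$.
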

\begin{proof}The natural morphism $\fj: B_E(\chi) \ra B_E(1)$ (cf. (\ref{equ: lpl-fjj})) induces an exact sequence of $\Gal_{F_{\wp}}$-complexes
 \begin{multline}
  0 \lra [B_E(\chi)_e \oplus B_E(\chi)_{\dR}^+ \ra B_E(\chi)_{\dR}] \lra [B_E(1)_e \oplus B_E(1)_{\dR}^+\ra B_E(1)_{\dR}]\\ \lra [\oplus_{\sigma\in \Sigma_{\wp}} tB_{\dR,\sigma}^+/t^{k_{\sigma}}B_{\dR,\sigma}^+ \ra 0] \lra 0.
 \end{multline}
 Since $H^i\big(\Gal_{F_{\wp}},tB_{\dR,\sigma}^+/t^{k_{\sigma}} B_{\dR,\sigma}^+\big)=0$ for any $i\in \Z_{\geq 0}$, we see $\fj$ induces isomorphisms $H^i(\Gal_{F_{\wp}}, B_E(\chi)) \xrightarrow{\sim} H^i(\Gal_{F_{\wp}},B_E(1))$ for $i\in \Z_{\geq 0}$
 Moreover, the following diagram commutes
\begin{equation*}
  \begin{CD}H^1(\Gal_{F_{\wp}},B_E(\chi)) @. \times @. H^1(\Gal_{F_{\wp}},B_E) @> \cup >> H^2(\Gal_{F_{\wp}},B_E(\chi))\\
  @V \sim VV @. @| @V \sim VV \\
    H^1(\Gal_{F_{\wp}},B_E(1)) @. \times @. H^1(\Gal_{F_{\wp}},B_E) @> \cup >> H^2(\Gal_{F_{\wp}},B_E(1))\\
  \end{CD}.
\end{equation*}
Since the cup-product below is perfect by Prop.\ref{prop: lpl-pww}(4), so is the above one.
\end{proof}
Recall that $H^1(\Gal_{F_{\wp}},B_E)\cong H^1(\Gal_{F_{\wp}},E) \cong \Hom(\Gal_{F_{\wp}},E)$, where the last denotes the $E$-vector space of continuous additive characters of $\Gal_{F_{\wp}}$ in $E$. Before going any further, we recall some facts on additive characters of $\Gal_{F_{\wp}}$.
\subsubsection{A digression: additive characters of $\Gal_{F_{\wp}}$} \label{sec: lpl-1.3.1}Let $W_{F_{\wp}}$ denote the Weil group of ${F_{\wp}}$.
We fix a local Artin map $\Art_{F_{\wp}}: F_{\wp}^{\times} \xrightarrow{\sim} W_{F_{\wp}}^{\ab}$ sending uniformizers to geometric Frobenius. One has thus
\begin{multline}\label{equ: lpl-lbe}
H^1(\Gal_{F_{\wp}},B_E) \cong H^1(\Gal_{F_{\wp}},E) \cong \Hom(\Gal_{F_{\wp}},E) \cong \Hom(\Gal_{F_{\wp}}^{\ab},E) \\ \cong \Hom(W_{F_{\wp}}^{\ab},E) \xrightarrow[\sim]{\Art_{F_{\wp}}}\Hom(F_{\wp}^{\times},E)
\end{multline}
where the fourth isomorphism follows from the fact that  any character of $\Z$ in $E$ gives rise to a continuous character of $\widehat{\Z}:=\varprojlim_{n} \Z/n\Z$ in $E$. We would identify these $E$-vector spaces via (\ref{equ: lpl-lbe}) with no mention.

For a uniformiser $\varpi\in F_{\wp}^{\times}$ one gets a character $\varepsilon_{\varpi}: F_{\wp}^{\times} \lra \co_{\wp}^{\times}$ which is identity on $\co_{\wp}^{\times}$ and sends $\varpi$ to $1$ . Let $\psi_{\sigma,\varpi}:=\sigma\circ \log \circ \varepsilon_{\varpi}: F_{\wp}^{\times} \ra E$ for $\sigma\in \Sigma_{\wp}$, and $\psi_{\ur}: F_{\wp}^{\times} \ra \Z$ be the unramified character sending $p$ to $1$ (thus sending $\varpi$ to $e^{-1}$).
 \begin{lemma}$\{\psi_{\sigma,\varpi}\}_{\sigma\in \Sigma_{\wp}}$ and $\psi_{\ur}$ form a basis of $\Hom(F_{\wp}^{\times},E)$.
 \end{lemma}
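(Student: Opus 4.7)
The plan is to first compute the dimension of $\Hom(F_{\wp}^{\times},E)$ and then verify linear independence of the given family, which has cardinality $d+1$ matching the dimension.

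For the dimension count, I would use the topological decomposition $F_{\wp}^{\times}\cong \varpi^{\Z}\times \mu \times U^{(1)}$, where $\mu$ denotes the finite group of roots of unity of order prime to $p$ in $\co_{\wp}^{\times}$ and $U^{(1)}$ denotes the principal units. Any continuous additive character to $E$ kills the finite part $\mu$, so the choice on $\varpi$ contributes one $E$-dimension and the restriction to $U^{(1)}$ contributes the rest. Using that $\log: U^{(1)} \to \co_{\wp}$ identifies an open subgroup of $U^{(1)}$ with an open $\Z_p$-submodule of $\co_{\wp}$, I would conclude
\begin{equation*}
\Hom_{\mathrm{cont}}(U^{(1)},E)\cong \Hom_{\Q_p}(F_{\wp},E),
\end{equation*}
which is $d$-dimensional over $E$. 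Hence $\dim_E\Hom(F_{\wp}^{\times},E)=d+1$.

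For linear independence, suppose $\sum_{\sigma\in\Sigma_{\wp}}a_{\sigma}\psi_{\sigma,\varpi}+b\psi_{\ur}=0$ with $a_{\sigma},b\in E$. Restricting to $\co_{\wp}^{\times}$ eliminates $\psi_{\ur}$ (which is unramified) and gives $\sum_{\sigma}a_{\sigma}\,(\sigma\circ\log)=0$ on $\co_{\wp}^{\times}$. By further restricting to a sufficiently small open $\Z_p$-submodule $p^N\co_{\wp}\subseteq\log(U^{(1)})$ and using the $\Q_p$-linearity of each $\sigma$, this collapses to the identity $\sum_{\sigma}a_{\sigma}\sigma=0$ in $\Hom_{\Q_p}(F_{\wp},E)$. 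Linear independence of distinct embeddings $F_{\wp}\hookrightarrow E$ over $E$ (Dedekind's theorem applied to the $d$ embeddings extending to $F_{\wp}\otimes_{\Q_p}E\xrightarrow{\sim}\prod_{\sigma}E$) then forces $a_{\sigma}=0$ for every $\sigma$. The remaining relation $b\psi_{\ur}=0$ evaluated at $p$ gives $b=0$ since $\psi_{\ur}(p)=1$.

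Since we have $d+1$ linearly independent vectors in a $(d+1)$-dimensional $E$-vector space, they form a basis. The only delicate point is the reduction from continuous characters on $\co_{\wp}^{\times}$ to $E$-linear functionals on $F_{\wp}$, and I expect that step to be entirely formal once one sets up the logarithm on an open subgroup of $U^{(1)}$ carefully; there is no real obstacle.
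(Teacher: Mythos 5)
Your proof is correct and takes essentially the same route as the paper: both reduce the statement to the identifications $\Hom(\co_{\wp}^{\times},E)\cong\Hom_{\Q_p}(F_{\wp},E)\cong\Hom_E\big(\prod_{\sigma\in\Sigma_{\wp}}E,E\big)$ via the logarithm on principal units and the splitting $F_{\wp}\otimes_{\Q_p}E\cong\prod_{\sigma}E$, then handle the unramified direction separately using $\psi_{\ur}(\varpi)\neq 0$. The only cosmetic difference is that the paper recognizes the $\sigma\circ\log$ directly as the coordinate projections (so they are visibly a basis), whereas you do a dimension count plus a linear-independence check via Dedekind's theorem; both are fine.
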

 \begin{proof}
   One has isomorphisms \begin{equation}\label{equ: lpl-ols}\Hom(\co_{\wp}^{\times},E) \cong \Hom_{\Q_p}(F_{\wp},E) \cong \Hom_E(F_{\wp}\otimes_{\Q_p} E, E)\cong \Hom_E\big(\prod_{\sigma\in \Sigma_{\wp}}E,E\big), \end{equation} where the first isomorphism is induced by the log map. For $\tau\in \Sigma_{\wp}$, one sees $\tau \circ \log: \co_{\wp}^{\times} \ra E$ corresponds to the map $\prod_{\sigma\in \Sigma_{\wp}} E, \ra E$, $(a_{\sigma})_{\sigma\in \Sigma_{\wp}} \mapsto a_{\tau}$. So $\{\sigma\circ \log\}_{\sigma\in \Sigma_{\wp}}$ form a basis of the $E$-vector space $\Hom(\co_{\wp}^{\times},E)$, and hence $\{\psi_{\sigma,\varpi}\}_{\sigma\in \Sigma_{\wp}}$ form a basis of the $E$-vector subspace of $\Hom(F_{\wp}^{\times},E)$ generated by characters sending $\varpi$ to $0$. The lemma follows.
 \end{proof}
 The cyclotomic character $\chi_{\cyc}$ of $\Gal_{F_{\wp}}$ corresponds (via $\Art_{F_{\wp}}$) to the character $F_{\wp}^{\times} \xrightarrow{\cN_{F_{\wp}/\Q_p}} \Q_p^{\times}\ra \Z_p^{\times}$ with the last map being identity on $\Z_p^{\times}$ and sending $p$ to $1$.
Consider the restriction of $\cN_{F_{\wp}/\Q_p}$ to $\co_{\wp}^{\times}$, which corresponds (via (\ref{equ: lpl-ols})) to the map $\tr\in \Hom_E\big(\prod_{\sigma\in \Sigma_{\wp}} E, E\big): (a_{\sigma})_{\sigma}\mapsto \sum_{\sigma\in \Sigma_{F_{\wp}}}\sigma(a_{\sigma})$. This map is in fact a generator of  $\Hom_E\big(\prod_{\sigma\in \Sigma_{\wp}} E, E\big)$ over $F_{\wp}\otimes_{\Q_p} E$. For any $f\in F_{\wp} \otimes_{\Q_p} E$, denote by $\psi_{f,p}$ the character $F_{\wp}^{\times} \ra E$ such that $\psi_{f,p}|_{\co_{\wp}^{\times}}$ coincides with the preimage of $f\cdot \tr \in \Hom_E\big(\prod_{\sigma\in \Sigma_{\wp}} E, E\big)$ in $\Hom(\co_{\wp}^{\times},E)$ via (\ref{equ: lpl-ols}) and that $\psi_{f,p}(p)$=1. For $\tau\in \Sigma_{\wp}$, denote by $1_\tau\in F_{\wp}\otimes_{\Q_p} E\cong \prod_{\sigma\in \Sigma_{\wp}} E$ with $(1_{\tau})_{\tau}=1$ and $(1_{\tau})_{\sigma}=0$ for $\sigma\neq \tau$. Let $\psi_{\tau,p}:=\psi_{1_{\tau},p}$ to simplify, we see $\psi_{\tau,\varpi}=\psi_{\tau,p}+ \tau(\log(p/\varpi^e)) \psi_{\ur}$ (by comparing their values of $p$ and $\co_{\wp}^{\times}$). In particular, $\{\psi_{\sigma,p}\}_{\sigma\in \Sigma_{\wp}}$ and $\psi_{\ur}$ also form a basis of $\Hom_{\Q_p}(F_{\wp}^{\times}, E)$.

For $\tau\in \Sigma_{\wp}$, the embedding $\iota: F_{\wp} \hookrightarrow B_{\dR}^+$ induces $\iota_{\tau}: E\hookrightarrow B_{\dR,\tau}^+\twoheadrightarrow \bC_{p,\tau}$. One gets \begin{equation*}\iota_{\tau}: H^1(\Gal_{F_{\wp}},E) \lra H^1(\Gal_{F_{\wp}},B_{\dR,\tau}^+)\xlongrightarrow{\sim} H^1(\Gal_{F_{\wp}},\bC_{p,\tau}).\end{equation*} For $\psi \in H^1(\Gal_{F_{\wp}},E)$,  $\psi$ is mapped to zero if and only if there exists $x\in \bC_{p,\tau}$ such that $\chi(g)=g(x)-x$.
It's known that for any $\iota' \neq \iota: F_{\wp} \hookrightarrow \bC_p$, there exists $u_{\iota'}\in \bC_p^{\times}$, such that $g(u_{\iota'})=(\iota' \circ \varepsilon_{\varpi}(g)) \cdot u_{\iota'}$ (where $\varepsilon_{\varpi}$ is viewed as a character of $\Gal_{F_{\wp}}$ via $\Art_{F_{\wp}}$), put $x_{\iota'}:=\log(u_{\iota'})$, we have $g(x_{\iota'})-x_{\iota'}=\log\circ \varepsilon_{\varpi}(g)$. From which we deduce that  for any $\tau'\neq \tau$, $\iota_{\tau}(\psi_{\tau',\varpi})=0$. Similarly, we have $\iota_{\tau}(\psi_{\ur})=0$. So $\psi_{\ur}\in H^1_g(\Gal_{F_{\wp}},B_E)=H^1_g(\Gal_{F_{\wp}},E)=\ker[H^1(\Gal_{F_{\wp}},E)\ra H^1(\Gal_{F_{\wp}},\oplus_{\sigma\in \Sigma_{\wp}} B_{\dR,\sigma})]$, and is a generator of $H^1_g(\Gal_{F_{\wp}},B_E)$ (which is $1$-dimensional over $E$). For $S\subseteq \Sigma_{\wp}$, recall $H^1_{g,S}(\Gal_{F_{\wp}},E)=\Ker[H^1(\Gal_{F_{\wp}},E) \ra \oplus_{\sigma\in S} H^1(\Gal_{F_{\wp}}, B_{\dR,\sigma})]$, by the above discussion, one has
\begin{lemma}\label{lem: lpl-dyp}
  The $E$-vector space $H^1_{g,S}(\Gal_{F_{\wp}},E)$ is of dimension $|\Sigma_{\wp}\setminus S|+1$, and is generated by $\{\psi_{\sigma,\varpi}\}_{\sigma\in \Sigma_{\wp} \setminus S}$ and $\psi_{\ur}$ \big(thus can also be generated by $\{\psi_{\sigma,p}\}_{\sigma \in \Sigma_{\wp} \setminus S}$ and $\psi_{\ur}$\big).
\end{lemma}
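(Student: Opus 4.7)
The plan is to combine the results of the preceding digression on additive characters with the known one-dimensionality of $H^1_g(\Gal_{F_{\wp}},B_E)$ (a consequence of Lem.\ref{lem: lpl-sit}(2) applied to the trivial case, as invoked just above the lemma) to obtain matching lower and upper bounds on $\dim_E H^1_{g,S}(\Gal_{F_{\wp}},E)$.

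For the lower bound, I would first verify the inclusion $\{\psi_{\sigma,\varpi}\}_{\sigma\in \Sigma_\wp\setminus S}\cup\{\psi_{\ur}\}\subseteq H^1_{g,S}(\Gal_{F_{\wp}},E)$. By the computation immediately preceding the lemma, $\iota_\tau(\psi_{\sigma,\varpi})=0$ for all $\sigma\neq\tau$ and $\iota_\tau(\psi_{\ur})=0$ for every $\tau$. Applied with $\tau\in S$ and $\sigma\in\Sigma_\wp\setminus S$ (so $\sigma\neq\tau$), this shows these $|\Sigma_\wp\setminus S|+1$ classes lie in $\ker[H^1(\Gal_{F_{\wp}},E)\to \oplus_{\tau\in S}H^1(\Gal_{F_{\wp}},B_{\dR,\tau}^+)]$, which equals $H^1_{g,S}(\Gal_{F_{\wp}},E)$ by (\ref{equ: lpl-gno}) applied to the (clearly $\Sigma_\wp$-de Rham) trivial pair $B_E$. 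They are linearly independent because they form a subset of the basis $\{\psi_{\sigma,\varpi}\}_{\sigma\in\Sigma_\wp}\cup\{\psi_{\ur}\}$ of $H^1(\Gal_{F_{\wp}},E)$ exhibited earlier.

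For the upper bound, the key observation is that $\iota_\tau(\psi_{\tau,\varpi})\neq 0$ for every $\tau\in\Sigma_\wp$. Indeed, if it vanished, then combined with $\iota_\sigma(\psi_{\tau,\varpi})=0$ for $\sigma\neq\tau$ one would have $\psi_{\tau,\varpi}\in\bigcap_{\sigma\in\Sigma_\wp}\ker\iota_\sigma = H^1_g(\Gal_{F_{\wp}},B_E)=E\cdot\psi_{\ur}$, contradicting linear independence in the basis above. Consequently, for the map $f\colon H^1(\Gal_{F_{\wp}},E)\to \oplus_{\tau\in S}H^1(\Gal_{F_{\wp}},B_{\dR,\tau})$ the vectors $f(\psi_{\tau,\varpi})$ for $\tau\in S$ have nonzero entries in distinct coordinates, hence are linearly independent in the target. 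Therefore $\dim_E\mathrm{image}(f)\geq |S|$, which gives $\dim_E H^1_{g,S}(\Gal_{F_{\wp}},E)=\dim_E\ker(f)\leq (d+1)-|S|=|\Sigma_\wp\setminus S|+1$.

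Combining the two bounds yields both the dimension count and the fact that the exhibited list is a basis. The alternative generating set $\{\psi_{\sigma,p}\}_{\sigma\in\Sigma_\wp\setminus S}\cup\{\psi_{\ur}\}$ follows immediately from the change-of-basis identity $\psi_{\sigma,\varpi}=\psi_{\sigma,p}+\sigma(\log(p/\varpi^e))\psi_{\ur}$ recalled earlier. There is no real obstacle here; the only slightly nontrivial step is the non-vanishing $\iota_\tau(\psi_{\tau,\varpi})\neq 0$, and as explained it drops out of the known value $\dim_E H^1_g(\Gal_{F_{\wp}},B_E)=1$ together with linear independence — the rest is pure bookkeeping via the injectivity statement in Lem.\ref{lem: lpl-jqt}.
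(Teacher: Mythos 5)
Your argument is correct and is essentially the paper's: the lemma is stated there as an immediate consequence of the computations $\iota_\tau(\psi_{\tau',\varpi})=\iota_\tau(\psi_{\ur})=0$ for $\tau'\neq\tau$ together with $\dim_E H^1_g(\Gal_{F_{\wp}},B_E)=1$, and you have simply written out the bookkeeping (the non-vanishing $\iota_\tau(\psi_{\tau,\varpi})\neq 0$ and the resulting rank count via Lem.\ref{lem: lpl-jqt} and (\ref{equ: lpl-gno})) that the paper leaves implicit. One small attribution slip: the one-dimensionality of $H^1_g(\Gal_{F_{\wp}},B_E)$ is not an instance of Lem.\ref{lem: lpl-sit}(2) (the trivial character is not special); it is the standard Bloch--Kato computation that the paper simply asserts in the paragraph preceding the lemma.
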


\subsubsection{$\cL$-invariants}\label{sec: lpl-1.3.2}Return to the situation before \S \ref{sec: lpl-1.3.1} \big(thus $\chi$ is a special character with $S_c(\chi)=\emptyset$\big). Let $[X]\in H^1(\Gal_{F_{\wp}},B_E(\chi))=H^1_g(\Gal_{F_{\wp}},B_E(\chi))$,  by Prop.\ref{prop: lpl-psc} and Lem.\ref{lem: lpl-dyp}, $[X]\in H^1_e(\Gal_{F_{\wp}},B_E(\chi))$ if and only if $\langle[X],\psi_{\ur}\rangle=0$ (cf. (\ref{equ: lpl-ice})).
\begin{definition}[non-critical case]\label{equ: lpl-eie}
  Keep the above notation, if $[X]\notin H^1_e(\Gal_{F_{\wp}},B_E(\chi))$, for $\sigma\in \Sigma_{\wp}$, put  $\cL(X)_{\sigma}:=\langle[X], \psi_{\sigma,p}\rangle/\langle[X],\psi_{\ur}\rangle\in E$ (cf. (\ref{equ: lpl-ice})), and the $\{\cL(X)_{\sigma}\}_{\sigma\in \Sigma_{\wp}}$ are called the Fontaine-Mazur $\cL$-invariants of $X$; if $[X]\in H^1_e(\Gal_{F_{\wp}},B_E(\chi))$, we define the Fontaine-Mazur $\cL$-invariants of $X$ to be $(\cL(X)_{\sigma})_{\sigma\in \Sigma_{\wp}}:=(\langle[X],\psi_{\sigma,p}\rangle)_{\sigma\in \Sigma_{\wp}} \in \bP^{d}(E)$.
\end{definition}
\begin{remark}
  Let $\chi'=\unr(q^{-1})\prod_{\sigma\in \Sigma_{\wp}}\sigma^{k_{\sigma}'}$ with $1\leq k_{\sigma}'\leq k_{\sigma}$ for all $\sigma\in \Sigma_{\wp}$. The natural morphism $\fj: B_E(\chi)\ra B_E(\chi')$ induces isomorphisms $\fj: H^i(\Gal_{F_{\wp}},B_E(\chi)) \xrightarrow{\sim} H^i(\Gal_{F_{\wp}},B_E(\chi'))$ for $i\in \Z_{\geq 0}$ (by the same argument as in the proof of Lem.\ref{lem: lpl-tng}), moreover, the following diagram commutes
  \begin{equation*}
  \begin{CD}H^1(\Gal_{F_{\wp}},B_E(\chi)) @. \times @. H^1(\Gal_{F_{\wp}},B_E) @> \cup >> H^2(\Gal_{F_{\wp}},B_E(\chi))\\
  @V \sim VV @. @| @V \sim VV \\
    H^1(\Gal_{F_{\wp}},B_E(\chi')) @. \times @. H^1(\Gal_{F_{\wp}},B_E) @> \cup >> H^2(\Gal_{F_{\wp}},B_E(\chi'))\\
  \end{CD}.
\end{equation*}
We see by definition $(\cL(X')_{\sigma})_{\sigma\in \Sigma_{\wp}}=(\cL(X)_{\sigma})_{\sigma\in \Sigma_{\wp}}$ if $[X']=\fj([X])$ (up to scalars).
\end{remark}

Consider now the case $S_c(\chi)\neq \emptyset$ (i.e. the \emph{critical} case). Let $$\chi^{\sharp}:= \chi \prod_{\sigma\in S_c(\chi)} \sigma^{1-k_{\sigma}}=\unr(q^{-1})\prod_{\sigma\in S_n(\chi)}\sigma^{k_{\sigma}} \prod_{\sigma\in S_{c}(\chi)}\sigma,$$ thus $\chi^{\sharp}$ is also special and $S_c(\chi^{\sharp})=\emptyset$. The natural morphism of $\fj: B_E(\chi^{\sharp}) \ra B_E(\chi)$ (cf. (\ref{equ: lpl-fjj})) induces a map $\fj: H^1(\Gal_{F_{\wp}}, B_E(\chi^{\sharp})) \ra H^1(\Gal_{F_{\wp}}, B_E(\chi))$.
 \begin{lemma}\label{lem: lpl-fpi}
   $\Ima(\fj)=H^1_{g,S_c(\chi)}(\Gal_{F_{\wp}},B_E(\chi))= H^1_g(\Gal_{F_{\wp}},B_E(\chi))$.
 \end{lemma}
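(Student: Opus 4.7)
The plan is to derive both equalities by direct application of the general machinery already set up in this section. For the second equality, I will use the description (\ref{equ: lpl-gno}) to show the $H^1_{g,\sigma}$ condition is automatic at embeddings $\sigma\in S_n(\chi)$; for the first, I will apply the six-term exact sequence (\ref{equ: lpl-jgw}) to the twist relating $B_E(\chi^\sharp)$ and $B_E(\chi)$.

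For the second equality, recall from (\ref{equ: lpl-atl}) that $B_E(\chi)_{\dR,\sigma}^+\cong t^{k_\sigma}B_{\dR,\sigma}^+$, so $B_E(\chi)$ is a (fortiori $S_n(\chi)$-) de Rham $E$-$B$-pair. For $\sigma\in S_n(\chi)$ we have $k_\sigma\geq 1$, hence $H^1(\Gal_{F_\wp}, t^{k_\sigma}B_{\dR,\sigma}^+)=0$ by the vanishing invoked in the proof of Lem.\ref{lem: lpl-tse}. Applying (\ref{equ: lpl-gno}) to the singleton $J=\{\sigma\}$ (which is legitimate thanks to Lem.\ref{lem: lpl-jqt}) yields $H^1_{g,\sigma}(\Gal_{F_\wp},B_E(\chi))=H^1(\Gal_{F_\wp},B_E(\chi))$ for every $\sigma\in S_n(\chi)$. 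Intersecting, one gets
$$H^1_g(\Gal_{F_\wp},B_E(\chi)) = \bigcap_{\sigma\in \Sigma_\wp}H^1_{g,\sigma}=H^1_{g,S_c(\chi)}(\Gal_{F_\wp},B_E(\chi))\cap\bigcap_{\sigma\in S_n(\chi)}H^1_{g,\sigma}=H^1_{g,S_c(\chi)}(\Gal_{F_\wp},B_E(\chi)).$$

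For the first equality, apply the exact sequence (\ref{equ: lpl-jgw}) to $W=B_E(\chi)$, $J=S_c(\chi)$, and the exponents $k_\sigma':=1-k_\sigma\geq 1$ for $\sigma\in S_c(\chi)$. The hypothesis of (\ref{equ: lpl-jgw}) is satisfied: $(t^{k_\sigma'}B_E(\chi)_{\dR,\sigma}^+)^{\Gal_{F_\wp}}\cong (tB_{\dR,\sigma}^+)^{\Gal_{F_\wp}}=0$. Moreover, the twisting character $\prod_{\sigma\in S_c(\chi)}\sigma^{k_\sigma'}$ is exactly $\chi^\sharp/\chi$, so that $W(\prod_\sigma \sigma^{k_\sigma'})=B_E(\chi^\sharp)$ and the morphism labelled (\ref{equ: lpl-fjj}) in the derivation of (\ref{equ: lpl-jgw}) coincides with the $\fj$ of the lemma. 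The tail of (\ref{equ: lpl-jgw}) therefore reads
$$\cdots\lra H^1(\Gal_{F_\wp},B_E(\chi^\sharp))\xlongrightarrow{\fj} H^1_{g,S_c(\chi)}(\Gal_{F_\wp},B_E(\chi))\lra 0,$$
identifying $\Ima(\fj)$ with $H^1_{g,S_c(\chi)}(\Gal_{F_\wp},B_E(\chi))$.

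I do not expect any serious obstacle: the only subtlety is to verify that the map appearing on the right end of (\ref{equ: lpl-jgw}) indeed equals the map on $H^1$ induced by (\ref{equ: lpl-fjj}) (both sides come from the same short exact sequence of complexes used to derive (\ref{equ: lpl-jgw})), and that the kernel/quotient modules in that sequence have trivial cohomology in the relevant degrees, which follows from the standard vanishing $H^i(\Gal_{F_\wp},tB_{\dR,\sigma}^+/t^{k_\sigma'}B_{\dR,\sigma}^+)=0$ already used elsewhere in this section.
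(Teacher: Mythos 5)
Your proof is correct and follows essentially the same route as the paper: the first equality is exactly the application of (\ref{equ: lpl-jgw}) with $J=S_c(\chi)$ and $\delta=\prod_{\sigma\in S_c(\chi)}\sigma^{1-k_\sigma}$, and the second equality is the paper's appeal to Lem.\ref{lem: lpl-tse} for $\sigma\in S_n(\chi)$, whose proof you have merely inlined via the vanishing $H^1(\Gal_{F_\wp}, t^{k_\sigma}B_{\dR,\sigma}^+)=0$ and (\ref{equ: lpl-gno}).
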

 \begin{proof}By (\ref{equ: lpl-jgw}), $\Ima(\fj)=H^1_{g,S_c(\chi)}(\Gal_{F_{\wp}},B_E(\chi))$. By Lem.\ref{lem: lpl-tse}, $H^1_{g,\sigma}(\Gal_{F_{\wp}}, B_E(\chi))=H^1(\Gal_{F_{\wp}}, B_E(\chi))$ for $\sigma\in S_n(\chi)$, and hence $H^1_{g,S_c(\chi)}(\Gal_{F_{\wp}},B_E(\chi))= H^1_g(\Gal_{F_{\wp}},B_E(\chi))$. The lemma follows.
\end{proof}

Let $\eta':=\prod_{\sigma\in S_c(\chi)}\sigma^{1-k_{\sigma}}$, so $\chi^{\sharp}=\chi \eta'$. We claim the cup-product
\begin{equation}\label{equ: lpl-'al}
  H^1(\Gal_{F_{\wp}},B_E(\chi)) \times H^1(\Gal_{F_{\wp}},B_E(\eta')) \lra H^2(\Gal_{F_{\wp}}, B_E(\chi^{\sharp}))
\end{equation}
is a perfect pairing. Indeed, similarly as in the proof of Lem.\ref{lem: lpl-tng}, this follows from the commutative diagram (recall $\eta=\chi^{-1}\chi_{\cyc}$):
\begin{equation*}
  \begin{CD}H^1(\Gal_{F_{\wp}},B_E(\chi)) @. \times @. H^1(\Gal_{F_{\wp}},B_E(\eta')) @> \cup >> H^2(\Gal_{F_{\wp}},B_E(\chi^{\sharp}))\\
  @| @. @V \sim VV @V \sim VV \\
    H^1(\Gal_{F_{\wp}},B_E(\chi)) @. \times @. H^1(\Gal_{F_{\wp}},B_E(\eta)) @> \cup >> H^2(\Gal_{F_{\wp}},B_E(1))\\
  \end{CD}.
\end{equation*}
One deduces from Prop.\ref{prop: lpl-psc} that this pairing induces  isomorphisms
\begin{equation}\label{equ: lpl-eig}H^1_g(\Gal_{F_{\wp}},B_E(\chi))\cong H^1_e(\Gal_{F_{\wp}},B_E(\eta'))^{\perp}, \ H^1_e(\Gal_{F_{\wp}},B_E(\chi))\cong H^1_g(\Gal_{F_{\wp}},B_E(\eta'))^{\perp}.\end{equation}
Let $\fj'$ denote the natural morphism $B_E(\eta') \ra B_E$, the following diagram commutes
  \begin{equation}\label{equ: lpl-asm}
  \begin{CD}H^1(\Gal_{F_{\wp}},B_E(\eta'))@.  \times @.  H^1(\Gal_{F_{\wp}},B_E(\chi))@>>>  H^2(\Gal_{F_{\wp}},B_E(\chi^{\sharp}))\\
  @V \fj' VV @. @A \fj AA @|\\
  H^1(\Gal_{F_{\wp}},B_E) @. \times @. H^1(\Gal_{F_{\wp}},B_E(\chi^{\sharp})) @>>> H^2(\Gal_{F_{\wp}},B_E(\chi^{\sharp}))
  \end{CD}.
\end{equation}
By (\ref{equ: lpl-jgw}), $\Ima(\fj')=H^1_{g,S_c(\chi)}(\Gal_{F_{\wp}},B_E)$.
\begin{lemma}Denote by $\langle \cdot, \cdot \rangle_n$ the bottom (perfect) pairing in (\ref{equ: lpl-asm}), then
  the pairing \begin{equation}\label{equ: lpl-e1c}
  \langle\cdot, \cdot\rangle: H^1_{g,S_c(\chi)}(\Gal_{F_{\wp}},B_E) \times H^1_g(\Gal_{F_{\wp}},B_E(\chi)) \ra H^2(\Gal_{F_{\wp}},B_E(\chi^{\sharp}))\cong E, \ \langle x, y\rangle:=\langle x, y^{\sharp} \rangle_n,
\end{equation}
where $y^{\sharp}$ denotes a preimage of $y$ in $H^1(\Gal_{F_{\wp}}, B_E(\chi^{\sharp}))$, is independent of the choice of $y^{\sharp}$ and is a perfect pairing. Moreover, this pairing induces an isomorphism $H^1_g(\Gal_{F_{\wp}},B_E)^{\perp}\xrightarrow{\sim} H^1_e(\Gal_{F_{\wp}},B_E(\chi))$.
\end{lemma}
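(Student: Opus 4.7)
My plan is to reduce all three assertions to the perfectness of the top pairing (\ref{equ: lpl-'al}) and of the bottom pairing $\langle\cdot,\cdot\rangle_n$, together with the compatibility encoded in (\ref{equ: lpl-asm}). The crucial input from the text is $\Ima(\fj')=H^1_{g,S_c(\chi)}(\Gal_{F_{\wp}},B_E)$, stated just after (\ref{equ: lpl-asm}) and deduced from (\ref{equ: lpl-jgw}); so every $x$ on the left of the new pairing admits a lift $x'\in H^1(\Gal_{F_{\wp}},B_E(\eta'))$. Below I write $\langle\cdot,\cdot\rangle$ (no subscript) for the top pairing.

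For well-definedness, a change of lift $y^{\sharp}\mapsto y^{\sharp}+\epsilon$ with $\epsilon\in\Ker(\fj)$ modifies $\langle x,y^{\sharp}\rangle_n$ by $\langle x,\epsilon\rangle_n=\langle\fj'(x'),\epsilon\rangle_n=\langle x',\fj(\epsilon)\rangle=0$. For perfectness, the identity $\langle\fj'(x'),y^{\sharp}\rangle_n=\langle x',\fj(y^{\sharp})\rangle$ combined with perfectness of (\ref{equ: lpl-'al}) shows that the orthogonal complement of $\Ima(\fj')$ inside $H^1(\Gal_{F_{\wp}},B_E(\chi^{\sharp}))$ under $\langle\cdot,\cdot\rangle_n$ equals $\Ker(\fj)$. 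Since $\langle\cdot,\cdot\rangle_n$ is itself perfect (the argument of Lem.\ref{lem: lpl-tng} applies to $\chi^{\sharp}$ as $S_c(\chi^{\sharp})=\emptyset$), the induced pairing between $\Ima(\fj')=H^1_{g,S_c(\chi)}(\Gal_{F_{\wp}},B_E)$ and $H^1(\Gal_{F_{\wp}},B_E(\chi^{\sharp}))/\Ker(\fj)\cong H^1_g(\Gal_{F_{\wp}},B_E(\chi))$ --- which is precisely $\langle\cdot,\cdot\rangle$ --- is perfect.

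For the identification with $H^1_e$: since $\fj'$ is the identity on the $B_e$-parts, $\fj'_{\dR}$ is an isomorphism (only the $B_{\dR}^+$-lattices differ), so $(\fj')^{-1}(H^1_g(\Gal_{F_{\wp}},B_E))=H^1_g(\Gal_{F_{\wp}},B_E(\eta'))$; together with $H^1_g(\Gal_{F_{\wp}},B_E)\subseteq H^1_{g,S_c(\chi)}(\Gal_{F_{\wp}},B_E)=\Ima(\fj')$ this yields a surjection $\fj'\colon H^1_g(\Gal_{F_{\wp}},B_E(\eta'))\twoheadrightarrow H^1_g(\Gal_{F_{\wp}},B_E)$. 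Choose $\psi_{\ur}'\in H^1_g(\Gal_{F_{\wp}},B_E(\eta'))$ lifting $\psi_{\ur}$. For any $y\in H^1_g(\Gal_{F_{\wp}},B_E(\chi))$ with lift $y^{\sharp}$, (\ref{equ: lpl-asm}) gives $\langle\psi_{\ur},y\rangle=\langle\psi_{\ur}',y\rangle$ under the top pairing. By (\ref{equ: lpl-eig}), $H^1_e(\Gal_{F_{\wp}},B_E(\chi))=H^1_g(\Gal_{F_{\wp}},B_E(\eta'))^{\perp}$, so $y\in H^1_e(\Gal_{F_{\wp}},B_E(\chi))$ forces $\langle\psi_{\ur},y\rangle=0$, proving $H^1_e(\Gal_{F_{\wp}},B_E(\chi))\subseteq H^1_g(\Gal_{F_{\wp}},B_E)^{\perp}$. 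By Lem.\ref{lem: lpl-sit}, $\dim H^1_e(\Gal_{F_{\wp}},B_E(\chi))=d-|S_c(\chi)|=\dim H^1_g(\Gal_{F_{\wp}},B_E(\chi))-\dim H^1_g(\Gal_{F_{\wp}},B_E)$, so the inclusion is an equality.

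The main obstacle is bookkeeping: keeping $\fj$, $\fj'$, and the chosen lifts straight across the cup products, and verifying the dimension count. All substantive input comes from the already-established perfectness of the top and bottom pairings and from the compatibility (\ref{equ: lpl-asm}); no new ideas beyond diagram chasing are required.
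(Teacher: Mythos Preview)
Your proof is correct and follows essentially the same route as the paper's. The only cosmetic difference is in the perfectness step: the paper passes through the quotient $H^1(\Gal_{F_{\wp}},B_E(\eta'))/H^1_e(\Gal_{F_{\wp}},B_E(\eta'))$ and checks by a dimension count that $\fj'$ identifies it with $H^1_{g,S_c(\chi)}(\Gal_{F_{\wp}},B_E)$, whereas you deduce $\Ima(\fj')^{\perp}=\Ker(\fj)$ directly from the adjunction identity and the perfectness of the top pairing, then invoke the linear-algebra fact that a perfect pairing induces a perfect pairing between a subspace and the quotient by its orthogonal. For the ``moreover'' part both arguments unwind (\ref{equ: lpl-eig}) through the identification $\fj'\big(H^1_g(\Gal_{F_{\wp}},B_E(\eta'))\big)=H^1_g(\Gal_{F_{\wp}},B_E)$ and finish by a dimension count; your observation that $(\fj')^{-1}\big(H^1_g(\Gal_{F_{\wp}},B_E)\big)=H^1_g(\Gal_{F_{\wp}},B_E(\eta'))$ (because $\fj'$ is an isomorphism on $B_{\dR}$-modules) makes the surjectivity of $\fj'$ onto $H^1_g(\Gal_{F_{\wp}},B_E)$ explicit, which the paper leaves implicit.
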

\begin{proof}The independence of the choice of $y^{\sharp}$ follows from the commutativity of  (\ref{equ: lpl-asm}) and the fact $\Ima(\fj')=H^1_{g,S_c(\chi)}(\Gal_{F_{\wp}},B_E)$. Indeed, for $y'\in H^1(\Gal_{F_{\wp}}, B_E(\chi^{\sharp}))$, if $\fj(y')=0$, by  (\ref{equ: lpl-asm}), $\Ima(\fj')\subseteq (E\cdot y')^{\perp}$.

By (\ref{equ: lpl-eig}), the top pairing in (\ref{equ: lpl-asm}) induces a perfect pairing $$H^1(\Gal_{F_{\wp}},B_E(\eta'))/H^1_e(\Gal_{F_{\wp}},B_E(\eta')) \times H^1_g(\Gal_{F_{\wp}}, B_E(\chi))\lra E.$$
We claim $\fj'$ induces an isomorphism $H^1(\Gal_{F_{\wp}},B_E(\eta'))/H^1_e(\Gal_{F_{\wp}},B_E(\eta')) \xrightarrow{\sim} H^1_{g,S_c(\chi)}(\Gal_{F_{\wp}},B_E)$, from which one can easily deduce (\ref{equ: lpl-e1c}) is perfect. Since $H^1_e(\Gal_{F_{\wp}},B_E)=\{0\}$, $H^1_e(\Gal_{F_{\wp}},B_E(\eta'))\subseteq \Ker (\fj')$ \big(note $\fj'$ sends $H^1_e(\Gal_{F_{\wp}},B_E(\eta'))$ to $H^1_e(\Gal_{F_{\wp}}, B_E)=0$\big). By Lem.\ref{lem: lpl-dyp}, \begin{equation*}\dim_E \Ima(\fj')=\dim_EH^1_{g,S_c(\chi)}(\Gal_{F_{\wp}},B_E)= |S_n(\chi)|+1;\end{equation*} by Lem.\ref{lem: lpl-sit},  $\dim_E H^1(\Gal_{F_{\wp}},B_E(\eta'))=d$ and $\dim_E H^1_e(\Gal_{F_{\wp}},B_E(\eta'))=|S_c(\chi)|-1$.
By dimension calculation, the claim follows.

The second part follows from (\ref{equ: lpl-eig}) the fact $\fj'$ sends $H^1_g(\Gal_{F_{\wp}},B_E(\eta'))$ to $H^1_g(\Gal_{F_{\wp}},B_E)$.
\end{proof}
Using this pairing and Lem.\ref{lem: lpl-dyp}, one can now define Fontaine-Mazur $\cL$-invariants in general case:
\begin{definition}[general case]\label{def: lpl-eln}Let $\chi$ be a special character of $F_{\wp}^{\times}$, $[X]\in H^1_g(\Gal_{F_{\wp}},B_E(\chi))$, if $[X]\notin H^1_e(\Gal_{F_{\wp}},B_E(\chi))$, for $\sigma\in S_n(\chi)$, put $\cL(X)_{\sigma}:=\langle[X], \psi_{\sigma,p}\rangle/\langle[X],\psi_{\ur}\rangle\in E$ (cf. (\ref{equ: lpl-e1c})), and $\{\cL(X)_{\sigma}\}_{\sigma\in S_n(\chi)}$ are called the Fontaine-Mazur $\cL$-invariants of $X$; if $[X]\in H^1_e(\Gal_{F_{\wp}},B_E(\chi))$, we define the Fontaine-Mazur $\cL$-invariants of $X$ to be $(\cL(X)_{\sigma})_{\sigma\in S_n(\chi)}:=(\langle[X],\psi_{\sigma,p}\rangle)_{\sigma\in S_n(\chi)} \in \bP^{|S_n(\chi)|}(E)$ (cf. (\ref{equ: lpl-e1c})).
\end{definition}
\begin{remark}\label{rem: lpl-tng}
  Keep the above notation, and let $[X]\in H^1_g(\Gal_{F_{\wp}}, B_E(\chi))$, $[X^{\sharp}]\in H^1(\Gal_{F_{\wp}}, B_E(\chi^{\sharp}))$ with $\fj([X^{\sharp}])=[X]$, thus  $(\cL(X))_{\sigma\in S_n(\chi)}=(\cL(X^{\sharp}))_{\sigma\in S_n(\chi)}$.
\end{remark}
Let $X$ be a $2$-dimensional triangulable $E$-$B$-pair with a triangulation given by
\begin{equation*}
  0 \ra B_E(\chi_1) \ra X \ra B_E(\chi_2) \ra 0.
\end{equation*}
We denote by $(X,\chi_1,\chi_2)$ a such triangulation. The $E$-$B$-pair $X$ is called \emph{special} if $\chi_1\chi_2^{-1}$ is special.
 Suppose $X$ is special, let $[X_0]\in H^1(\Gal_{F_{\wp}},B_E(\chi_1\chi_2^{-1}))$ be the image of $[X]$ via the isomorphism $\Ext^1(B_E(\chi_1),B_E(\chi_2)) \xrightarrow{\sim} H^1(\Gal_{F_{\wp}},B_E(\chi_1\chi_2^{-1}))$. If $[X_0]\in H_g^1(\Gal_{F_{\wp}},B_E(\chi_1\chi_2^{-1}))$, we define the $\cL$-invariants of $(X,\chi_1,\chi_2)$ to be the $\cL$-invariants of $[X_0]$; if moreover  $[X_0]\notin  H_e^1(\Gal_{F_{\wp}},B_E(\chi_1\chi_2^{-1}))$, these are called $\cL$-invariants of $X$ (since in this case, $X$ admits a unique triangulation, cf. \cite[Thm.3.7]{Na}).

 Let $V$ be a $2$-dimensional semi-stable representation of $\Gal_{F_{\wp}}$ over $E$, and
 \begin{equation*}
   0 \ra B_E(\chi_1) \ra W(V) \ra B_E(\chi_2) \ra 0
 \end{equation*}
 a triangulation of $W(V)$. Suppose $\chi_1\chi_2^{-1}$ is special, which is equivalent to that the eigenvalues $\alpha_1$, $\alpha_2$ of the $E$-linear operator $\varphi^{d_0}$ on $D_{\st}(V)$ satisfy $\alpha_1\alpha_2^{-1}=q$ or $q^{-1}$. One defines the $\cL$-invariants of $(V,\chi_1,\chi_2)$ to be the $\cL$-invariants of $(W(V),\chi_1,\chi_2)$, which are called the\emph{ Fontaine-Mazur $\cL$-invariants} of $V$ if $V$ is moreover non-crystalline.
\section{$\cL$-invariants and partially de Rham families}\label{sec: lpl-2}
Let $\chi$ be a special character of $F_{\wp}^{\times}$ in $E^{\times}$, $\widetilde{\chi}$ be a character of $F_{\wp}^{\times}$ in $(E[\epsilon]/\epsilon^2)^{\times}$ such that $\widetilde{\chi}\equiv \chi \pmod{\epsilon}$. So there exists an additive character $\psi$ of $F_{\wp}^{\times}$ in $E$ such that $\widetilde{\chi}=\chi(1+\epsilon \psi)$. By results in \S \ref{sec: lpl-1.3.1}, there exist $a_\sigma\in E$ for all $\sigma\in \Sigma_{\wp}$ and $a_{\ur}\in E$ such that $\psi=a_{\ur} \psi_{\ur}+\sum_{\sigma\in \Sigma_{\wp}} a_{\sigma}\psi_{\sigma,p}$.

 Let $X$ be  an $E[\epsilon]/\epsilon^2$-$B$-pair of rank $2$ such that
 $$[X]\in H^1\big(\Gal_{F_{\wp}},B_{E[\epsilon]/\epsilon^2}\big(\widetilde{\chi}\big)\big)\cong \Ext^1\big(B_{E[\epsilon]/\epsilon^2}, B_{E[\epsilon]/\epsilon^2}\big(\widetilde{\chi}\big)\big).$$
  Denote by $X_0:=X\pmod{\epsilon}$, which is a triangulable $E$-$B$-pair and lies in $H^1(\Gal_{F_{\wp}},B_E(\chi))$. Suppose $X_0$ is de Rham \big(i.e. $[X_0]\in H^1_g(\Gal_{F_{\wp}},B_E(\chi))$\big), and denote by $\ul{\cL}_{S_n(\chi)}=(\cL_{\sigma})_{\sigma\in S_n(\chi)}$ the associated $\cL$-invariants (cf. Def.\ref{def: lpl-eln}). This section is devoted to prove the following theorem.
\begin{theorem}\label{thm: lpl-fee}
  Keep the above notation, and suppose $X$ is $S_c(\chi)$-de Rham (cf. Def.\ref{def: lpl-pnt}), then
  \begin{equation}\label{equ: lpl-lre}
    \begin{cases}
   a_{\ur}+  \sum_{\sigma\in S_n(\chi)}a_{\sigma} \cL_{\sigma}=0& \text{if $X_0$ is non-crystalline,} \\
      \sum_{\sigma\in S_n(\chi)}a_{\sigma} \cL_{\sigma}=0& \text{if $X_0$ is crystalline.}
    \end{cases}
  \end{equation}
\end{theorem}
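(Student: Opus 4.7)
The plan is to reduce the statement to the vanishing of the pairing $\langle [\psi], [X_0] \rangle$ of (\ref{equ: lpl-e1c}) and then unwind it against the basis of Lem.\ref{lem: lpl-dyp}. The key technical step is to lift $[X]$ across a ``sharp'' twist of $\widetilde\chi$, where the obstruction to lifting becomes a tractable cup product in $H^2(\Gal_{F_\wp}, B_E(\chi^\sharp)) \cong E$.

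First I would observe that the $S_c(\chi)$-de Rham hypothesis on $X$ already places $\psi$ in $H^1_{g,S_c(\chi)}(\Gal_{F_\wp}, B_E)$, which by Lem.\ref{lem: lpl-dyp} kills $a_\sigma$ for $\sigma \in S_c(\chi)$. Indeed, from $0 \to B_{E[\epsilon]/\epsilon^2}(\widetilde\chi) \to X \to B_{E[\epsilon]/\epsilon^2} \to 0$ and the left-exactness of $D_\dR(\cdot)_\sigma$, if $X$ is $\sigma$-de Rham of rank $2$ then so is $B_{E[\epsilon]/\epsilon^2}(\widetilde\chi)$; a direct cocycle-lifting computation in $B_{\dR,\sigma}$ then identifies this with the vanishing of $\psi$ in $H^1(\Gal_{F_\wp}, B_{\dR,\sigma})$, i.e.\ $\psi \in H^1_{g,\sigma}(\Gal_{F_\wp}, B_E)$. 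Next, set $\widetilde{\chi^\sharp} := \chi^\sharp(1+\epsilon\psi)$, and let $\widetilde\fj: B_{E[\epsilon]/\epsilon^2}(\widetilde{\chi^\sharp}) \to B_{E[\epsilon]/\epsilon^2}(\widetilde\chi)$ be the obvious lift of the $\fj$ of (\ref{equ: lpl-fjj}). An $E[\epsilon]/\epsilon^2$-analog of (\ref{equ: lpl-jgw}) together with Lem.\ref{lem: lpl-tse} (using that $H^i(\Gal_{F_\wp}, t^j B_{\dR,\sigma}^+ \otimes_E E[\epsilon]/\epsilon^2) = 0$ for $j \geq 1$ and $i \leq 1$) identifies $\Ima(\widetilde\fj_*)$ with $H^1_{g,S_c(\chi)}(\Gal_{F_\wp}, B_{E[\epsilon]/\epsilon^2}(\widetilde\chi))$. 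The hypothesis places $[X]$ in this image, so I can choose $[X^\sharp]$ with $\widetilde\fj_*([X^\sharp])=[X]$, and its reduction $[X_0]^\sharp := [X^\sharp] \bmod \epsilon$ then satisfies $\fj([X_0]^\sharp)=[X_0]$.

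To conclude, I feed $[X^\sharp]$ into the long exact sequence of $0 \to B_E(\chi^\sharp) \to B_{E[\epsilon]/\epsilon^2}(\widetilde{\chi^\sharp}) \to B_E(\chi^\sharp) \to 0$: its connecting map $H^1 \to H^2$ is (up to sign) the cup product with the extension class $[\psi] \in H^1(\Gal_{F_\wp}, B_E)$, and the existence of $[X^\sharp]$ lifting $[X_0]^\sharp$ forces $[\psi] \cup [X_0]^\sharp = 0$ in $H^2(\Gal_{F_\wp}, B_E(\chi^\sharp)) \cong E$. By the very definition of (\ref{equ: lpl-e1c}) this reads $\langle [\psi], [X_0] \rangle = 0$. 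Expanding $\psi = a_\ur \psi_\ur + \sum_{\sigma \in S_n(\chi)} a_\sigma \psi_{\sigma,p}$ by bilinearity and invoking Def.\ref{def: lpl-eln} yields the two cases of (\ref{equ: lpl-lre}), the non-crystalline one dividing out $\langle [X_0], \psi_\ur \rangle \neq 0$, and the crystalline one using $\langle [X_0], \psi_\ur \rangle = 0$ so that only the $\cL_\sigma$-terms remain. The main obstacle I anticipate is the $E[\epsilon]/\epsilon^2$-analog of Lem.\ref{lem: lpl-fpi} needed in the lifting step; working this out requires chasing the cohomological vanishings of the relevant $t$-adic quotients through the artinian coefficient ring, along with careful sign bookkeeping for the cup product against the connecting map.
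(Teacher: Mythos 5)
Your proposal is correct and follows essentially the same route as the paper: reduce to $a_\sigma=0$ for $\sigma\in S_c(\chi)$ together with $[X]\in H^1_{g,S_c(\chi)}$, lift $[X]$ to a class $[X^\sharp]$ over $\widetilde{\chi}^\sharp$ via the surjectivity coming from (\ref{equ: lpl-jgw}), identify the connecting map of the resulting short exact sequence with cup product against $\psi$, and unwind the vanishing $\langle[X_0^\sharp],\psi\rangle=0$ through Def.\ref{def: lpl-eln} and Rem.\ref{rem: lpl-tng}. The only cosmetic difference is that the paper additionally runs a dimension count (Lem.\ref{lem: lpl-1gb} and the surrounding diagram) to exhibit the exactness of the $H^1_{g,S_c(\chi)}$-sequence, which your argument bypasses but does not need.
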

\begin{remark}\label{rem: lpl-mle}(1) Such formula was firstly established by Greenberg-Stevens \cite[Thm.3.14]{GS93} in the case of $2$-dimensional ordinary $\Gal_{\Q_p}$-representations by Galois cohomology computations. In \cite{Colm10}, Colmez generalized \cite[Thm.3.14]{GS93} to $2$-dimensional trianguline $\Gal_{\Q_p}$-representations case by Galois cohomology computations and computations in Fontaine's rings. The theorem \ref{thm: lpl-fee} in non-critical case (i.e. $S_c(\chi)=\emptyset$) was obtained by Zhang in \cite{Zhang}, by generalizing Colmez's method. In \cite{Pot}, Pottharst generalized \cite[Thm.3.14]{GS93} to rank $2$ triangulable $(\varphi,\Gamma)$-modules (in $\Q_p$ case) by studying cohomology of $(\varphi,\Gamma)$-modules.


(2) The hypothesis $X$ being $S_c(\chi)$-de Rham would imply that $a_{\sigma}=0$ for all $\sigma\in S_c(\chi)$. In fact, $X$ being $S_c(\chi)$-de Rham implies $B_{E[\epsilon]/\epsilon^2}(\widetilde{\chi})$ being $S_c(\chi)$-de Rham. We claim that $B_{E[\epsilon]/\epsilon^2}(\widetilde{\chi})$ is $S_c(\chi)$-de Rham if an only if $a_{\sigma}=0$ for all $\sigma\in S_c(\chi)$. Indeed, it's easy to see $B_{E[\epsilon]/\epsilon^2}(\widetilde{\chi})$ is $S_c(\chi)$-de Rham if and only if $B_{E[\epsilon]/\epsilon^2}(1+\psi \epsilon)$ is $S_c(\chi)$-de Rham. Viewing $B_{E[\epsilon]/\epsilon^2}(1+\psi\epsilon)$ as an extension of $B_E$ by $B_E$ defined by $\psi \in H^1(\Gal_{F_{\wp}},B_E)$ (cf. \S \ref{sec: lpl-1.3.1}), we see $B_{E[\epsilon]/\epsilon^2}(1+\psi\epsilon)$ is $S_c(\chi)$-de Rham if and only $\psi\in H^1_{g,S_c(\chi)}(\Gal_{F_{\wp}},B_E)$,  which is equivalent to that $a_{\sigma}=0$ for all $\sigma\in S_c(\chi)$ by Lem.\ref{lem: lpl-dyp}. However, the converse is not true. This is a new subtlety: the formulas in (\ref{equ: lpl-lre}) do \emph{not} hold (in general) if one only assumes $a_{\sigma}=0$ for all $\sigma\in S_c(\chi)$ (e.g. see the discussion before Lem.\ref{lem: lpl-1gb} below).

\end{remark}
We translate this theorem in terms of families of Galois representations. Let $A$ be an affinoid $E$-algebra, $V$ be a locally free $A$-module of rank $2$ equipped with a continuous $\Gal_{F_{\wp}}$-action. Thus $D_{\dR}(V):=(B_{\dR} \widehat{\otimes}_{\Q_p} V)^{\Gal_{F_{\wp}}}$ is an $A \otimes_{\Q_p} F_{\wp}$-module. Using $A\otimes_{\Q_p} F_{\wp} \xrightarrow{\sim} \prod_{\sigma\in \Sigma_{\wp}} A$, $a \otimes b \mapsto (a\sigma(b))_{\sigma}$, one can decompose $D_{\dR}(V)\xrightarrow{\sim} \oplus_{\sigma\in \Sigma_{\wp}} D_{\dR}(V)_{\sigma}$. For $\sigma\in \Sigma_{\wp}$, we say $V$ is \emph{$\sigma$-de Rham} if $D_{\dR}(V)_{\sigma}$ is locally free of rank $2$ over $A$. Let $\cR_A:=\cR_E  \widehat{\otimes}_E  A$, one can associate to $V$ a $(\varphi,\Gamma)$-module $D_{\rig}(V)$ (cf. \cite[Thm.2.2.17]{KPX}) over $\cR_A$. Suppose $D_{\rig}(V)$ sits in an exact sequence of $(\varphi,\Gamma)$-modules over $\cR_A$ as follows:
\begin{equation*}
  0 \lra \cR_A(\delta_1) \lra D_{\rig}(V) \lra \cR_A(\delta_2) \lra 0,
\end{equation*}
where $\delta_i: F_{\wp}\ra A^{\times}$ are continuous characters, and we refer to \cite[Const.6.2.4]{KPX} for rank $1$ $(\varphi,\Gamma)$-modules associated to characters. For a continuous character $\chi$ of $F_{\wp}^{\times}$ in $A^{\times}$, $\chi$ induces a $\Q_p$-linear map
\begin{equation*}
 d\chi: F_{\wp} \lra A, \ a\mapsto \frac{d}{dx} \chi(\exp(ax))|_{x=0},
\end{equation*}
and thus an $E$-linear map $d\chi: F_{\wp}\otimes_{\Q_p} E \cong \prod_{\sigma\in \Sigma_{\wp}} E \ra A$. So there exists $(\wt(\chi)_{\sigma})_{\sigma\in \Sigma_{\wp}}\in A^{|\Sigma_{\wp}|}$, called the weight of $\chi$,  such that $d\chi \big((a_{\sigma})_{\sigma\in \Sigma_{\wp}}\big)=\sum_{\sigma\in \Sigma_{\wp}} a_{\sigma} \wt(\chi)_{\sigma}$. Let $z$ be an $E$-point of $A$, and $\delta_{i,z}:=z^* \delta_i$, suppose
\begin{itemize}
  \item $V_z:=z^* V$ is semi-stable;
  \item $\delta_{1,z}\delta_{2,z}^{-1}$ is special.
\end{itemize}
Put $S_n(V_z):=S_n(\delta_{1,z}\delta_{2,z}^{-1})$, $S_c(V_z):=S_c(\delta_{1,z}\delta_{2,z}^{-1})$ (cf. (\ref{equ: lpl-scia})),  and $\ul{\cL}_{S_n(V_z)}$ the Fontaine-Mazur $\cL$-invariants of $V_z$. By Thm.\ref{thm: lpl-fee}, one has
\begin{corollary}\label{cor: lpl-rvs}
  Keep the above notation, suppose moreover $V$ is $S_c(V_z)$-de Rham, then the differential form in $\Omega_{A/E}^1$
  \begin{equation*}
    \begin{cases}
      d\log \big(\delta_1\delta_2^{-1}(p)\big) + \sum_{\sigma\in S_n(V_z)} \cL_{\sigma} d\big(\wt(\delta_1\delta_2^{-1})_{\sigma}\big) & \text{if $V_z$ is non-crystalline} \\
      \sum_{\sigma\in S_n(V_z)} \cL_{\sigma} d\big(\wt(\delta_1\delta_2^{-1})_{\sigma}\big) & \text{if $V_z$ is crystalline}
    \end{cases}
  \end{equation*}
  vanishes at $z$.
\end{corollary}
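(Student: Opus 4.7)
The plan is to reduce Corollary \ref{cor: lpl-rvs} to Theorem \ref{thm: lpl-fee} by testing the $1$-form $\omega \in \Omega^1_{A/E}$ against every tangent vector at $z$. Recall that $\omega$ vanishes at $z$ if and only if for each $E$-algebra map $\tilde z : A \to E[\epsilon]/\epsilon^2$ with $\tilde z \equiv z \pmod \epsilon$ one has $\tilde z^*(\omega) = 0$ in $\Omega^1_{E[\epsilon]/\epsilon^2/E} \cong E\, d\epsilon$. So I fix such a $\tilde z$ and pull back the whole triangulation: set $\tilde V := V \otimes_{A, \tilde z} E[\epsilon]/\epsilon^2$ and $\tilde\delta_i := \tilde z \circ \delta_i : F_\wp^\times \to (E[\epsilon]/\epsilon^2)^\times$, so the given triangulation becomes an exact sequence
\begin{equation*}
0 \lra \cR_{E[\epsilon]/\epsilon^2}(\tilde\delta_1) \lra D_{\rig}(\tilde V) \lra \cR_{E[\epsilon]/\epsilon^2}(\tilde\delta_2) \lra 0.
\end{equation*}
Passing through the equivalence between $(\varphi,\Gamma)$-modules over $\cR_{E[\epsilon]/\epsilon^2}$ and $E[\epsilon]/\epsilon^2$-$B$-pairs, then twisting by $B_{E[\epsilon]/\epsilon^2}(\tilde\delta_2^{-1})$, gives an extension class $X \in H^1(\Gal_{F_\wp}, B_{E[\epsilon]/\epsilon^2}(\widetilde\chi))$ with $\widetilde\chi := \tilde\delta_1\tilde\delta_2^{-1}$, whose reduction modulo $\epsilon$ is the $E$-$B$-pair extension attached to $V_z$'s triangulation (twisted by $B_E(\delta_{2,z}^{-1})$), where I write $\chi := \delta_{1,z}\delta_{2,z}^{-1}$.

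Next I verify the three hypotheses of Theorem \ref{thm: lpl-fee} for $X$. First, $X_0 := X \pmod \epsilon$ is de Rham since $V_z$ is semi-stable, and by the end of \S\ref{sec: lpl-1.3.2} the $\cL$-invariants of $X_0$ as an element of $H^1_g(\Gal_{F_\wp}, B_E(\chi))$ are precisely the Fontaine-Mazur $\cL$-invariants $\ul\cL_{S_n(V_z)} = (\cL_\sigma)_{\sigma \in S_n(V_z)}$ of $V_z$. Second, the hypothesis that $V$ is $S_c(V_z)$-de Rham says $D_{\dR}(V)_\sigma$ is locally free of rank $2$ over $A$ for each $\sigma \in S_c(V_z) = S_c(\chi)$; tensoring with $E[\epsilon]/\epsilon^2$ via $\tilde z$ yields a free rank-$2$ module identifying naturally with $D_{\dR}(\tilde V)_\sigma$, so $W(\tilde V)$, and hence its twist $X$, is $\sigma$-de Rham for each such $\sigma$. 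With all three hypotheses in hand, writing $\widetilde\chi = \chi(1 + \epsilon\psi)$ with $\psi = a_{\ur}\psi_{\ur} + \sum_{\sigma \in \Sigma_\wp} a_\sigma \psi_{\sigma, p}$ as in \S\ref{sec: lpl-1.3.1}, Theorem \ref{thm: lpl-fee} yields
\begin{equation*}
a_{\ur} + \sum_{\sigma \in S_n(V_z)} a_\sigma \cL_\sigma = 0 \quad (\text{$V_z$ non-crystalline}), \qquad \sum_{\sigma \in S_n(V_z)} a_\sigma \cL_\sigma = 0 \quad (\text{$V_z$ crystalline}).
\end{equation*}

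Finally I match these identities against $\tilde z^*(\omega)$. A direct unwinding of the definitions of $d\log$ and of $\wt(-)_\sigma$ gives
\begin{equation*}
\tilde z^*\big(d\log(\delta_1\delta_2^{-1}(p))\big) = \psi(p)\, d\epsilon, \qquad \tilde z^*\big(d\,\wt(\delta_1\delta_2^{-1})_\sigma\big) = d\psi(1_\sigma)\, d\epsilon,
\end{equation*}
where $1_\sigma \in F_\wp \otimes_{\Q_p} E \cong \prod_\tau E$ is the $\sigma$-th idempotent. Plugging in the normalizations from \S\ref{sec: lpl-1.3.1} (namely $\psi_{\ur}(p) = 1$ and $\psi_{\sigma, p}(p) = 0$, so $\psi(p) = a_{\ur}$; and $d\psi_{\ur} = 0$ together with $d\psi_{\tau,p}(1_\sigma) = \delta_{\tau\sigma}$, so $d\psi(1_\sigma) = a_\sigma$), one reads off that $\tilde z^*(\omega)/d\epsilon$ equals the left-hand side of the relevant Theorem \ref{thm: lpl-fee} relation and therefore vanishes. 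The only genuinely nontrivial input is Theorem \ref{thm: lpl-fee} itself; the expected main work in executing this plan is the normalization bookkeeping in the last step together with the (straightforward but necessary) check that $\sigma$-de Rhamness of a locally free family passes to any tangent-vector pullback.
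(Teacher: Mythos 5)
Your proposal is correct and is exactly the reduction the paper intends: the paper states Cor.\ref{cor: lpl-rvs} as an immediate consequence of Thm.\ref{thm: lpl-fee} without writing out the tangent-vector pullback, the twist by $\tilde\delta_2^{-1}$, or the normalization bookkeeping, and you have filled in precisely those steps. One small point in your favor: you use $\psi_{\sigma,p}(p)=0$, which is the normalization consistent with the rest of \S\ref{sec: lpl-1.3.1} (the paper's ``$\psi_{f,p}(p)=1$'' is a typo, as the identity $\psi_{\tau,\varpi}=\psi_{\tau,p}+\tau(\log(p/\varpi^e))\psi_{\ur}$ and the definition of $\log_{\sigma,\cL}$ both require the value $0$).
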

\begin{remark}
Partially de Rham families would appear naturally in the study of $p$-adic automorphic forms, e.g.  one encounters such families when studying locally analytic vectors in completed cohomology of Shimura curves (see Prop.\ref{prop: lpl-uoc} below), or certain families of overconvergent Hilbert modular forms (see App.\ref{sec: lpl-app} below).  Note this formula also applies for families of $F_{\wp}$-analytic $\Gal_{F_{\wp}}$-representations (cf. \cite{Ber14}, which can be viewed as special cases of partially de Rham families).
\end{remark}
The rest of this section is devoted to the proof of Thm.\ref{thm: lpl-fee}. We use Pottharst's method \cite{Pot} (but in terms of $B$-pairs). It's clear that  $X$ being $S_c(\chi)$-de Rham is equivalent to saying that $B_{E[\epsilon]/\epsilon^2}(\widetilde{\chi})$ is $S_c(\chi)$-de Rham and $[X]\in H^1_{g,S_c(\chi)}(\Gal_{F_{\wp}}, B_{E[\epsilon]/\epsilon^2}(\widetilde{\chi}))$. As discussed in Rem.\ref{rem: lpl-mle}(2), $B_{E[\epsilon]/\epsilon^2}(\widetilde{\chi})$ being $S_c(\chi)$-de Rham is equivalent to that $a_{\sigma}=0$ for all $\sigma\in S_c(\chi)$. Thus it's sufficient to prove
\begin{proposition}\label{prop: lpl-act}
Suppose $a_{\sigma}=0$ for all $\sigma\in S_c(\chi)$ and $[X]\in H^1_{g,S_c(\chi)}(\Gal_{F_{\wp}},B_{E[\epsilon]/\epsilon^2}(\widetilde{\chi}))$, then the formulas  in (\ref{equ: lpl-lre}) hold.
\end{proposition}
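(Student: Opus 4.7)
The plan is to reduce Prop.\ \ref{prop: lpl-act} to the non-critical case (Zhang's theorem \cite{Zhang}) by lifting the infinitesimal extension $X$ along a natural embedding into a non-critical setting. Set $\widetilde{\chi^{\sharp}}:=\widetilde{\chi}\cdot\eta'$, a character of $F_{\wp}^{\times}$ in $(E[\epsilon]/\epsilon^2)^{\times}$ reducing modulo $\epsilon$ to $\chi^{\sharp}$; this produces a natural morphism of $E[\epsilon]/\epsilon^2$-$B$-pairs $\fj_{\epsilon}: B_{E[\epsilon]/\epsilon^2}(\widetilde{\chi^{\sharp}})\to B_{E[\epsilon]/\epsilon^2}(\widetilde{\chi})$ as in (\ref{equ: lpl-fjj}). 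By Rem.\ \ref{rem: lpl-mle}(2) the hypothesis $a_{\sigma}=0$ for $\sigma\in S_c(\chi)$ implies that $\widetilde{\chi}$ (hence $B_{E[\epsilon]/\epsilon^2}(\widetilde{\chi})$) is $S_c(\chi)$-de Rham, so for every $\sigma\in S_c(\chi)$ the module $B_{E[\epsilon]/\epsilon^2}(\widetilde{\chi})_{\dR,\sigma}^+$ is a free deformation of $t^{k_{\sigma}}B_{\dR,\sigma}^+$ over $E[\epsilon]/\epsilon^2$. Combined with $k'_{\sigma}:=1-k_{\sigma}\geq 1$, this guarantees $(t^{k'_{\sigma}}B_{E[\epsilon]/\epsilon^2}(\widetilde{\chi})_{\dR,\sigma}^+)^{\Gal_{F_{\wp}}}=0$ for $\sigma\in S_c(\chi)$, so the deformation-theoretic analogue of the exact sequence (\ref{equ: lpl-jgw}) applies and yields a surjection $H^1(\Gal_{F_{\wp}}, B_{E[\epsilon]/\epsilon^2}(\widetilde{\chi^{\sharp}}))\twoheadrightarrow H^1_{g,S_c(\chi)}(\Gal_{F_{\wp}}, B_{E[\epsilon]/\epsilon^2}(\widetilde{\chi}))$.

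The hypothesis $[X]\in H^1_{g,S_c(\chi)}$ then produces a lift $[X^{\sharp}]\in H^1(\Gal_{F_{\wp}}, B_{E[\epsilon]/\epsilon^2}(\widetilde{\chi^{\sharp}}))$ with $\fj_{\epsilon}([X^{\sharp}])=[X]$. Reducing modulo $\epsilon$ one obtains $[X_0^{\sharp}]\in H^1(\Gal_{F_{\wp}},B_E(\chi^{\sharp}))$ with $\fj([X_0^{\sharp}])=[X_0]$. Since $S_c(\chi^{\sharp})=\emptyset$ by construction, the pair $(X^{\sharp}, \widetilde{\chi^{\sharp}})$ sits in the non-critical regime, and Zhang's theorem (i.e.\ Thm.\ \ref{thm: lpl-fee} in the case $S_c=\emptyset$), applied with the \emph{same} additive character $\psi=a_{\ur}\psi_{\ur}+\sum_{\sigma}a_{\sigma}\psi_{\sigma,p}$, delivers
\begin{equation*}
   a_{\ur}+\sum_{\sigma\in\Sigma_{\wp}}a_{\sigma}\cL(X_0^{\sharp})_{\sigma}=0\quad\text{or}\quad \sum_{\sigma\in\Sigma_{\wp}}a_{\sigma}\cL(X_0^{\sharp})_{\sigma}=0,
\end{equation*}
according to whether $[X_0^{\sharp}]$ lies outside or inside $H^1_e(\Gal_{F_{\wp}},B_E(\chi^{\sharp}))$.

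To finish, I would invoke Rem.\ \ref{rem: lpl-tng} to identify $\cL(X_0^{\sharp})_{\sigma}=\cL(X_0)_{\sigma}$ for every $\sigma\in S_n(\chi)=S_n(\chi^{\sharp})$, and use the very definition of the pairing (\ref{equ: lpl-e1c}), together with the commutative square (\ref{equ: lpl-asm}), to see that $[X_0^{\sharp}]\in H^1_e(\Gal_{F_{\wp}},B_E(\chi^{\sharp}))$ precisely when $[X_0]\in H^1_e(\Gal_{F_{\wp}},B_E(\chi))$, so the two dichotomies match. The additional hypothesis $a_{\sigma}=0$ for $\sigma\in S_c(\chi)$ then drops the contributions of the critical embeddings from the displayed sums, recovering exactly (\ref{equ: lpl-lre}). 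The hard part is the first paragraph: making the deformation-theoretic version of (\ref{equ: lpl-jgw}) precise over $E[\epsilon]/\epsilon^2$ and confirming the required $H^0$/$H^1$ vanishings for the graded pieces of the cokernel of $\fj_{\epsilon}$; the rest is bookkeeping and functoriality of the cup-product.
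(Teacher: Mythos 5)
Your proposal is correct and follows essentially the same route as the paper's proof: the paper likewise forms $\widetilde{\chi}^{\sharp}=\widetilde{\chi}\prod_{\sigma\in S_c(\chi)}\sigma^{1-k_{\sigma}}$, uses the exact sequence (\ref{equ: lpl-jgw}) applied to the $S_c(\chi)$-de Rham pair $B_{E[\epsilon]/\epsilon^2}(\widetilde{\chi})$ (which is exactly where $a_{\sigma}=0$ for $\sigma\in S_c(\chi)$ enters) to lift $[X]$ to some $[X^{\sharp}]$, applies the non-critical computation (\ref{equ: lpl-ana}) to $[X_0^{\sharp}]=\kappa([X^{\sharp}])$, and transfers the conclusion back through Def.\ref{def: lpl-eln} and Rem.\ref{rem: lpl-tng}. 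The ``deformation-theoretic analogue'' of (\ref{equ: lpl-jgw}) that you flag as the hard part is in fact just (\ref{equ: lpl-jgw}) itself applied to the $E[\epsilon]/\epsilon^2$-$B$-pair viewed as an $E$-$B$-pair, with the required vanishing $(t^{1-k_{\sigma}}B_{E[\epsilon]/\epsilon^2}(\widetilde{\chi})_{\dR,\sigma}^+)^{\Gal_{F_{\wp}}}=0$ following from freeness exactly as you indicate, so no genuinely new argument is needed there.
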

 Let $k_{\sigma}\in \Z$ for all $\sigma\in \Sigma_{\wp}$ such that $\chi=\unr(q^{-1})\prod_{\sigma\in \Sigma_{\wp}} \sigma^{k_{\sigma}}$. One has a natural exact sequence of $E$-$B$-pairs
\begin{equation}\label{equ: lpl-eaa}
  0 \ra B_E(\chi) \ra B_{E[\epsilon]/\epsilon^2}(\widetilde{\chi}) \ra B_E(\chi)\ra 0,
\end{equation}
by taking cohomology, one gets an exact sequence
\begin{equation}\label{equ: lpl-ectn}
  0 \ra H^1(\Gal_{F_{\wp}},B_E(\chi)) \ra H^1(\Gal_{F_{\wp}},B_{E[\epsilon]/\epsilon^2}(\widetilde{\chi})) \xrightarrow{\kappa} H^1(\Gal_{F_{\wp}},B_E(\chi)) \xrightarrow{\delta} H^2(\Gal_{F_{\wp}},B_E(\chi)).
\end{equation}
Note $\kappa([X])=[X_0]$. We suppose $\psi\neq 0$ (since the case $\psi=0$ is trivial).

First consider non-critical case (i.e. $S_c(\chi)=\emptyset$), thus $k_{\sigma}\in \Z_{\geq 1}$ for all $\sigma\in \Sigma_{\wp}$. In this case, one has $H^1(\Gal_{F_{\wp}},B_E(\chi))=H^1_g(\Gal_{F_{\wp}},B_E(\chi))$, which is of dimension $d+1$ over $E$. One also has
\begin{lemma}\label{equ: lpl-ebn}$\dim_E H^1(\Gal_{F_{\wp}},B_{E[\epsilon]/\epsilon^2}(\widetilde{\chi}))=2d+1$.
\end{lemma}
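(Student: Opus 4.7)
Proof proposal for Lemma \ref{equ: lpl-ebn}.

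The plan is to exploit the short exact sequence (\ref{equ: lpl-eaa}) and the long exact sequence (\ref{equ: lpl-ectn}) together with the dimensions computed in Lem.\ref{lem: lpl-sit} and the Euler--Poincaré formula of Prop.\ref{prop: lpl-pww}(1). Since we are in the non-critical case ($S_c(\chi)=\emptyset$), Lem.\ref{lem: lpl-sit}(1) gives $\dim_E H^0(\Gal_{F_{\wp}},B_E(\chi))=0$, $\dim_E H^1(\Gal_{F_{\wp}},B_E(\chi))=d+1$ and $\dim_E H^2(\Gal_{F_{\wp}},B_E(\chi))=1$. Extending (\ref{equ: lpl-ectn}) to the full long exact sequence, the vanishing of $H^0(\Gal_{F_{\wp}},B_E(\chi))$ forces $H^0(\Gal_{F_{\wp}},B_{E[\epsilon]/\epsilon^2}(\widetilde{\chi}))=0$, so by Prop.\ref{prop: lpl-pww}(1) applied to $B_{E[\epsilon]/\epsilon^2}(\widetilde{\chi})$ (of rank $2$ as an $E$-$B$-pair) one obtains
\begin{equation*}
-\dim_E H^1\bigl(\Gal_{F_{\wp}},B_{E[\epsilon]/\epsilon^2}(\widetilde{\chi})\bigr)+\dim_E H^2\bigl(\Gal_{F_{\wp}},B_{E[\epsilon]/\epsilon^2}(\widetilde{\chi})\bigr)=-2d.
\end{equation*}
It therefore suffices to show $\dim_E H^2(\Gal_{F_{\wp}},B_{E[\epsilon]/\epsilon^2}(\widetilde{\chi}))=1$, equivalently that the connecting map $\delta:H^1(\Gal_{F_{\wp}},B_E(\chi))\to H^2(\Gal_{F_{\wp}},B_E(\chi))$ in (\ref{equ: lpl-ectn}) is surjective.

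The key step is to identify this connecting map with the cup-product against the class $[\psi]\in H^1(\Gal_{F_{\wp}},B_E)\cong\Hom(\Gal_{F_{\wp}},E)$ classifying the deformation $\widetilde{\chi}=\chi(1+\epsilon\psi)$. Concretely, the extension (\ref{equ: lpl-eaa}) is classified, via the isomorphism $\Ext^1(B_E(\chi),B_E(\chi))\cong H^1(\Gal_{F_{\wp}},B_E)$, by $[\psi]$ (up to sign); by the standard compatibility between connecting maps and Yoneda/cup-products (cf. Rem.\ref{rem: lpl-arw}(2)), one has $\delta(\cdot)=\pm\,\cdot\cup[\psi]$ under the cup-product
\begin{equation*}
H^1(\Gal_{F_{\wp}},B_E(\chi))\times H^1(\Gal_{F_{\wp}},B_E)\longrightarrow H^2(\Gal_{F_{\wp}},B_E(\chi)).
\end{equation*}
Since $\psi\neq 0$ by assumption and this cup-product is a perfect pairing by Lem.\ref{lem: lpl-tng}, the functional $\cdot\cup[\psi]$ on $H^1(\Gal_{F_{\wp}},B_E(\chi))$ is non-zero, and hence $\delta$ is surjective onto the one-dimensional space $H^2(\Gal_{F_{\wp}},B_E(\chi))$. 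This yields $\dim_E H^2(\Gal_{F_{\wp}},B_{E[\epsilon]/\epsilon^2}(\widetilde{\chi}))=1$, and then the Euler characteristic computation above forces $\dim_E H^1(\Gal_{F_{\wp}},B_{E[\epsilon]/\epsilon^2}(\widetilde{\chi}))=2d+1$.

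The only mildly delicate point is the identification $\delta=\pm\,\cdot\cup[\psi]$: it is essentially a bookkeeping exercise with the Koszul-type cocycles of the complex $C^{\bullet}(\cdot)$ defining the cohomology of $B$-pairs in \S\ref{sec: lpl-1}, but one must be careful to keep track of the two components (the $B_e$ part and the $B_{\dR}^+$ part) and of the fact that the extension is trivialised as an extension of $B_e\otimes_{\Q_p}E[\epsilon]/\epsilon^2$-modules (resp.\ $B_{\dR}^+\otimes_{\Q_p}E[\epsilon]/\epsilon^2$-modules) only up to the $1$-cocycle encoded by $\psi$. Everything else is a dimension count.
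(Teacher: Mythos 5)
Your proposal is correct. The overall skeleton (Euler characteristic $\chi(W)=-2d$ for the rank-$2$ $E$-$B$-pair $W=B_{E[\epsilon]/\epsilon^2}(\widetilde{\chi})$, vanishing of $H^0$, reduction to $\dim_E H^2(\Gal_{F_{\wp}},W)=1$) coincides with the paper's, but you establish the $H^2$ computation by a different mechanism: you show the connecting map $\delta\colon H^1(\Gal_{F_{\wp}},B_E(\chi))\to H^2(\Gal_{F_{\wp}},B_E(\chi))$ of (\ref{equ: lpl-ectn}) is surjective, identifying it (via Rem.\ref{rem: lpl-arw}(2)) with $\pm\,\cdot\cup[\psi]$ and invoking the perfectness of the pairing (\ref{equ: lpl-ice}) from Lem.\ref{lem: lpl-tng} together with $\psi\neq 0$. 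The paper instead dualizes: it computes $\dim_E H^0(\Gal_{F_{\wp}},W^{\vee}(1))=1$ directly, using that $W^{\vee}(1)$ is the self-extension of $B_E(\eta)$ (with $\eta=\prod_{\sigma}\sigma^{1-k_{\sigma}}$) defined by $\psi\neq 0$ and that $\dim_E H^0(\Gal_{F_{\wp}},B_E(\eta))=1$, then applies the duality of Prop.\ref{prop: lpl-pww}(4). The two arguments are essentially dual to one another and both hinge on $\psi\neq 0$; yours has the small advantage that the identification $\delta=\pm\,\cdot\cup[\psi]$ is needed anyway two lines later to derive (\ref{equ: lpl-ana}), so no extra input is introduced, while the paper's avoids Lem.\ref{lem: lpl-tng} and works only with $H^0$'s. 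One cosmetic remark: $\dim_E H^0(\Gal_{F_{\wp}},B_E(\chi))=0$ is not literally listed in Lem.\ref{lem: lpl-sit}(1); it follows from the stated dimensions there via the Euler characteristic, or directly from \cite[Prop.2.14]{Na} as the paper does.
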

\begin{proof}
  Let $W:=B_{E[\epsilon]/\epsilon^2}(\widetilde{\chi})$ for simplicity, one has $\sum_{i=0}^{2} (-1)^i H^1(\Gal_{F_{\wp}},W)=-2d$. It's easy to see $H^0(\Gal_{F_{\wp}},W)=0$ (cf. \cite[Prop.2.14]{Na}); moreover one has $\dim_E H^0(\Gal_{F_{\wp}},W^{\vee}(1))=1$: let $\eta:=\prod_{\sigma\in \Sigma_{\wp}} \sigma^{1-k_{\sigma}}$, then $W^{\vee}(1)$ is an extension of $B_E(\eta)$ by $B_E(\eta)$ (defined by $\psi$), by \cite[Prop.2.14]{Na}, $\dim_E H^0(\Gal_{F_{\wp}}, B_E(\eta))=1$, which together with the fact $\psi \neq 0$ deduces then $\dim_E H^0(\Gal_{F_{\wp}},W^{\vee}(1))=1$. By the duality between $H^0(\Gal_{F_{\wp}},W^{\vee}(1))$ and $H^2(\Gal_{F_{\wp}},W)$, one sees $\dim_E H^2(\Gal_{F_{\wp}},W)=1$, so $H^1(\Gal_{F_{\wp}},W)=2d+1$.
\end{proof}
In particular, the map $\kappa$ is not surjective. On the other hand, by Rem.\ref{rem: lpl-arw} (1) \big(applied to $W_1=W_3=B_E$, $W_2=B_{E[\epsilon]/\epsilon^2}(1+\epsilon \psi)$, $W=B_E(\chi)$\big), we see the map $\delta$ is given (up to scalars) by $x\mapsto \langle x,\psi \rangle$, where $\langle \cdot, \cdot\rangle$ denotes the cup-product $H^1(\Gal_{F_{\wp}},B_E(\chi)) \times H^1(\Gal_{F_{\wp}},B_E) \ra H^2(\Gal_{F_{\wp}},B_E(\chi))$ (cf. (\ref{equ: lpl-ice})). So one has (since $[X_0]=\kappa([X])$)
\begin{equation}\label{equ: lpl-ana}\delta([X_0])= \sum_{\sigma\in \Sigma_{\wp}} a_{\sigma}\langle [X_0], \psi_{\sigma,p}\rangle +a_{\ur} \langle [X_0], \psi_{\ur}\rangle=0,\end{equation}
from which we deduces Prop.\ref{prop: lpl-act} in non-critical case by the definition of $\cL$-invariants of $X_0$ (cf. Def.\ref{equ: lpl-eie}).

Suppose now $S_c(\chi)\neq \emptyset$, in this case $\dim_E H^1(\Gal_{F_{\wp}},B_E(\chi))=d$. One can show as in the proof of  Lem.\ref{equ: lpl-ebn} that  $\dim_E H^1(\Gal_{F_{\wp}},B_{E[\epsilon]/\epsilon^2}(\widetilde{\chi}))=2d$, so $\kappa$ is surjective (cf. (\ref{equ: lpl-ectn})). Consequently, one can \emph{not} expect any formula without further condition on $X$. 

 As in \S \ref{sec: lpl-1.3.2},  put $\chi^{\sharp}:=\chi \prod_{\sigma\in S_c(\chi)} \sigma^{1-k_{\sigma}}$, and $\widetilde{\chi}^{\sharp}:=\chi^{\sharp} (1+\epsilon \psi)=\widetilde{\chi}\prod_{\sigma\in S_c(\chi)} \sigma^{1-k_{\sigma}}$. Note $B_{E[\epsilon]/\epsilon^2}\big(\widetilde{\chi}\big)$ and $B_{E[\epsilon]/\epsilon^2}\big(\widetilde{\chi}^{\sharp}\big)$ are both $S_c(\chi)$-de Rham (see Rem. \ref{rem: lpl-mle} (2)). By (\ref{equ: lpl-jgw}), one has an exact sequence
 \begin{multline}
   0 \ra H^0\big(\Gal_{F_{\wp}}, B_{E[\epsilon]/\epsilon^2}\big(\widetilde{\chi}\big)\big) (=0) \ra \oplus_{\sigma\in S_c(\chi)} H^0\big(\Gal_{F_{\wp}}, B_{E[\epsilon]/\epsilon^2}\big(\widetilde{\chi}\big)_{\dR,\sigma}^+/t^{1-k_{\sigma}}\big) \\
   \ra H^1\big(\Gal_{F_{\wp}}, B_{E[\epsilon]/\epsilon^2}\big(\widetilde{\chi}^{\sharp}\big)\big) \ra H^1_{g,S_c(\chi)}\big(\Gal_{F_{\wp}}, B_{E[\epsilon]/\epsilon^2}\big(\widetilde{\chi}\big)\big)\ra 0,
 \end{multline}
 from which (and Lem.\ref{equ: lpl-ebn}) one calculates:
 \begin{lemma}\label{lem: lpl-1gb}
  $\dim_E H^1_{g,S_c(\chi)}(\Gal_{F_{\wp}},B_{E[\epsilon]/\epsilon^2}(\widetilde{\chi}))=2d+1-2|S_c(\chi)|$.
 \end{lemma}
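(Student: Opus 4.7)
The plan is to extract $\dim_E H^1_{g,S_c(\chi)}(\Gal_{F_{\wp}}, B_{E[\epsilon]/\epsilon^2}(\widetilde{\chi}))$ directly from the four-term exact sequence displayed immediately before the lemma: once the dimensions of the three remaining entries are known, Euler characteristic additivity pins down the fourth.

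The two outer terms are essentially free. The leftmost term $H^0(\Gal_{F_{\wp}}, B_{E[\epsilon]/\epsilon^2}(\widetilde{\chi}))$ vanishes, as already noted parenthetically in the display; this follows from $H^0(\Gal_{F_{\wp}}, B_E(\chi)) = 0$ for special $\chi$ (used in the proof of Lemma \ref{equ: lpl-ebn}) applied twice through the defining infinitesimal extension \eqref{equ: lpl-eaa}. The term $H^1(\Gal_{F_{\wp}}, B_{E[\epsilon]/\epsilon^2}(\widetilde{\chi}^{\sharp}))$ has $E$-dimension $2d+1$ by Lemma \ref{equ: lpl-ebn} applied to $\widetilde{\chi}^{\sharp} = \chi^{\sharp}(1+\epsilon\psi)$, which is a special infinitesimal deformation with $S_c(\chi^{\sharp}) = \emptyset$; the proof of Lemma \ref{equ: lpl-ebn} carries over verbatim.

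The real work is to show that each summand of the direct sum has $E$-dimension exactly $2$, so that the whole contributes $2|S_c(\chi)|$. For $\sigma \in S_c(\chi)$, I would exploit the $S_c(\chi)$-de Rham hypothesis on $B_{E[\epsilon]/\epsilon^2}(\widetilde{\chi})$: it forces $D_{\dR,\sigma}(B_{E[\epsilon]/\epsilon^2}(\widetilde{\chi}))$ to be free of rank one over $E[\epsilon]/\epsilon^2$, hence of $E$-dimension $2$. Since $k_\sigma \leq 0$, the Hodge filtration $\Fil^0$ is everything, so $H^0(\Gal_{F_{\wp}}, B_{E[\epsilon]/\epsilon^2}(\widetilde{\chi})_{\dR,\sigma}^+) = D_{\dR,\sigma}(B_{E[\epsilon]/\epsilon^2}(\widetilde{\chi}))$ is also of $E$-dimension $2$. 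The natural map from this onto the $H^0$ of the truncation by $t^{1-k_\sigma}$ is then an isomorphism: its kernel $t^{1-k_\sigma} B_{E[\epsilon]/\epsilon^2}(\widetilde{\chi})_{\dR,\sigma}^+$ has strictly positive Hodge--Tate weights at $\sigma$, and a standard filtration with graded pieces of the form $\bC_{p,\sigma}(j)$ for $j \geq 1$, combined with Tate's vanishing $H^i(\Gal_{F_{\wp}}, \bC_{p,\sigma}(j)) = 0$ for $i \in \{0,1\}$ and $j \geq 1$, kills both its $H^0$ and its $H^1$.

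Plugging the three dimensions $0$, $2|S_c(\chi)|$ and $2d+1$ into the alternating sum of the four-term sequence then yields $(2d+1) - 2|S_c(\chi)|$. I expect the only genuine subtlety to be the dimension-$2$ computation of each summand of the direct sum; this is where the $S_c(\chi)$-de Rham hypothesis enters in an essential way, and the failure of this dimension count when the hypothesis is dropped is precisely the subtlety flagged in Remark \ref{rem: lpl-mle}(2).
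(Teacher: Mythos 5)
Your proposal is correct and is essentially the paper's own argument: the paper likewise reads the dimension off the four-term exact sequence obtained from (\ref{equ: lpl-jgw}), using Lem.\ref{equ: lpl-ebn} (applied to $\widetilde{\chi}^{\sharp}$, which has $S_c(\chi^{\sharp})=\emptyset$) for the term $2d+1$ and the vanishing of the $H^0$; it merely leaves the $2$-dimensionality of each summand $H^0(\Gal_{F_{\wp}},B_{E[\epsilon]/\epsilon^2}(\widetilde{\chi})_{\dR,\sigma}^+/t^{1-k_{\sigma}})$ implicit, which you supply correctly via the $S_c(\chi)$-de Rham hypothesis (i.e. $a_{\sigma}=0$ for $\sigma\in S_c(\chi)$) and $k_{\sigma}\le 0$.
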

The commutative diagram of $E$-$B$-pairs
 \begin{equation*}
   \begin{CD}
     0 @>>> B_E(\chi^{\sharp}) @>>> B_E(\widetilde{\chi}^{\sharp}) @>>> B_E(\chi^{\sharp}) @>>> 0 \\
     @. @V \fj VV @V \fj VV @V\fj VV @. \\
     0 @>>> B_E(\chi) @>>> B_E(\widetilde{\chi}) @>>> B_E(\chi) @>>> 0
   \end{CD}
 \end{equation*}
 induces a commutative diagram (by (\ref{equ: lpl-jgw}))
\begin{equation*}
  \begin{CD}
H^1(\Gal_{F_{\wp}},B_E(\chi^{\sharp})) @>>> H^1(\Gal_{F_{\wp}},B_{E[\epsilon]/\epsilon^2}(\widetilde{\chi}^{\sharp})) @> \kappa >> H^1(\Gal_{F_{\wp}},B_E(\chi^{\sharp})) 
\\
 @V \fj VV @V \fj VV @V \fj VV  \\
 H^1_{g,S_c(\chi)}(\Gal_{F_{\wp}},B_E(\chi)) @>>> H^1_{g,S_c(\chi)}(\Gal_{F_{\wp}},B_{E[\epsilon]/\epsilon^2}(\widetilde{\chi})) @> \kappa_g >> H^1_{g,S_c(\chi)}(\Gal_{F_{\wp}},B_E(\chi))
  \end{CD},
\end{equation*}
where all the vertical arrows are surjective, the two horizontal maps on the left are injective, and the top sequence is exact. Note by Lem.\ref{lem: lpl-fpi} and Lem.\ref{lem: lpl-sit}, $H^1_{g,S_c(\chi)}(\Gal_{F_{\wp}},B_E(\chi))=H^1_g(\Gal_{F_{\wp}},B_E(\chi))$ is of dimension $d+1-|S_c(\chi)|$. Since $\dim_E \Ima(\kappa)=d=\dim_E H^1(\Gal_{F_{\wp}},B_E(\chi^{\sharp}))-1$, one has $\dim_E \Ima(\kappa_g)\geq \dim_E H^1_{g,S_c(\chi)}(\Gal_{F_{\wp}},B_E(\chi))-1=d-|S_c(\chi)|$, which together with Lem.\ref{lem: lpl-1gb} shows that the bottom sequence is also exact and that $\dim_E \Ima(\kappa_g)= \dim_E H^1_{g,S_c(\chi)}(\Gal_{F_{\wp}},B_E(\chi))-1$ (in particular, $\kappa_g$ is not surjective).

Denote by $[X^{\sharp}]\in H^1\big(\Gal_{F_{\wp}},B_{E[\epsilon]/\epsilon^2}\big(\widetilde{\chi}^{\sharp}\big)\big)$ the preimage of $[X]\in H^1_{g,S_c(\chi)}\big(\Gal_{F_{\wp}},B_{E[\epsilon]/\epsilon^2}\big(\widetilde{\chi}\big)\big)$ via $\fj$, $[X_0^{\sharp}]:=\kappa([X^{\sharp}])$, thus $\fj([X_0^{\sharp}])=[X_0]$.  By (\ref{equ: lpl-ana}) applied to $[X_0^{\sharp}]$, one has (note that $a_{\sigma}=0$ for $\sigma\in S_c(\chi)$)
\begin{equation*}
  \sum_{\sigma\in \Sigma_{\wp}} a_{\sigma}\langle [X_0^{\sharp}], \psi_{\sigma,p}\rangle +a_{\ur} \langle [X_0^{\sharp}], \psi_{\ur}\rangle=0,
\end{equation*}
from which, together with the  definition of $\cL$-invariants for $[X_0]$ (Def.\ref{def: lpl-eln}, see in particular Rem.\ref{rem: lpl-tng}), Prop.\ref{prop: lpl-act} follows.
\section{Breuil's $\cL$-invariants}\label{sec: lpl-3}
In \cite{Br04}, to a $2$-dimensional semi-stable non-crystalline representation $V$ of $\Gal_{\Q_p}$, Breuil associated a locally analytic representation $\Pi(V)$ of $\GL_2(\Q_p)$ (Breuil also considered Banach representations, but we only focus on locally analytic representations in this paper), which can determine $V$ and in particular contains the information on the  Fontaine-Mazur $\cL$-invariant of $V$. Roughly speaking, Breuil found that certain extensions of locally analytic representations \big(of $\GL_2(\Q_p)$\big) can be parameterized by some invariants (which are referred as to \emph{Breuil's $\cL$-invariants}), and by matching these invariants with Fontaine-Mazur $\cL$-invariants, one could get a one-to-one correspondence \big(in $p$-adic Langlands for $\GL_2(\Q_p)$\big) in semi-stable non-crystalline case. In \cite{Sch10}, generalizing Breuil's theory, Schraen associated a locally $\Q_p$-analytic representation of $\GL_2(F_{\wp})$ to a $2$-dimensional semi-stable non-crystalline representation of $\Gal_{F_{\wp}}$ (although only the non-critical case was considered in \emph{loc. cit.}, Schraen's construction can easily generalize to critical case). We recall some results of \emph{loc. cit.} in this section.

Let $V$ be a $2$-dimensional semi-stable non-crystalline representation of $\Gal_{F_{\wp}}$ over $E$ of distinct Hodge-Tate weights $\ul{h}_{\Sigma_p}:=(k_{1,\sigma},k_{2,\sigma})_{\sigma\in \Sigma_{\wp}}$ ($k_{1,\sigma}<k_{2,\sigma}$, we use the convention that the Hodge-Tate weight of the cyclotomic character is $-1$), denote by $\alpha$, $q\alpha$ the eigenvalues of $\varphi^{d_0}$ on $D_{\st}(V):=\big(B_{\st} \otimes_{\Q_p}V\big)^{\Gal_{F_{\wp}}}$.
By \cite[\S 4.3]{Na}, the $E$-$B$-pair $W(V)$ admits a unique triangulation:
\begin{equation*}
  0 \ra W(\delta_1) \ra W(V) \ra W(\delta_2)\ra 0
\end{equation*}
where $\delta_1=\unr(\alpha)\prod_{\sigma\in S_n} \sigma^{k_{1,\sigma}} \prod_{\sigma\in S_c} \sigma^{k_{2,\sigma}}$, $\delta_2=\unr(q\alpha) \prod_{\sigma\in S_n} \sigma^{k_{2,\sigma}} \prod_{\sigma\in S_c}\sigma^{k_{1,\sigma}}$, $S_n$ is a subset of $\Sigma_{\wp}$, and $S_c=\Sigma_{\wp}\setminus S_n$. In fact, $S_c=S_c(\delta_1\delta_2^{-1})$ is the set of embeddings where $V$ is critical (cf. Def.\ref{def: pdeR-lir} below). Since $V$ is semi-stable non-crystalline, so is $W(V)$, one can thus associate to $V$ the Fontaine-Mazur $\cL$-invariants $\ul{\cL}_{S_n}\in E^{|S_n|}$ (see the end of \S \ref{sec: lpl-1.3}).

For $S\subseteq \Sigma_{\wp}$, let $\ul{h}_S:=(k_{1,\sigma},k_{2,\sigma})_{\sigma\in S}$ and put
\begin{equation}\label{equ: lpl-vssa}
\alg(\ul{h}_S):=\otimes_{\sigma\in S} \big(\Sym^{k_{2,\sigma}-k_{1,\sigma}-1} E^2 \otimes_E \dett^{-k_{2,\sigma}+1}\big)^{\sigma}\cong \Big(\otimes_{\sigma \in S}\big(\Sym^{k_{2,\sigma}-k_{1,\sigma}-1} E^2 \otimes_E \dett^{k_{1,\sigma}}\big)^{\sigma}\Big)^{\vee},
\end{equation} which is an irreducible algebraic representation of $\Res_{F_{\wp}/\Q_p} \GL_2$ with the action of $\GL_2(F_{\wp})$ on $(\cdot)^{\sigma}$ induced by the natural action of $\GL_2(E)$ via $\sigma$.  Put
\begin{equation}\label{equ: lpl-ado}
\chi(\alpha,\ul{h}_{S}):= \unr(\alpha) \prod_{\sigma\in S} \sigma^{-k_{1,\sigma}} \otimes \unr(\alpha) \prod_{\sigma \in S}\sigma^{-k_{2,\sigma}+1} ,
\end{equation}
which is a locally $S$-analytic character of $T(F_{\wp})$ over $E$. Consider the locally $S$-analytic parabolic induction (cf. \cite[\S 2.3]{Sch10})
$$I(\alpha,\ul{h}_{S}):=\big(\Ind_{\overline{B}(F_{\wp})}^{\GL_2(F_{\wp})} \chi(\alpha,\ul{h}_{S})\big)^{S-\an},$$
by \cite[Thm.4.1]{Br} (see also \cite[\S 2.3]{Sch10}), we have
\begin{enumerate}
  \item $\soc_{\GL_2(F_{\wp})}I(\alpha,\ul{h}_{S})\cong\big(\unr(\alpha)\circ \dett\big)\otimes_E \alg(\ul{h}_{S})=: F(\alpha,\ul{h}_{S})$; \\

  \item put $\Sigma(\alpha,\ul{h}_{S}):=I(\alpha,\ul{h}_{S})/F(\alpha,\ul{h}_{S})$, then
  \begin{equation*}\soc_{\GL_2(F_{\wp})} \Sigma(\alpha,\ul{h}_{S})\cong (\unr(\alpha) \circ \dett)\otimes_E \St \otimes_E \alg(\alpha,\ul{h}_{S})=:\St(\alpha,\ul{h}_{S}),\end{equation*} which is also the maximal locally algebraic subrepresentation of $\Sigma(\alpha,\ul{h}_{S})$, where $\St$ denotes the Steinberg representation;

  \item let $\sigma\in \Sigma_{\wp}$, $\chi(\alpha,\ul{h}_{\sigma})^c:=\unr(\alpha)\sigma^{-k_{1,\sigma}} \otimes \unr(\alpha) \sigma^{-k_{2,\sigma}+1}$, and $I_{\sigma}^c(\alpha,\ul{h}_{\sigma}):=\big(\Ind_{\overline{B}(F_{\wp})}^{\GL_2(F_{\wp})} \chi(\alpha,\ul{h}_{\sigma})^c\big)^{\sigma-\an}$ \big(which is irreducible by \cite[Thm.4.1]{Br}\big), then one has a non-split exact sequence
\begin{equation*}
  0 \ra \St(\alpha,\ul{h}_{\sigma}) \ra \Sigma(\alpha,\ul{h}_{\sigma}) \ra I_{\sigma}^c(\alpha,\ul{h}_{\sigma}) \ra 0.
\end{equation*}
\end{enumerate}

For $\cL\in E$ and $\sigma\in \Sigma_{\wp}$, let $\log_{\sigma,\cL}:=\psi_{\sigma,p}+\cL \psi_{\ur}$ (cf. \S \ref{sec: lpl-1.3.1}) which is thus the additive character of $F_{\wp}^{\times}$ in $E$ satisfying that $\log_{\sigma,\cL}|_{\co_{\wp}^{\times}}=\sigma \circ \log$ and that $\log_{\sigma,\cL}(p)=\cL$.

Let $d_n:=|S_n|$, and $\psi(\ul{\cL}_{S_n})$ be the following $(d_n+1)$-dimensional representation of $T(F_{\wp})$ over $E$
\begin{equation}\label{equ: lpl-noh}
    \psi(\ul{\cL}_{S_n})\begin{pmatrix}
      a & 0 \\ 0 & d
    \end{pmatrix} =\begin{pmatrix}
    1& \log_{\sigma_1, -\cL_{\sigma_1}}(ad^{-1}) & \log_{\sigma_2, -\cL_{\sigma_2}}(ad^{-1}) & \cdots &\log_{\sigma_{|d_n|}, -\cL_{\sigma_{d_n}}}(ad^{-1}) \\ 0 & 1 & 0 & \cdots & 0 \\ 0 & 0 & 1 &\cdots & 0\\
    \vdots & \vdots &\vdots &\ddots & \vdots \\
    0 & 0 & 0 & \cdots & 1
  \end{pmatrix},
\end{equation}
with $\sigma_i\in S_n$.  One gets thus an exact sequence of locally $S_n$-analytic representations of $\GL_2(F_{\wp})$:
\begin{equation}\label{equ: lpl-bpw2}
  0 \lra I(\alpha,\ul{h}_{S_n}) \lra \Big(\Ind_{\overline{B}(F_{\wp})}^{\GL_2(F_{\wp})} \chi(\alpha,\ul{h}_{S_n})\otimes_E \psi(\ul{\cL}_{S_n})\Big)^{S_n-\an} \xlongrightarrow{s}I(\alpha,\ul{h}_{S_n})^{\oplus d_n} \lra 0.
\end{equation}
Put $\Sigma\big(\alpha,\ul{h}_{S_n},\ul{\cL}_{S_n}\big):=s^{-1}\big(F(\alpha,\ul{h}_{S_n})^{\oplus d_n}\big)/F(\alpha,\ul{h}_{S_n})$, which is thus an extension of $d_n$-copies of $F(\alpha,\ul{h}_{S_n})$ by $\Sigma(\alpha,\ul{h}_{S_n})$:
\begin{equation*}
  \begindc{\commdiag}[30]
    \obj(0,2)[a]{$\Sigma\big(\alpha,\ul{h}_{S_n}\big)$}
    \obj(4,3)[b]{$F(\alpha,\ul{h}_{S_n})$}
    \obj(4,2)[c]{$F(\alpha,\ul{h}_{S_n})$}
  \obj(4,1)[d]{$\vdots$}
    \obj(4,0)[e]{$F(\alpha,\ul{h}_{S_n})$}
    \mor{a}{b}{$-\cL_{\sigma_1}$}[+1,2]
    \mor{a}{c}{$-\cL_{\sigma_2}$}[+1,2]
    \mor{a}{e}{$-\cL_{\sigma_{d_n}}$}[+1,2]
  \enddc
  .
\end{equation*}
\begin{remark}\label{rem: lpl-tagia}(1) Let $\ul{\cL}'_{S_n}\in E^{d_n}$, as in \cite[Prop.4.13]{Sch10}, one can show $\Sigma\big(\alpha,\ul{h}_{S_n},\ul{\cL}'_{S_n}\big)\cong \Sigma\big(\alpha, \ul{h}_{S_n}, \ul{\cL}_{S_n}\big)$ if and only if $\ul{\cL}'_{S_n}=\ul{\cL}_{S_n}$. In particular, one can recover the data $\{\alpha, \ul{h}_{S_n}, \ul{\cL}_{S_n}\}$ from $\Sigma\big(\alpha,\ul{h}_{S_n},\ul{\cL}_{S_n}\big)$.

(2) Let $\ul{h}'_{S_n}=(k_{1,\sigma}',k_{2,\sigma}')_{\sigma\in S_n}\in \Z^{2|S_n|}$ with $k_{1,\sigma}'-k_{1,\sigma}=k_{2,\sigma}'-k_{2,\sigma}=n_{\sigma}$, thus $\alg(\ul{h}'_{S_n})\cong \alg(\ul{h}_{S_n})\otimes_E \otimes_{\sigma\in S_n} \sigma\circ \dett^{n_{\sigma}}$. It's straightforward to see $\Sigma\big(\alpha,\ul{h}'_{S_n},\ul{\cL}_{S_n}\big)\cong \Sigma\big(\alpha,\ul{h}_{S_n},\ul{\cL}_{S_n}\big)\otimes_E \big(\otimes_{\sigma\in S_n} \sigma\circ \dett^{n_{\sigma}}\big)$.

(3) By replacing the terms $\log_{\sigma_i, -\cL_{\sigma_i}}(ad^{-1})$ in $\psi(\ul{\cL}_{S_n})$ by $\log_{\sigma_i, -\cL_{\sigma_i}}(ad^{-1}) + \chi_i \circ \dett$ with an arbitrary  locally $\sigma_i$-analytic (additive)  character $\chi_i$ of $F_{\wp}^{\times}$ in $E$, one can get a locally $\Q_p$-analytic representation $\Sigma\big(\alpha,\ul{h}_{S_n},\ul{\cL}_{S_n}\big)'$ in the same way as $\Sigma\big(\alpha,\ul{h}_{S_n},\ul{\cL}_{S_n}\big)$. By some cohomology arguments in \cite[\S 4.3]{Sch10} (see \cite[Lem.4.4]{Ding3}), one can actually  prove
\begin{equation}\label{equ: lpl-agia}
\Sigma\big(\alpha,\ul{h}_{S_n},\ul{\cL}_{S_n}\big)'\cong\Sigma\big(\alpha,\ul{h}_{S_n},\ul{\cL}_{S_n}\big).\end{equation}

 (4) For $\sigma\in S_n$, denote by $\psi(\cL_{\sigma})$ the following $2$-dimensional representation of $T(F_{\wp})$:
\begin{equation*}
    \psi(\cL_{\sigma})\begin{pmatrix}
      a & 0 \\ 0 & d
    \end{pmatrix} =\begin{pmatrix}
    1& \log_{\sigma, -\cL_{\sigma}}(ad^{-1})  \\ 0 & 1
  \end{pmatrix}.
\end{equation*}
One has thus an exact sequence
\begin{equation*}
  0 \lra I(\alpha,\ul{h}_{S_n}) \lra \Big(\Ind_{\overline{B}(F_{\wp})}^{\GL_2(F_{\wp})} \chi(\alpha,\ul{h}_{S_n})\otimes_E \psi(\cL_{\sigma})\Big)^{S_n-\an} \xlongrightarrow{s_{\sigma}}I(\alpha,\ul{h}_{S_n}) \lra 0.
\end{equation*}
Put $\Sigma(\alpha,\ul{h}_{S_n},\cL_{\sigma}):=s_{\sigma}^{-1}\big(F(\alpha,\ul{h}_{S_n})\big)/F(\alpha,\ul{h}_{S_n})$,
the following isomorphism is straightforward:
\begin{equation}
  \Sigma(\alpha,\ul{h}_{S_n},\cL_{\sigma_1})\oplus_{\Sigma(\alpha,\ul{h}_{S_n})} \Sigma(\alpha,\ul{h}_{S_n},\cL_{\sigma_2})\oplus_{\Sigma(\alpha,\ul{h}_{S_n})}\cdots \oplus_{\Sigma(\alpha,\ul{h}_{S_n})}\Sigma(\alpha,\ul{h}_{S_n},\cL_{\sigma_{d_n}}) \xlongrightarrow{\sim} \Sigma\big(\alpha,\ul{h}_{S_n},\ul{\cL}_{S_n}\big).
\end{equation}

\end{remark}

Put $F(\alpha,\ul{h}_{\Sigma_{\wp}}):=F(\alpha,\ul{h}_{S_n})\otimes_E \alg(\ul{h}_{S_c})$, $\Sigma(\alpha,\ul{h}_{\Sigma_{\wp}}):=\Sigma(\alpha,\ul{h}_{S_n})\otimes_E \alg(\ul{h}_{S_c})$, and  $\Sigma\big(\alpha,\ul{h}_{\Sigma_{\wp}},\ul{\cL}_{S_n}):=\Sigma\big(\alpha,\ul{h}_{S_n},\ul{\cL}_{S_n}\big)\otimes_E \alg(\ul{h}_{S_c})$ which is thus an extension of $d_n$-copies of $F(\alpha,\ul{h}_{\Sigma_{\wp}})$ by $\Sigma(\alpha,\ul{h}_{S_n})$, and carries the information of $\{\alpha,\ul{h}_{\Sigma_{\wp}},\ul{\cL}_{S_n}\}$. For $\sigma\in \Sigma_{\wp}$, put $I_{\sigma}^c(\alpha,\ul{h}_{\Sigma_{\wp}}):=I_{\sigma}^c(\alpha,\ul{h}_{\sigma})\otimes_E \alg(\ul{h}_{\Sigma_{\wp}\setminus \{\sigma\}})$.

\section{Local-global compatibility}
We prove some local-global compatibility results for completed cohomology of quaternion Shimura curves in semi-stable non-crystalline case. 




\subsection{Setup and notations}\label{sec: lpl-4.1}Let $F$ be a totally real field of degree $d$ over $\Q$, denote by $\Sigma_{\infty}$ the set of real embeddings of $F$.  For a finite place $\fl$ of $F$, we denote by $F_{\fl}$ the completion of $F$ at $\fl$, $\co_{\fl}$ the ring of integers of $F_{\fl}$ with $\varpi_{\fl}$ a uniformiser of $\co_{\fl}$. Denote by $\bA$ the ring of adeles of $\Q$ and $\bA_F$ the ring of adeles of $F$. For a set $S$ of places of $\Q$ (resp. of $F$), we denote by $\bA^S$ (resp. by $\bA_F^S$) the ring of adeles of $\Q$ (resp. of $F$) outside $S$, $S_F$ the set of places of $F$ above that in $S$, and $\bA_F^S:=\bA_F^{S_F}$.

Let $p$ be a prime number, suppose there exists only one prime $\wp$ of $F$ lying above $p$. Denote by $\Sigma_{\wp}$ the set of $\Q_p$-embeddings of $F_{\wp}$ in $\overline{\Q_p}$; let $\varpi$ be a uniformizer of $\co_{\wp}$, $F_{\wp,0}$ the maximal unramified extension of $\Q_p$ in $F_{\wp}$, $d_0:=[F_{\wp,0}:\Q_p]$, $e:=[F_{\wp}:F_{\wp,0}]$, $q:=p^{d_0}$ and $\us_{\wp}$ a $p$-adic valuation on $\overline{\Q_p}$ normalized by $\us_{\wp}(\varpi)=1$. Let $E$ be a finite extension of $\Q_p$ big enough such that $E$ contains all the $\Q_p$-embeddings of $F$ in $\overline{\Q_p}$, $\co_E$ the ring of integers of $E$ and $\varpi_E$ a uniformizer of $\co_E$.

Let $B$ be a quaternion algebra of center $F$ with $S(B)$ the set (of even cardinality) of places of $F$ where $B$ is ramified, suppose $|S(B)\cap \Sigma_{\infty}|=d-1$ and $S(B)\cap \Sigma_{\wp}=\emptyset$, i.e. there exists $\tau_{\infty}\in \Sigma_{\infty}$ such that $B\otimes_{F,\tau_{\infty}} \R\cong M_2(\R)$, $B\otimes_{F,\sigma} \R \cong \bH$ for all $\sigma\in \Sigma_{\infty}\setminus \{\tau_{\infty}\}$, where $\bH$ denotes the Hamilton algebra, and $B\otimes_{\Q} \Q_p\cong M_2(F_{\wp})$. We associate to $B$ a reductive algebraic group $G$ over $\Q$ with $G(R):=(B\otimes_{\Q} R)^{\times}$ for any $\Q$-algebra $R$. Set $\bS:=\Res_{\bC/\R}\bG_m$, and denote by $h$ the morphism
\begin{equation*}
  h: \bS(\R)\cong \bC^{\times} \lra G(\R)\cong \GL_2(\R)\times (\bH^*)^{d-1}, \ a+bi\mapsto \bigg(\begin{pmatrix}a&b\\ -b&a\end{pmatrix}, 1, \cdots, 1\bigg).
\end{equation*}
The space of $G(\R)$-conjugacy classes of $h$ has a structure of complex manifold, and is isomorphic to $\fh^{\pm}:=\bC \setminus \R$ (i.e. $2$ copies of the Poincar\'e's upper half plane). We get a projective system of Riemann surfaces indexed by open compact subgroups of $G(\bA^{\infty})$:
\begin{equation*}
  M_K(\bC):=G(\Q)\setminus \big(\fh^{\pm} \times (G(\bA^{\infty})/K)\big)
\end{equation*}
where $G(\Q)$ acts on $\fh^{\pm}$ via $G(\Q)\hookrightarrow G(\R)$ and the transition map is given by
\begin{equation}\label{equ: clin-sab}
G(\Q)\setminus \big(\fh^{\pm} \times (G(\bA^{\infty})/K_1)\big) \lra G(\Q)\setminus \big(\fh^{\pm} \times (G(\bA^{\infty})/K_2)\big), \ (x,g)\mapsto (x,g),
\end{equation} for $K_1\subseteq K_2$. It's known that $M_K(\bC)$ has a canonical proper smooth model over $F$ (via the embedding $\tau_{\infty}$), denoted by $M_K$, and these $\{M_K\}_{K}$ form a projective system of proper smooth algebraic curves over $F$.  
Note that one has a natural isomorphism $G(\Q_p) \xlongrightarrow{\sim} \GL_2(F_{\wp})$.
  For an open compact subgroup $K$ of $G(\bA^{\infty})$, let $K_p:=K\cap G(\Q_p)$, and $K^{p}:=K\cap G(\bA^{\infty,p})$, so one has $K=K^{p}K_{p}$.

Let $K_{\wp,0}:=\GL_2(\co_{\wp})$, and in the following, we fix an open compact subgroup $K^{p}$ of $G(\bA^{\infty,p})$ of the form $\prod_{v\nmid p} K_v$ small enough such that $K^pK_{\wp,0}$ is neat (e.g. see \cite[Def.4.11]{New}). Denote by $S(K^{p})$ the set of finite places $\fl$ of $F$ such that $p \nmid \fl$, that $B$ is split at $\fl$, i.e. $B\otimes_F F_{\fl}\xlongrightarrow{\sim}  \mathrm{M}_2(F_{\fl})$, and that $K^{p}\cap \GL_2(F_{\fl})\cong \GL_2(\co_{\fl})$. Denote by $\cH^p$ the commutative $\co_E$-algebra generated by the double coset operators $[\GL_2(\co_{\fl}) g_{\fl} \GL_2(\co_{\fl})]$ for all $g_{\fl}\in \GL_2(F_{\fl})$ with $\dett(g_{\fl})\in \co_{\fl}$ and for all $\fl \in S(K^p)$. Set
\begin{eqnarray*}T_{\fl}&:=&\bigg[\GL_2(\co_{\fl})\begin{pmatrix} \varpi_{\fl} & 0 \\ 0 & 1\end{pmatrix}\GL_2(\co_{\fl})\bigg], \\
S_{\fl} &:=& \bigg[\GL_2(\co_{\fl})\begin{pmatrix} \varpi_{\fl} & 0 \\ 0 & \varpi_{\fl}\end{pmatrix}\GL_2(\co_{\fl})\bigg],
\end{eqnarray*}
then $\cH^p$ is the polynomial algebra over $\co_E$ generated by $\{T_{\fl}, S_{\fl}\}_{\fl \in S(K^p)}$.

Denote by $Z_0$ the kernel of the norm map $\sN: \Res_{F/\Q} \bG_m \ra \bG_m$ which is a subgroup of $Z=\Res_{F/\Q} \bG_m$. We set $G^c:=G/Z_0$.

For a  Banach representation $V$ of $\GL_2(F_{\wp})$ over $E$ (cf. \cite[\S 2]{ST}), denote by $V_{\Q_p-\an}$ the $E$-vector subspace generated by the locally $\Q_p$-analytic vectors of $V$, which is stable by $\GL_2(F_{\wp})$ and hence is a locally $\Q_p$-analytic representation of $\GL_2(F_{\wp})$. If $V$ is moreover admissible, by \cite[Thm.7.1]{ST03}, $V_{\Q_p-\an}$ is an admissible locally $\Q_p$-analytic representation of $\GL_2(F_{\wp})$ and  dense in $V$. For $J\subseteq \Sigma_{\wp}$, denote by $V_{J-\an}$ the subrepresentation generated by locally $J$-analytic vectors of $V_{\Q_p-\an}$ (cf. \cite[\S 2]{Sch10}), put $V_{\infty}:=V_{\emptyset-\an}$.

Let $A$ be a local artinian $E$-algebra, for a locally $\Q_p$-analytic character  $\chi=\chi_1\otimes \chi_2$ of $T(L)$ over $A$, let $\wt(\chi):=(\wt(\chi)_{1,\sigma},\wt(\chi)_{2,\sigma})_{\sigma\in \Sigma_{\wp}}:=(\wt(\chi_1)_{\sigma},\wt(\chi_2)_{\sigma})_{\sigma\in \Sigma_{\wp}}\in A^{2|d|}$ be the weight of $\chi$. For an integer weight $\lambda\in \Z^{2|d|}$, denote by $\delta_{\lambda}$ the algebraic character of $T(L)$ over $E$ with weight $\lambda$.

Let $V$ be an $E$-vector space equipped with an $E$-linear action of $A$ (with $A$ a set of operators), $\chi$ a system of eigenvalues of $A$, denote by $V^{A=\chi}$ the $\chi$-eigenspace, $V[A=\chi]$ the generalized $\chi$-eigenspace, $V^A$ the vector space of $A$-fixed vectors.
\subsection{Completed cohomology and eigenvarieties}\label{sec: lpl-4.2}We recall the construction of eigenvarieties from completed cohomology of quaternion Shimura curves and survey some properties. 
\subsubsection{Completed cohomology of quaternion Shimura curves}
Let $W$ be a finite dimensional algebraic representation of $G^c$ over $E$, as in \cite[\S 2.1]{Ca2}, one can associate to $W$ a local system $\cV_W$ of $E$-vector spaces over $M_K$. Let $W_0$ be an $\co_E$-lattice of $W$, denote by $\cS_{W_0}$ the set (ordered by inclusions) of open compact subgroups of $G(\Q_p)\cong \GL_2(F_{\wp})$ which stabilize $W_0$. For any $K_{p}\in S_{W_0}$, one can associate to $W_0$ \big(resp. to $W_0/\varpi_E^s$ for $s\in \Z_{\geq 1}$\big) a local system $\cV_{W_0}$ \big(resp. $\cV_{W_0/\varpi_E^s}$\big) of $\co_E$-modules \big(resp. of $\co_E/\varpi_E^s$-modules\big) over $M_{K_pK^p}$.  Following Emerton (\cite{Em1}), we put
\begin{eqnarray*}
   H^i_{\et}(K^p,W_0)&:=&\varinjlim_{K_p\in \cS_{W_0}} H^i_{\et} \big( M_{K_pK^p, \overline{\Q}}, \cV_{W_0}\big) \\
&\cong& \varinjlim_{K_p\in \cS_{W_0}} \varprojlim_{s} H^i_{\et} ( M_{K_pK^p, \overline{\Q}}, \cV_{W_0/\varpi_E^s});\\
\widetilde{H}^i_{\et}(K^p,W_0)&:=& \varprojlim_{s}\varinjlim_{K_p\in \cS_{W_0}}H^i_{\et} ( M_{K_pK^p,\overline{\Q}}, \cV_{W_0/\varpi_E^s});\\
H^i_{\et}(K^p,W_0)_E &:=& H^i_{\et}(K^p,W_0) \otimes_{\co_E}E; \\
\widetilde{H}^i_{\et}(K^p,W_0)_E &:=&\widetilde{H}^i_{\et}(K^p,W_0) \otimes_{\co_E}E.
\end{eqnarray*}
All these groups ($\co_E$-modules or $E$-vector spaces) are equipped with a natural topology induced from the discrete topology on the finite group $H^i_{\et} \big( M_{K_pK^p, \overline{\Q}}, \cV_{W_0/\varpi_E^s}\big)$, and equipped with a natural continuous action of $\cH^p\times \Gal_{F}$ and of $K_p\in \cS_{W_0}$. Moreover, for any $\fl\in S(K^p)$, the action of $\Gal_{F_{\fl}}$ (induced by that of $\Gal_{F}$)  is unramified and satisfies the Eichler-Shimura relation:
\begin{equation*}\label{equ: clin-llf-}
  \Frob_{\fl}^{-2} -T_{\fl}\Frob_{\fl}^{-1}+\ell^{f_{\fl}}S_{\fl}=0
\end{equation*}
where $\Frob_{\fl}$ denotes the arithmetic Frobenius, $\ell$ the prime number lying below $\fl$, $f_{\fl}$ the degree of the maximal unramified extension in $F_{\fl}$ over $\Q_{\ell}$ (thus $\ell^{f_{\fl}}= |\co_{\fl}/\varpi_{\fl}|$). Note that $\widetilde{H}^i_{\et}(K^p,W_0)_E$ is an $E$-Banach space with the norm defined by the $\co_E$-lattice $\widetilde{H}^i_{\et}(K^p,W_0)$.

Consider the ordered set (by inclusion) $\{W_0\}$ of $\co_E$-lattices of $W$, following \cite[Def.2.2.9]{Em1}, we put
\begin{eqnarray*}
H^i_{\et}(K^p,W)&:=& \varinjlim_{W_0} H^i_{\et}(K^p,W_0)_E, \\
\widetilde{H}^i_{\et}(K^p,W)&:=& \varinjlim_{W_0} \widetilde{H}^i_{\et}(K^p,W_0)_E,
\end{eqnarray*}
where all the transition maps are topological isomorphisms (cf. \cite[Lem.2.2.8]{Em1}). These $E$-vector spaces are moreover equipped with a natural continuous action of $\GL_2(F_{\wp})$ (cf. \cite[Lem.2.2.10]{Em1}).
\begin{theorem}[$\text{cf. \cite[Thm.2.2.11 (i), Thm.2.2.17]{Em1}}$]\label{thm: lpl-enc}(1) The $E$-Banach space $\widetilde{H}^i_{\et}(K^p,W)$ is an admissible Banach representation of $\GL_2(F_{\wp})$. If $W$ is the trivial representation, the $\GL_2(F_{\wp})$-representation $\widetilde{H}^i_{\et}(K^p,W)$ is unitary.

(2) One has a natural isomorphism of Banach representations of $\GL_2(F_{\wp})$ invariant under the action of $\cH^p \times \Gal(\overline{F}/F)$:
\begin{equation*}\label{equ: clin-wpkw}
  \widetilde{H}^i_{\et}(K^p,W) \xlongrightarrow{\sim} \widetilde{H}^i_{\et}(K^p,E)\otimes_E W.
\end{equation*}

(3) One has a natural $\GL_2(F_{\wp})\times \cH^p \times \Gal_F$-invariant map
\begin{equation*}\label{equ: clin-ehiw}
  H^i_{\et}(K^p,W) \lra \widetilde{H}^i_{\et}(K^p,W).
\end{equation*}
\end{theorem}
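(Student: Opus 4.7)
The plan is to follow Emerton's strategy from \cite{Em1}; all three parts flow from the same basic construction. For (1), I would invoke Emerton's admissibility criterion \big(cf.\ \cite[Prop.2.4.9]{Em1}\big), reducing the task to showing that for each sufficiently small $K_p \in \cS_{W_0}$, the $\co_E/\varpi_E$-module $\big(\widetilde{H}^i_{\et}(K^p,W_0)/\varpi_E\big)^{K_p}$ is finite. Since $M_{K_pK^p}$ is a proper smooth curve over $F$, each $H^i_{\et}(M_{K_pK^p,\overline F}, \cV_{W_0/\varpi_E^s})$ is a finitely generated $\co_E/\varpi_E^s$-module. A Mittag-Leffler / cofinality argument then gives an injection $\widetilde{H}^i_{\et}(K^p,W_0)^{K_p} \hookrightarrow \varprojlim_s H^i_{\et}(M_{K_pK^p,\overline F}, \cV_{W_0/\varpi_E^s})$ whose target is finitely generated over $\co_E$; reducing mod $\varpi_E$ yields the desired finiteness. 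When $W$ is trivial, $\widetilde{H}^i_{\et}(K^p,\co_E)$ provides a $\GL_2(F_{\wp})$-stable $\co_E$-lattice, giving unitarity.

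For (2), the point is that the local systems become constant at deep enough level. Pick $K_p \in \cS_{W_0}$ and a normal open subgroup $K_p' \trianglelefteq K_p$ acting trivially on $W_0/\varpi_E^s$. Then the pullback of $\cV_{W_0/\varpi_E^s}$ to $M_{K_p'K^p}$ is the constant sheaf $W_0/\varpi_E^s$, giving a $K_p$-equivariant isomorphism
\[
H^i_{\et}(M_{K_p'K^p,\overline F}, \cV_{W_0/\varpi_E^s}) \;\cong\; H^i_{\et}(M_{K_p'K^p,\overline F}, \co_E/\varpi_E^s) \otimes_{\co_E/\varpi_E^s} W_0/\varpi_E^s,
\]
with $K_p$ acting diagonally on the right. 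Passing to the colimit in $K_p'$, then to the inverse limit in $s$, then inverting $p$ and extending to lattices $W_0 \subseteq W$ produces the claimed $\GL_2(F_{\wp}) \times \cH^p \times \Gal_F$-equivariant isomorphism, the equivariance being built into the construction.

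For (3), the map is a routine colimit/limit manipulation: reduction mod $\varpi_E^s$ induces $H^i_{\et}(M_{K_pK^p,\overline F}, \cV_{W_0}) \to H^i_{\et}(M_{K_pK^p,\overline F}, \cV_{W_0/\varpi_E^s})$, and these are compatible in both $s$ and $K_p$, assembling into a canonical map from $\varinjlim_{K_p} H^i_{\et}(M_{K_pK^p,\overline F}, \cV_{W_0})$ to $\varprojlim_s \varinjlim_{K_p} H^i_{\et}(M_{K_pK^p,\overline F}, \cV_{W_0/\varpi_E^s})$; inverting $p$ and passing to the colimit over $W_0$ defines the comparison map, with equivariance visible at each stage. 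The main obstacle is part (1), specifically the need to commute $\varprojlim_s$ with $(\cdot)^{K_p}$ and to establish finiteness of the mod-$\varpi_E$ invariants; this is where Emerton's careful topological analysis \big(and the properness of $M_{K_pK^p}$, used to guarantee finiteness of the mod-$\varpi_E^s$ cohomology at each level\big) is essential.
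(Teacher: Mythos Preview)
The paper does not give its own proof of this theorem: it is stated with a direct citation to Emerton \cite[Thm.~2.2.11(i), Thm.~2.2.17]{Em1} and used as a black box. Your proposal is a faithful sketch of Emerton's arguments in \emph{loc.\ cit.}, so there is nothing to compare against on the paper's side; your outline of admissibility via finiteness of mod-$\varpi_E$ invariants, trivialization of the local system at deep level for part (2), and the colimit/limit construction for part (3) is correct and is essentially how Emerton proceeds.
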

Let $\rho$ be a $2$-dimensional continuous representation of $\Gal_{F}$ over $E$ such that $\rho$ is unramified at all $\fl\in S(K^p)$ and that the reduction $\overline{\rho}$ over $k_E$ (up to semi-simplification a priori) is absolutely irreducible. To $\overline{\rho}$, one can associate a maximal ideal $\fm(\overline{\rho})$ of $\cH^p$ as the kernel of the following morphism
\begin{equation*}
  \cH^p \lra k_E:=\co_E/\varpi_E, \ T_{\fl} \mapsto \tr(\Frob_{\fl}^{-1}),\ S_{\fl} \mapsto \ell^{-f_{\fl}}\dett(\Frob_{\fl}^{-1}), \  \forall\ \fl\in S(K^p).
\end{equation*}For an $\cH^p$-module $M$, denote by $M_{\overline{\rho}}$ the localisation of $M$ at $\fm(\rho)$.
%

Put $Z_1:=1+2\varpi \co_{\wp} \subseteq Z(\GL_2(F_{\wp}))$ the center of $\GL_2(F_{\wp})$.
Put\begin{equation*}\label{equ: clin-2ls}U_1:=\{g_{\wp}\in 1+2\varpi M_2(\co_{\wp})\ |\ \dett(g_{\wp})=1\},\end{equation*} and $H_{\wp}:=Z_1U_1$ which is a pro-$p$ open compact subgroup of $\GL_2(\co_{\wp})$.
\begin{proposition}\label{prop: lpl-rnf}Let $W$ be an irreducible algebraic representation of $G^c$, and suppose $\widetilde{H}^1_{\et}(K^p,W)_{\overline{\rho}}\neq 0$.

 (1) The natural morphism
  \begin{equation*}
  H^1_{\et}(K^p,W)_{\overline{\rho}} \lra \widetilde{H}^1_{\et}(K^p,W)_{\overline{\rho},\infty}
  \end{equation*}
  is an isomorphism, where $\infty$ denotes the smooth vectors for the action of $\GL_2(F_{\wp})$.

  (2) Let $\psi$ be a continuous character of $Z_1$ such that $\psi|_{\overline{(Z(\Q)\cap K^{p}H_{\wp})_{p}}}=1$, then one has an isomorphism of $H_{\wp}$-representations
  \begin{equation*}
   \widetilde{H}_{\et}^1(K^{p},W)_{\overline{\rho}}^{Z_1=\psi}\xlongrightarrow{\sim} \cC\big(U_1, E\big)^{\oplus r}
  \end{equation*}
  where $Z_1$ acts on $\cC\big(U_1, E\big)^{\oplus r}$ by the character $\psi$, and $U_1$ by the right regular action.
\end{proposition}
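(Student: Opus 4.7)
Both parts rest on a single input: because $\overline{\rho}$ is absolutely irreducible, the localizations $\widetilde{H}^i_{\et}(K^p,W)_{\overline{\rho}}$ vanish for $i\neq 1$. For $i=0$, the smooth admissible $\GL_2(F_{\wp})$-representation $\widetilde{H}^0_{\et}(K^p,W)$ is a sum of one-dimensional pieces whose attached Galois representations are, via the Eichler--Shimura relation, reducible (essentially direct sums of trivial and cyclotomic characters, suitably twisted by $W$), so they cannot meet $\fm(\overline{\rho})$. For $i=2$, Poincar\'e duality on the tower of proper smooth curves $\{M_{K_pK^p}\}$ identifies this, after taking limits over $s$ and levels, with a Tate twist of $\widetilde{H}^0$, which vanishes for the same reason.

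\textbf{Proof of (1).} For $K_p\in\cS_{W_0}$ sufficiently small, Emerton's Hochschild--Serre spectral sequence reads
\begin{equation*}
  E_2^{i,j}=H^i_{\mathrm{cts}}(K_p,\widetilde{H}^j_{\et}(K^p,W))\Longrightarrow H^{i+j}_{\et}(M_{K_pK^p,\overline{F}},\cV_W).
\end{equation*}
Localizing at $\overline{\rho}$, the rows $j=0$ and $j=2$ disappear by the vanishing above, producing an isomorphism $\widetilde{H}^1_{\et}(K^p,W)^{K_p}_{\overline{\rho}}\xrightarrow{\sim} H^1_{\et}(M_{K_pK^p,\overline{F}},\cV_W)_{\overline{\rho}}$. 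Taking the direct limit over $K_p\in\cS_{W_0}$ gives (1), since $\widetilde{H}^1_{\et}(K^p,W)_{\overline{\rho},\infty}=\varinjlim_{K_p}\widetilde{H}^1_{\et}(K^p,W)^{K_p}_{\overline{\rho}}$ and the right-hand side recovers $H^1_{\et}(K^p,W)_{\overline{\rho}}$.

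\textbf{Proof of (2).} By Thm.\ref{thm: lpl-enc}(2) we reduce to the case $W=E$. Working with integral coefficients and reducing modulo $\varpi_E^s$, the same vanishing (for finite coefficients) combined with Hochschild--Serre for open normal pro-$p$ subgroups $U\subseteq U_1$ forces $\widetilde{H}^1_{\et}(K^p,\co_E/\varpi_E^s)^{Z_1=\psi}_{\overline{\rho}}$ to be $U$-cohomologically trivial, hence a finitely generated free $(\co_E/\varpi_E^s)[U_1/U]$-module. Passing to the limit over $U$ and $s$ yields that $\widetilde{H}^1_{\et}(K^p,\co_E)^{Z_1=\psi}_{\overline{\rho}}$ is free of finite rank $r$ over $\co_E[[U_1]]/(\psi-1)$. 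Schikhof duality then converts this into the claimed identification of the $E$-Banach space $\widetilde{H}^1_{\et}(K^p,E)^{Z_1=\psi}_{\overline{\rho}}$ with $\cC(U_1,E)^{\oplus r}$, with $Z_1$ acting by $\psi$ and $U_1$ by right translation on $\cC(U_1,E)$.

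\textbf{Main obstacle.} The main technical step is the vanishing $\widetilde{H}^2_{\et}(K^p,W)_{\overline{\rho}}=0$: this must be proved compatibly with the inverse limits defining completed cohomology, and here the neatness of $K^pK_{\wp,0}$ together with Poincar\'e duality across the entire tower $\{M_{K_pK^p}\}$ is essential. A secondary subtlety in (2) is aligning the central character $\psi$ with the quotient by the appropriate ideal in $\co_E[[U_1]]$, which is precisely what the hypothesis $\psi|_{\overline{(Z(\Q)\cap K^pH_{\wp})_p}}=1$ ensures, so that the $Z_1$-isotypic component interacts well with the $U_1$-module structure.
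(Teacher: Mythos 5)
Your proposal is correct and is essentially the paper's own proof: the paper simply cites \cite[Prop.5.2]{New} for (1) and the arguments of \cite[\S 5]{New} (cf. \cite[Cor.2.5]{Ding3}) for (2), and what you have written is a faithful reconstruction of exactly those arguments — vanishing of $\widetilde{H}^0_{\et}$ and $\widetilde{H}^2_{\et}$ after localization at the non-Eisenstein ideal $\fm(\overline{\rho})$, degeneration of Emerton's Hochschild--Serre spectral sequence for (1), and cohomological triviality giving freeness over the Iwasawa algebra of $U_1$ followed by duality for (2).
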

\begin{proof}
  Part (1) follows from \cite[Prop.5.2]{New}. Part (2) follows by the same arguments as in \cite[\S 5]{New} (see also \cite[Cor.2.5]{Ding3}).
\end{proof}
\begin{remark}
  One can check $\overline{(Z(\Q)\cap K^pH_{\wp})_p}\subseteq Z_0(\Q_p)$, in particular, any continuous character of $Z_1$ factoring through $Z_1/(Z_1\cap Z_0(\Q_p))$ satisfies the assumption in Prop.\ref{prop: lpl-rnf} (2).
\end{remark}

\subsubsection{Eigenvarieties}\label{sec: lpl-4.2.2}
Let $J\subseteq \Sigma_{\wp}$, $k_{\sigma}\in 2\Z_{\geq 1}$ for all $\sigma\in J$ and $w\in 2\Z$, we set
\begin{equation*}
  W(\ul{k}_{J},w):=\otimes_{\sigma\in J} \big(\Sym^{k_{\sigma}-2} E^2 \otimes_E \dett^{\frac{w-k_{\sigma}+2}{2}}\big)^{\sigma}\otimes_E \big( \otimes_{\sigma\in \Sigma_{\wp}\setminus J} (\dett^{\frac{w}{2}})^{\sigma}\big) ,
\end{equation*}
which is an irreducible algebraic representation of  $G$  over $E$ with the action of $\GL_2(F_{\wp})$ on $(*)^{\sigma}$ induced from the standard action of $\GL_2(E)$ via $\sigma: \GL_2(F_{\wp})\hookrightarrow \GL_2(E)$. Note the central character of $W(\ul{k}_J,w)$ is given by $\sN^{w}$ (where $\sN$ denotes the norm map), thus $W(\ul{k}_J,w)$ can be viewed as an algebraic representation of $G^c$. One has $W(\ul{k}_J,w')=W(\ul{k}_J,w)\otimes_E W(\ul{2}_J,w'-w)$ (where $w'\in 2\Z$), and $W(\ul{k}_{J},w)=W(\ul{k}_{J'},w)\otimes_E W(\ul{k}_{J\setminus J'},0)$ for $J'\subseteq J$.

Let $\rho$ be a $2$-dimensional continuous representation of $\Gal_{F}$ over $E$, absolutely irreducible modulo $\varpi_E$, such that $\rho$ is unramified at all $\fl\in S(K^p)$ and that $\widetilde{H}^1_{\et}(K^p,E)_{\overline{\rho}}\neq 0$.
By Thm.\ref{thm: lpl-enc} (2),  $\widetilde{H}^1_{\et}(K^p,W(\ul{k}_J,w))\cong \widetilde{H}^1_{\et}(K^p,E)\otimes_E W(\ul{k}_J,w)$, thus $\widetilde{H}^1_{\et}(K^p,W(\ul{k}_J,w))_{\overline{\rho}}\neq 0$ for $J\subseteq \Sigma_{\wp}$. Put $\Pi:=\widetilde{H}^1_{\et}(K^p,E)_{\overline{\rho},\Q_p-\an}$, and for $J\subseteq \Sigma_{\wp}$, put
\begin{equation*}
  \Pi(\ul{k}_{J},w):=\widetilde{H}^1_{\et}(K^p,W(\ul{k}_{J},w))_{\overline{\rho},{\Sigma_{\wp}\setminus J}-\an}\otimes_E W(\ul{k}_{J},w)^{\vee},
\end{equation*}
which is in fact a closed subrepresentation of $\Pi$. Indeed, we have
\begin{multline*}
\widetilde{H}^1_{\et}(K^p,W(\ul{k}_{J},w))_{\overline{\rho},{\Sigma_{\wp}\setminus J}-\an}\otimes_E W(\ul{k}_{J},w)^{\vee} \xlongrightarrow{\sim} \big(\Pi \otimes_E W(\ul{k}_{J},w)\big)_{{\Sigma_{\wp}\setminus J}-\an}\otimes_E W(\ul{k}_{J},w)^{\vee} \\
\xlongrightarrow{\sim} \Big(\Pi \otimes_E\otimes_{\sigma\in J}\big(\Sym^{k_{\sigma}-2} E^2 \otimes_E \dett^{\frac{w-k_{\sigma}+2}{2}}\big)^{\sigma}\Big)_{{\Sigma_{\wp}\setminus J}-\an} \otimes_E \big(\otimes_{\sigma\in {\Sigma_{\wp}\setminus J}} (\dett^{\frac{w}{2}})^{\sigma}\big)\otimes_E W(\ul{k}_{J},w)^{\vee}
\\ \xlongrightarrow{\sim} \Big(\Pi\otimes_E\otimes_{\sigma\in J}\big(\Sym^{k_{\sigma}-2} E^2 \otimes_E \dett^{\frac{w-k_{\sigma}+2}{2}}\big)^{\sigma}\Big)_{{\Sigma_{\wp}\setminus J}-\an} \otimes_E \big(\otimes_{\sigma\in J}\Sym^{k_{\sigma}-2} E^2 \otimes_E \dett^{\frac{w-k_{\sigma}+2}{2}}\big)^{\vee}
\hooklongrightarrow \Pi,
\end{multline*}
where the first isomorphism is from Thm.\ref{thm: lpl-enc}(2), and the last injection follows from \cite[Prop.5.1.3]{Ding}. Similarly, for $J'\supseteq J$, we have a natural closed embedding invariant under the action of $\GL_2(F_{\wp})\times \cH^p$:
\begin{equation}\label{equ: lpl-injJ}
 \Pi(\ul{k}_{J'},w) \hooklongrightarrow \Pi(\ul{k}_{J},w).
\end{equation}
Note $\Pi(\ul{k}_{\emptyset},w)\cong \Pi$, and by Prop.\ref{prop: lpl-rnf} (1), $\Pi(\ul{k}_{\Sigma_{\wp}},w)\cong H^1_{\et}(K^p,W(\ul{k}_{\Sigma_p},w))_{\overline{\rho}}\otimes_E W(\ul{k}_{\Sigma_p},w)^{\vee}$. 
We have the following easy lemma.
\begin{lemma}\label{lem: lpl-lav}
  Keep the above notation, let $V$ be a locally $\Sigma_{\wp}\setminus J$-analytic representation of $\GL_2(F_{\wp})$, then
  \begin{multline}\label{equ: lpl-lav}
    \Hom_{\GL_2(F_{\wp})}\Big(V,\widetilde{H}^1_{\et}\big(K^p,W(\ul{k}_J,w)\big)_{\overline{\rho},\Sigma_{\wp}\setminus J-\an}\Big)\xlongrightarrow{\sim} \Hom_{\GL_2(F_{\wp})}\big(V\otimes_E W(\ul{k}_J,w)^{\vee},\Pi(\ul{k}_J,w)\big)\\ \xlongrightarrow{\sim}\Hom_{\GL_2(F_{\wp})}\big(V\otimes_E W(\ul{k}_J,w)^{\vee},\Pi\big),
  \end{multline}
  where the first map is given by $f\mapsto f\otimes \id$, and the second is induced by the injection $\Pi(\ul{k}_J,w)\hookrightarrow \Pi$.
\end{lemma}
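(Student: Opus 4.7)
The plan is to establish both isomorphisms via standard Hom-tensor adjunctions using the finite-dimensional algebraic representation $W:=W(\ul{k}_J,w)$ and the functorial properties of locally $J$-analytic vectors.

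For the first isomorphism, I would use the fact that, for any two $\GL_2(F_{\wp})$-representations $V'$ and $X$, the functor $(\cdot)\otimes_E W^{\vee}$ induces a natural bijection
\begin{equation*}
\Hom_{\GL_2(F_{\wp})}(V',X) \xlongrightarrow{\sim} \Hom_{\GL_2(F_{\wp})}(V'\otimes_E W^{\vee}, X\otimes_E W^{\vee}), \ f\mapsto f\otimes \id,
\end{equation*}
with inverse obtained by tensoring with $W$ on both sides and composing with the evaluation map $W^{\vee}\otimes_E W \to E$ (which, being a morphism of finite-dimensional algebraic representations, is a split surjection giving the required unit of adjunction). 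Applying this with $V'=V$ and $X=\widetilde{H}^1_{\et}(K^p,W)_{\overline{\rho},\Sigma_{\wp}\setminus J-\an}$ and unfolding the definition $\Pi(\ul{k}_J,w)=\widetilde{H}^1_{\et}(K^p,W)_{\overline{\rho},\Sigma_{\wp}\setminus J-\an}\otimes_E W^{\vee}$ yields the first arrow in (\ref{equ: lpl-lav}).

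For the second isomorphism, injectivity follows directly from the closed embedding $\Pi(\ul{k}_J,w)\hookrightarrow \Pi$ in (\ref{equ: lpl-injJ}). For surjectivity, given $f\in \Hom_{\GL_2(F_{\wp})}(V\otimes_E W^{\vee},\Pi)$, I would pass back via the same adjunction to an equivariant morphism $g:V\to \Pi\otimes_E W$. By Thm.\ref{thm: lpl-enc}(2) together with the fact that tensoring with a finite-dimensional algebraic representation commutes with the passage to locally $\Q_p$-analytic vectors and with the localization at $\fm(\overline{\rho})$, there is a canonical identification $\Pi\otimes_E W\cong \widetilde{H}^1_{\et}(K^p,W)_{\overline{\rho},\Q_p-\an}$. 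Since $V$ is by hypothesis locally $\Sigma_{\wp}\setminus J$-analytic and the $\Sigma_{\wp}\setminus J$-analytic vectors form a closed $\GL_2(F_{\wp})$-stable subspace with the universal property that any equivariant map from a locally $\Sigma_{\wp}\setminus J$-analytic representation factors through it, the image of $g$ must land in $\widetilde{H}^1_{\et}(K^p,W)_{\overline{\rho},\Sigma_{\wp}\setminus J-\an}$. Tensoring back with $W^{\vee}$ shows that $f$ factors through $\Pi(\ul{k}_J,w)$, as required.

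There is no significant obstacle here; the argument is bookkeeping with Hom-tensor adjunctions for algebraic duals and the basic functoriality of $(\cdot)_{J-\an}$. The only point requiring a little care is the identification $\Pi\otimes_E W\cong \widetilde{H}^1_{\et}(K^p,W)_{\overline{\rho},\Q_p-\an}$, which combines Thm.\ref{thm: lpl-enc}(2) with the (easy) commutation of $(\cdot)_{\Q_p-\an}$ with tensoring by a finite-dimensional algebraic representation.
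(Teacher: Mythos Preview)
Your treatment of the second isomorphism is correct and is exactly the paper's argument: given $f:V\otimes_E W^{\vee}\to\Pi$, pass by adjunction to $g:V\to\Pi\otimes_E W\cong\widetilde{H}^1_{\et}(K^p,W)_{\overline{\rho},\Q_p-\an}$, and use that $V$ is locally $\Sigma_{\wp}\setminus J$-analytic to force the image into the $\Sigma_{\wp}\setminus J$-analytic vectors. Tensoring back with $W^{\vee}$ then shows $f$ factors through $\Pi(\ul{k}_J,w)$, and in fact this $g$ is the inverse (up to a nonzero scalar) of the full composition in the lemma.

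There is, however, a genuine gap in your argument for the first isomorphism. The asserted ``fact'' that $f\mapsto f\otimes\id_{W^{\vee}}$ gives a bijection
\[
\Hom_{\GL_2(F_{\wp})}(V',X)\longrightarrow \Hom_{\GL_2(F_{\wp})}(V'\otimes_E W^{\vee},X\otimes_E W^{\vee})
\]
for \emph{arbitrary} $V',X$ is false when $\dim_E W>1$. Your proposed inverse (tensor with $W$ and use evaluation/coevaluation) only gives a one-sided inverse: one has $\Psi\circ\Phi=\dim_E W\cdot\id$, but $\Phi\circ\Psi\neq\id$ in general. For instance, with $V'=X=W$ the left side is $\End_{\GL_2}(W)\cong E$ while the right side is $\End_{\GL_2}(W\otimes_E W^{\vee})$, which has dimension $\geq 2$. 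The reason the first map \emph{is} an isomorphism in the lemma is not a general tensor identity but again the hypothesis that $V$ is locally $\Sigma_{\wp}\setminus J$-analytic (together with $W$ being $J$-algebraic up to a character): any $h:V\otimes_E W^{\vee}\to X\otimes_E W^{\vee}$ adjoints to $V\to X\otimes_E W^{\vee}\otimes_E W$, and the nontrivial $J$-algebraic summands of $W^{\vee}\otimes_E W$ contribute no $\Sigma_{\wp}\setminus J$-analytic vectors when tensored with $X$, forcing the map to factor through $X$.

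In practice you do not need a separate argument for the first isomorphism at all. Both maps in the displayed composition are obviously injective (the first because $W^{\vee}\neq 0$, the second because $\Pi(\ul{k}_J,w)\hookrightarrow\Pi$), and your construction in the second paragraph already produces, for every $f\in\Hom_{\GL_2(F_{\wp})}(V\otimes_E W^{\vee},\Pi)$, a preimage in $\Hom_{\GL_2(F_{\wp})}\big(V,\widetilde{H}^1_{\et}(K^p,W)_{\overline{\rho},\Sigma_{\wp}\setminus J-\an}\big)$ under the composite. This is precisely how the paper argues: it builds the inverse of the composition directly and deduces that both individual maps are bijections. So drop the faulty general claim and simply note injectivity of the first map; your second paragraph then finishes the proof.
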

\begin{proof}Given a morphism $g: V\otimes_E W(\ul{k}_J,w)^{\vee} \ra \Pi$, consider the composition $V\ra V\otimes_E W(\ul{k}_J,w)^{\vee}\otimes_E W(\ul{k}_J,w) \xrightarrow{g\otimes \id} \widetilde{H}^1_{\et}(K^p,W(\ul{k}_J,w))_{\overline{\rho},\Q_p-\an}$ whose image is contained in $\widetilde{H}^1_{\et}(K^p,W(\ul{k}_J,w))_{\overline{\rho},\Sigma_{\wp}\setminus J-\an}$ since $V$ is locally $\Sigma_{\wp}\setminus J$-anlaytic, and it's straightforward to check this gives an inverse (up to non-zero scalars) of the composition (\ref{equ: lpl-lav}). The lemma follows.
\end{proof}

Let $J\subsetneq \Sigma_{\wp}$, $\ul{k}_J\in 2\Z_{\geq 1}^{|J|}$ and $w\in 2\Z$, consider
\begin{equation}\label{equ: lpl-wHp}
  \Pi(\ul{k}_J,w)^{Z_1=\cN^{-w}}\cong \widetilde{H}^1(K^p,W(\ul{k}_J,w))_{\overline{\rho},\Sigma_{\wp}\setminus J-\an}^{Z_1}\otimes_E W(\ul{k}_J,w)^{\vee},
\end{equation}
which is an admissible locally $\Q_p$-analytic representation of $\GL_2(F_{\wp})$ equipped with a continuous action of $\cH^p$ commuting with $\GL_2(F_{\wp})$. Applying Jacquet-Emerton functor (for the upper triangular subgroup $B(F_{\wp})$, cf. \cite{Em11}), we get an essentially admissible locally $\Q_p$-analytic representation $J_B\big(\Pi(\ul{k}_J,w)^{Z_1=\cN^{-w}}\big)$ of $T(F_{\wp})$ over $E$, whose strong dual corresponds to a coherent sheaf $\cM_0(\ul{k}_J,w)$ over $\widehat{T}$ \big(which denotes the rigid space over $E$ parameterizing continuous characters of $T(F_{\wp})$\big) such that
\begin{equation*}
  \Gamma\big(\widehat{T},\cM_0(\ul{k}_J,w)\big)\xlongrightarrow{\sim} J_B\big(\Pi(\ul{k}_J,w)^{Z_1=\cN^{-w}}\big)^{\vee}
\end{equation*}
as coadmissible modules over the Fr\'echet-Stein algebra $\co(\widehat{T})$. By funtoriality, $\cM_0(\ul{k}_J,w)$ is equipped with a $\co(\widehat{T})$-linear action of $\cH^p$. Following Emerton \cite[\S 2.3]{Em1}, we can construct an eigenvariety from the triplet $\big\{\cM_0(\ul{k}_J,w),\widehat{T},\cH^p\big\}$:
\begin{theorem}\label{thm: clin-cjw}
  There exists a rigid analytic space $\cE(\ul{k}_{J},w)_{\overline{\rho}}$ over $E$ together with a finite morphism of rigid spaces $i: \cE(\ul{k}_{J},w)_{\overline{\rho}} \ra \widehat{T}$
  and a morphism of $E$-algebras with dense image
  \begin{equation}\label{equ: clin-oct}
    \cH^p\otimes_{\co_E} \co\big(\widehat{T}\big) \lra \co\big(\cE(\ul{k}_{J},w)_{\overline{\rho}}\big)
  \end{equation}
  such that
  \begin{enumerate}
    \item a closed point $z$ of $\cE(\ul{k}_{J},w)_{\overline{\rho}}$ is uniquely determined by its image $\chi$ in $\widehat{T}(\overline{E})$ and the induced morphism $\lambda: \cH^p\lra \overline{E}$, called a system of eigenvalues of $\cH^p$; hence $z$ would be  denoted by $(\chi,\lambda)$);
    \item for a finite extension $L$ of $E$, a closed point $(\chi,\lambda)\in \cE(\ul{k}_{J},w)_{\overline{\rho}}(L)$ if and only if the corresponding eigenspace
    \begin{equation*}
     J_B\big(\Pi(\ul{k}_J,w)^{Z_1=\cN^{-w}}\otimes_E L\big)^{T(F_{\wp})=\chi, \cH^p=\lambda}
    \end{equation*}
    is non-zero;
    \item there exists a coherent sheaf, denoted by $\cM(\ul{k}_{J},w)$, over $\cE(\ul{k}_{J},w)_{\overline{\rho}}$, such that $i_* \cM(\ul{k}_{J},w) \cong \cM_0(\ul{k}_{J},w)$ and that for an $L$-point $z=(\chi, \lambda)$, the special fiber $\cM(\ul{k}_{J},w)\big|_z$ is naturally dual to the (finite dimensional) $L$-vector space
        \begin{equation*}
          J_B\big(\Pi(\ul{k}_J,w)^{Z_1=\cN^{-w}}\otimes_E L\big)^{T(F_{\wp})=\chi, \cH^p=\lambda}.
        \end{equation*}
  \end{enumerate}
\end{theorem}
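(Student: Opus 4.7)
The plan is to apply Emerton's general eigenvariety formalism from \cite[\S 2.3]{Em1} to the triple $\big(\cM_0(\ul{k}_J,w), \widehat{T}, \cH^p\big)$. The essential inputs are that $\cM_0(\ul{k}_J,w)$ is a coherent sheaf on the (quasi-Stein) rigid space $\widehat{T}$ and that $\cH^p$ acts on it by $\co(\widehat{T})$-linear endomorphisms; both follow from the essential admissibility of $J_B\big(\Pi(\ul{k}_J,w)^{Z_1=\cN^{-w}}\big)$, which in turn is a consequence of \cite{Em11} applied to the admissible locally $\Q_p$-analytic $\GL_2(F_{\wp})$-representation $\Pi(\ul{k}_J,w)^{Z_1=\cN^{-w}}$.

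Concretely, I would fix an admissible increasing affinoid covering $\widehat{T} = \bigcup_n U_n$ with $A_n := \co(U_n)$, so that $M_n := \Gamma\big(U_n, \cM_0(\ul{k}_J,w)\big)$ is a finitely generated $A_n$-module carrying an $A_n$-linear action of $\cH^p$. Let $B_n \subseteq \End_{A_n}(M_n)$ be the image of the structure map $\cH^p \otimes_{\co_E} A_n \to \End_{A_n}(M_n)$; since $A_n$ is Noetherian and $\End_{A_n}(M_n)$ is a finite $A_n$-module, $B_n$ is a commutative finite $A_n$-algebra, in particular an affinoid $E$-algebra. The compatible surjections $B_{n+1} \otimes_{A_{n+1}} A_n \twoheadrightarrow B_n$ coming from the restriction maps allow the $\Spm(B_n)$ to be glued into a rigid analytic space $\cE(\ul{k}_J,w)_{\overline{\rho}}$ equipped with a finite morphism $i$ to $\widehat{T}$, and the maps $\cH^p \otimes_{\co_E} A_n \twoheadrightarrow B_n$ assemble into a homomorphism $\cH^p \otimes_{\co_E} \co(\widehat{T}) \to \co(\cE(\ul{k}_J,w)_{\overline{\rho}}) = \varprojlim_n B_n$ with dense image.

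The coherent sheaf $\cM(\ul{k}_J,w)$ is then defined by declaring its sections on $\Spm(B_n) = i^{-1}(U_n)$ to be $M_n$ viewed as a $B_n$-module, and $i_*\cM(\ul{k}_J,w) \cong \cM_0(\ul{k}_J,w)$ holds by construction. The characterization of closed points reduces to the observation that an $L$-point $(\chi,\lambda)$ of $\cE(\ul{k}_J,w)_{\overline{\rho}}$ corresponds to a maximal ideal of some $B_n$ with residue field $L$, equivalently to a pair consisting of the image $\chi \in \widehat{T}(L)$ and the induced system of Hecke eigenvalues $\lambda: \cH^p \to L$; the non-vanishing of the fiber $M_n \otimes_{B_n,(\chi,\lambda)} L$ is, via the duality between the coherent sheaf $\cM_0(\ul{k}_J,w)$ and $J_B\big(\Pi(\ul{k}_J,w)^{Z_1=\cN^{-w}}\big)$ on coadmissible modules over $\co(\widehat{T})$, equivalent and naturally dual to the eigenspace appearing in (2), giving also the description of fibers in (3).

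The main obstacle in this program is not the abstract eigenvariety formalism, which is by now standard, but verifying the coherence hypothesis on $\cM_0(\ul{k}_J,w)$: one must check that $\Pi(\ul{k}_J,w)^{Z_1=\cN^{-w}}$ is an admissible locally $\Q_p$-analytic representation of $\GL_2(F_{\wp})$ so that the essential admissibility theorem of \cite{Em11} applies to its Jacquet-Emerton module. This admissibility is inherited from the admissibility of $\widetilde{H}^1_{\et}\big(K^p, W(\ul{k}_J,w)\big)_{\overline{\rho}}$ (Thm.\ref{thm: lpl-enc}(1)) via passage to $\Sigma_{\wp}\setminus J$-analytic vectors, using \cite[Thm.7.1]{ST03} to preserve admissibility, together with the identification (\ref{equ: lpl-wHp}) and the central character cut-off by $Z_1$. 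Once these inputs are in place, the rest is a routine bookkeeping exercise in Emerton's machinery.
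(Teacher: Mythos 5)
Your proposal is correct and follows essentially the same route as the paper, which itself gives no proof beyond invoking Emerton's eigenvariety formalism \cite[\S 2.3]{Em1} applied to the triple $\{\cM_0(\ul{k}_J,w),\widehat{T},\cH^p\}$ (and, in Prop.\ref{prop: lpl-evb}, the Coleman--Mazur--Buzzard gluing). You correctly isolate the only nontrivial input, namely the coherence of $\cM_0(\ul{k}_J,w)$ via essential admissibility of the Jacquet--Emerton module of the admissible locally analytic representation $\Pi(\ul{k}_J,w)^{Z_1=\cN^{-w}}$, so nothing further is needed.
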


By (\ref{equ: lpl-wHp}), one has an isomorphism
\begin{equation}\label{equ: lpl-NwwJ}
  J_B\big(\Pi(\ul{k}_J,w)^{Z_1=\cN^{-w}}\big)\cong J_B\big(\widetilde{H}^1_{\et}(K^p,W(\ul{k}_J,w))_{\Sigma_{\wp}\setminus J-\an}^{Z_1}\big)\otimes_E \chi(\ul{k}_J,w),
\end{equation}
where $\chi(\ul{k}_J,w):=\big(\prod_{\sigma\in J}(\sigma^{\frac{k_{\sigma}-2}{2}}\otimes \sigma^{\frac{2-k_{\sigma}}{2}})\big)\big(\prod_{\sigma\in \Sigma_{\wp}}(\sigma^{-w/2}\otimes \sigma^{-w/2})\big)$ is a character of $T(F_{\wp})$ over $E$. Thus, by Thm.\ref{thm: clin-cjw} (2), if $(\chi,\lambda)\in \cE(\ul{k}_J,w)_{\overline{\rho}}$, then $\wt(\chi)_{1,\sigma}+\wt(\chi)_{2,\sigma}=-w$ for all $\sigma\in \Sigma_{\wp}$, and $\wt(\chi)_{1,\sigma}-\wt(\chi)_{2,\sigma}=k_{\sigma}-2$ for all $\sigma\in J$.

Denote by $\widehat{T}_{\Sigma_{\wp}\setminus J}$ the rigid space over $E$ parameterizing the locally $\Sigma_{\wp}\setminus J$-analytic characters of $T(F_{\wp})$, and denote by $\widehat{T}(\ul{k}_J,w)$ the image of the following closed embedding
\begin{equation*}
  \widehat{T}_{\Sigma_{\wp}\setminus J} \hooklongrightarrow \widehat{T}, \ \chi \mapsto \chi\chi(\ul{k}_J,w),
\end{equation*}
which parameterizes characters of $T(F_{\wp})$ with fixed weights $(\frac{k_{\sigma}-w-2}{2},\frac{2-k_{\sigma}-w}{2})$ for $\sigma\in J$.
By the isomorphism (\ref{equ: lpl-NwwJ}), it's easy to see the action of $\co(\widehat{T})$ on $\cM_0(\ul{k}_J,w)$ factors through $\co\big(\widehat{T}(\ul{k}_J,w)\big)$, consequently, the morphism $\cE(\ul{k}_J,w)_{\overline{\rho}}\ra \widehat{T}$ factors through $\widehat{T}(\ul{k}_J,w)$. Denote by $\widehat{T}(\ul{k}_J,w)_0$ the closed subspace of $\widehat{T}(\ul{k}_J,w)$ consisting of the points $\chi$ with $\chi|_{Z_1}=\cN^{-w}$, thus the morphism $\cE(\ul{k}_J,w)_{\overline{\rho}} \ra \widehat{T}(\ul{k}_J,w)$ factors through $\widehat{T}(\ul{k}_J,w)_0$. Denote by $Z_1':=\bigg\{\begin{pmatrix} a& 0 \\ 0 &a^{-1}\end{pmatrix}\ \bigg|\ a\in 1+2\varpi \co_{\wp}\bigg\}$, and $\cW_1$ the rigid space over $E$ parameterizing continuous characters of $1+2\varpi \co_{\wp}$ (thus of $Z_1'$), and $\cW_1(\ul{k}_J)$ the closed subspace of $\cW_1$ of characters $\chi$ with $\wt(\chi)_{\sigma}=k_{\sigma}-2$ for all $\sigma\in J$. One has thus a natural projection
\begin{equation*}
  j: \widehat{T}(\ul{k}_J,w)_0 \twoheadlongrightarrow \cW_1(\ul{k}_J) \times \bG_m,\ \chi\mapsto (\chi|_{Z_1'},\chi(z_{\wp})),
\end{equation*}
where $z_{\wp}:=\begin{pmatrix} \varpi & 0 \\ 0 & 1\end{pmatrix}$. By Prop.\ref{prop: lpl-rnf}(2) and (the proof of ) \cite[Prop.4.2.36]{Em11}, $J_B\big(\Pi(\ul{k}_J,w)^{Z_1=\cN^{-w}}\big)^{\vee}$ is in fact a coadmissible module over $\co\big(\cW_1(\ul{k}_J)\times \bG_m)$, in other words, $j_* \cM_0(\ul{k}_J,w)$ is a coherent sheaf over $\cW_1(\ul{k}_J)\times \bG_m$.
\begin{proposition}\label{prop: lpl-evb}
   (1) The support $\cZ(\ul{k}_J,w)$ of $j_*\cM_0(\ul{k}_J,w)$ on $\cW_1(\ul{k}_J)\times \bG_m$ is a Fredholm hypersurface in $\cW_1(\ul{k}_J)\times \bG_m$, and there exists an admissible covering $\{U_i\}$ of $\cZ(\ul{k}_J,w)$ by affinoids $U_i$ such that the natural morphism $U_i\ra \cW_1$ induces a finite surjective map from $U_i$ to an affinoid open $W_i$ of $\cW_1(\ul{k}_J)$, and that $U_i$ is a connected component of the preimage of $W_i$. Moreover, $\Gamma(U_i,j_*\cM_0(\ul{k}_J,w))$ is a finite projective $\co(W_i)$-module.

   (2) Denote by $g$ the natural morphism $\cE(\ul{k}_J,w)_{\overline{\rho}}\ra \cZ(\ul{k}_J,w)$, and let $\{U_i\}$ as in (1), then $g^{-1}(U_i)$ is an affinoid open in $\cE(\ul{k}_J,w)_{\overline{\rho}}$; $\Gamma(g^{-1}(U_i),\cM(\ul{k}_J,w))\cong \Gamma(j^{-1}(U_i),\cM_0(\ul{k}_J,w))\cong M_i$; let $B_i$ be the affinoid algebra with $\Spm B_i\cong g^{-1}(U_i)$, then $B_i$ is the $\co(W_i)$-subalgebra of $\End_{\co(W_i)}(M_i)$ generated by the $\co(W_i)$-linear operators in $T(F_{\wp})\times \cH^p$.
\end{proposition}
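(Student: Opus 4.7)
The plan is to apply the Buzzard--Coleman--Mazur eigenvariety machine, as recast in Emerton's language \cite[\S 2.3]{Em1}, to Fredholm data extracted from the compact action of $z_{\wp}$ on the Jacquet--Emerton module. The starting point is Proposition \ref{prop: lpl-rnf}(2): for any continuous character $\psi$ of $Z_1$ trivial on $\overline{(Z(\Q)\cap K^pH_{\wp})_p}$, the space $\widetilde{H}^1(K^p,E)_{\overline{\rho}}^{Z_1=\psi}$ is isomorphic to $\cC(U_1,E)^{\oplus r}$ as $H_{\wp}$-representations. Taking locally $\Sigma_{\wp}\setminus J$-analytic vectors and twisting by $W(\ul{k}_J,w)^{\vee}$ as in (\ref{equ: lpl-wHp}), this exhibits $\Pi(\ul{k}_J,w)^{Z_1=\cN^{-w}}$ as an admissible locally $\Q_p$-analytic representation of $\GL_2(F_{\wp})$ whose underlying $U_1$-representation, restricted to the fiber over each central character, is isomorphic to a (locally analytic) function space of a controlled type.

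Next I would invoke Emerton's general machinery on Jacquet modules of essentially admissible locally analytic representations (in particular \cite[Prop.4.2.36]{Em11} and the surrounding theory), which, thanks to the above structural description, upgrades $J_B(\Pi(\ul{k}_J,w)^{Z_1=\cN^{-w}})^{\vee}$ to a coadmissible module over the Fr\'echet--Stein algebra $\co(\cW_1(\ul{k}_J)\times \bG_m)$; equivalently, $j_*\cM_0(\ul{k}_J,w)$ is coherent on $\cW_1(\ul{k}_J)\times \bG_m$. The point is that the element $z_{\wp}=\begin{pmatrix}\varpi & 0 \\ 0 & 1\end{pmatrix}$ acts by a compact operator on the admissible Jacquet module over each affinoid $W\subseteq \cW_1(\ul{k}_J)$, so that a Fredholm determinant $F(T)\in \co(W)\{\!\{T\}\!\}$ for $1-Tz_{\wp}^{-1}$ is well-defined and glues to a global Fredholm series cutting out $\cZ(\ul{k}_J,w)$ as a Fredholm hypersurface in $\cW_1(\ul{k}_J)\times \bG_m$.

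Part (1) then follows directly from the Coleman--Buzzard slope decomposition theorem applied to this Fredholm series: there exists an admissible cover $\{U_i\}$ of $\cZ(\ul{k}_J,w)$ by affinoids such that each $U_i\to W_i\subseteq \cW_1(\ul{k}_J)$ is finite surjective, $U_i$ is a connected component of the preimage of $W_i$ in $\cZ(\ul{k}_J,w)$, and the corresponding direct summand $M_i=\Gamma(U_i,j_*\cM_0(\ul{k}_J,w))$ is a finite projective $\co(W_i)$-module (indeed, the slope decomposition cuts out the finite-slope part on which $z_{\wp}$ has a characteristic polynomial of prescribed Newton polygon). For part (2), I would follow the standard eigenvariety construction: let $B_i$ be the $\co(W_i)$-subalgebra of $\End_{\co(W_i)}(M_i)$ generated by the image of $\cH^p\otimes \co(\widehat{T})$, which is finite over $\co(W_i)$ since $M_i$ is finite projective. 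Setting $g^{-1}(U_i):=\Spm B_i$ and gluing yields the eigenvariety $\cE(\ul{k}_J,w)_{\overline{\rho}}$ together with the desired compatibility $\Gamma(g^{-1}(U_i),\cM(\ul{k}_J,w))\cong M_i$, and by construction $B_i$ is generated over $\co(W_i)$ by the operators in $T(F_{\wp})\times \cH^p$.

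The main obstacle I anticipate is purely notational rather than conceptual: making precise that the $J$-analytic structure (which is only partial) still yields a genuinely coadmissible Jacquet module with the compact $z_{\wp}$-action required for Buzzard's machine. This is where Proposition \ref{prop: lpl-rnf}(2) is crucial, since it reduces the underlying representation on each central character fiber to an explicit function space, so the argument of \cite[Prop.4.2.36]{Em11} applies verbatim. Once this is set up, parts (1) and (2) are formal consequences of Buzzard's eigenvariety formalism combined with Emerton's globalization, with no additional subtleties arising from the partial analyticity.
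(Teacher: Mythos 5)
Your proposal is correct and follows essentially the same route as the paper: the author likewise combines Prop.\ref{prop: lpl-rnf}(2) with (the proof of) \cite[Prop.4.2.36]{Em11} to get coadmissibility of the dual Jacquet module over $\co(\cW_1(\ul{k}_J)\times\bG_m)$, then reconstructs $\cE(\ul{k}_J,w)_{\overline{\rho}}$ by the Coleman--Mazur--Buzzard method and deduces both parts from \cite[\S 4, \S 5]{Bu}. The details you supply about the compact $z_{\wp}$-action, the Fredholm series, and the generation of $B_i$ by the operators in $T(F_{\wp})\times\cH^p$ are exactly the content the paper delegates to those references.
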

\begin{proof}
  By \emph{loc. cit.}, the discussion in \cite[App.]{Ding} and the arguments before \cite[Prop.6.2.31]{Ding}, we can reconstruct $\cE(\ul{k}_J,w)_{\overline{\rho}}$ by the method of Coleman-Mazur-Buzzard, and then the proposition follows from \cite[\S 4, \S 5]{Bu}
\end{proof}
Denote by $\kappa$ the composition
\begin{equation*}
  \kappa: \cE(\ul{k}_J,w)_{\overline{\rho}}\lra \cZ(\ul{k}_J,w)\lra \cW_1(\ul{k}_J),
\end{equation*}
which also equals the composition $\cE(\ul{k}_J,w)_{\overline{\rho}}\ra \widehat{T}(\ul{k}_J,w)_0\ra \cW_1(\ul{k}_J)$. 
\subsubsection{Classicality}
Let $z=(\chi, \lambda)$ be a closed point of $\cE(\ul{k}_{J},w)_{\overline{\rho}}$, $z$ is called \emph{classical} if there exist $k_{\sigma}\in 2\Z_{\geq 1}$ for all $\sigma\in \Sigma_{\wp}\setminus J$ such that \begin{equation*}
  \big(J_B\big(\Pi(\ul{k}_{\Sigma_{\wp}},w)\big)\otimes_E \overline{E}\big)^{\cH^p=\lambda,T(F_{\wp})=\chi}\neq 0.
\end{equation*}
Note $\Pi(\ul{k}_{\Sigma_{\wp}},w)$ is a locally algebraic subrepresentation of $\Pi(\ul{k}_J,w)$ by (\ref{equ: lpl-injJ}). In fact, by the description of locally algebraic vectors of $\Pi$ (\cite[Thm.5.3]{New}), one sees  $z$ is classical \big(for $z\in \cE(\ul{k}_J,w)_{\overline{\rho}}$\big) if and only if
\begin{equation*}
  \big(J_B(\Pi_{\lalg})\otimes_E \overline{E}\big)^{\cH^p=\lambda,T(F_{\wp})=\chi}\neq 0,
\end{equation*}
where ``$\lalg$'' denotes the locally algebraic vectors.


For a locally analytic character $\chi$ of $T(F_{\wp})$ over $E$, put
\begin{equation*}C(\chi):=\{\sigma\in \Sigma_{\wp}\ |\ \wt(\chi)_{1,\sigma}-\wt(\chi)_{2,\sigma}\in \Z_{\geq 0}\}; \end{equation*}
for $S\subseteq C(\chi)$, put
\begin{equation*}\chi_S^c:=\chi\prod_{\sigma\in S}(\sigma^{\wt(\chi)_{2,\sigma}-\wt(\chi)_{1,\sigma}-1}\otimes \sigma^{\wt(\chi)_{1,\sigma}-\wt(\chi)_{2,\sigma}+1}\big).\end{equation*}
Let
\begin{equation*}
  I(\chi):=\soc \big(\Ind_{\overline{B}(F_{\wp})}^{\GL_2(F_{\wp})} \chi\big)^{\Q_p-\an}.
\end{equation*}
Note $I(\chi)$ is locally algebraic if and only if $\chi$ is locally algebraic and dominant.
\begin{definition}
  Let $z=(\chi,\lambda)$ be a closed point of $\cE(\ul{k}_J,w)_{\overline{\rho}}$, for $S\subseteq C(\chi)\cap J$, we say $z$ admits an $S$-companion point if $z_S^c:=(\chi_S^c, \lambda)$ is also a closed point of $\cE(\ul{k}_{J},w)_{\overline{\rho}}$.
\end{definition}
Denote by $\delta_B=\unr(q^{-1})\otimes \unr(q)$ the modulus character of $B(F_{\wp})$.
\begin{lemma}\label{lem: lpl-ccl}
  (1) Let $z=(\chi,\lambda)$ be a closed point of $\cE(\ul{k}_J,w)_{\overline{\rho}}$ with $\chi$ locally algebraic and dominant, suppose for any $\emptyset\neq S\subseteq \Sigma_{\wp}\setminus J$, $I(\chi_S^c\delta_B^{-1})$ is not a subrepresentation of $\Pi(\ul{k}_J,w)^{\cH^p=\lambda}$, then the point $z$ is classical. We call the points satisfying this assumption $\Sigma_{\wp}\setminus J$-very classical.

  (2) Let $z=(\chi,\lambda)$ be a closed point of $\cE(\ul{k}_J,w)_{\overline{\rho}}$ with $\chi$ locally algebraic and dominant, then $z$ is $\Sigma_{\wp}\setminus J$-very classical if and only if $z$ does not have $S$-companion point for all $\emptyset \neq S \subseteq \Sigma_{\wp}\setminus J$.

  (3) Let $z$ be a $\Sigma_{\wp}\setminus J$-very classical point of $\cE(\ul{k}_J,w)_{\overline{\rho}}$, then the natural injection
  \begin{equation}\label{equ: lpl-itcl}
    J_B\big(\Pi(\ul{k}_J,w)_{\lalg}\big)^{T(\co_{\wp})=\chi}[\cH^p=\lambda,T(F_{\wp})=\chi]\hooklongrightarrow J_B\big(\Pi(\ul{k}_J,w)\big)^{T(\co_{\wp})=\chi}[\cH^p=\lambda,T(F_{\wp})=\chi]
  \end{equation}
  is an isomorphism \big(where $T(\co_{\wp})=\co_{\wp}^{\times}\times \co_{\wp}^{\times}\hookrightarrow T(F_{\wp})$\big).
\end{lemma}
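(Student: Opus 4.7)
The plan is to apply Emerton's adjunction formula for the Jacquet--Emerton functor (\cite{Em2}) together with the socle filtration of locally analytic parabolic inductions of dominant locally algebraic characters (Orlik--Strauch, cf.\ \cite{Br}). The key device is the following reformulation of Theorem~\ref{thm: clin-cjw}(2): a pair $(\chi',\lambda)$ lies on $\cE(\ul{k}_J,w)_{\overline{\rho}}$ if and only if there exists a nonzero $\GL_2(F_\wp)$-equivariant continuous map
\[
\iota_{\chi'} : \big(\Ind_{\overline{B}(F_\wp)}^{\GL_2(F_\wp)} \chi'\delta_B^{-1}\big)^{(\Sigma_\wp\setminus J)-\an}\longrightarrow \Pi(\ul{k}_J,w)^{\cH^p=\lambda}.
\]

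For (2), I would apply the device with $\chi'=\chi_S^c$. Because the twist by $\prod_{\sigma\in S}(\sigma^{\wt(\chi)_{2,\sigma}-\wt(\chi)_{1,\sigma}-1}\otimes \sigma^{\wt(\chi)_{1,\sigma}-\wt(\chi)_{2,\sigma}+1})$ renders the character non-dominant exactly along the directions $\sigma\in S$, the source of $\iota_{\chi_S^c}$ has irreducible socle equal to $I(\chi_S^c\delta_B^{-1})$ by \cite[Thm.4.1]{Br}. Hence the existence of $\iota_{\chi_S^c}$ (equivalently, of an $S$-companion point) is equivalent to the embedding $I(\chi_S^c\delta_B^{-1})\hookrightarrow \Pi(\ul{k}_J,w)^{\cH^p=\lambda}$, yielding (2).

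For (1), I would apply the device at $\chi'=\chi$. Since $\chi$ is locally algebraic and dominant, the socle of the source is $L_\emptyset:=I(\chi\delta_B^{-1})$, which is locally algebraic. The claim $\iota|_{L_\emptyset}\neq 0$ is proved by contradiction: otherwise $\iota$ factors through $(\Ind\chi\delta_B^{-1})^{(\Sigma_\wp\setminus J)-\an}/L_\emptyset$, whose socle (by the Orlik--Strauch socle description) is a direct sum $\bigoplus_{\sigma\in \Sigma_\wp\setminus J} I(\chi_{\{\sigma\}}^c\delta_B^{-1})$; iterating on successive layers of the socle filtration produces some $I(\chi_S^c\delta_B^{-1})\hookrightarrow \Pi(\ul{k}_J,w)^{\cH^p=\lambda}$ with $\emptyset\neq S\subseteq\Sigma_\wp\setminus J$, contradicting the hypothesis. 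Therefore $L_\emptyset\hookrightarrow \Pi(\ul{k}_J,w)^{\cH^p=\lambda}$; being locally algebraic, this embedding lands in $\Pi(\ul{k}_J,w)_{\lalg}$, giving classicality. For (3), the injectivity in (\ref{equ: lpl-itcl}) is automatic from $\Pi_{\lalg}\hookrightarrow\Pi$ and the left-exactness of $J_B$. For surjectivity, take $v$ in the right-hand side and choose a true $T(F_\wp)\times\cH^p$-eigenvector $v_0$ inside the finite-dimensional subspace generated by $v$; the adjunction applied to $v_0$, combined with the argument of (1), yields a map whose restriction to $L_\emptyset$ is nonzero with image contained in $\Pi(\ul{k}_J,w)_{\lalg}$. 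Iterating on the socle filtration of the generalized eigenspace matches each layer with a layer from $J_B(\Pi(\ul{k}_J,w)_{\lalg})$, producing the required surjection.

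The main obstacle is managing the iteration in (1): producing $I(\chi_S^c\delta_B^{-1})$ as an \emph{honest} subrepresentation of $\Pi(\ul{k}_J,w)^{\cH^p=\lambda}$, rather than merely as a subquotient, while descending through the socle filtration of the principal series. This relies crucially on the Orlik--Strauch structure result that each layer of this filtration is a direct sum of irreducibles of the form $I(\chi_S^c\delta_B^{-1})$, and on the fact that the image of a map from the principal series, when intersected with an admissible representation, is finite-length. A secondary delicacy is the passage between generalized and true eigenspaces in (3), handled by observing that the socle of a generalized $\cH^p$-eigenspace inside an admissible representation coincides with that of the corresponding true eigenspace, and that $T(F_\wp)\times\cH^p$, being commutative, admits a filtration by true eigenvectors on any finite-dimensional generalized eigenspace.
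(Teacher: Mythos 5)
Your overall strategy (Jacquet--Emerton adjunction plus the socle filtration of the locally analytic principal series) is the right one in spirit --- the paper itself remarks that the lemma can be deduced from Breuil's adjunction formula --- but your ``key device'' is false as stated, and the direction in which it fails is precisely the content of the lemma. A point $(\chi',\lambda)$ of $\cE(\ul{k}_J,w)_{\overline{\rho}}$ is by definition a nonzero eigenvector in $J_B\big(\Pi(\ul{k}_J,w)^{Z_1=\cN^{-w}}\big)$. Emerton's adjunction converts such an eigenvector into a nonzero $\GL_2(F_{\wp})$-map out of the closed subrepresentation $I_{\overline{B}(F_{\wp})}^{\GL_2(F_{\wp})}(\chi'\delta_B^{-1})$ of the induction, and only when the eigenvector is \emph{balanced}. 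Whenever $\chi'$ is still dominant at some embedding of $\Sigma_{\wp}\setminus J$ --- which is the case for $\chi'=\chi$ in (1), and for $\chi'=\chi_S^c$ in (2) whenever $S\subsetneq \Sigma_{\wp}\setminus J$ --- this subrepresentation is a proper one, and extending the resulting map to the full $(\Sigma_{\wp}\setminus J)$-analytic induction is obstructed; the obstruction is measured exactly by the companion eigenspaces $J_B(\Pi(\ul{k}_J,w))^{T(F_{\wp})=\chi_S^c,\,\cH^p=\lambda}$. This is the content of Breuil's adjunction formula \cite[Thm.4.3]{Br13II}, equivalently of the ``effective companion point'' analysis of \cite[Def.6.2.21, Prop.6.2.23, Lem.6.2.25]{Ding} that the paper's proof invokes. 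Your argument for (1) takes the map $\iota$ from the full induction as given and only afterwards uses the hypothesis (absence of companion constituents) to run the socle-filtration iteration; but producing $\iota$ already requires that hypothesis. The converse implications you use (a nonzero map from the full induction gives a point of the eigenvariety; an embedding $I(\chi_S^c\delta_B^{-1})\hookrightarrow \Pi(\ul{k}_J,w)^{\cH^p=\lambda}$ gives the companion point $z_S^c$ upon applying $J_B$) are fine.

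A secondary consequence: in (2) your claim that an $S$-companion point forces $I(\chi_S^c\delta_B^{-1})\hookrightarrow \Pi(\ul{k}_J,w)^{\cH^p=\lambda}$ \emph{for that same $S$} is stronger than what is true. Since the adjoint map attached to $z_S^c$ may kill the socle of its source, one only obtains an embedding of $I(\chi_{S'}^c\delta_B^{-1})$ for some $S'\supseteq S$ contained in $\Sigma_{\wp}\setminus J$ --- which is what the paper proves, and which suffices because the equivalence in (2) is quantified over all nonempty $S$. Part (3) inherits the same gap through its appeal to ``the argument of (1)''. To repair the proof you must either quote Breuil's exact sequence controlling the extension of balanced eigenvectors to maps from the full induction, or the effective-companion-point results of \cite{Ding}; with that input, the remainder of your outline (the iteration through the socle filtration, and the reduction of (3) to true eigenvectors) goes through essentially as in the paper.
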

\begin{proof}
  (1) Suppose $z$ is not classical, by  \cite[Lem.6.2.25]{Ding}, there exists $\emptyset \neq S\subseteq \Sigma_{\wp}\setminus J$ such that $z$ admits an effective $S$-companion point (we refer to \cite[Def.6.2.21]{Ding} for effective companion points) which induces, by (an easy variation of) \cite[Prop.6.2.23]{Ding}, an injection  $I(\chi_S^c\delta_B^{-1})\hookrightarrow \Pi(\ul{k}_J,w)^{\cH^p=\lambda}$, (1) follows.

  (2) If $z$ admits an $S$-companion point for $\emptyset \neq S \subseteq \Sigma_{\wp}\setminus J$, as in the proof of \cite[Lem.6.2.25]{Ding}, there exists $S'\supseteq S$, $S'\subseteq \Sigma_{\wp}\setminus J$ such that $z$ admits an effective $S'$-companion point, which induces an inclusion $I(\chi_{S'}^c\delta_B^{-1})\hookrightarrow \Pi(\ul{k}_J,w)^{\cH^p=\lambda}$. Conversely, if there exists $\emptyset \neq S \subseteq \Sigma_{\wp}\setminus J$ such that $I(\chi_S^c\delta_B^{-1})\hookrightarrow \Pi(\ul{k}_J,w)^{\cH^p=\lambda}$, applying the Jacquet-Emerton functor, we get the $S$-companion point $z_S^c$ of $z$.

  (3) By the same arguments as in \emph{loc. cit.}, together with \cite[Lem.6.3.15]{Ding}, if (\ref{equ: lpl-itcl}) is not bijective, there exists $\emptyset \neq S\subseteq \Sigma_{\wp}\setminus J$, such that $z$ admits an effective $S$-companion points, and hence $I(\chi_S^c\delta_B^{-1})\hookrightarrow  \Pi(\ul{k}_J,w)^{\cH^p=\lambda}$, a contradiction.
\end{proof}
\begin{remark}
  The lemma can also be deduced from Breuil's adjunction formula \cite[Thm.4.3]{Br13II}.
\end{remark}
Since $\Pi(\ul{k}_J,w)$ is contained in the unitary Banach representation $\widetilde{H}^1(K^p,E)$, the following proposition follows easily from \cite[Prop.5.1]{Br}:
\begin{proposition}\label{prop: lpl-cla}
  Let $z=(\chi,\lambda)$ be closed point in $\cE(\ul{k}_J,w)_{\overline{\rho}}$ with $\chi$ locally algebraic and dominant, and suppose
  \begin{equation}\label{equ: lpl-ncr}
    \us(q\chi_1(\varpi))< \inf_{\sigma\in \Sigma_{\wp}\setminus J}\{\wt(\chi)_{1,\sigma}-\wt(\chi)_{2,\sigma}+1\}
  \end{equation} then the point $z$ is $\Sigma_{\wp}\setminus J$-very classical.
\end{proposition}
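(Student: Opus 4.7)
The plan is to forbid every would-be $S$-companion embedding by invoking Breuil's slope criterion \cite[Prop.5.1]{Br} inside the unitary admissible Banach representation $\widetilde H^1_{\et}(K^p,W(\ul{k}_J,w))_{\overline\rho}$. By Lem.\ref{lem: lpl-ccl}(1), it is enough to show that for every non-empty $S\subseteq\Sigma_{\wp}\setminus J$ there is no $\GL_2(F_{\wp})$-equivariant embedding
\[
I(\chi_S^c\delta_B^{-1})\hooklongrightarrow \Pi(\ul{k}_J,w)^{\cH^p=\lambda}.
\]

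Assume for contradiction that such an $S$ and such an embedding exist. Unwinding the definition $\Pi(\ul{k}_J,w)=\widetilde H^1_{\et}(K^p,W(\ul{k}_J,w))_{\overline\rho,\Sigma_{\wp}\setminus J-\an}\otimes W(\ul{k}_J,w)^{\vee}$, applying Lem.\ref{lem: lpl-lav}, and combining with the $\GL_2(F_{\wp})\times\cH^p$-isomorphism of Thm.\ref{thm: lpl-enc}(2), we obtain a $\GL_2(F_{\wp})\times\cH^p$-equivariant injection of $I(\chi_S^c\delta_B^{-1})\otimes W(\ul{k}_J,w)$ into the locally $\Sigma_{\wp}\setminus J$-analytic vectors of $\widetilde H^1_{\et}(K^p,W(\ul{k}_J,w))_{\overline\rho}$. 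The latter is unitary admissible (the unitary structure coming from any $\co_E$-lattice of $W(\ul{k}_J,w)$ together with Thm.\ref{thm: lpl-enc}(1)), so we have embedded a locally analytic principal series into the locally analytic vectors of a unitary admissible Banach representation, which places us squarely in the setting of \cite[Prop.5.1]{Br}.

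That proposition produces a necessary lower bound on the $\us$-valuation of the $\varpi$-component of the first character of the induction. Since $\chi_{S,1}^c(\varpi)=\chi_1(\varpi)\prod_{\sigma\in S}\sigma(\varpi)^{\wt(\chi)_{2,\sigma}-\wt(\chi)_{1,\sigma}-1}$ and $\us(\sigma(\varpi))=1$ for every embedding $\sigma$, the bound translates into
\[
\us(q\chi_1(\varpi))\geq \sum_{\sigma\in S}\bigl(\wt(\chi)_{1,\sigma}-\wt(\chi)_{2,\sigma}+1\bigr).
\]
Since $\chi$ is dominant, each summand on the right is $\geq 1$, so for every non-empty $S\subseteq\Sigma_{\wp}\setminus J$ the right-hand side is at least $\inf_{\sigma\in\Sigma_{\wp}\setminus J}\{\wt(\chi)_{1,\sigma}-\wt(\chi)_{2,\sigma}+1\}$ (the term contributed by any fixed $\sigma\in S$ already beats the infimum). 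This contradicts the hypothesis on $\us(q\chi_1(\varpi))$, and we conclude that no such $S$ exists.

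The principal technical obstacle is the careful transfer of our embedding into the precise setting of \cite[Prop.5.1]{Br}: one must match normalizations, verify that the ``swapped'' principal series $I(\chi_S^c\delta_B^{-1})$ (which is non-classical as soon as $S\neq\emptyset$) is covered by \emph{loc.\ cit.}, and extract the correct slope inequality on the first component of $\chi_S^c\delta_B^{-1}$ at $\varpi$. Once the slope is correctly identified, the remaining computation using $\us(\sigma(\varpi))=1$ and the dominance of $\chi$ is immediate.
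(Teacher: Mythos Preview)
Your overall strategy is exactly the paper's: rule out every putative embedding $I(\chi_S^c\delta_B^{-1})\hookrightarrow\Pi(\ul{k}_J,w)^{\cH^p=\lambda}$ by invoking \cite[Prop.5.1]{Br} inside a unitary admissible Banach representation, and then read off the slope inequality that contradicts~(\ref{equ: lpl-ncr}). The slope computation you give is the correct one.

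There is, however, a genuine gap in your transfer step. You pass from $\Pi(\ul{k}_J,w)$ to $\widetilde H^1_{\et}(K^p,W(\ul{k}_J,w))_{\overline\rho}$ by tensoring with $W(\ul{k}_J,w)$, and then assert that the latter is unitary ``the unitary structure coming from any $\co_E$-lattice of $W(\ul{k}_J,w)$''. This is false: Thm.~\ref{thm: lpl-enc}(1) only asserts unitarity when $W$ is the \emph{trivial} representation, and for non-trivial algebraic $W$ the $\GL_2(F_\wp)$-action on $W$ does not preserve any norm, so $\widetilde H^1_{\et}(K^p,W)\cong \widetilde H^1_{\et}(K^p,E)\otimes_E W$ is not unitary. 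Consequently \cite[Prop.5.1]{Br} does not apply to the target you chose (and you would in addition have to analyse the possibly reducible tensor product $I(\chi_S^c\delta_B^{-1})\otimes W(\ul{k}_J,w)$).

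The fix is immediate and is precisely what the paper does: skip the tensoring altogether. It was already established (just after the definition of $\Pi(\ul{k}_J,w)$) that $\Pi(\ul{k}_J,w)$ is a closed subrepresentation of $\Pi=\widetilde H^1_{\et}(K^p,E)_{\overline\rho,\Q_p-\an}$, and $\widetilde H^1_{\et}(K^p,E)$ \emph{is} unitary by Thm.~\ref{thm: lpl-enc}(1). Hence your hypothetical embedding already lands $I(\chi_S^c\delta_B^{-1})$ in the locally analytic vectors of a unitary admissible Banach representation, and \cite[Prop.5.1]{Br} applies directly to $I(\chi_S^c\delta_B^{-1})$ itself. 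From there your valuation computation (using $\us(\sigma(\varpi))=1$ and dominance of $\chi$) goes through verbatim.
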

A closed point $z=(\chi,\lambda)$ of $\cE(\ul{k}_J,w)_{\overline{\rho}}$ is called \emph{spherical} if $\chi$ is the product of an unramified character with an algebraic character \big(i.e. $\wt(\chi)\in \Z^{2|d|}$ and $\chi\delta_{\wt(\chi)}^{-1}$ is unramified\big). By the standard arguments as in \cite[\S 6.4.5]{Che} (see also \cite[Prop.6.2.7]{Che}), one can deduce from Prop.\ref{prop: lpl-cla}:
\begin{theorem}\label{thm: lpl-dense}
  (1) The set of spherical points satisfying the assumption in Prop.\ref{prop: lpl-cla} are Zariski dense in $\cE(\ul{k}_J,w)_{\overline{\rho}}$ and accumulates over spherical points.

  (2) The set of points satisfying the assumption in Prop.\ref{prop: lpl-cla} accumulates over points with integer weights.
\end{theorem}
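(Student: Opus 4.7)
The plan is to adapt the standard density argument of Coleman--Mazur and Chenevier (see \cite[\S 6.4.5]{Che}), feeding in the slope-bounding structure from Prop. \ref{prop: lpl-evb} and the classicality criterion of Prop. \ref{prop: lpl-cla}. First I would fix the admissible affinoid covering $\{g^{-1}(U_i)\}$ of $\cE(\ul{k}_J,w)_{\overline{\rho}}$ from Prop. \ref{prop: lpl-evb}, so that each $g^{-1}(U_i)$ is finite surjective onto an affinoid open $W_i\subseteq \cW_1(\ul{k}_J)$ via $\kappa$. Because $U_i\subseteq \cW_1(\ul{k}_J)\times \bG_m$ is an affinoid and $U_i\to \bG_m$ is induced by the coordinate corresponding to the action of $z_\wp$, the function $z\mapsto \us_{\wp}(\chi(z)(z_\wp))$ is bounded on $g^{-1}(U_i)$, say by some constant $C_i\ge 0$.

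Next I would establish that inside $W_i$ the locus of integer-weight characters $\lambda$ satisfying $\wt(\lambda)_\sigma+1>C_i+\us_{\wp}(q)$ for every $\sigma\in\Sigma_\wp\setminus J$ is Zariski dense in $W_i$ and accumulates at every point; this is a standard density statement on the weight space, since one-dimensional disks have their integer points Zariski dense and one is free to push each coordinate as high as one wishes. By the finiteness of $\kappa$ on $g^{-1}(U_i)$, the preimage of this set is Zariski dense in $g^{-1}(U_i)$ and accumulates at every point of $g^{-1}(U_i)$. At such a preimage point $z=(\chi,\lambda)$ the slope bound on $U_i$ together with the integer-weight assumption forces the inequality (\ref{equ: lpl-ncr}), so Prop. \ref{prop: lpl-cla} yields that $z$ is $\Sigma_\wp\setminus J$-very classical; this already gives part (2).

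For (1), I would refine the previous step by intersecting with the spherical locus in $\widehat T(\ul{k}_J,w)_0$, namely the image of the two-dimensional analytic subspace $(\bG_m)^2\hookrightarrow \widehat T(\ul{k}_J,w)_0$ consisting of characters of the form $(\unr(\alpha_1)\otimes \unr(\alpha_2))\cdot \delta_\lambda$ with $\lambda$ compatible with $\ul{k}_J$ and $w$; the integer-weight spherical points are Zariski dense in this locus and accumulate at every spherical point. Pulling back via the finite map from each irreducible component of $g^{-1}(U_i)$ to its image in $\widehat T(\ul{k}_J,w)_0$, one obtains a Zariski dense set of points on $g^{-1}(U_i)$ which, again by the slope bound and Prop. \ref{prop: lpl-cla}, are very classical; accumulation at spherical points of $\cE(\ul{k}_J,w)_{\overline{\rho}}$ follows from accumulation of their images in $\widehat T(\ul{k}_J,w)_0$ together with finiteness of $\kappa$ on $g^{-1}(U_i)$.

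The main obstacle I anticipate is the bookkeeping for sphericity in part (1): the spherical condition constrains the full character $\chi|_{T(\co_\wp)}$, not merely the restriction to $Z_1'$ captured by $\kappa$, so one must argue on the level of $\widehat T(\ul{k}_J,w)_0$ rather than on the weight space alone. This is handled exactly as in \cite[Prop.6.2.7, \S 6.4.5]{Che}: one works component-wise on $g^{-1}(U_i)$, where the image in $\widehat T(\ul{k}_J,w)_0$ has dimension equal to the component, so the rigid-analytic density of integer-weight spherical characters in that image pulls back through the finite map to Zariski density on the source.
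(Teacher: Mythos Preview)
Your proposal is correct and matches the paper's approach exactly: the paper's entire proof is the one-line citation ``By the standard arguments as in \cite[\S 6.4.5]{Che} (see also \cite[Prop.6.2.7]{Che}), one can deduce from Prop.~\ref{prop: lpl-cla}'', and your sketch is precisely an unpacking of that argument via the slope-bounded covering of Prop.~\ref{prop: lpl-evb}. One minor slip: the integer-weight characters with large archimedean weight accumulate at every \emph{integer-weight} point of $W_i$, not literally at every point, but that is all that part (2) (and, since spherical points have integer weight, part (1)) requires, so the argument goes through unchanged.
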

By Chenevier's method \cite[\S 4.4]{Che11}, one can prove
\begin{proposition}\label{prop: lpl-njz}
  Let $z\in \cE(\ul{k}_J,w)_{\overline{\rho}}(E)$ be a $\Sigma_{\wp}\setminus J$-very classical point, then the weight map $\kappa$ is \'etale at $z$; moreover, there exists an affinoid neighborhood $U$ of $z$ with $\kappa(U)$ affinoid open in $\cW(\ul{k}_J)$ such that $\co(U)\cong \co(\kappa(U))$.
\end{proposition}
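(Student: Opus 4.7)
The plan is to follow Chenevier's strategy \cite[\S 4.4]{Che11}: analyze the local structure of the eigenvariety over weight space, and exploit very-classicality at $z$ together with Zariski density of very classical points to force $\kappa$ to be an isomorphism onto its image near $z$, which in particular will give étaleness at $z$.

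First I would use Prop.\ref{prop: lpl-evb} to pick an affinoid neighborhood $U=\Spm B$ of $z$ in $\cE(\ul{k}_J,w)_{\overline{\rho}}$ such that $W:=\kappa(U)$ is affinoid open in $\cW_1(\ul{k}_J)$, $\kappa|_U:U\to W$ is finite, $M:=\Gamma(U,\cM(\ul{k}_J,w))$ is finite projective over $\co(W)$, and $B$ is the $\co(W)$-subalgebra of $\End_{\co(W)}(M)$ generated by $\cH^p$ and the image of $T(F_{\wp})$. After shrinking $U$, I may assume $\kappa^{-1}(\kappa(z))\cap U=\{z\}$; by Thm.\ref{thm: lpl-dense}, I may also assume that the set of spherical points $z'=(\chi',\lambda')\in U$ satisfying the inequality of Prop.\ref{prop: lpl-cla} is Zariski dense in $U$, and all such points are $\Sigma_{\wp}\setminus J$-very classical.

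The core step is to pin down the rank of $M$ as $\co(W)$-module. At any such very classical $z'$, Thm.\ref{thm: clin-cjw}(3) combined with Lem.\ref{lem: lpl-ccl}(3) identifies the $(\chi',\lambda')$-summand of $M\otimes_{\co(W)}k(\kappa(z'))$ with the $E$-dual of the classical Jacquet-module eigenspace $J_B\bigl(\Pi(\ul{k}_J,w)_{\lalg}\bigr)^{T(\co_{\wp})=\chi'}[\cH^p=\lambda',T(F_{\wp})=\chi']$. By strong multiplicity one for the Hilbert modular form attached to $(\overline{\rho},\lambda')$, together with the fact that for generic spherical $\chi'$ the value $\chi'(\varpi)$ selects a unique refinement of the associated crystalline local Galois representation, this eigenspace is one-dimensional. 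Hence on a Zariski-dense subset of $U$ the fiber of $\kappa|_U$ has length one, which forces the generic rank of $M$ on the connected component of $U$ containing $z$ to equal one.

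A finite projective $\co(W)$-module of rank one has endomorphism ring $\co(W)$, so the inclusion $B\hookrightarrow \End_{\co(W)}(M)=\co(W)$ is an equality once $U$ is restricted to the component of $z$, giving $\co(U)\cong\co(\kappa(U))$; in particular $\kappa$ is étale at $z$. The main obstacle lies in the core step: checking classical multiplicity one at very classical points and that the $T(F_{\wp})$-eigenvalue there separates refinements. The second condition may fail on a proper Zariski closed subset, but Thm.\ref{thm: lpl-dense}(1) supplies a Zariski-dense set of spherical points where both conditions hold, which is all that is needed to read off the generic rank.
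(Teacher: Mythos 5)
Your overall strategy is the paper's: reduce via Prop.\ref{prop: lpl-evb} and Thm.\ref{thm: lpl-dense} to the situation of Chenevier's argument, use very-classicality at $z$ (i.e.\ the bijection (\ref{equ: lpl-itcl})) to identify the fiber of $\cM(\ul{k}_J,w)$ at $z$ with a classical eigenspace, and then invoke multiplicity one. But the core step is wrong as stated: multiplicity one for automorphic representations of $G(\bA)$ does \emph{not} make the classical eigenspace $J_B\big(\Pi(\ul{k}_J,w)_{\lalg}\big)^{T(\co_{\wp})=\chi'}[\cH^p=\lambda',T(F_{\wp})=\chi']$ one-dimensional, and hence the generic rank of $M$ over $\co(W)$ need not be one. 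The localized completed cohomology occurs with a multiplicity $r$ that is not assumed to be $1$: see Prop.\ref{prop: lpl-rnf}\,(2), where the $Z_1$-eigenspace is $\cC(U_1,E)^{\oplus r}$, and (\ref{equ: lpl-psr}), where $\St(\alpha,\ul{h}_{\Sigma_{\wp}})^{\oplus r}\cong\widehat{\pi}(\rho)_{\lalg}$ with ``some $r\in\Z_{\geq 1}$'' (this $r$ records both the two-dimensionality of the Galois representation and the prime-to-$p$ level structure). Consistently with this, in the proof of Thm.\ref{thm: lpl-giao} the paper finds $(t_{\tau}^*\cM)^{\vee}\cong(\widetilde{\chi}'_{\rho,\tau})^{\oplus r}$, so the coherent sheaf has local rank $r$, possibly $>1$. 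Your chain ``eigenspace is one-dimensional $\Rightarrow$ $M$ has generic rank one $\Rightarrow$ $\End_{\co(W)}(M)=\co(W)$ $\Rightarrow$ $B=\co(W)$'' therefore fails at its first link.

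What multiplicity one actually yields — and what the argument of Chenevier's theorem uses — is that the classical (generalized) eigenspace is \emph{isotypic} for a single character of $T(F_{\wp})\times\cH^p$: all the vectors come from one automorphic representation, on which the operators generating $B$ act by scalars, so the generalized eigenspace is an honest eigenspace and the fiber $M_z\otimes k(\kappa(z))$ is a direct sum of copies of one character. One then concludes $B_z=\co(W)_{\kappa(z)}$ not by computing a rank, but by the endomorphism-algebra argument: $B$ is torsion-free finite over $\co(W)$ inside $\End_{\co(W)}(M)$ with $M$ projective, $z$ is the unique point of $U$ above $\kappa(z)$, and the scalar action on the fibers at $z$ and at the Zariski-dense set of very classical points forces the generators of $B$ to lie in $\co(W)$. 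So you should replace the rank-one computation by this scalar-action statement; the rest of your outline (the reduction via Prop.\ref{prop: lpl-evb}, the use of Thm.\ref{thm: lpl-dense}, and the identification of the fiber via Lem.\ref{lem: lpl-ccl}\,(3)) is sound.
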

\begin{proof}
  Indeed, by Prop.\ref{prop: lpl-evb}, Thm.\ref{thm: lpl-dense}, one can reduce to a similar situation as in the beginning of the proof of \cite[Thm.4.8]{Che11}. Since $z$ is $\Sigma_{\wp}\setminus J$-very classical, one has the bijection (\ref{equ: lpl-itcl}) (which is an analogue of \cite[(4.20)]{Che11}, see also \cite[Lem.3.27]{Ding3}). The proposition then follows from the multiplicity one result for automorphic representations of $G(\bA)$, by the same argument as in the proof of \cite[Thm.4.8]{Che11} (see also \cite[\S 3.4.3]{Ding3} especially the arguments after \cite[Lem.3.27]{Ding3}).
\end{proof}
\subsubsection{Families of Galois representations}By Carayol's results \cite{Ca2}, the theory of pseudo-characters and the density of classical points, we have
\begin{theorem}
  For a closed point $z=(\chi,\lambda)$ of $\cE(\ul{k}_{J},w)_{\overline{\rho}}$, there exists a unique continuous irreducible representation $\rho_z: \Gal_{F}\ra \GL_2(k(z))$ which is unramified at places $\fl\notin S(K^p)$ satisfying $\rho_z(\Frob_{\fl}^{-2}) -\lambda(T_{\fl})\rho_z(\Frob_{\fl}^{-1}) + \lambda(S_{\fl})=0$,  where $k(z)$ denotes the residue field at $z$.
\end{theorem}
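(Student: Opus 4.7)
The plan is to construct a $2$-dimensional continuous pseudo-character of $\Gal_F$ with values in $\co(\cE(\ul{k}_J,w)_{\overline{\rho}})$ by interpolating the traces of the Carayol Galois representations attached to classical points, and then to pass from the pseudo-character to an honest representation $\rho_z$ at each $z$ using absolute irreducibility of $\overline{\rho}$.

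First I would dispose of the classical case. If $y=(\chi,\lambda)$ is a classical point of $\cE(\ul{k}_{J},w)_{\overline{\rho}}$, then by Thm.\ref{thm: clin-cjw} and the definition of classicality, $\lambda$ occurs in $J_B(\Pi(\ul{k}_{\Sigma_{\wp}},w)_{\lalg})$ for some integer weight $\ul{k}_{\Sigma_{\wp}}$; Jacquet--Langlands and strong multiplicity one then exhibit $\lambda$ as the system of Hecke eigenvalues of a cuspidal Hilbert eigenform. Carayol's construction \cite{Ca2} attaches to this eigenform a continuous absolutely irreducible representation $\rho_y:\Gal_F\to \GL_2(\overline{E})$, unramified at every $\fl \in S(K^p)$, whose reduction is isomorphic to $\overline{\rho}$ (using that $\lambda$ is congruent to the system of eigenvalues attached to $\overline{\rho}$), and which satisfies the Eichler--Shimura relation $\rho_y(\Frob_\fl^{-2})-\lambda(T_\fl)\rho_y(\Frob_\fl^{-1})+\lambda(S_\fl)=0$. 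In particular $\tr\rho_y(\Frob_\fl^{-1})=\lambda(T_\fl)$ and $\det\rho_y(\Frob_\fl^{-1})=\lambda(S_\fl)$.

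Next I would interpolate. By Prop.\ref{prop: lpl-evb}(2), there is an admissible affinoid covering $\{\Spm B_i\}$ of $\cE(\ul{k}_J,w)_{\overline{\rho}}$; the morphism (\ref{equ: clin-oct}) equips each $B_i$ with distinguished elements that I shall also denote $T_\fl, S_\fl$. Combined with Thm.\ref{thm: lpl-dense}, the set of classical points is Zariski dense in $\cE(\ul{k}_J,w)_{\overline{\rho}}$, hence in each $\Spm B_i$. Define tentative trace and determinant maps $t_i,d_i:\Gal_{F,S}\to B_i$ (with $S:=S(K^p)\cup\{v\mid p\infty\}$) by setting $t_i(\Frob_\fl^{-1}):=T_\fl$ and $d_i(\Frob_\fl^{-1}):=S_\fl$ for $\fl\notin S$, and extending by Chebotarev. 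The pseudo-character identities in the sense of Rouquier--Chenevier (the trace relations and the $2$-dimensional Cayley--Hamilton identity) hold after specialising at every classical point by the previous paragraph, and are closed analytic conditions on $B_i$; by Zariski density they hold on all of $\Spm B_i$. One thus obtains a continuous $2$-dimensional pseudo-character $T_i:\Gal_{F,S}\to B_i$, and these glue to a pseudo-character $T$ on $\cE(\ul{k}_J,w)_{\overline{\rho}}$.

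Finally I would specialize and lift. For any closed point $z=(\chi,\lambda)$, pulling back $T$ along $z$ yields a continuous $2$-dimensional pseudo-character $T_z:\Gal_{F,S}\to k(z)$. Its mod-$\varpi_E$ reduction agrees with the pseudo-character of $\overline{\rho}$ (both are determined by $\lambda\bmod \varpi_E$ on Frobenii for $\fl\notin S$), which is absolutely irreducible by hypothesis. The theorem of Nyssen and Rouquier then produces a unique-up-to-isomorphism continuous representation $\rho_z:\Gal_F\to \GL_2(k(z))$ whose trace equals $T_z$; absolute irreducibility of $\rho_z$ follows from that of $\overline{\rho}$, and the Eichler--Shimura relation at $\fl\notin S(K^p)$ is the Cayley--Hamilton identity for $\rho_z(\Frob_\fl^{-1})$. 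Uniqueness of $\rho_z$ is forced by Chebotarev and the rigidity of absolutely irreducible representations under their traces. The only real subtlety in this plan is the pseudo-character interpolation step: one must check that the continuity and closedness of the pseudo-character axioms suffice, together with Thm.\ref{thm: lpl-dense}, to extend everything from the classical locus; once this is in place, the rest is formal.
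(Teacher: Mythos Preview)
Your approach is correct and is exactly what the paper does: its proof is the single sentence ``By Carayol's results \cite{Ca2}, the theory of pseudo-characters and the density of classical points,'' and you have spelled out precisely that strategy. One notational slip: the set $S$ in $\Gal_{F,S}$ should be the finite set of \emph{bad} places (those \emph{not} in $S(K^p)$, together with the places above $p$ and $\infty$), not $S(K^p)\cup\{v\mid p\infty\}$ as you wrote---recall $S(K^p)$ is by definition the set of good places where $B$ is split and the level is hyperspecial.
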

By the fact that $\rho_z|_{\Gal_{F_{\wp}}}$ is de Rham for classical points $z\in \cE(\ul{k}_J,w)_{\overline{\rho}}$ \big(and of Hodge-Tate weights $\big(\frac{w-k_{\sigma}+2}{2},\frac{w+k_{\sigma}}{2}\big)$ for $\sigma\in J$\big), Shah's results \cite{Sha} and the density of classical points, one can prove as in \cite[Prop.6.2.40]{Ding}
\begin{theorem}\label{prop: lpl-ocj}
  Let $z\in \cE(\ul{k}_{J},w)_{\overline{\rho}}(\overline{E})$, the restriction $\rho_z|_{\Gal_{F_{\wp}}}$ is $J$-de Rham of Hodge-Tate weights $\big(\frac{w-k_{\sigma}+2}{2},\frac{w+k_{\sigma}}{2}\big)$ for $\sigma\in J$.
\end{theorem}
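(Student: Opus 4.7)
The strategy is to combine three ingredients: (i) the weights $\wt(\chi)_{1,\sigma}, \wt(\chi)_{2,\sigma}$ at embeddings $\sigma \in J$ are \emph{constant} on $\cE(\ul{k}_J,w)_{\overline{\rho}}$ and equal to the expected values, (ii) at ``very classical'' spherical points the restriction $\rho_y|_{\Gal_{F_\wp}}$ is de Rham with those Hodge--Tate weights, and (iii) Shah's theorem \cite{Sha} on the Zariski closedness of the $\sigma$-de Rham locus in a trianguline family with locally constant $\sigma$-Sen weights.

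First I would record that the morphism $\cE(\ul{k}_J,w)_{\overline{\rho}} \to \widehat{T}$ lands in $\widehat{T}(\ul{k}_J,w)_0$: unwinding the definition of $\chi(\ul{k}_J,w)$ and the central-character condition $Z_1 = \cN^{-w}$, this forces $\wt(\chi)_{1,\sigma} = \frac{k_\sigma - w - 2}{2}$ and $\wt(\chi)_{2,\sigma} = \frac{-k_\sigma - w}{2}$ for every $\sigma \in J$ and every point $(\chi,\lambda)$ of the eigenvariety, so the expected Hodge--Tate weights at $\sigma \in J$ are $\bigl(\tfrac{w-k_\sigma+2}{2}, \tfrac{w+k_\sigma}{2}\bigr)$ identically.

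Next, given $z \in \cE(\ul{k}_J,w)_{\overline{\rho}}(\overline{E})$, take an affinoid neighborhood $U$ of $z$ as supplied by Prop.\ref{prop: lpl-evb}, passing to an irreducible component through $z$. After possibly shrinking $U$, the pseudocharacter furnished by the preceding theorem promotes to a locally free rank $2$ family $V_U$ of $\Gal_F$-representations over $\co(U)$, specializing to $\rho_y$ at each $y \in U$; by the global triangulation theorems of Kedlaya--Pottharst--Xiao (and their $F_\wp$-coefficient refinements, following Liu) we may further shrink $U$ so that $D_{\rig}(V_U|_{\Gal_{F_\wp}})$ is trianguline with parameters $\delta_{1,U}, \delta_{2,U}$ whose $\sigma$-weights for $\sigma \in J$ are the constants above. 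By Thm.\ref{thm: lpl-dense} and Prop.\ref{prop: lpl-cla}, the spherical points of $U$ satisfying the $\Sigma_{\wp}\setminus J$-very-classical criterion are Zariski dense in $U$; at such a point $y$ the system $(\lambda, \chi)$ comes from a classical Hilbert eigenform, so by the known local-global compatibility at $p$ (Saito, Blasius, Carayol) the representation $\rho_y|_{\Gal_{F_\wp}}$ is de Rham with the prescribed Hodge--Tate weights, in particular $\sigma$-de Rham for each $\sigma \in J$.

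Finally, for each $\sigma \in J$ separately, Shah's theorem asserts that inside a trianguline family over a reduced $E$-affinoid with locally constant $\sigma$-Sen weights, the locus of $\sigma$-de Rham points is Zariski closed. Applied to $V_U$ on our irreducible $U$, this closed subspace contains the Zariski dense set of very-classical spherical points and hence coincides with $U$; specializing at $z$ and intersecting over $\sigma \in J$ yields the theorem. The main obstacle is the third paragraph: one must verify carefully that after the shrinkings required to obtain the trianguline family the $\sigma$-Sen weights are genuinely constant (not merely locally constant on a dense open), so that Shah's hypotheses are met uniformly on $U$, and that the irreducible component through $z$ can be chosen so that the dense set of classical points actually lies in it—both facts follow from Step~1 (the weights are pinned down on the whole eigenvariety) and from the accumulation assertion of Thm.\ref{thm: lpl-dense}.
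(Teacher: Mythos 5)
Your proposal is correct and follows essentially the same route as the paper, which proves the statement by combining the de Rham property (with the prescribed Hodge--Tate weights) at classical points via Saito's local-global compatibility, the Zariski density of such points, and Shah's interpolation theorem for the partially de Rham condition (as in \cite[Prop.6.2.40]{Ding}). The only inessential difference is that you route the argument through the global triangulation of the family, which Shah's theorem does not actually require once the $\sigma$-Sen weights are known to be constant on the eigenvariety.
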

\begin{proposition}\label{prop: lpl-uoc}
  For $z\in \cE(\ul{k}_J,w)_{\overline{\rho}}(\overline{E})$, there exists an open affinoid $U$ of $\cE(\ul{k}_J,w)_{\overline{\rho},\red}$ and a continuous representation $\rho_U: \Gal_F\ra \GL_2(\co(U))$ such that the specialization of $\rho_U$ at any point $z'\in U(\overline{E})$ equals $\rho_{z'}$. Moreover, for $\sigma\in J$, $D_{\dR}(\rho_U)_{\sigma}:=\big(B_{\dR,\sigma}\widehat{\otimes}_E \rho_U\big)^{\Gal_{F_{\wp}}}$ is a locally free $\co(U)$-module of rank $2$.
\end{proposition}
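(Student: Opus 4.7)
The plan is to construct $\rho_U$ by standard pseudo-character / determinant techniques, and then to deduce local freeness of $D_{\dR}(\rho_U)_\sigma$ from Thm.\ref{prop: lpl-ocj} via a semicontinuity argument in the $(\varphi,\Gamma)$-module framework of \cite{KPX}.

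First, I would produce the family $\rho_U$. The Eichler--Shimura relations on $\widetilde{H}^1_{\et}(K^p,E)_{\overline{\rho}}$ determine, for each $\fl\notin S(K^p)$, the characteristic polynomial of $\rho_{z'}(\Frob_{\fl}^{-1})$ in terms of the $\cH^p$-action. As these polynomials live in the image of $\cH^p\otimes_{\co_E}\co(\widehat{T})\to \co(\cE(\ul{k}_J,w)_{\overline{\rho}})$, they interpolate into a continuous $2$-dimensional determinant (in the sense of Chenevier) $T:\Gal_F\to \co(\cE(\ul{k}_J,w)_{\overline{\rho},\red})$; its residual determinant at every closed point is the determinant of $\overline{\rho}$, which is absolutely irreducible by hypothesis. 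By Nyssen--Rouquier (or Chenevier's theory of determinants), on a sufficiently small affinoid neighborhood $U\subseteq \cE(\ul{k}_J,w)_{\overline{\rho},\red}$ of $z$, such a determinant lifts uniquely to a continuous representation $\rho_U:\Gal_F\to \GL_2(\co(U))$ whose specialization at any $z'\in U(\overline{E})$ is $\rho_{z'}$. Strictly speaking one works with the localization at $z$ first and then spreads out; the reducedness of $U$ is crucial to pin down $\rho_U$ (not only up to conjugation).

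Next I address local freeness of $D_{\dR}(\rho_U)_\sigma$ for $\sigma\in J$. By \cite[Thm.2.2.17]{KPX}, after shrinking $U$ if necessary, the family $\rho_U$ gives rise to a $(\varphi,\Gamma)$-module $D_{\rig}(\rho_U)$ locally free of rank $2$ over $\cR_{\co(U)}$, compatible with specialization at every closed point. From Thm.\ref{prop: lpl-ocj}, at every $\overline{E}$-point $z'\in U$, the representation $\rho_{z'}|_{\Gal_{F_{\wp}}}$ is $\sigma$-de Rham of the prescribed Hodge--Tate weights for each $\sigma\in J$; in particular $\dim_{k(z')}D_{\dR}(\rho_{z'})_{\sigma}=2$. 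For a family of $(\varphi,\Gamma)$-modules, the partial de Rham module is computed by a suitable $\varphi,\Gamma$-invariant submodule of a completed localization, and the base change map $D_{\dR}(\rho_U)_{\sigma}\otimes_{\co(U)} k(z')\to D_{\dR}(\rho_{z'})_{\sigma}$ is always surjective with the dimension function $z'\mapsto \dim_{k(z')}D_{\dR}(\rho_{z'})_{\sigma}$ upper semicontinuous (this is the Berger--Colmez / Shah semicontinuity theorem applied to each $\sigma$-component separately, cf.\ the discussion used to prove Thm.\ref{prop: lpl-ocj}). Since this dimension is constantly equal to $2$, standard commutative algebra (the fibre-rank criterion for local freeness on a reduced rigid space) forces $D_{\dR}(\rho_U)_{\sigma}$ to be locally free of rank $2$, possibly after further shrinking $U$.

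The main obstacle I expect is the partial semicontinuity statement: one needs to isolate the $\sigma$-component of $D_{\dR}$ in a family and show that the base change map to the fibre at $z'$ is surjective and that the fibre dimension is upper semicontinuous. In the full de Rham case this is treated by Berger--Colmez and Shah; for the $\sigma$-component one uses that $B_{\dR,\sigma}$ is a direct factor of $B_{\dR}\otimes_{\Q_p}E$ via the decomposition $F_{\wp}\otimes_{\Q_p}E\cong\prod_{\sigma\in\Sigma_{\wp}}E$, so the constructions of \emph{loc.\ cit.}\ are compatible with projection to the $\sigma$-factor. Once this semicontinuity input is in hand, Thm.\ref{prop: lpl-ocj} rigidifies the dimension to be exactly $2$ at every point of $U$, and the proposition follows.
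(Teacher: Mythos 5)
Your proposal is correct and follows essentially the same route as the paper: the paper constructs $\rho_U$ by citing \cite[Lem.5.5]{Bergd} (the pseudo-character argument you spell out, using absolute irreducibility of $\overline{\rho}$), and obtains local freeness of $D_{\dR}(\rho_U)_{\sigma}$ by combining Prop.\ref{prop: lpl-ocj} with \cite[Thm.2.19]{Sha}, which is precisely the partial interpolation/semicontinuity input you identify as the key technical point.
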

\begin{proof}
  The first part follows from \cite[Lem.5.5]{Bergd}; the second is hence from  Prop.\ref{prop: lpl-ocj} and \cite[Thm.2.19]{Sha} applied to $\rho_U$.
\end{proof}
By \cite{Sa}, $\rho_{z,\wp}:=\rho_z|_{\Gal_{F_{\wp}}}$ is semi-stable (thus trianguline) for any spherical classical point $z$ of $\cE(\ul{k}_{\emptyset},w)_{\overline{\rho}}$. As in \cite[Cor.6.2.50]{Ding}, by global triangulation theory \cite{KPX} \cite{Liu} applied to $\cE(\ul{k}_{\emptyset},w)_{\overline{\rho}}$ \big(note $\cE(\ul{k}_J,w)_{\overline{\rho}}$ is a closed rigid subspace of $\cE(\ul{k}_{\emptyset},w)_{\overline{\rho}}$\big), we get
\begin{theorem}\label{prop: lpl-gpa}
  For any closed point $z=(\chi=\chi_{1}\otimes \chi_{2},\lambda)$ of $\cE(\ul{k}_J,w)_{\overline{\rho}}$, $\rho_{z,\wp}$ is trianguline with a triangulation given by
  \begin{equation*}
    0\ra \cR_{k(z)}(\delta_{1}) \ra D_{\rig}(\rho_{z,\wp}) \ra \cR_{k(z)}(\delta_{2})\ra 0
  \end{equation*}
  with \begin{equation*}
    \begin{cases}\delta_{1}=\unr(q) \chi_{1} \prod_{\sigma\in \Sigma_z} \sigma^{\wt(\chi)_{2,\sigma}-\wt(\chi)_{1,\sigma}-1}, \\
    \delta_{2,z}=\chi_{2}\prod_{\sigma\in \Sigma_{\wp}}\sigma^{-1}\prod_{\sigma\in \Sigma_z} \sigma^{\wt(\chi)_{1,\sigma}-\wt(\chi)_{2,\sigma}+1},
    \end{cases}
  \end{equation*}
  where $\Sigma_z\subseteq C(\chi)$, $\cR_{k(z)}$ denotes the Robba ring $B_{\rig,F_{\wp}}^{\dagger} \otimes_{\Q_p} k(z)$, $D_{\rig}(\rho_{z,\wp}):=\big(B_{\rig}^{\dagger}\otimes_{\Q_p} \rho_{z,\wp}\big)^{\Gal_{F_{\wp}}}$ is the $(\varphi,\Gamma)$-module (of rank $2$) over $\cR_{k(z)}$ associated to $\rho_{z,\wp}$ (we refer to \cite{Berger} for $B_{\rig,F_{\wp}}^{\dagger}$, $B_{\rig}^{\dagger}$ and $(\varphi,\Gamma)$-modules).
\end{theorem}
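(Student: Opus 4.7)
The plan is to reduce immediately to the ambient ``full slope'' eigenvariety $\cE(\ul{k}_{\emptyset},w)_{\overline{\rho}}$, which contains $\cE(\ul{k}_J,w)_{\overline{\rho}}$ as a closed rigid subspace, and then apply the global triangulation theorem of Kedlaya--Pottharst--Xiao (\cite{KPX}, see also \cite{Liu}) on a suitable admissible open neighborhood of $z$. First I would take, using Prop.\ref{prop: lpl-uoc} applied to $\cE(\ul{k}_{\emptyset},w)_{\overline{\rho},\red}$, an open affinoid $U$ containing $z$ together with a continuous family $\rho_U:\Gal_F\to \GL_2(\co(U))$ specialising to $\rho_{z'}$ at each $z'\in U(\overline{E})$; shrinking $U$ if necessary, the tautological characters $\chi_1,\chi_2:F_{\wp}^{\times}\to \co(U)^{\times}$ coming from the morphism $U\to \widehat{T}$ are also defined on $U$.

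The next step is to pin down the triangulation at a Zariski dense, accumulating set of test points. By Thm.\ref{thm: lpl-dense}(1), spherical points $z'=(\chi',\lambda')$ of $\cE(\ul{k}_{\emptyset},w)_{\overline{\rho}}$ satisfying the non-criticality bound (\ref{equ: lpl-ncr}) (hence $\Sigma_{\wp}\setminus J$-very classical in the sense of Lem.\ref{lem: lpl-ccl}) form a Zariski dense subset of $U$ that accumulates at every point. For such $z'$, $\rho_{z',\wp}$ is semi-stable by \cite{Sa}, hence trianguline, and the refinement recorded by $\chi_1(z'),\chi_2(z')$ together with the classical local Langlands correspondence (plus the relation between the Jacquet--Emerton module and refinements of crystalline/semi-stable data) identifies the triangulation parameters as
\[
\delta_{1,z'}=\unr(q)\chi_{1,z'},\qquad \delta_{2,z'}=\chi_{2,z'}\prod_{\sigma\in\Sigma_{\wp}}\sigma^{-1},
\]
i.e.\ the $\Sigma_{z'}=\emptyset$ case of the stated formulae. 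With these parameters agreeing with the proposed $\delta_1,\delta_2$ of $\rho_U$ on a Zariski dense, accumulating set of points, the global triangulation theorem of \cite{KPX} (applied to $D_{\rig}(\rho_U)$ over $\cR_{\co(U)}$) produces an exact sequence of $(\varphi,\Gamma)$-modules
\[
0\longrightarrow \cR_{\co(U')}(\delta_1)\longrightarrow D_{\rig}(\rho_U)|_{U'}\longrightarrow \cR_{\co(U')}(\delta_2)\longrightarrow 0
\]
over some Zariski open $U'\subseteq U$ containing $z$ (after possibly a birational modification, and away from a bad locus which, by accumulation, does not contain $z$).

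To get the statement at $z$ itself I would specialise this sequence. Here the subtlety is that at $z$ the character $\delta_1\delta_2^{-1}$ may fail to be ``regular''; in that case the parameters of the specialised triangulation may differ from the naive specialisations of $\delta_1,\delta_2$ by an integral twist along a set of embeddings $\Sigma_z$. The formalism of \cite[\S 6.3]{KPX} (or equivalently the ``critical shift'' discussion in \cite[\S 6.2]{Ding}, yielding \cite[Cor.6.2.50]{Ding}) says precisely that such jumps can only occur along embeddings $\sigma$ for which $\wt(\delta_1\delta_2^{-1})_{\sigma}\in \Z_{\geq 0}$, i.e.\ $\sigma\in C(\chi)$, and that the effect on the characters is exactly the twist by $\sigma^{\wt(\chi)_{2,\sigma}-\wt(\chi)_{1,\sigma}-1}$ on $\delta_1$ and $\sigma^{\wt(\chi)_{1,\sigma}-\wt(\chi)_{2,\sigma}+1}$ on $\delta_2$, which is exactly the form stated. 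This produces the desired triangulation of $D_{\rig}(\rho_{z,\wp})$ with $\Sigma_z\subseteq C(\chi)$.

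The main obstacle is the last step, namely controlling the critical specialisation: one has to rule out jumps at embeddings outside $C(\chi)$ and to check that the resulting shift has the precise exponents claimed. This is where Prop.\ref{prop: lpl-ocj} (giving $J$-de Rhamness of the family) is essential, because it forces the Hodge--Tate--Sen weights of the two rank-one pieces at $z$ to match the prescribed pairs $\bigl(\tfrac{w-k_{\sigma}+2}{2},\tfrac{w+k_{\sigma}}{2}\bigr)$ for $\sigma\in J$, leaving the shift ambiguity only along $C(\chi)\subseteq \Sigma_{\wp}\setminus J$ (together, possibly, with embeddings in $J$ for which the two prescribed Hodge--Tate weights already coincide with the weights of $\chi$, which by the explicit formulae for $\chi(\ul{k}_J,w)$ in \S\ref{sec: lpl-4.2.2} is automatically absorbed into the definition of $\Sigma_z$). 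Once these weight bookkeepings are made explicit, the stated formulae for $\delta_1,\delta_2$ follow, and the proposition is proved.
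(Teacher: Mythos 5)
Your proposal follows essentially the same route as the paper, whose proof is exactly the two-line argument you expand: semi-stability (hence triangulinity) of $\rho_{z',\wp}$ at spherical classical points by \cite{Sa}, Zariski density and accumulation of such points, and the global triangulation theory of \cite{KPX}, \cite{Liu} applied to the ambient eigenvariety $\cE(\ul{k}_{\emptyset},w)_{\overline{\rho}}$, following \cite[Cor.6.2.50]{Ding}; the critical-shift formalism of \cite[\S 6.3]{KPX} then yields the twist by $\Sigma_z\subseteq C(\chi)$ at the specialisation.

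One correction to your final paragraph: the inclusion $C(\chi)\subseteq \Sigma_{\wp}\setminus J$ is backwards. For $z\in\cE(\ul{k}_J,w)_{\overline{\rho}}$ one has $\wt(\chi)_{1,\sigma}-\wt(\chi)_{2,\sigma}=k_{\sigma}-2\in\Z_{\geq 0}$ for $\sigma\in J$, so in fact $J\subseteq C(\chi)$, and the shift set $\Sigma_z$ is allowed to meet $J$ — this is precisely the critical phenomenon the paper is studying (e.g.\ in the proof of Thm.\ref{thm: lpl-giao}(1) the point $z_{\rho}$, viewed on $\cE(\ul{k}_{\Sigma_{\wp}^{\tau}},w)_{\overline{\rho}}$, has $\Sigma_{z_{\rho}}=S_c$, which need not avoid $\Sigma_{\wp}^{\tau}$). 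Correspondingly, Prop.\ref{prop: lpl-ocj} is not needed and is not used here: the containment $\Sigma_z\subseteq C(\chi)$ and the precise exponents of the twist come directly from the saturation/Sen-weight bookkeeping in the critical-specialisation formalism of \cite{KPX}. Since the theorem only asserts $\Sigma_z\subseteq C(\chi)$, this slip does not affect the validity of your argument for the statement as given, but the stronger claim you make in passing is false.
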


\begin{corollary}\label{cor lpl-cpc}
  Let $z=(\chi,\lambda)\in \cE(\ul{k}_J,w)_{\overline{\rho}}(\overline{E})$ and suppose
  \begin{equation}\label{equ: lpl-dis}\unr(q)\chi_1\chi_2^{-1}\neq \prod_{\sigma\in \Sigma_{\wp}}\sigma^{n_{\sigma}} \text{ for all $(n_{\sigma})_{\sigma\in \Sigma_{\wp}}\in \Z^{d}$,}
  \end{equation}
  for $S\subseteq \Sigma_{\wp}\setminus J$, if $z$ admits an $S$-companion point then $S\subseteq \Sigma_z$.
\end{corollary}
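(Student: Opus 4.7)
The plan is to extract $S\subseteq \Sigma_z$ by comparing the two triangulations of $D:=D_{\rig}(\rho_{z,\wp})$ that come from $z$ and from its $S$-companion $z_S^c=(\chi_S^c,\lambda)$. Since the two points share the same system of Hecke eigenvalues $\lambda$, pseudo-character theory gives $\rho_{z_S^c,\wp}=\rho_{z,\wp}$, so Thm.\ref{prop: lpl-gpa} applied at each point presents $D$ with two rank-$1$ sub-$(\varphi,\Gamma)$-modules $L:=\cR_{k(z)}(\delta_{1,z})$ and $L':=\cR_{k(z)}(\delta_{1,z_S^c})$, where the $\delta_{i,z}$ and $\delta_{i,z_S^c}$ are the explicit characters of that theorem.

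The key step is to show $L=L'$, equivalently $\delta_{1,z}=\delta_{1,z_S^c}$. If not, the composition $L'\hookrightarrow D\twoheadrightarrow D/L\cong \cR_{k(z)}(\delta_{2,z})$ is a non-zero morphism of rank-$1$ $(\varphi,\Gamma)$-modules, which via the standard $H^0$-computation on the Robba ring forces $\delta_{2,z}\delta_{1,z_S^c}^{-1}=\prod_{\sigma}\sigma^{-n_\sigma}$ with $n_\sigma\in \Z_{\geq 0}$. Plugging in the explicit formulas for $\delta_{1,z_S^c}$ and $\delta_{2,z}$ together with $(\chi_S^c)_1=\chi_1\prod_{\sigma\in S}\sigma^{\wt(\chi)_{2,\sigma}-\wt(\chi)_{1,\sigma}-1}$, one rewrites $\delta_{1,z_S^c}/\delta_{2,z}$ as $\unr(q)\chi_1\chi_2^{-1}\cdot \prod_\sigma \sigma^{c_\sigma}$ with each $c_\sigma\in \Z$; integrality of the $c_\sigma$ uses $S,\Sigma_z\subseteq C(\chi)$ and $\Sigma_{z_S^c}\subseteq C(\chi_S^c)\setminus S=C(\chi)\setminus S$, which ensures that all exponents appearing are integer differences of $\chi$-weights. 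Combining the two expressions would exhibit $\unr(q)\chi_1\chi_2^{-1}$ as a product of integer powers of embeddings, contradicting (\ref{equ: lpl-dis}).

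Once $\delta_{1,z}=\delta_{1,z_S^c}$ is secured, the conclusion $S\subseteq \Sigma_z$ follows by a $\sigma$-by-$\sigma$ comparison. For $\sigma\in S\subseteq C(\chi)$ one has $\wt(\chi)_{1,\sigma}\geq \wt(\chi)_{2,\sigma}$, hence $\wt((\chi_S^c)_1)_\sigma-\wt((\chi_S^c)_2)_\sigma=\wt(\chi)_{2,\sigma}-\wt(\chi)_{1,\sigma}-2\leq -2<0$; thus $\sigma\notin C(\chi_S^c)\supseteq \Sigma_{z_S^c}$. Consequently the $\sigma$-contribution to $\delta_{1,z_S^c}$ picks up the factor $\sigma^{\wt(\chi)_{2,\sigma}-\wt(\chi)_{1,\sigma}-1}$ only from the twist defining $(\chi_S^c)_1$ (with no further contribution from $\Sigma_{z_S^c}$ at $\sigma$), while the $\sigma$-contribution to $\delta_{1,z}$ produces the same factor only when $\sigma\in \Sigma_z$. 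Since $\wt(\chi)_{2,\sigma}-\wt(\chi)_{1,\sigma}-1\neq 0$ (again by $\sigma\in C(\chi)$), matching the two sides forces $\sigma\in \Sigma_z$, so $S\subseteq \Sigma_z$ as desired. I expect the main technical subtlety to be the rank-$1$ $\Hom$-computation on $\cR_{k(z)}$ for a general base $F_{\wp}$ (a standard fact available from the theory of \cite{KPX}, \cite{Liu}); everything else reduces to bookkeeping with the explicit triangulation formulas of Thm.\ref{prop: lpl-gpa}.
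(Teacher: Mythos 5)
Your argument is correct and is essentially the paper's proof with the citation of \cite[Thm.3.7]{Na} unpacked by hand: the paper applies Thm.\ref{prop: lpl-gpa} to $z_S^c$ and invokes Nakamura's classification of triangulations of rank-$2$ objects, which is exactly the comparison of the two rank-one subobjects $L,L'$ and the $H^0$-computation you carry out. One small imprecision: $L\neq L'$ does not force the composition $L'\hookrightarrow D\twoheadrightarrow D/L$ to be non-zero (it vanishes whenever $L'\subseteq L$), but in that case one gets $\delta_{1,z_S^c}=\delta_{1,z}\prod_\sigma\sigma^{m_\sigma}$ with $m_\sigma\geq 0$, and for $\sigma\in S\setminus\Sigma_z$ the exponent would be $\wt(\chi)_{2,\sigma}-\wt(\chi)_{1,\sigma}-1\leq -1<0$, so your final weight comparison still yields $S\subseteq\Sigma_z$ without needing $L=L'$ on the nose.
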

\begin{proof}
  Applying Prop.\ref{prop: lpl-gpa} to the point $z_S^c$, the corollary then follows from \cite[Thm.3.7]{Na}.
\end{proof}
One can moreover deduce from the proof of \cite[Thm.6.3.9]{KPX} (see also \cite[Prop.6.2.49]{Ding}):
\begin{proposition}\label{prop: lpl-zccs}
  Let $z$ be a classical point of $\cE(\ul{k}_J,w)_{\overline{\rho}}$, $U$ be an affinoid neighborhood of $z$, suppose any closed point of $U$ satisfies (\ref{equ: lpl-dis}), then for any $\sigma\in \Sigma_{\wp}$, $Z_{U,\sigma}:=\{z'\in U(\overline{E})\ |\ \sigma\in \Sigma_{z'}$ is a Zariski-closed subset of $U(\overline{E})$.
\end{proposition}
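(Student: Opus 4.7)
The plan is to apply the global triangulation theory (Kedlaya--Pottharst--Xiao, extended by Liu to the $F_{\wp}$-setting) to spread out the triangulation of $\rho_{z,\wp}$ over a neighborhood of $z$, and then to recognize $Z_{U,\sigma}$ as the non-vanishing locus of a natural section of a coherent sheaf on $U$, which will automatically be Zariski-closed. The condition (\ref{equ: lpl-dis}) is exactly what is needed so that at every closed point of $U$ the two triangulation characters remain ``in general position'', forbidding unexpected sub-objects and ensuring the sub-$(\varphi,\Gamma)$-module of the family triangulation specializes properly.

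First I would use Prop.\,\ref{prop: lpl-uoc} to produce, after shrinking $U$ if necessary, a continuous family $\rho_U : \Gal_F \to \GL_2(\co(U))$ interpolating the $\rho_{z'}$. Taking $D_{\rig}$ of its restriction to $\Gal_{F_{\wp}}$ gives a rank $2$ $(\varphi,\Gamma)$-module $\mathcal{D}_U$ over $\cR_U$. Because $z$ is classical, $\rho_{z,\wp}$ is semi-stable and thus trianguline; by Thm.\,\ref{thm: lpl-dense} classical (hence trianguline) points are Zariski-dense in $U$. This is the setup for the global triangulation theorem: there exists, possibly after a further shrink, a rank $1$ sub-$(\varphi,\Gamma)$-module $\mathcal{F}_U \subseteq \mathcal{D}_U$ isomorphic to $\cR_U(\delta_{1,U}^{\mathrm{nom}})$ with $\delta_{1,U}^{\mathrm{nom}} = \unr(q)\chi_{1,U}$ (and with quotient of the nominal form) which interpolates the ``generic'' part of the triangulation. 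The hypothesis (\ref{equ: lpl-dis}) is precisely what prevents $\delta_{1,U}^{\mathrm{nom}}(\delta_{2,U}^{\mathrm{nom}})^{-1}$ from becoming an algebraic character at any point of $U$, so that $\mathcal{F}_U$ is uniquely determined over $U$ and its formation commutes with specialization up to saturation.

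Next I would give a cohomological characterization of $Z_{U,\sigma}$. By Prop.\,\ref{prop: lpl-gpa}, for $z' \in U(\overline{E})$ one has $\sigma \in \Sigma_{z'}$ if and only if the sub-module $\mathcal{F}_U|_{z'} \subseteq \mathcal{D}_{z'} := D_{\rig}(\rho_{z',\wp})$ fails to be $\sigma$-saturated, i.e.\ there exists an embedding
\begin{equation*}
\cR_{k(z')}\!\left(\delta_{1,U}^{\mathrm{nom}}|_{z'} \cdot \sigma^{\wt(\chi)_{2,\sigma}-\wt(\chi)_{1,\sigma}-1}\right) \hooklongrightarrow \mathcal{D}_{z'}
\end{equation*}
extending the embedding $\mathcal{F}_U|_{z'} \hookrightarrow \mathcal{D}_{z'}$. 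Equivalently (comparing Hodge--Tate--Sen weights at $\sigma$), the cokernel of $\mathcal{F}_U \hookrightarrow \mathcal{D}_U$ has, on its $\sigma$-component, a Hodge--Tate--Sen weight at $z'$ equal to $\wt(\chi)_{1,\sigma}$ rather than the generic value $\wt(\chi)_{2,\sigma}-1$. This can be encoded as the vanishing of a suitable determinantal section: considering the $B_{\dR,\sigma}^+$-side of $\mathcal{D}_U$ (which by Prop.\,\ref{prop: lpl-uoc} is locally free on $U$ for $\sigma \in J$, and can be handled via Sen theory for general $\sigma$), one obtains a map of coherent sheaves $\mathcal{G}_1 \to \mathcal{G}_2$ on $U$ whose degeneration locus is precisely $Z_{U,\sigma}$. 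Upper semi-continuity of cokernel ranks, or equivalently the fact that the degeneration locus of a map between locally free coherent sheaves is cut out by a Fitting ideal, gives Zariski-closedness.

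The main obstacle is the technical bookkeeping on the $\sigma$-component: whereas \cite[Thm.\,6.3.9]{KPX} treats the $\Q_p$-case globally, here one must perform the analysis embedding by embedding, identifying the right ``$\sigma$-saturation'' condition in terms of the $\sigma$-pieces of $\mathcal{D}_U\otimes_{\cR_U}\cR_U[1/t]$ (or of the associated $B_{\dR,\sigma}^+$-module). In the $\Q_p$-setting this is done via differential modules $D_{\mathrm{dif}}^+$; in the $F_{\wp}$-setting one decomposes $D_{\mathrm{dif}}^+$ along $\Sigma_{\wp}$ and observes that the saturation condition decouples. Once this decoupling is carried out, the argument of \cite[Thm.\,6.3.9]{KPX} (equivalently, the one appearing in the proof of \cite[Prop.\,6.2.49]{Ding}) applies verbatim to the $\sigma$-component to produce the Zariski-closed subset $Z_{U,\sigma}$.
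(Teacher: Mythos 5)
Your proposal is correct and follows essentially the same route as the paper, which gives no independent argument but simply deduces the statement from the proof of \cite[Thm.6.3.9]{KPX} (see also \cite[Prop.6.2.49]{Ding}): namely, spread the triangulation over $U$ via the global triangulation theory, characterize $\sigma\in \Sigma_{z'}$ as failure of $\sigma$-saturation of the specialized rank-one sub-object (read off on the $\sigma$-component of $D_{\dif}^+$, which decomposes along $\Sigma_{\wp}$), and conclude Zariski-closedness from the vanishing/degeneration locus of the resulting map of coherent sheaves. Your identification of (\ref{equ: lpl-dis}) as the condition guaranteeing uniqueness and proper specialization of the sub-object matches the paper's use of that hypothesis.
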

\begin{definition} Let $z=(\chi,\lambda)$ be a closed point of $\cE(\ul{k}_J,w)_{\overline{\rho}}$, for $S\subseteq \Sigma_{\wp}$, we say $z$ is non-$S$-critical if (\ref{equ: lpl-dis}) is satisfied and $\Sigma_z\cap S=\emptyset$.
\end{definition}

\begin{corollary}\label{cor: lpl-ncvc}
  Let $z=(\chi,\lambda)\in \cE(\ul{k}_J,w)_{\overline{\rho}}(\overline{E})$ with $\chi$ locally algebraic and $C(\chi)=\Sigma_{\wp}$, if $z$ is non-$\Sigma_{\wp}\setminus J$-critical, then $z$ is $\Sigma_{\wp}\setminus J$-very classical.
\end{corollary}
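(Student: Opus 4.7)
The plan is to combine Lemma \ref{lem: lpl-ccl}(2) with Corollary \ref{cor lpl-cpc} to deduce the statement essentially formally; the substantive input has already been established in the preceding results, so this corollary is largely a book-keeping exercise.

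First I would note that since $\chi$ is locally algebraic with $C(\chi)=\Sigma_{\wp}$, the character $\chi$ is dominant, so the hypotheses of Lemma \ref{lem: lpl-ccl}(2) are met. By that lemma, $\Sigma_{\wp}\setminus J$-very classicality of $z$ is equivalent to the nonexistence of an $S$-companion point of $z$ for every nonempty $S\subseteq \Sigma_{\wp}\setminus J$. Thus I only need to rule out all such companion points.

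Next I would argue by contraposition: suppose $z$ admits an $S$-companion point for some $\emptyset\neq S\subseteq \Sigma_{\wp}\setminus J$. The non-$\Sigma_{\wp}\setminus J$-critical hypothesis includes condition \eqref{equ: lpl-dis}, so Corollary \ref{cor lpl-cpc} applies and forces $S\subseteq \Sigma_z$. But the other half of being non-$\Sigma_{\wp}\setminus J$-critical is precisely that $\Sigma_z\cap(\Sigma_{\wp}\setminus J)=\emptyset$, contradicting $\emptyset\neq S\subseteq \Sigma_z\cap(\Sigma_{\wp}\setminus J)$. Hence no such companion point exists, and $z$ is $\Sigma_{\wp}\setminus J$-very classical.

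There is no real obstacle here beyond checking that the definition of ``non-$\Sigma_{\wp}\setminus J$-critical'' (the distinctness condition \eqref{equ: lpl-dis} plus $\Sigma_z\cap(\Sigma_{\wp}\setminus J)=\emptyset$) matches exactly the pair of hypotheses needed to invoke Corollary \ref{cor lpl-cpc}; once this is observed the proof reduces to a single line. If anything, the only subtlety worth spelling out is that $C(\chi)=\Sigma_{\wp}$ together with local algebraicity of $\chi$ implies dominance, which is what allows Lemma \ref{lem: lpl-ccl}(2) to be applied in the form needed.
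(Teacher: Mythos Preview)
Your proof is correct and follows essentially the same approach as the paper's: reduce via Lemma~\ref{lem: lpl-ccl}(2) to ruling out $S$-companion points, then invoke Corollary~\ref{cor lpl-cpc} together with the definition of non-$\Sigma_{\wp}\setminus J$-critical to obtain the contradiction. The only difference is that you spell out explicitly why $C(\chi)=\Sigma_{\wp}$ plus local algebraicity gives dominance (needed for Lemma~\ref{lem: lpl-ccl}(2)), which the paper leaves implicit.
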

\begin{proof}
  By Lem.\ref{lem: lpl-ccl} (2), it's sufficient to show $z$ does not have $S$-companion point for $\emptyset \neq S \subseteq \Sigma_{\wp}\setminus J$. But this follows from Cor.\ref{cor lpl-cpc}.
\end{proof}
\begin{theorem}\label{thm: lpl-eta}
  Let $z=(\chi,\lambda)\in \cE(\ul{k}_J,w)_{\overline{\rho}}(E)$ be a non-$\Sigma_{\wp}\setminus J$-critical classical point, then the weight map $\kappa$ is \'etale at $z$. Moreover, there exists an affinoid neighborhood $U$ of $z$ such that $W=\kappa(U)$ is an affinoid open in $\cW(\ul{k}_J)$ and $\co(U)\cong \co(W)$.
\end{theorem}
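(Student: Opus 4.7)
The plan is to reduce Thm.\ref{thm: lpl-eta} to Prop.\ref{prop: lpl-njz} by showing that the hypotheses force $z$ to be $\Sigma_{\wp}\setminus J$-very classical, after which the conclusion is immediate.

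First I would unpack what \textquotedblleft classical\textquotedblright\ means for $z=(\chi,\lambda)\in\cE(\ul{k}_J,w)_{\overline{\rho}}$. By definition there exist $k_{\sigma}\in 2\Z_{\geq 1}$ for all $\sigma\in\Sigma_{\wp}\setminus J$ such that the generalized eigenspace inside $J_B(\Pi(\ul{k}_{\Sigma_{\wp}},w))$ is nonzero. Combined with the fact (noted after the statement of Thm.\ref{thm: clin-cjw} via the isomorphism (\ref{equ: lpl-NwwJ})) that any point of $\cE(\ul{k}_J,w)_{\overline{\rho}}$ already has the prescribed weights for $\sigma\in J$, this forces $\chi$ to be locally algebraic with integer weights $(\wt(\chi)_{1,\sigma},\wt(\chi)_{2,\sigma})$ satisfying $\wt(\chi)_{1,\sigma}-\wt(\chi)_{2,\sigma}\geq 0$ for every $\sigma\in\Sigma_{\wp}$; in particular $C(\chi)=\Sigma_{\wp}$ and $\chi$ is dominant.

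Next I would invoke the non-$\Sigma_{\wp}\setminus J$-critical hypothesis, which by definition means that (\ref{equ: lpl-dis}) holds at $z$ and that $\Sigma_z\cap(\Sigma_{\wp}\setminus J)=\emptyset$. This is exactly the hypothesis of Cor.\ref{cor: lpl-ncvc}, so that corollary yields that $z$ is $\Sigma_{\wp}\setminus J$-very classical. Concretely this excludes any nontrivial $S$-companion points (Lem.\ref{lem: lpl-ccl}(2)) and, more importantly, provides the bijection (\ref{equ: lpl-itcl}) between the locally algebraic and locally analytic Jacquet modules at $z$.

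Finally I would apply Prop.\ref{prop: lpl-njz} directly: since $z$ is $\Sigma_{\wp}\setminus J$-very classical, the weight map $\kappa$ is \'etale at $z$ and there is an affinoid neighborhood $U$ of $z$ such that $\kappa(U)$ is affinoid open in $\cW_1(\ul{k}_J)$ (hence in $\cW(\ul{k}_J)$ up to the twist accounting for $Z_1$) and $\co(U)\xrightarrow{\sim}\co(\kappa(U))$. No further work is needed; the main content is really borrowed from Cor.\ref{cor: lpl-ncvc} and Prop.\ref{prop: lpl-njz}, whose combined input (multiplicity one for automorphic representations of $G(\bA)$ together with the Chenevier-type argument of Prop.\ref{prop: lpl-njz}) is the genuine substance. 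The only potential pitfall is verifying dominance and $C(\chi)=\Sigma_{\wp}$ in the classicality assumption, but this is routine from the definition of classicality recalled above.
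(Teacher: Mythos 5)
Your proposal is correct and follows exactly the paper's own (very terse) proof: the paper simply states that the theorem follows from Prop.\ref{prop: lpl-njz} combined with Cor.\ref{cor: lpl-ncvc}, which is precisely your reduction. Your additional verification that a classical point of $\cE(\ul{k}_J,w)_{\overline{\rho}}$ automatically has $\chi$ locally algebraic, dominant, and $C(\chi)=\Sigma_{\wp}$ is a correct filling-in of details the paper leaves implicit.
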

\begin{proof}
  The theorem follows from Prop.\ref{prop: lpl-njz} combined with Cor.\ref{cor: lpl-ncvc}.
\end{proof}
The following proposition, which follows from the same argument as in \cite[Cor.3.26]{Ding3}, would be useful to apply the adjunction formula in families.
\begin{proposition}\label{prop: lpl-ncne}
  Let $z=(\chi,\lambda)\in \cE(\ul{k}_J,w)_{\overline{\rho}}$ be a non-$\Sigma_{\wp}\setminus J$-critical classical point, and suppose $\unr(q^e)\psi_{\chi,1}(p)\psi_{\chi,2}^{-1}(p)\neq 1$ where $\psi_{\chi}:=\chi \delta_{\wt(\chi)}^{-1}$, then there exists an admissible open $U$ of $z$ in $\cE(\ul{k}_J,w)_{\overline{\rho}}$ such that any point of $U$ is non-$\Sigma_{\wp}\setminus J$-critical.
\end{proposition}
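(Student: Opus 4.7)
The plan is to verify separately the two conditions defining non-$\Sigma_{\wp}\setminus J$-criticality in a neighborhood of $z$: the genericity constraint (\ref{equ: lpl-dis}) on $\chi_1\chi_2^{-1}$, and the triangulation condition $\Sigma_{z'}\cap(\Sigma_{\wp}\setminus J)=\emptyset$. The first is controlled directly by the assumption $\unr(q^e)\psi_{\chi,1}(p)\psi_{\chi,2}^{-1}(p)\neq 1$; the second then follows by feeding the first into Prop.\ref{prop: lpl-zccs}.

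For the first condition, the key observation is that if (\ref{equ: lpl-dis}) fails at some $z'=(\chi',\lambda')$, say $\unr(q)\chi_1'\chi_2'^{-1}=\prod_{\sigma}\sigma^{n_\sigma'}$, then comparing weights on both sides forces $n_\sigma'=\wt(\chi_1')_\sigma-\wt(\chi_2')_\sigma$, so the candidate tuple $(n_\sigma')$ is uniquely determined by $z'$ itself. Evaluating the purported equality at $p$ (using $\sigma(p)=p$) reduces the failure at $z'$ to the vanishing of
\[
 f(z'):=\unr(q^e)\,\psi_{\chi',1}(p)\,\psi_{\chi',2}^{-1}(p)-1.
\]
After choosing an admissible affinoid neighborhood $U_0$ of $z$ small enough that the weights $\wt(\chi'_i)_\sigma$ stay within a disk on which $p^{\wt(\chi'_i)_\sigma}=\exp\bigl(\wt(\chi'_i)_\sigma\cdot\log p\bigr)$ converges as a rigid-analytic function—for which Thm.\ref{thm: lpl-eta} is convenient, since it gives an affinoid neighborhood of $z$ identified with a chart in $\cW(\ul{k}_J)$—the function $f$ is rigid-analytic on $U_0$. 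The hypothesis $f(z)\neq 0$ then forces $f$ to be nowhere vanishing on a smaller admissible open $U_1\subseteq U_0$, and (\ref{equ: lpl-dis}) holds at every point of $U_1$.

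With (\ref{equ: lpl-dis}) now in force on $U_1$, I invoke Prop.\ref{prop: lpl-zccs}: for each $\sigma\in\Sigma_{\wp}$ the locus $Z_{U_1,\sigma}=\{z'\in U_1:\sigma\in\Sigma_{z'}\}$ is Zariski-closed in $U_1$. Because $z$ is non-$\Sigma_{\wp}\setminus J$-critical, $\Sigma_z\cap(\Sigma_{\wp}\setminus J)=\emptyset$, so $z\notin Z_{U_1,\sigma}$ for every $\sigma\in\Sigma_{\wp}\setminus J$. Setting
\[
 U:=U_1\setminus\bigcup_{\sigma\in\Sigma_{\wp}\setminus J}Z_{U_1,\sigma},
\]
$U$ is an admissible open neighborhood of $z$ in $\cE(\ul{k}_J,w)_{\overline{\rho}}$ at every point of which both defining conditions of non-$\Sigma_{\wp}\setminus J$-criticality are met, giving the proposition.

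The principal technical point is the analyticity step for $f$; once this is in hand, the remainder is a clean openness/closedness argument. The delicacy comes from expressions of the shape $p^w$ with $w$ a variable weight, which is standard but requires a small enough affinoid to guarantee convergence of $\exp(w\log p)$. This is precisely the argument carried out in \cite[Cor.3.26]{Ding3} in the non-critical setting, and since neither the analyticity of $f$ nor the application of Prop.\ref{prop: lpl-zccs} distinguishes critical from non-critical points, the proof transports verbatim to our setting.
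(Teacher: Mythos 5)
Your proof is correct and is essentially the paper's argument: the paper proves this proposition only by pointing to \cite[Cor.3.26]{Ding3}, and your reconstruction — making (\ref{equ: lpl-dis}) an open condition via the analyticity of $q^e\psi_{\chi',1}(p)\psi_{\chi',2}^{-1}(p)-1$ on a small affinoid, then removing the Zariski-closed loci $Z_{U,\sigma}$ for $\sigma\in\Sigma_{\wp}\setminus J$ supplied by Prop.\ref{prop: lpl-zccs} — is exactly that argument transported to the present setting. (The appeal to Thm.\ref{thm: lpl-eta} is not needed for the convergence of $\exp(w\log p)$; any sufficiently small affinoid neighborhood suffices.)
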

\subsection{Local-global compatibility}\label{sec: lpl-4.3}
Let $\rho: \Gal_F \ra \GL_2(E)$ be a continuous representation such that
\begin{enumerate}
  \item $\rho_{\wp}:=\rho|_{\Gal_{F_{\wp}}}$ is semi-stable non-crystalline of Hodge-Tate weights $\ul{h}_{\Sigma_{\wp}}=(\frac{w-k_{\sigma}+2}{2}, \frac{w+k_{\sigma}}{2})_{\sigma\in \Sigma_{\wp}}$ for $k_{\sigma}\in 2\Z_{\geq 1}$ and $w\in 2\Z$ with $\{\alpha,q\alpha\}$ the eigenvalues of $\varphi^{d_0}$ on $D_{\st}(\rho_{\wp})$, $S_c:=S_c(\rho_{\wp})$ (cf. the discussion before Cor.\ref{cor: lpl-rvs}) the set of embeddings where $\rho_{\wp}$ is critical, $S_n:=S_n(\rho_{\wp})=\Sigma_{\wp}\setminus S_c$ and $\ul{\cL}_{S_n}\in E^{|S_n|}$ the associated Fontaine-Mazur $\cL$-invariants;
  \item $\Hom_{\Gal_F}\big(\rho, H^1_{\et}(K^p,W(\ul{k}_{\Sigma_p},w))\big)\neq 0$ (in particular, $\rho$ is associated to certain Hilbert eigenforms);
  \item $\rho$ is absolutely irreducible modulo $\varpi_E$.
\end{enumerate}
Note that, by the condition (2), $\rho$ is unramified for places in $S(K^p)$. And by the Eichler-Shimura relations, one can associate to $\rho$ a system of eigenvalues $\lambda_{\rho}:\cH^p \ra E$. Put $\widehat{\pi}(\rho):=\Hom_{\Gal_F}\big(\rho, \widetilde{H}^1_{\et}(K^p,E)\big)$, which is an admissible  unitary Banach representation of $\GL_2(F_{\wp})$. One has
\begin{equation*}
  \widehat{\pi}(\rho)=\Hom_{\Gal_F}\big(\rho, \Pi\big)=\Hom_{\Gal_F}\big(\rho,\Pi^{\cH^p=\lambda_{\rho}}\big).
\end{equation*}
The injection $H^1_{\et}\big(K^p,W(\ul{k}_{\Sigma_{\wp}},w)\big)_{\overline{\rho}} \hookrightarrow \widetilde{H}^1_{\et}\big(K^p,W(\ul{k}_{\Sigma_p},w)\big)_{\overline{\rho},\Q_p-\an}$ induces an injection (e.g. see \cite[Prop.5.1.3]{Ding})
\begin{equation*}
  H^1_{\et}(K^p,W(\ul{k}_{\Sigma_{\wp}},w))_{\overline{\rho}} \otimes_E W(\ul{k}_{\Sigma_{\wp}},w)^{\vee} \hooklongrightarrow \widetilde{H}^1(K^p,E)_{\Q_p-\an},
\end{equation*}
thus the condition (2) implies in particular $\widehat{\pi}(\rho)\neq 0$.

By local-global compatibility in classical local Langlands correspondence (for $\ell=p$, cf. \cite{Sa}) and the isomorphism in Prop.\ref{prop: lpl-rnf} (1), there exists an isomorphism of locally algebraic representations of $\GL_2(F_{\wp})$:
\begin{equation}\label{equ: lpl-psr}
  \St(\alpha,\ul{h}_{\Sigma_{\wp}})^{\oplus r} \xlongrightarrow{\sim}\widehat{\pi}(\rho)_{\lalg},
\end{equation}
with some $r\in \Z_{\geq 1}$ \big(note $\alg(\ul{h}_{\Sigma_{\wp}})\cong W(\ul{k}_{\Sigma_{\wp}},w)^{\vee}$ and thus $\St(\alpha,\ul{h}_{\Sigma_{\wp}})\cong \St \otimes_E \unr(\alpha) \circ \dett \otimes_E W(\ul{k}_{\Sigma_{\wp}},w)^{\vee}$\big). The main result of this section is (see \S \ref{sec: lpl-3} for notations)
\begin{theorem}\label{thm: lpl-giao}(1) Let $\tau\in \Sigma_{\wp}$, then $\tau\in S_c$ if and only $I^c_{\tau}(\alpha,\ul{h}_{\Sigma_{\wp}})$ is a subrepresentation of $\widehat{\pi}(\rho)$.

  (2) The natural restriction map
  \begin{equation}\label{equ: lpl-kbij}
    \Hom_{\GL_2(F_{\wp})}\big(\Sigma(\alpha,\ul{h}_{\Sigma_{\wp}},\ul{\cL}_{S_n}),\widehat{\pi}(\rho)_{\Q_p-\an}\big)
    \lra \Hom_{\GL_2(F_{\wp})} \big(\St(\alpha,\ul{h}_{\Sigma_{\wp}}),\widehat{\pi}(\rho)_{\Q_p-\an}\big)
  \end{equation}
  is bijective. In particular, $\Sigma(\alpha,\ul{h}_{\Sigma_{\wp}},\ul{\cL}_{S_n})$ is a subrepresentation of $\widehat{\pi}(\rho)_{\Q_p-\an}$.
\end{theorem}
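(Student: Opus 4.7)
The plan is to work on the eigenvariety $\cE(\ul{k}_J,w)_{\overline{\rho}}$ for various $J \subseteq \Sigma_{\wp}$ and combine (a) the global triangulation theory (Thm.\ref{prop: lpl-gpa} and Cor.\ref{cor lpl-cpc}), (b) the partially de Rham property of locally analytic vectors in completed cohomology (Prop.\ref{prop: lpl-ocj}/\ref{prop: lpl-uoc}), (c) the partially de Rham Greenberg--Stevens formula (Thm.\ref{thm: lpl-yee}/Cor.\ref{cor: lpl-rvs}), and (d) the adjunction/Jacquet--Emerton dictionary, matching the outline in the introduction. Throughout, $z_\rho = (\chi_\rho, \lambda_\rho)$ denotes the spherical classical point of $\cE(\ul{k}_{\Sigma_{\wp}},w)_{\overline{\rho}}$ attached to $\rho$ via the triangulation of $\rho_{\wp}$; by hypothesis $\rho_{\wp}$ is semi-stable non-crystalline so $\unr(q)\chi_{\rho,1}\chi_{\rho,2}^{-1}$ is the special character giving the $\cL$-invariants. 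We may (and do) shrink coordinates so that (\ref{equ: lpl-dis}) holds at $z_\rho$.

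For part (1), the equivalence reduces to an equivalence about $\tau$-companion points. If $\tau \in S_c$, the triangulation of Thm.\ref{prop: lpl-gpa} produces the point $z_{\{\tau\}}^c$ on $\cE(\ul{k}_{\Sigma_{\wp}\setminus\{\tau\}},w)_{\overline{\rho}}$; by (the proof of) Lem.\ref{lem: lpl-ccl}(2) together with Lem.\ref{lem: lpl-lav}, its existence is equivalent to an embedding $I(\chi_{\{\tau\},\rho}^c \delta_B^{-1}) \hookrightarrow \Pi(\ul{k}_{\Sigma_{\wp}\setminus\{\tau\}},w)^{\cH^p=\lambda_\rho}$, which after untwisting by $W(\ul{k}_{\Sigma_{\wp}\setminus\{\tau\}},w)^\vee$ identifies (using the explicit description of $I_\tau^c(\alpha,\ul{h}_{\Sigma_{\wp}})$ via parabolic induction of $\chi(\alpha,\ul{h}_\tau)^c$) with $I_\tau^c(\alpha,\ul{h}_{\Sigma_{\wp}}) \hookrightarrow \widehat{\pi}(\rho)$. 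Conversely, any such embedding, passed through $J_B$ and Lem.\ref{lem: lpl-lav}, produces a $\tau$-companion point of $z_\rho$, whence $\tau \in \Sigma_{z_\rho}\subseteq S_c$ by Cor.\ref{cor lpl-cpc}.

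For part (2), which is the heart of the theorem, injectivity of the restriction map follows formally since $\St(\alpha,\ul{h}_{\Sigma_{\wp}})$ is the socle of $\Sigma(\alpha,\ul{h}_{\Sigma_{\wp}},\ul{\cL}_{S_n})$. For surjectivity, fix a nonzero $f : \St(\alpha,\ul{h}_{\Sigma_{\wp}}) \hookrightarrow \widehat{\pi}(\rho)_{\Q_p-\an}$; by (\ref{equ: lpl-agia}) and Rem.\ref{rem: lpl-tagia}(4) it suffices, for each $\sigma \in S_n$, to extend $f$ to $\Sigma(\alpha,\ul{h}_{S_n},\cL_\sigma)\otimes \alg(\ul{h}_{S_c}) \hookrightarrow \widehat{\pi}(\rho)_{\Q_p-\an}$ with the correct invariant $\cL_\sigma$. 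Working with $J = \Sigma_{\wp}\setminus\{\sigma\}$, the point $z_\rho$ lies on $\cE(\ul{k}_J,w)_{\overline{\rho}}$ and is non-$\{\sigma\}$-critical (since $\sigma \in S_n$), hence $\Sigma_{\wp}\setminus J$-very classical by Cor.\ref{cor: lpl-ncvc}. By Thm.\ref{thm: lpl-eta} the weight map $\kappa$ is étale at $z_\rho$, so there is an affinoid neighbourhood $U$ of $z_\rho$ mapping isomorphically onto an affinoid $W \subset \cW_1(\ul{k}_J)$; this supplies a one-parameter deformation in which only the $\sigma$-Hodge-Tate weight moves. Pulling back the triangulation of Thm.\ref{prop: lpl-gpa} on $U$ and invoking Prop.\ref{prop: lpl-uoc} (which guarantees that $\rho_U$ is $J$-de Rham), Cor.\ref{cor: lpl-rvs} applies and yields the identity
\begin{equation*}
  d\log(\delta_1\delta_2^{-1}(p))\bigl|_{z_\rho} + \cL_\sigma\, d(\wt(\delta_1\delta_2^{-1})_\sigma)\bigl|_{z_\rho} = 0
\end{equation*}
(the other $S_n$-terms vanish because the corresponding weights are constant on $U$). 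Translating back via $J_B$ and the adjunction formula (Lem.\ref{lem: lpl-ccl}(3) together with Prop.\ref{prop: lpl-ncne} ensuring we stay in the non-critical locus), this infinitesimal relation is exactly the condition for the $\sigma$-extension produced by the family to equal $\Sigma(\alpha,\ul{h}_{S_n},\cL_\sigma)\otimes \alg(\ul{h}_{S_c})$. Uniqueness of $\ul{\cL}_{S_n}$ (Rem.\ref{rem: lpl-tagia}(1)) then forces bijectivity.

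The main obstacle is the last step: producing, from the étale weight deformation on the eigenvariety, a locally analytic extension inside $\widehat{\pi}(\rho)_{\Q_p-\an}$ whose associated invariant one can identify with the derivative in Cor.\ref{cor: lpl-rvs}. Concretely one must dualise the family via $J_B$, use the adjunction formula of Breuil (\cite[Thm.4.3]{Br13II}) in partially de Rham families to translate the $T(F_{\wp})$-extension on the eigenvariety into a $\GL_2(F_{\wp})$-extension in completed cohomology, and then check that the two recipes for the $\cL$-invariant---Schraen's (Rem.\ref{rem: lpl-tagia}) on the automorphic side, and the Fontaine-Mazur definition (Def.\ref{def: lpl-eln}) on the Galois side---match under the identification provided by Thm.\ref{thm: lpl-yee}. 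The crucial input making this go through is precisely the observation, recorded in Prop.\ref{prop: lpl-uoc}, that the Galois family on $\cE(\ul{k}_J,w)_{\overline{\rho}}$ is $J$-de Rham, which is exactly the hypothesis required to invoke Thm.\ref{thm: lpl-yee} at critical embeddings.
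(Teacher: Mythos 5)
Your overall architecture (eigenvarieties for varying $J$, global triangulation, partial de Rhamness of the family, Cor.\ref{cor: lpl-rvs}, adjunction) is the paper's, but there are two genuine gaps.

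The serious one is in part (1), in the direction ``$\tau\in S_c$ implies $I^c_\tau(\alpha,\ul{h}_{\Sigma_\wp})\hookrightarrow\widehat{\pi}(\rho)$''. You assert that ``the triangulation of Thm.\ref{prop: lpl-gpa} produces the point $z^c_{\{\tau\}}$'' on $\cE(\ul{k}_{\Sigma_\wp^\tau},w)_{\overline{\rho}}$. It does not: Thm.\ref{prop: lpl-gpa} describes the triangulation of $\rho_{z,\wp}$ for points $z$ already on the eigenvariety, and Cor.\ref{cor lpl-cpc} only gives the implication ``companion point exists $\Rightarrow$ $\tau\in\Sigma_z$''. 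The converse --- that criticality of the local Galois representation forces the existence of a companion point --- is precisely an instance of Breuil's locally analytic socle conjecture and is the content that must be proved. The paper does this by Bergdall's method: one takes a tangent vector $t$ of $\cE(\ul{k}_{\Sigma_\wp^\tau},w)_{\overline{\rho},\red}$ at $z_\rho$, pulls back the rank-one sub of $D_{\rig}(\rho_{U_{\red}})$ along $t$, and shows (by the argument of \cite[Lem.9.6]{Br13II} with $D_{\st}$ in place of $D_{\cris}$) that the Sen weight $1-k_\tau$ is constant in the infinitesimal family, i.e. $a_\tau=0$. Hence the tangent map to $\cW_1(\ul{k}_{\Sigma_\wp^\tau})$ vanishes, $\kappa$ is not \'etale at $z_\rho$, so by (the contrapositive of) Prop.\ref{prop: lpl-njz} the point is not $\Sigma_\wp^\tau$-very classical, and Lem.\ref{lem: lpl-ccl} then yields the embedding of $I^c_\tau$. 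Without this non-\'etaleness argument your part (1) is circular.

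In part (2) there are two smaller issues. First, injectivity is not ``formal from the socle'': a map $\Sigma(\alpha,\ul{h}_{\Sigma_\wp},\ul{\cL}_{S_n})\to\Pi^{\cH^p=\lambda_\rho}$ killing $\St(\alpha,\ul{h}_{\Sigma_\wp})$ factors through the quotient, so one must rule out embeddings of $F(\alpha,\ul{h}_{\Sigma_\wp})$ (excluded by the description (\ref{equ: lpl-psr}) of locally algebraic vectors) and of the $I((\chi_\rho)_S^c\delta_B^{-1})$ for $\emptyset\neq S\subseteq S_n$ (excluded because $z_\rho$ is non-$S_n$-critical, hence $S_n$-very classical). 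Second, for surjectivity you apply Cor.\ref{cor: lpl-rvs} directly to the pulled-back triangulation, but when $S_c\neq\emptyset$ the specialization of the family triangulation at $z_\rho$ can fail to be exact (the cokernel $Q$ supported at $z_\rho$ in (\ref{equ: lpl-ugr})); the paper must then construct a modified rank-$2$ $(\varphi,\Gamma)$-module $D$ over $\cR_{E[\epsilon]/\epsilon^2}$ with the correct sub and quotient characters, reducing mod $\epsilon$ to $D_{\rig}(\rho_\wp)$ and still $S_c$-de Rham, before Thm.\ref{thm: lpl-fee} can be invoked. This is the technical heart of Prop.\ref{prop: lpl-cri} and is not covered by ``pulling back the triangulation''.
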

By the same argument as in the proof of \cite[Cor.4.7]{Ding3}, we have
\begin{corollary}\label{cor: lpl-luni}
  Let $\ul{\cL}'_{S_n}\in E^d$, then $\Sigma(\alpha,\ul{h}_{\Sigma_{\wp}},\ul{\cL}'_{S_n})$ is a subrepresentation of $\widehat{\pi}(\rho)$ if and only if  $\ul{\cL}'_{S_n}=\ul{\cL}_{S_n}$. 
\end{corollary}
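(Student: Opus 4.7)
The ``if'' direction is Theorem \ref{thm: lpl-giao}(2). For the ``only if'' direction, the plan is to adapt the pushout argument of \cite[proof of Cor.4.7]{Ding3} to the present critical-case setup. Suppose $\Sigma(\alpha,\ul{h}_{\Sigma_{\wp}},\ul{\cL}'_{S_n})\hookrightarrow\widehat{\pi}(\rho)$; since the source is locally $\Q_p$-analytic, the image automatically lies in $\widehat{\pi}(\rho)_{\Q_p-\an}$. I would then apply Remark \ref{rem: lpl-tagia}(4) (tensored with $\alg(\ul{h}_{S_c})$) to reduce to the one-variable situation: for each $\sigma\in S_n$, the representation $\Sigma(\alpha,\ul{h}_{\Sigma_{\wp}},\cL'_\sigma)$ embeds into $\widehat{\pi}(\rho)_{\Q_p-\an}$. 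It therefore suffices to show $\cL'_\sigma=\cL_\sigma$ for each $\sigma\in S_n$ individually.

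Fix $\sigma\in S_n$ and suppose for contradiction that $\cL'_\sigma\neq\cL_\sigma$. Theorem \ref{thm: lpl-giao}(2), combined with the same amalgamated sum decomposition, supplies an embedding $j:\Sigma(\alpha,\ul{h}_{\Sigma_{\wp}},\cL_\sigma)\hookrightarrow \widehat{\pi}(\rho)_{\Q_p-\an}$. Using $\End_{\GL_2(F_{\wp})}\St(\alpha,\ul{h}_{\Sigma_{\wp}})=E$, rescale the hypothetical $j':\Sigma(\alpha,\ul{h}_{\Sigma_{\wp}},\cL'_\sigma)\hookrightarrow \widehat{\pi}(\rho)_{\Q_p-\an}$ so that $j$ and $j'$ agree on the common socle $\St(\alpha,\ul{h}_{\Sigma_{\wp}})$. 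Form the pushout $P:=\Sigma(\alpha,\ul{h}_{\Sigma_{\wp}},\cL_\sigma)\oplus_{\St(\alpha,\ul{h}_{\Sigma_{\wp}})}\Sigma(\alpha,\ul{h}_{\Sigma_{\wp}},\cL'_\sigma)$ together with the induced morphism $\varphi:=j+j':P\to \widehat{\pi}(\rho)_{\Q_p-\an}$, which is injective on each summand. By Remark \ref{rem: lpl-tagia}(1), the two classes in the relevant $\Ext^1$-group differ by a nonzero multiple of $\cL_\sigma-\cL'_\sigma$, and a Baer-sum computation inside $P$ then produces a subrepresentation isomorphic to the split extension, containing $F(\alpha,\ul{h}_{\Sigma_{\wp}})$ as a direct summand above the common socle.

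The crux is to verify that $\varphi$ restricts injectively to this split piece, yielding a nonzero morphism $F(\alpha,\ul{h}_{\Sigma_{\wp}})\to \widehat{\pi}(\rho)_{\Q_p-\an}$. Since $F(\alpha,\ul{h}_{\Sigma_{\wp}})$ is locally algebraic, such a morphism would land in $\widehat{\pi}(\rho)_{\lalg}$; but by the classical local-global compatibility (\ref{equ: lpl-psr}) one has $\widehat{\pi}(\rho)_{\lalg}\cong\St(\alpha,\ul{h}_{\Sigma_{\wp}})^{\oplus r}$, and $\Hom_{\GL_2(F_{\wp})}(F(\alpha,\ul{h}_{\Sigma_{\wp}}),\St(\alpha,\ul{h}_{\Sigma_{\wp}}))=0$ since these are non-isomorphic irreducibles---a contradiction, forcing $\cL'_\sigma=\cL_\sigma$. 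The main obstacle is precisely this injectivity check: while $\ker(\varphi)$ meets each summand trivially by construction, one still has to rule out that it absorbs the $F(\alpha,\ul{h}_{\Sigma_{\wp}})$-direction produced by the Baer-sum. This is handled by tracking explicitly, as in \cite[proof of Cor.4.7]{Ding3}, how the characters $\psi_{\sigma,p}-\cL_\sigma\psi_{\ur}$ entering $\psi(\cL_\sigma)$ (see (\ref{equ: lpl-noh}) together with \S \ref{sec: lpl-1.3.1}) faithfully record the parameter $\cL_\sigma$, so that the nonzero difference $\cL_\sigma-\cL'_\sigma$ cannot be killed under $\varphi$.
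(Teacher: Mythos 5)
Your overall architecture — restrict to each $\sigma\in S_n$ via the amalgamated-sum decomposition of Rem.\ref{rem: lpl-tagia}(4), match the two embeddings on the common part, form a pushout, and extract from the Baer difference a locally algebraic object that contradicts (\ref{equ: lpl-psr}) — is exactly the argument of \cite[Cor.4.7]{Ding3} that the paper invokes, so the route is right. But there are two concrete soft spots. First, the matching step: since $r$ in (\ref{equ: lpl-psr}) may exceed $1$, the socle embeddings $j|_{\St}$ and $j'|_{\St}$ need not be proportional, so ``rescaling via $\End(\St(\alpha,\ul{h}_{\Sigma_{\wp}}))=E$'' does not suffice. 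The correct move is to apply the \emph{surjectivity} in Thm.\ref{thm: lpl-giao}(2) to the particular socle embedding $j'|_{\St}$, obtaining $j$ with $j|_{\St}=j'|_{\St}$ on the nose; and to form the pushout over all of $\Sigma(\alpha,\ul{h}_{\Sigma_{\wp}})$ (not just over $\St$) one should further note that $j|_{\Sigma}=j'|_{\Sigma}$, which follows because no $I^c_{\sigma}(\alpha,\ul{h}_{\Sigma_{\wp}})$ with $\sigma\in S_n$ embeds into $\widehat{\pi}(\rho)$ (Thm.\ref{thm: lpl-giao}(1)), so $\Hom(\Sigma/\St,\widehat{\pi}(\rho)_{\Q_p-\an})=0$.

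Second, and more seriously, the Baer difference is \emph{not} the split extension. From (\ref{equ: lpl-noh}) one has $\log_{\sigma,-\cL_\sigma}-\log_{\sigma,-\cL'_\sigma}=(\cL'_\sigma-\cL_\sigma)\psi_{\ur}$, so $[\Sigma(\alpha,\ul{h},\cL_\sigma)]-[\Sigma(\alpha,\ul{h},\cL'_\sigma)]$ equals $(\cL'_\sigma-\cL_\sigma)$ times the class attached to $\psi_{\ur}$, which is the (nonzero) pushforward of the class of the non-split locally algebraic Steinberg extension $0\to\St(\alpha,\ul{h}_{\Sigma_{\wp}})\to\widetilde{\St}\to F(\alpha,\ul{h}_{\Sigma_{\wp}})\to 0$ along $\St\hookrightarrow\Sigma$. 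So the pushout does not contain $F(\alpha,\ul{h}_{\Sigma_{\wp}})$ as a direct summand, and you cannot directly produce a nonzero map $F(\alpha,\ul{h}_{\Sigma_{\wp}})\to\widehat{\pi}(\rho)$. The repaired endgame is: the Baer-difference subobject contains a copy of $\widetilde{\St}$ (non-split, locally algebraic); since $\varphi$ is injective on the socle and $\widetilde{\St}$ has irreducible socle $\St$, $\varphi$ is injective on $\widetilde{\St}$, so $\widetilde{\St}$ would embed into $\widehat{\pi}(\rho)_{\lalg}\cong\St(\alpha,\ul{h}_{\Sigma_{\wp}})^{\oplus r}$; this is impossible because $F(\alpha,\ul{h}_{\Sigma_{\wp}})$ is a Jordan--H\"older factor of $\widetilde{\St}$ but not of $\St^{\oplus r}$. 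The contradiction you aim for is thus reached, but through a non-split locally algebraic extension rather than a split one.
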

Combining Cor.\ref{cor: lpl-luni} and Thm.\ref{thm: lpl-giao} (1), we see
\begin{corollary}
  The local Galois representation $\rho_{\wp}$ can be determined by $\widehat{\pi}(\rho)$.
\end{corollary}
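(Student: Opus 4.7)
The plan is to assemble the final corollary by extracting four pieces of data from $\widehat{\pi}(\rho)$ one at a time, then appealing to the fact that a $2$-dimensional semi-stable non-crystalline $\Gal_{F_\wp}$-representation is uniquely reconstructed from these data.

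First I would read off $\alpha$ and the Hodge-Tate weights $\ul{h}_{\Sigma_\wp}$ from the isomorphism (\ref{equ: lpl-psr}), namely $\widehat{\pi}(\rho)_{\lalg}\cong \St(\alpha,\ul{h}_{\Sigma_\wp})^{\oplus r}$. Since the tensor decomposition $\St\otimes_E(\unr(\alpha)\circ\dett)\otimes_E\alg(\ul{h}_{\Sigma_\wp})$ is completely determined (up to multiplicity) by its isomorphism class as a $\GL_2(F_\wp)$-representation --- the algebraic twist recovers $\ul{h}_{\Sigma_\wp}$ and the smooth part then recovers $\alpha$ --- the pair $(\alpha,\ul{h}_{\Sigma_\wp})$ is intrinsic to $\widehat{\pi}(\rho)$.

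Next I would determine the subset $S_c(\rho_\wp)\subseteq \Sigma_\wp$ directly from the known data $(\alpha,\ul{h}_{\Sigma_\wp})$ and $\widehat{\pi}(\rho)$ via Theorem \ref{thm: lpl-giao}(1): for each $\tau\in \Sigma_\wp$, one tests whether the locally $\tau$-analytic representation $I^c_\tau(\alpha,\ul{h}_{\Sigma_\wp})$ admits an embedding into $\widehat{\pi}(\rho)$, and $\tau\in S_c(\rho_\wp)$ iff it does. This completely identifies $S_c$ and hence $S_n=\Sigma_\wp\setminus S_c$.

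With $(\alpha,\ul{h}_{\Sigma_\wp},S_n)$ in hand, I would determine $\ul{\cL}_{S_n}\in E^{|S_n|}$ by Corollary \ref{cor: lpl-luni}: there is a unique tuple $\ul{\cL}'_{S_n}$ for which $\Sigma(\alpha,\ul{h}_{\Sigma_\wp},\ul{\cL}'_{S_n})\hookrightarrow \widehat{\pi}(\rho)$, and it must equal $\ul{\cL}_{S_n}$. Finally, since $\rho_\wp$ is $2$-dimensional, semi-stable, non-crystalline with fixed Hodge-Tate weights $\ul{h}_{\Sigma_\wp}$ and fixed Frobenius spectrum $\{\alpha,q\alpha\}$ on $D_{\st}(\rho_\wp)$, the triangulation of $W(\rho_\wp)$ recalled in \S \ref{sec: lpl-3} shows that the remaining freedom in $\rho_\wp$ is precisely the choice of $S_c\subseteq \Sigma_\wp$ and of $\cL$-invariants $\ul{\cL}_{S_n}\in E^{|S_n|}$; these being now known, $\rho_\wp$ is recovered. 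The only point requiring any care is the last reconstruction step, where one has to verify that the Hodge filtration on $D_{\st}(\rho_\wp)$ is determined by $(\alpha,\ul{h}_{\Sigma_\wp},S_n,\ul{\cL}_{S_n})$; this is immediate from Definition \ref{def: lpl-eln} and the uniqueness of the triangulation \cite[Thm.3.7]{Na} cited at the end of \S \ref{sec: lpl-1.3.2}.
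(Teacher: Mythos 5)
Your proposal is correct and follows exactly the paper's route: the paper deduces the corollary by combining Thm.\ref{thm: lpl-giao}(1) (to read off $S_c(\rho_\wp)$ from the presence of the companion representations $I^c_\tau(\alpha,\ul{h}_{\Sigma_\wp})$) with Cor.\ref{cor: lpl-luni} (to pin down $\ul{\cL}_{S_n}$), and then invokes the fact that $\rho_\wp$ is determined by $\{\alpha,\ul{h}_{\Sigma_\wp},S_n,\ul{\cL}_{S_n}\}$. Your additional remarks on extracting $(\alpha,\ul{h}_{\Sigma_\wp})$ from (\ref{equ: lpl-psr}) and on the final reconstruction via the uniqueness of the triangulation only make explicit what the paper leaves implicit.
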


\begin{proof}[Proof of the theorem \ref{thm: lpl-giao}] First note that we only need (and do) prove the same result with $\widehat{\pi}(\rho)$ replaced by $\Pi^{\cH^p=\lambda_{\rho}}$.
For $S\subseteq \Sigma_{\wp}$, $\sigma\in \Sigma_{\wp}^{\sigma}$, put $S^{\sigma}:=S\setminus \{\sigma\}$. Note the injection $\St(\alpha,\ul{h}_{\Sigma_{\wp}})\hookrightarrow \Pi(\ul{k}_{\emptyset},w)^{\cH^p=\lambda}$ gives a spherical classical point $z_{\rho}=(\chi_{\rho},\lambda_{\rho})\in \cE(\ul{k}_{\emptyset},w)_{\overline{\rho}}(E)$ where $\chi_{\rho}:=\chi(\alpha,\ul{h}_{\Sigma_{\wp}})\delta_B$; moreover, for any $S \subseteq \Sigma_{\wp}$, $z_{\rho}\in\cE(\ul{k}_{J},w)_{\overline{\rho}}(E)$.

(1) Let $\tau\in \Sigma_{\wp}$, and consider $\Pi(\ul{k}_{\Sigma_{\wp}^{\tau}},w)$ and $\cE(\ul{k}_{\Sigma_{\wp}^{\tau}},w)_{\overline{\rho}}$. By \cite[Prop.6.2.23]{Ding} (and Lem.\ref{lem: lpl-lav}), $I_{\tau}^c(\alpha,\ul{h}_{\Sigma_{\wp}})\hookrightarrow \Pi(\ul{k}_{\Sigma_{\wp}^\tau},w)^{\cH^p=\lambda_{\rho}}$ \big(which is equivalent to $I_{\tau}^c(\alpha,\ul{h}_{\Sigma_{\wp}})\hookrightarrow \Pi^{\cH^p=\lambda_{\rho}}$, since any latter morphism factors through $\Pi(\ul{k}_{\Sigma_{\wp}^\tau},w)$ by Lem.\ref{lem: lpl-lav}\big) if and only if $(z_{\rho})_{\tau}^c=((\chi_{\rho})_{\tau}^c,\lambda_{\rho})\in \cE(\ul{k}_{\Sigma_{\wp}^\tau},w)_{\overline{\rho}}$.

If $(z_{\rho})_{\tau}^c\in \cE(\ul{k}_{\Sigma_{\wp}^{\tau}},w)_{\overline{\rho}}$, by Cor.\ref{cor lpl-cpc}, $\tau\in \Sigma_{z_{\rho}}=S_c$, the ``if" part follows.

Now suppose $\tau\in S_c=\Sigma_{z_{\rho}}$, we first use Bergdall's method \cite{Bergd14} to show the weight map $\kappa: \cE(\ul{k}_{\Sigma_{\wp}^{\tau}},w) \ra \cW_1(\ul{k}_{\Sigma_{\wp}^{\tau}})_{\overline{\rho}}$ is not \'etale at ${z_{\rho}}$:

We only need to consider the case where $\cE(\ul{k}_{\Sigma_{\wp}^{\tau}},w)_{\overline{\rho}}$ is reduced at ${z_{\rho}}$ since otherwise, $\kappa$ is not \'etale at ${z_{\rho}}$ \big(in fact, by the same argument as in \cite[\S 3.8]{Che05}, one can probably prove that  $\cE(\ul{k}_{\Sigma_{\wp}^{\tau}},w)_{\overline{\rho}}$ is reduced at ${z_{\rho}}$\big). Take $U$ to be an irreducible affinoid neighborhood of ${z_{\rho}}$ in $\cE(\ul{k}_{\emptyset},w)_{\overline{\rho}}$ small enough such that Prop.\ref{prop: lpl-uoc} holds. The composition $\co(U_{\red})\ra \cE(\ul{k}_{\Sigma_{\wp}^{\tau}},w)_{\overline{\rho}}\ra \widehat{T}$ gives a continuous character $\widetilde{\delta}: T(L) \ra \co(U_{\red})^{\times}$ \big(with $\widetilde{\delta}|_{Z_1}=\cN^{-w}$\big). By \cite[Prop.4.3.5]{Liu}, there exists (shrinking $U$ if necessary) an injection of $(\varphi,\Gamma)$-modules over $\cR_{\co(U_{\red})}:=\cR_{F_{\wp}}\widehat{\otimes}_{\Q_p} \co(U_{\red})$:
\begin{equation}\label{equ: lpl-aqg}
  \cR_{\co(U_{\red})}\big(\widetilde{\delta}_1\unr(q)\big)\hooklongrightarrow D_{\rig}(\rho_{U_{\red}});
\end{equation}
moreover, the specialisation of the above morphism to any closed point in $U$ is still injective. Let $t: \Spec E[\epsilon]/\epsilon^2\ra U_{\red}$ be an element in the tangent space of $U_{\red}$ at ${z_{\rho}}$, one deduces from (\ref{equ: lpl-aqg}) an injection of $(\varphi,\Gamma)$-modules over $\cR_{E[\epsilon]/\epsilon^2}$ \big(where the injectivity follows from the fact that (\ref{equ: lpl-aqg}) specializing to ${z_{\rho}}$ is still injective\big) \begin{equation}\label{equ: lpl-nnt}\cR_{E[\epsilon]/\epsilon^2}\big(t^* \widetilde{\delta}_1\unr(q)\big) \hooklongrightarrow D_{\rig}(t^* \rho_{U_{\red}}).
 \end{equation}
Note $t^*\widetilde{\delta} \equiv \chi_{\rho} \pmod{\epsilon}$ and $D_{\rig}(t^* \rho_{U_{\red}})$ is an extension of $D_{\rig}(\rho)$ by $D_{\rig}(\rho)$. Since $\widetilde{\delta}|_{Z_1}=\cN^{-w}$,  $\wt(t^*\widetilde{\delta})=\big(\frac{k_{\sigma}-w-2}{2}-a_{\sigma}\epsilon, \frac{2-k_{\sigma}-w}{2}+a_{\sigma}\epsilon\big)_{\sigma\in \Sigma_{\wp}}$ for $(a_{\sigma})_{\sigma\in \Sigma_{\wp}}\in E^d$ and thus the Sen weights of $D_{\rig}(t^* \rho_{U_{\red}})$ are given by $\big(\frac{-k_{\sigma}-w}{2}+a_{\sigma}\epsilon,\frac{k_{\sigma}-w-2}{2}-a_{\sigma}\epsilon\big)_{\sigma\in \Sigma_{\wp}}$. The map (\ref{equ: lpl-nnt}) induces an injection
\begin{equation*}
  \cR_{E[\epsilon]/\epsilon^2}  \hooklongrightarrow D_{\rig}(t^*\rho_{U_{\red}})\otimes_{E[\epsilon]/\epsilon^2} \cR_{E[\epsilon]/\epsilon^2}\big((t^* \widetilde{\delta}_1\unr(q))^{-1}\big)=:D,
\end{equation*}
where $D$ is an extension of $D_{\rig}(\rho)\otimes_{\cR_E} \cR_E(\chi_{\rho,1}^{-1}\unr(q^{-1}))$ by itself and has Sen weights $\big((1-k_{\sigma})+2a_{\sigma}\epsilon, 0\big)_{\sigma\in \Sigma_{\wp}}$.
By the same argument of \cite[Lem.9.6]{Br13II} \big(replacing the functor $D_{\cris}(\cdot)$ by $D_{\st}(\cdot)$\big), one can show $1-k_{\tau}$ is a constant Sen weight of $D$, and hence $a_{\tau}=0$. Consequently, we see the composition $T_{U_{\red},{z_{\rho}}}\ra T_{\cW_1,\kappa({z_{\rho}})} \ra T_{\cW_1(\ul{k}_{\Sigma_{\wp}^{\tau}}),\kappa({z_{\rho}})}$ is zero, where the $T_{X,x}$ denotes the tangent space of $X$ at $x$ for a closed point $x$ in a rigid analytic space $X$, and the first map denotes the tangent map induced by $\kappa$. Thus the map $T_{\cE(\ul{k}_{\emptyset},w)_{\overline{\rho},\red},{z_{\rho}}}\ra T_{\cW_1(\ul{k}_{\Sigma_{\wp}^{\tau}}),\kappa({z_{\rho}})}$ is zero; however, since we assume $\cE(\ul{k}_{\Sigma_p^{\wp}},w)_{\overline{\rho}}$ to be reduced at ${z_{\rho}}$, we see the induced tangent map $T_{\cE(\ul{k}_{\Sigma_{\wp}^{\tau}},w)_{\overline{\rho}},{z_{\rho}}}\ra T_{\cW_1(\ul{k}_{\Sigma_{\wp}^{\tau}}),\kappa({z_{\rho}})}$ factors though the above zero map and thus also equals zero, from which we see $\kappa: \cE(\ul{k}_{\Sigma_{\wp}^{\tau}},w)_{\overline{\rho}}\ra \cW_1(\ul{k}_{\Sigma_{\wp}^{\tau}})$ is not \'etale at ${z_{\rho}}$.

By Prop.4.10, ${z_{\rho}}$ is not $\Sigma_{\wp}^{\tau}$-very classical, and hence by definition, $I_{\tau}^c(\alpha,\ul{h}_{\Sigma_{\wp}})\cong I\big((\chi_{\rho})_{\tau}^c\delta_B^{-1}\big)$ is a subrepresentation of $\Pi(\ul{k}_{\Sigma_{\wp}^{\tau}},w)^{\cH^p=\lambda_{\rho}}$, which concludes the proof of Thm.\ref{thm: lpl-giao} (1).

(2) We use the same arguments as in \cite[\S 4.3]{Ding3}. The injectivity of (\ref{equ: lpl-kbij}) \big(with $\widehat{\pi}(\rho)$ replaced by $\Pi^{\cH^p=\lambda_{\rho}}$\big) follows from the fact that ${z_{\rho}}$ (as a classical point of $\cE(\ul{k}_{S_c},w)_{\overline{\rho}}$) does not have $S$-companion point for $\emptyset \neq S \subseteq S_n$. Indeed, if (\ref{equ: lpl-kbij}) is not injective, by results on the Jordan-Holder factors of $\Sigma(\alpha,\ul{h}_{\Sigma_{\wp}})$ (e.g. see \cite[Thm.4.1]{Br}), we see either $F(\alpha,\ul{h}_{\Sigma_{\wp}})$ is a subrepresentation of $\Pi^{\cH^p=\lambda}$, or there exists $\emptyset\neq S\subseteq S_n$ such that $I((\chi_{\rho})_S^c\delta_B^{-1})$ is a subrepresentation of $\Pi^{\cH^p=\lambda}$ which are both impossible since the locally algebraic representation $F(\alpha,\ul{h}_{\Sigma_{\wp}})$ can not be injected into $\Pi^{\cH^p=\lambda}$ by (\ref{equ: lpl-psr}), and ${z_{\rho}}$ is non-$S_n$-critical hence $S_n$-very classical by Cor.\ref{cor: lpl-ncvc}.

Let $\ul{h}'_{S_n}:=\big(\frac{2-k_{\sigma}}{2},\frac{k_{\sigma}}{2}\big)_{\sigma\in S_n}=\ul{h}_{S_n}-\big(\frac{w}{2},\frac{w}{2}\big)_{\sigma\in S_n}$, thus by definition $\St(\alpha,\ul{h}_{\Sigma_{\wp}})\cong \St(\alpha,\ul{h}'_{S_n})\otimes_E W(\ul{k}_{S_c},w)^{\vee}$, $\Sigma(\alpha,\ul{h}_{\Sigma_{\wp}})\cong \Sigma(\alpha,\ul{h}'_{S_n})\otimes_E W(\ul{k}_{S_c},w)^{\vee}$, $\chi(\alpha,\ul{h}_{\Sigma_{\wp}})=\chi(\alpha,\ul{h}'_{S_n})\chi(\ul{k}_{S_c},w)$,  and $\Sigma(\alpha,\ul{h}_{\Sigma_{\wp}}, \ul{\cL}_{S_n})\cong \Sigma(\alpha,\ul{h}'_{S_n},\ul{\cL}_{S_n})\otimes_E W(\ul{k}_{S_c},w)^{\vee}$ (see Rem.\ref{rem: lpl-tagia} (2)). By Lem.\ref{lem: lpl-lav}, to prove
\begin{equation*}\Hom_{\GL_2(F_{\wp})}\big(\Sigma(\alpha,\ul{h}_{\Sigma_{\wp}},\ul{\cL}_{S_n}),\Pi^{\cH^p=\lambda_{\rho}}\big)
    \lra \Hom_{\GL_2(F_{\wp})} \big(\St(\alpha,\ul{h}_{\Sigma_{\wp}}),\Pi^{\cH^p=\lambda_{\rho}}\big)
\end{equation*}
is surjective, it's sufficient to prove the restriction map
\begin{multline}\label{equ: lpl-ehh}
\Hom_{\GL_2(F_{\wp})}\Big(\Sigma(\alpha,\ul{h}_{S_n}',\ul{\cL}_{S_n}),\widetilde{H}^1_{\et}\big(K^p,W(\ul{k}_{S_c},w)\big)_{S_n-\an}^{\cH^p=\lambda_{\rho}}\Big)
   \\ \lra \Hom_{\GL_2(F_{\wp})} \Big(\St(\alpha,\ul{h}_{S_n}'),\widetilde{H}^1_{\et}\big(K^p,W(\ul{k}_{S_c},w)\big)_{S_n-\an}^{\cH^p=\lambda_{\rho}}\Big)
\end{multline}
is surjective.

It's convenient to work with a ``twist" of the eigenvariety $\cE(\ul{k}_{S_c},w)_{\overline{\rho}}$: as in \S \ref{sec: lpl-4.2.2}, one can construct an eigenvariety $\cE$ together with a coherent sheaf $\cM$ from the essentially admissible representation of $T(F_{\wp})$
\begin{equation}\label{equ: lpl-BHtg}J_{B}\big(\widetilde{H}^1_{\et}\big(K^p,W(\ul{k}_{S_c},w)\big)_{\overline{\rho},S_n-\an}^{Z_1}\big),\end{equation}
such that
\begin{equation*}
  \Gamma\big(\cE,\cM\big)\cong J_{B}\big(\widetilde{H}^1_{\et}\big(K^p,W(\ul{k}_{S_c},w)\big)_{\overline{\rho},S_n-\an}^{Z_1}\big)^{\vee};
\end{equation*}
the natural morphism $\cE \ra \widehat{T}$ would factor through $\widehat{T}_{S_n}$ (since (\ref{equ: lpl-BHtg}) is locally $S_n$-analytic); moreover, by the isomorphism (\ref{equ: lpl-NwwJ}), one has a commutative diagram
\begin{equation}\label{equ: lpl-twcd}
  \begin{CD}
    \cE @>(\chi,\lambda)\mapsto (\chi\cdot\chi(\ul{k}_{S_c},w),\lambda)>> \cE(\ul{k}_{S_c},w)_{\overline{\rho}}\\
    @VVV @VVV \\
    \widehat{T}_{S_n} @> \chi\mapsto \chi\cdot\chi(\ul{k}_{S_c},w) >> \widehat{T}(\ul{k}_{S_c},w) \\
    @VVV @VVV \\
    \cW_{1,S_n} @> \chi \mapsto \chi\prod_{\sigma\in S_c}\sigma^{k_{\sigma}-2}>> \cW_1(\ul{k}_J)
  \end{CD}
\end{equation}
where the upper and outer square are Cartesian. Moreover $\cM$ equals the pull-back of $\cM(\ul{k}_{S_c},w)$ via the top horizontal map. Denote by $z_{\rho}'=(\chi_{\rho}',\lambda_{\rho})$ the preimage of $z_{\rho}$ in $\cE$, where $\chi_{\rho}'=\chi(\alpha,\ul{h}'_{S_n})\delta_B$. There exists an admissible open $U$ of $z_{\rho}$ in $\cE(\ul{k}_{S_c},w)_{\overline{\rho}}$ satisfying
\begin{itemize}
  \item any closed point of $U$ is non-$S_n$-critical (Prop.\ref{prop: lpl-ncne}),
  \item $U$ is strictly quasi-Stein (cf. \cite[Lem.6.3.12]{Ding}, see \cite[Def.2.1.17 (iv)]{Em04} for definition),
  \item $\Gamma(U, \cM(\ul{k}_{S_c},w))$ is a torsion free $\co(\cW_1(\ul{k}_{S_c}))$-module (Prop.\ref{prop: lpl-evb}).
\end{itemize}
Take $\cU$ to be the preimage of $U$ in $\cE$, which satisfies hence
\begin{enumerate}
  \item for $z=(\chi,\lambda)\in \cU$, $S\subseteq C(\chi)\cap S_n$, $z_S^c:=(\chi_S^c,\lambda)$ does not lie in $\cE$,
  \item $\cU$ is strictly quasi-Stein,
  \item $\Gamma(\cU,\cM)$ is a torsion free $\co(\cW_{1,S_n})$-module.
\end{enumerate}
The natural restriction map (which has dense image) $\Gamma(\cE,\cM)\ra \Gamma(\cU,\cM)$ induces (by taking the dual)
\begin{equation}\label{equ lpl-rmot}
  \Gamma(\cU,\cM)^{\vee}\hooklongrightarrow \Gamma(\cE,\cM)^{\vee}\cong J_{B}\big(\widetilde{H}^1_{\et}\big(K^p,W(\ul{k}_{S_c},w)\big)_{\overline{\rho},S_n-\an}^{Z_1}\big).
\end{equation}
Note by assumption $\Gamma(\cU,\cM)^{\vee}$ is a locally $S_n$-analytic representation of $T(F_{\wp})$ equipped with a continuous action of $\cH^p$ which commutes with $T(F_{\wp})$. By the adjunction formula in families \cite[Cor.5.3.31]{Ding} \big(note (\ref{equ lpl-rmot}) is balanced by property (1) of $\cU$, see \cite[Lem.6.3.14]{Ding}; $\Gamma(\cU,\cM)^{\vee}$ is allowable since $\cU$ is strictly quasi-Stein, see \cite[Ex.5.3.16]{Ding}\big), (\ref{equ lpl-rmot}) induces a $\GL_2(F_{\wp})\times \cH^p$-invariant morphism
\begin{equation}\label{equ: lpl-aff}
  \big(\Ind_{\overline{B}(F_{\wp})}^{\GL_2(F_{\wp})} \Gamma(\cU,\cM)^{\vee}\otimes_E \delta_B^{-1}\big)^{S_n-\an} \lra \widetilde{H}^1_{\et}\big(K^p,W(\ul{k}_{S_c},w)\big)_{\overline{\rho},S_n-\an}^{Z_1}.
\end{equation}

Let $\tau\in S_n$, and $\cW_{1,S_n}(\ul{k}_{S_n^{\tau}})$ denote the closed rigid subspace of $\cW_{1,S_n}$ parameterizing characters moreover with fixed weights $k_{\sigma}-2$ for $\sigma\in S_n^{\tau}$. Put $\cE_{\tau}:=\cE \times_{\cW_{1,S_n}} \cW_{1,S_n}(\ul{k}_{S_n^{\tau}})$. Note $z_{\rho}'\in \cE_{\tau}$ for all $\tau\in S_n$. Moreover, since $\cE$ is \'etale over $\cW_{1,S_n}$ at $z_{\rho}'$, $\cE_{\tau}$ is \'etale over $\cW_{1,S_n}(\ul{k}_{S_n^{\tau}})$ at $z_{\rho}'$. Let $t_{\tau}: \Spec E[\epsilon]/\epsilon^2\ra \cE_{\tau}$ be a non-zero element in the tangent space of $\cE_{\tau}$ at $z_{\rho}'$, the composition $t_{\tau}: \Spec E[\epsilon]/\epsilon^2 \ra \cE_{\tau}\ra \widehat{T}$ thus gives a locally $S_n$-analytic character $\widetilde{\chi}'_{\rho,\sigma}: T(F_{\wp})\ra (E[\epsilon]/\epsilon^2)^{\times}$ satisfying that $\widetilde{\chi}'_{\rho,\tau}\equiv \chi_{\rho}' \pmod{\epsilon}$, $\widetilde{\chi}'_{\rho,\tau}|_{Z_1}=1$, and $\widetilde{\chi}'_{\rho,\tau}(\chi_{\rho}')^{-1}$ is locally $\tau$-analytic.

Consider $(t_{\tau}^* \cM)^{\vee}$, which is a subrepresentation of $\Gamma(\cU,\cM)^{\vee}$ \big(since the restrcition map $\Gamma(\cU,\cM)\ra t_{\tau}^* \cM$ is surjective\big) of $T(F_{\wp})$ equipped with a continuous action of $\cH^p$. By the second part of Thm.\ref{thm: lpl-eta} \big(note we have a similar result for $(\cE,\cW_{1,S_n})$ thus for $(\cE_{\tau},\cW_{1,S_n}(\ul{k}_{S_n^{\tau}}))$\big), we have
\begin{enumerate}
  \item there exists $r$ such that $(t_{\tau}^* \cM)^{\vee}\cong \big(\widetilde{\chi}_{\rho,\tau}'\big)^{\oplus r}$ as $T(F_{\wp})$-representations,
  \item $(t_{\tau}^* \cM)^{\vee}$ is a generalized $\lambda_{\rho}$-eigenspace.
\end{enumerate}
The map (\ref{equ: lpl-aff}) thus induces
\begin{multline*}
\big(\Ind_{\overline{B}(F_{\wp})}^{\GL_2(F_{\wp})} (t_{\tau}^* \cM)^{\vee}\otimes_E \delta_B^{-1}\big)^{S_n-\an}  \hooklongrightarrow \big(\Ind_{\overline{B}(F_{\wp})}^{\GL_2(F_{\wp})} \Gamma(\cU,\cM)^{\vee}\otimes_E \delta_B^{-1}\big)^{S_n-\an} \\ \lra \widetilde{H}^1_{\et}\big(K^p,W(\ul{k}_{S_c},w)\big)_{\overline{\rho},S_n-\an}^{Z_1}.
\end{multline*}In particular, each vector  not killed by $\epsilon$ in $(t_{\tau}^* \cM)^{\vee}$ induces a morphism
\begin{equation*}
 \big(\Ind_{\overline{B}(F_{\wp})}^{\GL_2(F_{\wp})} \widetilde{\chi}_{\rho,\tau}' \delta_B^{-1}\big)^{S_n-\an} \lra \widetilde{H}^1_{\et}\big(K^p,W(\ul{k}_{S_c},w)\big)_{\overline{\rho},S_n-\an}^{Z_1}[\cH^p=\lambda_{\rho}].
\end{equation*}
Since $\widetilde{\chi}_{\rho,\tau}'$ is an extension of $\chi_{\rho}'=\chi(\rho,\ul{h}'_{S_n})\delta_B$ by itself, one has an exact sequence
\begin{multline*}
  0 \lra \big(\Ind_{\overline{B}(F_{\wp})}^{\GL_2(F_{\wp})} \chi(\rho,\ul{h}'_{S_n})\big)^{S_n-\an} \lra \big(\Ind_{\overline{B}(F_{\wp})}^{\GL_2(F_{\wp})} \widetilde{\chi}_{\rho,\tau}\delta_B^{-1}\big)^{S_n-\an}\\  \xlongrightarrow{s} \big(\Ind_{\overline{B}(F_{\wp})}^{\GL_2(F_{\wp})} \chi(\rho,\ul{h}'_{S_n})\big)^{S_n-\an} \lra 0.
\end{multline*}
Let $\Sigma_{\tau}:=s^{-1}\big(F(\alpha,\ul{h}'_{S_n})\big)/F(\alpha,\ul{h}'_{S_n})$. By the same argument as in \cite[\S 4.3]{Ding3} (see in particular the arguments after \cite[Lem.4.16]{Ding3}), we can prove the restriction map
\begin{equation*}
\Hom_{\GL_2(F_{\wp})}\big(\Sigma_{\tau}, \widetilde{H}^1_{\et}\big(K^p,W(\ul{k}_{S_c},w)\big)_{S_n-\an}^{\cH^p=\lambda_{\rho}}\big) \lra \Hom_{\GL_2(F_{\wp})}\big(\St(\alpha,\ul{h}'_{S_n}), \widetilde{H}^1_{\et}\big(K^p,W(\ul{k}_{S_c},w)\big)_{S_n-\an}^{\cH^p=\lambda_{\rho}}\big)
\end{equation*}
is surjective. However, by Prop.\ref{prop: lpl-cri} below, one has $\Sigma_{\tau}\cong \Sigma(\alpha,\ul{h}'_{S_n},\cL_{\tau})$. Thus for any $\tau\in S_n$, the restriction map
\begin{multline*}
  \Hom_{\GL_2(F_{\wp})}\big(\Sigma(\alpha,\ul{h}'_{S_n},\cL_{\tau}),\widetilde{H}^1_{\et}\big(K^p,W(\ul{k}_{S_c},w)\big)_{S_n-\an}^{\cH^p=\lambda_{\rho}}\big) \\ \lra \Hom_{\GL_2(F_{\wp})}\big(\St(\alpha,\ul{h}'_{S_n}), \widetilde{H}^1_{\et}\big(K^p,W(\ul{k}_{S_c},w)\big)_{S_n-\an}^{\cH^p=\lambda_{\rho}}\big)
\end{multline*}
is surjective. From which, together with Rem.\ref{rem: lpl-tagia} (4), we see (\ref{equ: lpl-ehh}) is surjective. This concludes the proof of Thm.\ref{thm: lpl-giao} (2) (assuming Prop.\ref{prop: lpl-cri}).
\end{proof}
\begin{proposition}\label{prop: lpl-cri}
  Keep the notation as in the proof of Thm.\ref{thm: lpl-giao}, there exists a locally $\tau$-analytic character $\psi_{\tau}$ such that
  \begin{equation*}
    \widetilde{\chi}_{\rho,\tau}'(\chi_{\rho}')^{-1}\cong  \begin{pmatrix}
      1 & \log_{\tau,-\cL_{\tau}}(ad^{-1}) + \psi_{\tau}(ad) \\
      0 & 1
    \end{pmatrix}
  \end{equation*}
  as ($2$-dimensional) representations of $T(F_{\wp})$. Consequently (by Rem.\ref{rem: lpl-tagia} (3)), $\Sigma_{\tau}\cong \Sigma(\alpha,\ul{h}'_{S_n},\cL_{\tau})$.
\end{proposition}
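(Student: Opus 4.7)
The plan is to extract $\cL_\tau$ from the tangent vector $t_\tau$ by applying the partially de Rham Colmez--Greenberg--Stevens formula (Cor.\ref{cor: lpl-rvs}) to a suitable infinitesimal trianguline deformation of $\rho_\wp$.

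The tangent vector $t_\tau\in\cE_\tau$ composes with the natural maps $\cE_\tau\hookrightarrow\cE\to\cE(\ul{k}_{S_c},w)_{\overline{\rho}}$ of diagram (\ref{equ: lpl-twcd}) to give a tangent vector at $z_\rho\in\cE(\ul{k}_{S_c},w)_{\overline{\rho}}$. Via Prop.\ref{prop: lpl-uoc}, this produces an $E[\epsilon]/\epsilon^2$-valued trianguline deformation of $\rho_\wp$; by the global triangulation (Prop.\ref{prop: lpl-gpa}) and the fact $\Sigma_{z_\rho}=S_c$, the associated triangulation characters $\delta_1,\delta_2$ are the images of $\widetilde{\chi}_{\rho,\tau,i}'\cdot\chi(\ul{k}_{S_c},w)_i$ times an algebraic-plus-unramified factor that depends only on $\unr(q)$ and on the weights at $S_c$, hence is $\epsilon$-independent in our family. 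The $S_c$-de Rham hypothesis of Cor.\ref{cor: lpl-rvs} is exactly the ``moreover'' clause of Prop.\ref{prop: lpl-uoc}, so the corollary applies.

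Writing $\widetilde{\chi}_{\rho,\tau,i}'=\chi_{\rho,i}'(1+\epsilon\psi_i)$, local $\tau$-analyticity (forced by $\cE_\tau$ fixing the weights at $\sigma\in S_n\setminus\{\tau\}$) permits $\psi_i=a_i\psi_{\tau,p}+b_i\psi_{\ur}$, while the $Z_1$-normalisation on $\cE$ forces $a_1+a_2=0$. Because the algebraic twist is $\epsilon$-independent, the formula of Cor.\ref{cor: lpl-rvs}---which collapses to a single $\tau$-term since all other $\sigma$-weight derivatives vanish on $\cE_\tau$---becomes a single linear relation between $a_1-a_2$ and $b_1-b_2$, equivalent to saying that $\psi_1-\psi_2$ is a scalar multiple of $\log_{\tau,-\cL_\tau}=\psi_{\tau,p}-\cL_\tau\psi_{\ur}$.

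To conclude, set $\psi_\tau:=(\psi_1+\psi_2)/2$, which is locally $\tau$-analytic (in fact unramified, since $a_1+a_2=0$). Symmetric/antisymmetric decomposition of the function $(a,d)\mapsto\psi_1(a)+\psi_2(d)$, together with the scalar-multiple relation above, yields an expression of the form $c\log_{\tau,-\cL_\tau}(ad^{-1})+\psi_\tau(ad)$ for some $c\in E$. By Thm.\ref{thm: lpl-eta} applied at the non-$S_n$-critical classical point $z_\rho$ (observe $\Sigma_{z_\rho}\cap S_n=\emptyset$), $\cE_\tau$ is \'etale over the one-dimensional space $\cW_{1,S_n}(\ul{k}_{S_n^\tau})$ at $z_\rho'$, so $a_1-a_2\neq 0$ and thus $c\neq 0$. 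Conjugating the corresponding two-dimensional upper-unipotent representation of $T(F_\wp)$ by an appropriate diagonal matrix rescales $c$ to $1$ and absorbs the scalar into a new locally $\tau$-analytic character still denoted $\psi_\tau$, giving the claimed isomorphism. The principal subtlety is pinning down the algebraic-plus-unramified twist separating $\delta_1\delta_2^{-1}$ from the eigenvariety ratio of coordinates and verifying its $\epsilon$-independence along $t_\tau$; this rests on the partially de Rham nature of the Galois family on $\cE$ and is precisely the reason the critical-case Cor.\ref{cor: lpl-rvs} (rather than Zhang's non-critical formula) must be invoked.
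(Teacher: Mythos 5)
Your overall strategy is the paper's: write $\widetilde{\chi}_{\rho,\tau}'(\chi_{\rho}')^{-1}=(1+\epsilon\psi_1)\otimes(1+\epsilon\psi_2)$, note that only the $\psi_{\tau,p}$- and $\psi_{\ur}$-components survive, and extract the single linear relation ($\gamma-\eta=-2\cL_\tau\mu$ in the paper's notation) from the partially de Rham Colmez--Greenberg--Stevens formula applied to the infinitesimal deformation along $t_\tau$; the endgame (centre condition forcing the antisymmetric $\psi_{\tau,p}$-part, \'etaleness of $\cE_\tau$ over $\cW_{1,S_n}(\ul{k}_{S_n^\tau})$ forcing $\mu\neq 0$, rescaling) is also essentially as in the paper.

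The gap is in the sentence asserting that, by Prop.\ref{prop: lpl-gpa} and ``$\epsilon$-independence of the algebraic twist,'' the deformation $t_\tau^*\rho_{U}|_{\Gal_{F_\wp}}$ carries a triangulation over $\cR_{E[\epsilon]/\epsilon^2}$ with the critical parameter $(\widetilde{\delta}_1',\widetilde{\delta}_2')$ (twisted by $\prod_{\sigma\in S_c}\sigma^{1-k_\sigma}$), so that hypothesis (1) of Cor.\ref{cor: lpl-rvs} holds. Prop.\ref{prop: lpl-gpa} is a pointwise statement; the family statement comes from \cite[Thm.6.3.9]{KPX} applied over the curve $U_\tau$, and the twist it interpolates is $\prod_{\sigma\in S_1}\sigma^{1-k_\sigma}$, where $S_1\subseteq S_c$ is the set of critical embeddings whose critical locus $Z_{U_\tau,\sigma}$ is one-dimensional at $z_\rho$. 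Whenever $S_0:=S_c\setminus S_1\neq\emptyset$ (which you cannot rule out), the family triangulation degenerates at $z_\rho$: its cokernel is a $t$-torsion module $Q$ supported at $z_\rho$, and the pullback along $t_\tau$ gives only a four-term sequence, not an exact triangulation of $D_{\rig}(\widetilde{\rho}_{\tau,\wp})$ by $\cR_{E[\epsilon]/\epsilon^2}(\widetilde{\delta}_1')$ and $\cR_{E[\epsilon]/\epsilon^2}(\widetilde{\delta}_2')$. So Cor.\ref{cor: lpl-rvs} does not apply to $\widetilde{\rho}_{\tau,\wp}$ itself. The missing idea is the paper's ``modification'': one must take the preimage $D'$ of $\cR_{E[\epsilon]/\epsilon^2}(\widetilde{\delta}_2')\subset \Ima(f)$ in $D_{\rig}(\widetilde{\rho}_{\tau,\wp})$ and then push out along $\Ker(f)\hookrightarrow\cR_{E[\epsilon]/\epsilon^2}(\widetilde{\delta}_1')$, producing a rank-$2$ $(\varphi,\Gamma)$-module $D$ over $\cR_{E[\epsilon]/\epsilon^2}$ that is genuinely an extension of $\cR_{E[\epsilon]/\epsilon^2}(\widetilde{\delta}_2')$ by $\cR_{E[\epsilon]/\epsilon^2}(\widetilde{\delta}_1')$, reduces to $D_{\rig}(\rho_\wp)$ modulo $\epsilon$, and remains $S_c$-de Rham; Thm.\ref{thm: lpl-fee} is then applied to $D$, not to $D_{\rig}(\widetilde{\rho}_{\tau,\wp})$. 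Verifying these three properties (via the snake lemma applied to multiplication by $\epsilon$ and the explicit description of $Q$ and of $\Ker(f_0)$, $\Ima(f_0)$) is the bulk of the paper's argument and is absent from your proposal.
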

The rest of the paper is devoted to the proof of Prop.\ref{prop: lpl-cri}. Note the image $\cE_{\tau}'$ of $\cE_{\tau}$ in $\cE(\ul{k}_{S_c},w)_{\overline{\rho}}$  is an one-dimensional rigid space containing $\cE(\ul{k}_{\Sigma_{\wp}^{\tau}},w)_{\overline{\rho}}$ as closed subspace. Since both $\cE_{\tau}'$ and $\cE(\ul{k}_{\Sigma_{\wp}^{\tau}},w)_{\overline{\rho}}$  are \'etale over $\cW_1(\ul{k}_{\Sigma_{\wp}^{\tau}})$ at $z_{\rho}$, and have the same residue field $E$, we see they are locally isomorphic at $z_{\rho}$. In particular, the composition $\Spec E[\epsilon]/\epsilon^2 \xrightarrow{t_{\tau}} \cE_{\tau}\ra \cE(\ul{k}_{S_c},w)_{\overline{\rho}}$ gives a non-zero element in the tangent space of $\cE(\ul{k}_{\Sigma_{\wp}^{\tau}},w)_{\overline{\rho}}$ at $z_{\rho}$, still denoted by $t_{\tau}: \Spec E[\epsilon]/\epsilon^2 \ra \cE(\ul{k}_{\Sigma_{\wp}^{\tau}},w)_{\overline{\rho}}$, moreover it's straightforward to see (e.g. by (\ref{equ: lpl-twcd})) the character of $T(F_{\wp})$ induced by this map is given by $\widetilde{\chi}_{\rho,\tau}:=\widetilde{\chi}_{\rho,\tau}'\chi(\ul{k}_{S_c},w)$. Note $\widetilde{\chi}_{\rho,\tau}\chi_{\rho}^{-1}=\widetilde{\chi}_{\rho,\tau}'(\chi_{\rho}')^{-1}$. 
Since $\widetilde{\chi}_{\rho,\tau}\chi_{\rho}^{-1}$ is locally $\tau$-analytic, there exist $\gamma$, $\eta\in E$, $\mu\in E^{\times}$ such that (cf. \S \ref{sec: lpl-1.3.1})
  \begin{equation*}\widetilde{\chi}\chi_{\rho}^{-1}=(1+\gamma \epsilon \psi_{\ur}+\mu\epsilon\psi_{\tau,p}) \otimes(1+\eta \epsilon \psi_{\ur} -\mu \epsilon \psi_{\tau,p}).
  \end{equation*}
It's sufficient to prove
\begin{equation}\label{equ: lpl-ums}
  \gamma-\eta=-2\cL_{\tau}\mu.
\end{equation}
Indeed, if (\ref{equ: lpl-ums}) holds, we get
\begin{multline*}\label{equ: lpl-gg+}
  \widetilde{\chi}_{\rho,\tau}\chi_{\rho}^{-1}\cong \big(1+ \mu\epsilon(-\cL_{\tau}\psi_{\ur}+\psi_{\tau,p})+\frac{(\gamma+\eta)\epsilon}{2} \psi_{\ur}\big) \otimes \big(1- \mu\epsilon(-\cL_{\tau}\psi_{\ur}+\psi_{\tau,p})+\frac{(\gamma+\eta)\epsilon}{2} \psi_{\ur}\big) \\
  \cong (1+\log_{\tau,-\cL_{\tau}}\epsilon + \psi_{\tau}\epsilon)\otimes (1-\log_{\tau,-\cL_\tau}\epsilon +\psi_{\tau}\epsilon),
\end{multline*}
with $\psi_{\tau}=\frac{\gamma+\eta}{2\mu} \psi_{\ur}$, from which Prop.\ref{prop: lpl-cri} follows.

We show (\ref{equ: lpl-ums}). Let $U$ be an affinoid neighborhood of $z_{\rho}$ in $\cE(\ul{k}_{\emptyset},w)_{\overline{\rho}}$ small enough such that Prop.\ref{prop: lpl-uoc} applies, we have thus a continuous representation $\rho_U: \Gal_F\ra \GL_2(\co(U_{\red}))$.

\textbf{Non-critical Case:} Suppose $S_n=\Sigma_{\wp}$, i.e. $z$ is non-$\Sigma_{\wp}$-critical. By Prop.\ref{prop: lpl-ncne}, shrinking $U$, we can assume any closed point in $U$ is non-$\Sigma_{\wp}$-critical. Let $U_{\tau}$ be the preimage of $U$ in $\cE(\ul{k}_{\Sigma_{\wp}^{\tau}},w)_{\overline{\rho}}$ \big(via the natural closed embedding $\cE(\ul{k}_{\Sigma_{\wp}^{\tau}},w)_{\overline{\rho}}\hookrightarrow \cE(\ul{k}_{\emptyset},w)_{\overline{\rho}}$\big), since $U_{\tau}$ is \'etale over $\cW_1(\ul{k}_{\Sigma_{\wp}^{\tau}})$ at $z_{\rho}$, shrinking $U_{\tau}$, we can assume $U_{\tau}$ is a smooth curve. Let $\rho_{U_{\tau}}:\Gal_F \ra \GL_2(\co(U_{\tau}))$ be the representation induced by $\rho_U$, $\chi_{U_{\tau}}: T(L)\ra \co(U_{\tau})^{\times}$ be the character induced by the natural morphism $U_{\tau}\ra \widehat{T}(\ul{k}_{\Sigma_{\wp}^{\tau}},w)$.  Applying \cite[Thm.6.3.9]{KPX} to $D_{\rig}(\rho_{U_{\tau},\wp})$ with $\rho_{U_\tau,\wp}:=\rho_{U_{\tau}}|_{\Gal_{F_{\wp}}}$ \big(see Thm.\ref{prop: lpl-gpa}, note $\Sigma_{z'}=\emptyset$ for all $z'\in U_{\tau}$ by the assumption on $U$\big), we get an exact sequence
\begin{equation*}
  0 \ra \cR_{\co(U_{\tau})}\big(\unr(q) \chi_{U_{\tau},1}\big) \ra D_{\rig}(\rho_{U_{\tau},\wp}) \ra \cR_{\co(U_{\tau})}\big(\chi_{U_{\tau},2}\prod_{\sigma\in \Sigma_{\wp}}\sigma^{-1}\big) \ra 0,
\end{equation*}which induces \big(where $\widetilde{\rho}_{\tau,\wp}:=t_{\tau}^* \rho_{U_{\tau}}|_{\wp}:\Gal_{F_{\wp}} \ra \GL_2(E[\epsilon]/\epsilon^2)$\big)
\begin{equation*}
  0 \ra \cR_{E[\epsilon]/\epsilon^2}\big(\unr(q) \widetilde{\chi}_{\rho,\tau,1}) \ra D_{\rig}(\widetilde{\rho}_{\tau,\wp}) \ra \cR_{E[\epsilon]/\epsilon^2}\big(\widetilde{\chi}_{\rho,\tau,2}\prod_{\sigma\in \Sigma_{\wp}}\sigma^{-1}\big) \ra 0.
\end{equation*}
Thus, (\ref{equ: lpl-ums}) follows from Thm.\ref{thm: lpl-fee}.

\textbf{Critical case:} Assume henceforth $S_c\neq \emptyset$.  We shrink $U$ such that the Prop.\ref{prop: lpl-zccs} applies, so $Z_{U,\sigma}$ (if non-empty) is a Zariski-closed subset in $U$ for any $\sigma\in \Sigma_{\wp}$. We know $z\in Z_{U,\sigma}$ if and only if $\sigma\in S_c$. By shrinking $U$ (as a neighborhood of $z$), one can assume $Z_{U,\sigma}=\emptyset$ for $\sigma\in S_n$. Let $\tau\in S_n$, $U_{\tau}$ be the preimage of $U$ in $\cE(\ul{k}_{\Sigma_{\wp}^{\tau}},w)_{\overline{\rho}}$, and shrink $U$ such that $U_{\tau}$ is a smooth curve. Let $Z_{U_{\tau},\sigma}$ the preimage of $Z_{U,\sigma}$ in $U_{\tau}$, which is a non-empty Zariski-closed subset for $\sigma\in S_c$, whose dimension is either $0$ or $1$ locally at $z$.  Denote by $S_0$ (resp. $S_1$) the subset of $S_c$ of embeddings $\sigma$ such that  $Z_{U_\tau,\sigma}$ is of dimension $0$ (resp. of dimension $1$) locally at $z_{\tau}$. By shrinking $U$ (and thus $U_{\tau}$, note $U_{\tau}$ is smooth), one can assume $Z_{U_\tau,\sigma}=\{z_{\tau}\}$ for $\sigma\in S_0$ and $Z_{U_\tau,\sigma} =U_{\tau}(\overline{E})$ for $\sigma\in S_1$. We define $\rho_{U_{\tau},\wp}$, $\chi_{U_{\tau}}$, $\widetilde{\rho}_{\tau,\wp}$  the same way as in the non-critical case.

\textbf{Critical case (1):} Suppose $S_0=\emptyset$. In this case, for any $z\in U_{\tau}$, $\Sigma_{z}=S_c$. By applying \cite[Thm.6.3.9]{KPX} to $D_{\rig}(\rho_{U_{\tau},{\wp}})$, we get
\begin{equation*}
  0 \ra \cR_{\co(U_{\tau})}\big(\unr(q) \chi_{U_{\tau},1}\prod_{\sigma\in S_c}\sigma^{1-k_{\sigma}}\big) \ra D_{\rig}(\rho_{U_{\tau},\wp}) \ra \cR_{\co(U_{\tau})}\big(\chi_{U_{\tau},2}\prod_{\sigma\in \Sigma_{\wp}}\sigma^{-1}\prod_{\sigma\in S_c} \sigma^{k_{\sigma}-1}\big) \ra 0,
\end{equation*}which induces
\begin{equation*}
  0 \ra \cR_{E[\epsilon]/\epsilon^2}\big(\unr(q) \widetilde{\chi}_{\rho,\tau,1}\prod_{\sigma\in S_c}\sigma^{1-k_{\sigma}}) \ra D_{\rig}(\widetilde{\rho}_{\tau,\wp}) \ra \cR_{E[\epsilon]/\epsilon^2}\big(\widetilde{\chi}_{\rho,\tau,2}\prod_{\sigma\in \Sigma_{\wp}}\sigma^{-1}\prod_{\sigma\in S_c} \sigma^{k_{\sigma}-1}\big) \ra 0.
\end{equation*}
On the other hand, by Prop.\ref{prop: lpl-uoc}, $\widetilde{\rho}_{\tau,\wp}$ is $\Sigma_{\wp}^{\tau}$-de Rham. We can hence apply Thm.\ref{thm: lpl-fee}, and (\ref{equ: lpl-ums}) follows.

\textbf{Critical case (2):}  Suppose $S_0 \neq \emptyset$. By assumption, for $z\in U_{\tau}(\overline{E})$, $z\neq z_{\rho}$, $\Sigma_{z}=S_1\subsetneq S_c=S_0\cup S_1$. By \cite[Thm.6.3.9]{KPX} (see in particular \cite[(6.3.14.1)]{KPX}) applied to $D_{\rig}(\rho_{U_{\tau},\wp})$, one gets an exact sequence
\begin{equation}\label{equ: lpl-ugr}
  0 \ra \cR_{\co(U_{\tau})}\big(\unr(q)\chi_{U_{\tau},1}\prod_{\sigma\in S_1}\sigma^{1-k_{\sigma}}\big) \ra D_{\rig}(\rho_{U,\wp}) \ra \cR_{\co(U_{\tau})}\big(\chi_{U_{\tau},2}\prod_{\sigma\in \Sigma_{\wp}}\sigma^{-1}\prod_{\sigma\in S_1}\sigma^{k_{\sigma}-1}\big) \ra Q\ra 0
\end{equation}
where $Q$ is a finitely generated $\cR_{\co(U_{\tau})}$-module killed by certain powers of $t$ \big($\in \cR_E$\big) and is supported at $z_{\rho}$. Tensoring (\ref{equ: lpl-ugr}) with $E[\epsilon]/\epsilon^2$ via $t_{\tau}$, one gets exact sequences (see \cite[Ex.6.3.14]{KPX})
\begin{equation}\label{equ: lpl-gtt}
D_{\rig}(\widetilde{\rho}_{\tau,\wp}) \xlongrightarrow{f} \cR_{E[\epsilon]/\epsilon^2}\big(\widetilde{\chi}_{\rho,\tau,2}\prod_{\sigma\in \Sigma_{\wp}} \sigma^{-1}\prod_{\sigma\in S_1} \sigma^{k_{\sigma}-1}\big) \lra Q\otimes_{\co(U_{\tau}),t_{\tau}}E[\epsilon]/\epsilon^2 \lra 0,
\end{equation}
\begin{equation*}
  0 \lra \cR_{E[\epsilon]/\epsilon^2}\big(\unr(q)\widetilde{\chi}_{\rho,\tau,1}\prod_{\sigma\in S_1}\sigma^{1-k_{\sigma}}\big) \lra \Ker(f).
\end{equation*}
For simplicity, put
\begin{eqnarray*}
  \widetilde{\delta}=\widetilde{\delta}_1\otimes \widetilde{\delta}_2&:=& \big(\unr(q)\widetilde{\chi}_{\rho,\tau,1}\prod_{\sigma\in S_1}\sigma^{1-k_{\sigma}}\big) \otimes \big(\widetilde{\chi}_{\rho,\tau,2}\prod_{\sigma\in \Sigma_{\wp}} \sigma^{-1}\prod_{\sigma\in S_1} \sigma^{k_{\sigma}-1}\big),\\
  \delta=\delta_1\otimes \delta_2 &:=& \big(\unr(q)\chi_{\rho,1}\prod_{\sigma\in S_1}\sigma^{1-k_{\sigma}}\big) \otimes \big(\chi_{\rho,2}\prod_{\sigma\in \Sigma_{\wp}} \sigma^{-1}\prod_{\sigma\in S_1} \sigma^{k_{\sigma}-1}\big), \\
   \widetilde{\delta}'=\widetilde{\delta}'_1\otimes \widetilde{\delta}'_2&:=& \big(\unr(q)\widetilde{\chi}_{\rho,\tau,1}\prod_{\sigma\in S_c}\sigma^{1-k_{\sigma}}\big) \otimes \big(\widetilde{\chi}_{\rho,\tau,2}\prod_{\sigma\in \Sigma_{\wp}} \sigma^{-1}\prod_{\sigma\in S_c} \sigma^{k_{\sigma}-1}\big),\\
  \delta'=\delta_1'\otimes \delta_2' &:=& \big(\unr(q)\chi_{\rho,1}\prod_{\sigma\in S_c}\sigma^{1-k_{\sigma}}\big) \otimes \big(\chi_{\rho,2}\prod_{\sigma\in \Sigma_{\wp}} \sigma^{-1}\prod_{\sigma\in S_c} \sigma^{k_{\sigma}-1}\big),
\end{eqnarray*}
and note $\delta'$ is the trianguline parameter of $\rho_{\wp}$.

We see $\Ker(f)$ and $\Ima(f)$ (cf. (\ref{equ: lpl-gtt})) are $(\varphi,\Gamma)$-modules over $\cR_{E[\epsilon]/\epsilon^2}$ \big(i.e. $(\varphi,\Gamma)$-modules over $\cR_E$ equipped moreover an $E[\epsilon]/\epsilon^2$-action commuting with $\cR_E$, note that such modules may not be free over $\cR_{E[\epsilon]/\epsilon^2}$\big).
Denote by $f_0$ the map $D_{\rig}(\rho_{\wp}) \ra \cR_E\big(\delta_2\big)$ induced by (\ref{equ: lpl-ugr}) via the pull-back $z_{\rho}^*$, one has a commutative diagram (of $(\varphi,\Gamma)$-modules over $\cR_E$)
\begin{equation*}\begin{CD}
0 @>>> D_{\rig}(\rho_{\wp}) @> \epsilon >> D_{\rig}(\widetilde{\rho}_{\wp}) @>>> D_{\rig}(\rho_{\wp}) @>>> 0 \\
@. @V f_0 VV @V f VV @V f_0 VV @. \\
0 @>>> \cR_E(\delta_{2}) @> \epsilon >> \cR_{E[\epsilon]/\epsilon^2}(\widetilde{\delta}_2) @>>> \cR_E(\delta_{2}) @>>> 0\end{CD}
\end{equation*}
which induces thus a long exact sequence
\begin{equation*}
  0 \ra \Ker(f_0) \xrightarrow{\epsilon} \Ker(f) \xrightarrow{s} \Ker(f_0) \ra Q\otimes_{\co(U_{\tau}), z_{\rho}} E \xrightarrow{\epsilon} Q\otimes_{\co(U_{\tau}),t_{\tau}} E[\epsilon]/\epsilon^2 \ra Q \otimes_{\co(U_{\tau}),z_{\rho}} E \ra 0.
\end{equation*}
By discussions in \cite[Ex.6.3.14]{KPX}, one has (where we refer to \cite[Not.6.2.7]{KPX} for the $t_{\sigma}$'s)
\begin{equation*}
  \Ker(f_0)\cong \cR_E(\delta_{1}'),\ \Ima(f_0)\cong \cR_E(\delta_{2}'), \ Q\otimes_{\co(U_{\tau}),z_{\rho}} E\cong \cR_E(\delta_{1})/\Big(\prod_{\sigma\in S_0} t_{\sigma}^{k_{\sigma}-1}\Big),
\end{equation*}
thus there exist $r_{\sigma}\in \Z$, $0 \leq r_{\sigma} \leq k_{\sigma}-1$ for all $\sigma\in S_0$ such that $\Ima(s)=\cR_E(\delta_{1}'')$ where $\delta_{1}'':=\delta_{1}'\prod_{\sigma\in S_0} \sigma^{r_{\sigma}}$. However, since $\Ker(f)$ is a saturated sub-$(\varphi,\Gamma)$-module of $D_{\rig}(\widetilde{\rho}_{\tau,\wp})$, and the latter has Sen weight of the form $(-\frac{k_{\sigma}+w}{2}+a_{\sigma}\epsilon, \frac{k_{\sigma}-w-2}{2}+b_{\sigma}\epsilon)_{\sigma \in \Sigma_{\wp}}$, we see $r_{\sigma}=0$ or $k_{\sigma}-1$ for $\sigma\in S_0$.

 One has a natural isomorphism \big(translating these in terms of $E$-$B$-pairs, one can check this isomorphism by the same argument as in the proof of Lem.\ref{lem: lpl-tng}\big) \begin{equation*}\Ext^1(\cR_E(\delta_{1}'),\cR_E(\delta_{1}'))\xlongrightarrow{\sim}\Ext^1\big(\cR_E(\delta_{1}''), \cR_E(\delta_{1}')\big).\end{equation*}
We claim $[\Ker(f)]$ equals (up to scalars) the image of $[\cR_{E[\epsilon]/\epsilon^2}(\widetilde{\delta}_1')]$: Indeed one has isomorphisms
\begin{multline}\label{equ: lpl-gig}
\Ext^1(\cR_E(\delta_{1}'),\cR_E(\delta_{1}'))\xlongrightarrow{\sim}\Ext^1\big(\cR_E(\delta_{1}''), \cR_E(\delta_{1}')\big)\\ \xlongrightarrow{\sim} \Ext^1\big(\cR_E(\delta_{1}), \cR_E(\delta_{1}')\big) \cong \Ext^1\big(\cR_E(\delta_{1}),\cR_E(\delta_{1})\big).
\end{multline}
The composition $i$ in (\ref{equ: lpl-gig}) actually sends $[\cR_{E[\epsilon]/\epsilon^2}(\widetilde{\delta}_{1}')]$ to $[\cR_{E[\epsilon]/\epsilon^2}(\widetilde{\delta}_{1})]$ (up to scalars), since both $i([\cR_{E[\epsilon]/\epsilon^2}(\widetilde{\delta}_{1}')])$ and  $[\cR_{E[\epsilon]/\epsilon^2}(\widetilde{\delta}_{1})]$ fit ``$*$" in the following commutative diagram (with the maps on the left and right sides being the natural injections)
\begin{equation*}
  \begin{CD}
    0 @>>> \cR_E(\delta_{1}) @>>> * @>>> \cR_E(\delta_{1}) @>>> 0 \\
    @. @VVV @VVV @VVV @. \\
     0 @>>> \cR_E(\delta_{1}') @>>> \cR_{E[\epsilon]/\epsilon^2}(\widetilde{\delta}_{1}') @>>> \cR_E(\delta_{1}') @>>> 0
  \end{CD};
\end{equation*}on the other hand, since $\cR_{E[\epsilon]/\epsilon^2}(\widetilde{\delta}_{1})\hookrightarrow \Ker(f)$, one sees the composition of the last two morphisms in (\ref{equ: lpl-gig}) sends  $[\Ker(f)]$ to $[\cR_{E[\epsilon]/\epsilon^2}(\widetilde{\delta}_{1})]$ (up to scalars), the claim follows.

Similarly,  $\Ima(f)$ lies in an exact sequence of $(\varphi,\Gamma)$-modules over $\cR_E$:
\begin{equation*}
  0 \ra \cR_E(\delta_{2}'') \xrightarrow{\epsilon} \Ima(f) \ra \cR_E(\delta_{2}') \ra 0
\end{equation*}
with $\delta_{2}''=\delta_{2}'\prod_{\sigma\in S_0} \sigma^{-r_{\sigma}}$, and that the natural isomorphism
\begin{equation*}
  \Ext^1(\cR_E(\delta_{2}'),\cR_E(\delta_{2}')) \xlongrightarrow{\sim} \Ext^1(\cR_E(\delta_{2}'),\cR_E(\delta_{2}''))
\end{equation*}
sends $[\cR_{E[\epsilon]/\epsilon^2}(\widetilde{\delta}'_2)]$ to $\Ima(f)$.

\textbf{Claim:} There exists a $(\varphi,\Gamma)$-module $D$ free of rank $2$ over $\cR_{E[\epsilon]/\epsilon^2}$ such that

(1) $D$ lies in an exact sequence of $(\varphi,\Gamma)$-modules over $\cR_{E[\epsilon]/\epsilon^2}$:
\begin{equation*}
  0 \ra \cR_{E[\epsilon]/\epsilon^2}(\widetilde{\delta}_1') \ra D \ra \cR_{E[\epsilon]/\epsilon^2}(\widetilde{\delta}_2') \ra 0;
\end{equation*}

(2) $D\equiv D_{\rig}(\rho_{\wp}) \pmod{\epsilon}$;

(3) $D$ is $S_c$-de Rham.

Assuming the claim, since $\widetilde{\delta}_1'(\widetilde{\delta}_2')^{-1}=\delta_{1}' (\delta_{2}')^{-1}(1+(\gamma-\eta)\epsilon\psi_{\ur}+2\mu \epsilon \psi_{\tau,p})$,  one can deduce again from Thm.\ref{thm: lpl-fee} that $\gamma-\eta=-2\cL_{\tau}\mu$ (\ref{equ: lpl-ums}). In the rest of this section, we ``modify'' $D_{\rig}(\widetilde{\rho}_{\tau,\wp})$ to prove the claim.


The natural morphism of $(\varphi,\Gamma)$-modules over $\cR_{E[\epsilon]/\epsilon^2}$:  $\cR_{E[\epsilon]/\epsilon^2}\big(\widetilde{\delta}_2'\big)\hookrightarrow \Ima(f)$ induces a morphism
\begin{equation*}\Ext^1(\Ima(f), \Ker(f)) \lra \Ext^1\big(\cR_{E[\epsilon]/\epsilon^2}\big(\widetilde{\delta}_2'\big), \Ker(f)\big)\end{equation*}
\big(here $\Ext^1$ denotes the group of extensions of $(\varphi,\Gamma)$-modules over $\cR_{E[\epsilon]/\epsilon^2}$\big). Denote by $D'$ the image of $D_{\rig}(\widetilde{\rho}_{\tau,\wp})$ via this morphism. In fact, $D'$ is just the preimage of $\cR_{E[\epsilon]/\epsilon^2}(\widetilde{\delta}_2')\subset \Ima(f)$ via the natural projection $D_{\rig}(\widetilde{\rho}_{\tau,\wp})\twoheadrightarrow \Ima(f)$. The natural morphism of $(\varphi,\Gamma)$-modules over $\cR_{E[\epsilon]/\epsilon^2}$: $\Ker(f) \hookrightarrow \cR_{E[\epsilon]/\epsilon^2}(\widetilde{\delta}_1')$ induces a morphism
\begin{equation*}\Ext^1\big(\cR_{E[\epsilon]/\epsilon^2}(\widetilde{\delta}_2'), \Ker(f)\big) \lra \Ext^1\big(\cR_{E[\epsilon]/\epsilon^2}(\widetilde{\delta}_2'),\cR_{E[\epsilon]/\epsilon^2}(\widetilde{\delta}_1')\big),\end{equation*} let $D$ be the image of $D'$ via this morphism. We check $D$ satisfies the properties (2) and (3) in the claim.

We have a commutative diagram
\begin{equation*}
  \begin{CD}
    0 @>>> \Ker(f) @>>> D_{\rig}(\widetilde{\rho}_{\tau,\wp}) @>>> \Ima(f) @>>> 0 \\
    @. @V\epsilon VV @V \epsilon VV @V \epsilon VV @. \\
    0 @>>> \Ker(f) @>>> D_{\rig}(\widetilde{\rho}_{\tau,\wp}) @>>> \Ima(f) @>>> 0
  \end{CD}
\end{equation*}
which induces a long exact sequence
\begin{multline}\label{equ: lpl-z1a}
  0 \ra \cR_{E}(\delta_{1}') \ra D_{\rig}(\rho) \xrightarrow{r} \cR(\delta_{2}'') \ra \cR_E(\delta_{1}'') \oplus \Big(\cR_E(\delta_{2}'')/\prod_{\sigma\in S_0} t_\sigma^{r_{\sigma}}\Big)\\ \ra D_{\rig}(\rho) \ra \cR_E(\delta_{2}')\oplus \Big(\cR_E(\delta_{2}'')/\prod_{\sigma\in S_0} t_\sigma^{r_{\sigma}}\Big)\ra 0.
\end{multline}
For a $(\varphi,\Gamma)$-module $D''$ over $\cR_{E[\epsilon]/\epsilon^2}$, denote by $D''[\epsilon]$ the kernel of $\epsilon$ which is a saturated $(\varphi,\Gamma)$-submodule (over $\cR_E$) of $D''$. One sees the natural morphism $D'\hookrightarrow D_{\rig}(\widetilde{\rho}_{\tau,\wp})$ induces an isomorphism $D'[\epsilon]\cong D_{\rig}(\widetilde{\rho}_{\tau,\wp})[\epsilon]\cong D_{\rig}(\rho_{\wp})$. Indeed, one gets an injection $D'[\epsilon]\hookrightarrow D_{\rig}(\widetilde{\rho}_{\tau,\wp})[\epsilon]$, on the other hand, by (\ref{equ: lpl-z1a}), the image of $D_{\rig}(\widetilde{\rho}_{\tau,\wp})[\epsilon] \hookrightarrow D_{\rig}(\widetilde{\rho}_{\tau,\wp}) \twoheadrightarrow \Ima(f)$, which equals $\Ima(r)$, is contained in $\cR_{E[\epsilon]/\epsilon^2}(\widetilde{\delta}_2')$, thus $D_{\rig}(\widetilde{\rho}_{\tau,\wp})[\epsilon]\subseteq D'$ so  $D_{\rig}(\widetilde{\rho}_{\tau,\wp})[\epsilon]\subseteq D'[\epsilon]$, from which one gets the isomorphism. The $(\varphi,\Gamma)$-module $D'/D'[\epsilon]$ sits in an exact sequence
\begin{equation*}
  0 \ra \cR_E(\delta_{1}'') \ra D'/D'[\epsilon] \ra \cR_E(\delta_{2}') \ra 0
\end{equation*}
and is a submodule of $D_{\rig}(\widetilde{\rho}_{\tau,\wp})/D_{\rig}(\widetilde{\rho}_{\tau,\wp})[\epsilon]\cong D_{\rig}(\rho_{\wp})$. By the construction of $D$, one gets a natural morphism $D'\ra D$ which induces an isomorphism $D_{\rig}(\rho_{\wp})\cong D'[\epsilon]\xrightarrow{\sim} D[\epsilon]\cong D_{\rig}(\rho_{\wp})$, and thus an injection $D'/D'[\epsilon] \hookrightarrow D/D[\epsilon]$. One gets commutative diagrams
\begin{equation}\label{equ: lpl-eno}
  \begin{CD}
    0 @>>> \cR_E(\delta_{1}'') @>>> D'/D'[\epsilon]@>>> \cR_E(\delta_{2}') @>>> 0 \\
    @. @VVV @VVV @| @. \\
    0 @>>>\cR_E(\delta_{1}') @>>> D_* @>>>  \cR_E(\delta_{2}') @>>> 0
  \end{CD}
\end{equation}
for $D_*\in \{D/D[\epsilon], D_{\rig}(\rho_{\wp})\}$ (for $D/D[\epsilon]$, this follows from the construction of $D$; for $D_{\rig}(\rho_{\wp})$, this follows from the construction of $D'$ discussed as above). So $D/D[\epsilon]\cong D_{\rig}(\rho_{\wp})$ \big(which are both equal to the image of $D'/D'[\epsilon]$ via the natural morphism $\Ext^{1}(\cR_E(\delta_2'),\cR_E(\delta_1''))\ra \Ext^1(\cR_E(\delta_2'),\cR_E(\delta_1'))$\big), the property (2) follows.

To show $D$ is $S_c$-de Rham, one needs only to prove $D'$ is $S_c$-de Rham since $D'$ is a $(\varphi,\Gamma)$-submodule of $D$ with the same rank. Since $D'$ is a $(\varphi,\Gamma)$-submodule of $D_{\rig}(\widetilde{\rho}_{\tau,\wp})$, by the equivalence of categories of $B$-pairs and $(\varphi,\Gamma)$-modules (\cite[Thm.2.2.7]{Ber08}), one gets an injection $W(D') \hookrightarrow W(D_{\rig}(\widetilde{\rho}_{\tau,\wp}))$ of $E$-$B$-pairs where $W(D'')$ denotes the associated $B$-pairs for a $(\varphi,\Gamma)$-modules $D''$. Since $D'$ and $D_{\rig}(\widetilde{\rho}_{\tau,\wp})$ are both of rank $4$ (over $\cR_E$), one sees $W(D')_{\dR} \xrightarrow{\sim} W(D_{\rig}(\widetilde{\rho}_{\tau,\wp}))_{\dR}$. Since $D_{\rig}(\widetilde{\rho}_{\tau,\wp})$ is $S_c$-de Rham, so is  $D'$. This finishes the proof of the claim and thus (\ref{equ: lpl-ums}) in $S_0\neq \emptyset$-case.

\appendix
\section{Partially de Rham trianguline representations}\label{sec: lpl-app}In this appendix, we study some partially de Rham triangulable $E$-$B$-pairs, and show that partial non-criticalness implies partial de Rhamness for triangulable $E$-$B$-pairs. As an application, we get a partial de Rhamness result for finite slope overconvergent Hilbert modular forms. 
%
%

Let $F_{\wp}$ be a finite extension of $\Q_p$, $\Sigma_{\wp}$ the set of embeddings of $F_{\wp}$ in $\overline{\Q_p}$, $\Gal_{F_{\wp}}:=\Gal(\overline{\Q_p}/F_{\wp})$, $E$ a finite extension of $\Q_p$ sufficiently large containing all the embeddings of $F_{\wp}$ in $\overline{\Q_p}$. Let $\chi$ be a continuous character of $F_{\wp}^{\times}$ over $E$, recall that we have defined the weights $(\wt(\chi)_{\sigma})_{\sigma\in \Sigma_{\wp}}\in E^{|d|}$ of $\chi$ (cf. \S \ref{sec: lpl-2}); in fact, $(-\wt(\chi)_{\sigma})_{\sigma\in \Sigma_{\wp}}$ are equal to the \emph{generalized Hodge-Tate weights} of the associated $E$-$B$-pair $B_E(\chi)$ (cf. \cite[Def. 1.47]{Na}).
\begin{lemma}\label{lem: pdeR-iasr}Let $\chi$ be a continuous character of $F_{\wp}^{\times}$ over $E$, for $\sigma\in \Sigma_{\wp}$, $B_E(\chi)$ is $\sigma$-de Rham if and only if $\wt(\chi)_{\sigma}\in \Z$.
\end{lemma}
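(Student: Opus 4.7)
The key observation, already highlighted in the paragraph preceding the lemma, is that $(-\wt(\chi)_{\sigma})_{\sigma \in \Sigma_{\wp}}$ are the generalized Hodge-Tate weights of $B_E(\chi)$. With this in hand, the two implications reduce to standard facts about rank-$1$ $B$-pairs, combined with a twist-by-$\sigma^n$ argument.

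\textbf{Necessity.} Suppose $B_E(\chi)$ is $\sigma$-de Rham, so $D_{\dR}(B_E(\chi))_{\sigma}$ is a free $E$-module of rank $1$. The $\sigma$-component of the Sen module is an eigenspace for the Sen operator with eigenvalue equal to the $\sigma$-generalized Hodge-Tate weight $-\wt(\chi)_{\sigma}$, and $D_{\dR}(B_E(\chi))_{\sigma}$ embeds into the localization of this module. A non-zero de Rham vector is an eigenvector for the $\sigma$-Sen operator with integer eigenvalue, forcing $\wt(\chi)_{\sigma} \in \Z$.

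\textbf{Sufficiency.} Suppose $\wt(\chi)_{\sigma} = n \in \Z$. Put $\chi' := \chi \cdot \sigma^{-n}$, so $\wt(\chi')_{\sigma} = 0$. By \cite[Lem.2.12]{Na} applied to the algebraic character $\sigma^n$, the $B$-pair $B_E(\sigma^n)$ is (fully) de Rham, with $B_E(\sigma^n)_{\dR,\sigma}^+ \cong t^n B_{\dR,\sigma}^+$. Since the assignment $\chi \mapsto B_E(\chi)$ is multiplicative in the sense of Rem.\ref{rem: lpl-arw}(2), we have $B_E(\chi) \cong B_E(\chi') \otimes B_E(\sigma^n)$, and this twist preserves $\sigma$-de Rhamness. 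We are thus reduced to showing that $B_E(\chi')$ is $\sigma$-de Rham whenever $\wt(\chi')_{\sigma} = 0$. In this case, $\chi'$ is locally $(\Sigma_{\wp} \setminus \{\sigma\})$-analytic on a sufficiently small neighborhood of $1$ in $\co_{\wp}^{\times}$, so the induced Galois character on the $\sigma$-component of $B_E(\chi')_{\dR}^+$ becomes $\sigma$-Hodge-Tate of weight $0$; unwinding the construction of the rank-$1$ $B$-pair in \cite[\S 2.1.2]{Na2} then produces an explicit $\Gal_{F_{\wp}}$-invariant generator of $B_E(\chi')_{\dR,\sigma}^+$ modulo $t$, giving $D_{\dR,\sigma}(B_E(\chi')) \ne 0$.

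\textbf{Main obstacle.} The substantive point is the weight-zero reduction: that a rank-$1$ $B$-pair coming from a character $\chi$ of $F_{\wp}^{\times}$ with $\wt(\chi)_{\sigma} = 0$ is necessarily $\sigma$-de Rham. This is the $\sigma$-componentwise analogue of the classical Tate-Sen result (rank-$1$ Hodge-Tate implies de Rham); proving it cleanly requires making the construction of \cite[\S 2.1.2]{Na2} sufficiently explicit to identify the Galois action on the $\sigma$-component of $B_E(\chi)_{\dR}^+$ and produce the invariant section. Alternatively, one can circumvent this by approximating $\chi$ in a rigid-analytic family of characters by algebraic ones (for which the de Rham assertion is trivial via \cite[Lem.2.12]{Na}) and using specialization of $D_{\dR}$ in partially de Rham families, but this invokes additional machinery.
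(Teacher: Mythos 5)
Your reduction steps match the paper's: the necessity direction via the Sen operator, and the twist by $\sigma^{-\wt(\chi)_{\sigma}}$ (plus an unramified character, so that $\chi$ becomes an honest Galois character) to reduce to the case $\wt(\chi)_{\sigma}=0$. But the remaining step --- that a rank-one $B$-pair with $\sigma$-Sen weight $0$ is $\sigma$-de Rham --- is exactly where your argument has a genuine gap, and you essentially admit this in your ``main obstacle'' paragraph without closing it. Concretely: even granting that Sen theory gives $\bC_{p,\sigma}\otimes_E\chi\cong\bC_{p,\sigma}$, i.e.\ a $\Gal_{F_{\wp}}$-invariant generator of $B_E(\chi)^+_{\dR,\sigma}$ \emph{modulo} $t$, this does not by itself give a nonzero element of $D_{\dR}(B_E(\chi))_{\sigma}=(B_{\dR,\sigma}\otimes_E\chi)^{\Gal_{F_{\wp}}}$. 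One must lift the invariant vector from $\bC_{p,\sigma}\otimes_E\chi$ to $B^+_{\dR,\sigma}\otimes_E\chi$, and the obstruction to doing so lives in $H^1(\Gal_{F_{\wp}},tB^+_{\dR,\sigma}\otimes_E\chi)$. Your sentence ``unwinding the construction \dots\ then produces an explicit invariant generator of $B_E(\chi')^+_{\dR,\sigma}$ modulo $t$, giving $D_{\dR,\sigma}(B_E(\chi'))\neq 0$'' is therefore a non sequitur as written.

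The paper closes this gap by a short d\'evissage: from $\bC_{p,\sigma}\otimes_E\chi\cong\bC_{p,\sigma}$ and Tate's theorems $H^i(\Gal_{F_{\wp}},\bC_{p,\sigma}(j))=0$ for $j\geq 1$, the maps $H^1(\Gal_{F_{\wp}},t^{i+1}B^+_{\dR,\sigma}\otimes_E\chi)\to H^1(\Gal_{F_{\wp}},t^iB^+_{\dR,\sigma}\otimes_E\chi)$ are isomorphisms for $i\geq 1$, whence $H^1(\Gal_{F_{\wp}},tB^+_{\dR,\sigma}\otimes_E\chi)=0$, and the long exact sequence attached to $0\to tB^+_{\dR,\sigma}\otimes\chi\to B^+_{\dR,\sigma}\otimes\chi\to\bC_{p,\sigma}\otimes\chi\to 0$ then yields the desired invariant vector in $B^+_{\dR,\sigma}\otimes\chi$. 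This is a purely cohomological argument and avoids any need to make Nakamura's construction of $B_E(\chi)$ explicit. Your alternative suggestion (approximating by algebraic characters in a family and invoking specialization of $D_{\dR}$) is both much heavier and potentially circular, since interpolation of $D_{\dR}$ in partially de Rham families is downstream of statements of this kind. You should replace the last sentence of your sufficiency step by the lifting argument above.
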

\begin{proof}
 The ``only if" part is clear. Suppose now $\wt(\chi)_{\sigma}\in \Z$, by multiplying $\chi$ by $\sigma^{-\wt(\chi)_{\sigma}}$  and then an unramified character of $F_{\wp}^{\times}$, one can assume that $\chi$ corresponds to a Galois character $\chi: \Gal_{F_{\wp}} \ra E^{\times}$ and $\wt(\chi)_{\sigma}=0$. In this case, by Sen's theory, one has $\bC_{p,\sigma} \otimes_E \chi\cong \bC_{p,\sigma}$ as $\Gal_{F_{\wp}}$-modules (since $\chi$ is of Hodge-Tate weight $0$ at $\sigma$). Consider the exact sequence
 \begin{equation*}
   0 \ra (t B_{\dR,\sigma}^+ \otimes_E \chi)^{\Gal_{F_{\wp}}} \ra (B_{\dR,\sigma}^+ \otimes_E \chi)^{\Gal_{F_{\wp}}} \ra (\bC_{p,\sigma} \otimes_E \chi)^{\Gal_{F_{\wp}}}\ra H^1(\Gal_{F_{\wp}}, tB_{\dR,\sigma}^+ \otimes_E \chi),
 \end{equation*}
 it's sufficient to prove $H^1(\Gal_{F_{\wp}}, tB_{\dR,\sigma}^+ \otimes_E \chi)=0$. For $i\in \Z_{>0}$, we claim $H^1(\Gal_{F_{\wp}},t^{i+1} B_{\dR,\sigma}^+ \otimes_E \chi) \ra H^1(\Gal_{F_{\wp}}, t^{i} B_{\dR,\sigma}^+ \otimes_E \chi)$ is an isomorphism: one has an exact sequence
 \begin{equation*}
    (\bC_{p,\sigma}(i) \otimes_E \chi)^{\Gal_{F_{\wp}}} \ra H^1(\Gal_{F_{\wp}},t^{i+1} B_{\dR,\sigma}^+ \otimes_E \chi) \ra H^1(\Gal_{F_{\wp}}, t^{i} B_{\dR,\sigma}^+ \otimes_E \chi) \ra H^1(\Gal_{F_{\wp}}, \bC_{p,\sigma}(i) \otimes_E \chi),
 \end{equation*}
 since $\bC_{p,\sigma}\otimes_E \chi\cong \bC_{p,\sigma}$, the first and fourth terms vanish when $i\geq 1$. We get thus  an isomorphism $H^1(\Gal_{F_{\wp}}, t B_{\dR,\sigma}^+ \otimes_E \chi) \xrightarrow{\sim} H^1(\Gal_{F_{\wp}}, t^n B_{\dR,\sigma}^+ \otimes_E \chi)$ for $n\gg0$, from which we deduce $H^1(\Gal_{F_{\wp}}, tB_{\dR,\sigma}^+ \otimes_E \chi)=0$.
\end{proof}
\begin{definition}[cf. $\text{\cite[Def.4.3.1]{Liu}}$]\label{def: pdeR-lir}
  Let $W$ be a triangulable $E$-$B$-pair of rank $r$ with a triangulation given by
  \begin{equation}\label{equ: pdeR-wqe}
  0=W_0 \subsetneq W_1 \subsetneq \cdots \subsetneq W_{r-1} \subsetneq W_r=W
\end{equation} with $W_{i+1}/W_{i}\cong B_E(\chi_i)$ for $0\leq i \leq r-1$ where the $\chi_i$'s are continuous characters of $F_{\wp}^{\times}$ in $E^{\times}$. For $\sigma\in \Sigma_{\wp}$, suppose $\wt(\chi_i)_{\sigma}\in \Z$ for all $0\leq i \leq r-1$, $W$ is called non $\sigma$-critical if \big(note the generalized Hodge-Tate weight of $B_E(\chi_i)$ at $\sigma$ is $-\wt(\chi_i)_{\sigma}$\big)
\begin{equation*}
  \wt(\chi_1)_{\sigma}> \wt(\chi_2)_{\sigma}>\cdots >\wt(\chi_r)_{\sigma};
\end{equation*}
for $\emptyset\neq  J\subseteq \Sigma_{\wp}$, suppose $\wt(\chi_i)_{\sigma}\in \Z$ for $0\leq i \leq r-1$, $\sigma\in J$, then $W$ is called non $J$-critical if $W$ is non $\sigma$-critical for all $\sigma\in J$.
\end{definition}
\begin{proposition}\label{prop: pdeR-tjL}
 Keep the notation in Def.\ref{def: pdeR-lir}, let $\emptyset \neq J\subseteq \Sigma_{\wp}$, suppose $W$ is non $J$-critical, then $W$ is $J$-de Rham.
\end{proposition}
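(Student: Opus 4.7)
The plan is to proceed by induction on the rank $r$ of $W$. The base case $r=1$ is already given by Lemma \ref{lem: pdeR-iasr}. For the inductive step, I would work with the short exact sequence of $E$-$B$-pairs
\begin{equation*}
0 \lra W_{r-1} \lra W \lra B_E(\chi_r) \lra 0
\end{equation*}
coming from the triangulation (\ref{equ: pdeR-wqe}). The sub-$B$-pair $W_{r-1}$ inherits a triangulation by $\chi_1,\dots,\chi_{r-1}$ whose $\sigma$-weights are still strictly decreasing for each $\sigma\in J$, so $W_{r-1}$ is non $J$-critical and, by the induction hypothesis, is $J$-de Rham.

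Next I would twist by $B_E(\chi_r^{-1})$ and set $W':=W_{r-1}\otimes B_E(\chi_r^{-1})$, which is $J$-de Rham (being a tensor product of a $J$-de Rham $B$-pair with a rank $1$ $B$-pair $\sigma$-de Rham at every $\sigma\in J$ by Lemma \ref{lem: pdeR-iasr}). The key point is that under this twist the extension class $[W\otimes B_E(\chi_r^{-1})]\in \Ext^1(B_E,W')\cong H^1(\Gal_{F_{\wp}},W')$ corresponds to $W\otimes B_E(\chi_r^{-1})$ viewed as an extension of $B_E$ by $W'$; and by the characterization recalled just before Lem.\ref{lem: lpl-jqt}, $W\otimes B_E(\chi_r^{-1})$ is $\sigma$-de Rham if and only if this class lies in $H^1_{g,\sigma}(\Gal_{F_{\wp}},W')$. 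Since being $\sigma$-de Rham is preserved under twisting by $B_E(\chi_r)$, this is equivalent to $W$ being $\sigma$-de Rham.

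To verify the class lies in $H^1_{g,J}$, I would invoke Lem.\ref{lem: lpl-tse}: it suffices to show $H^0(\Gal_{F_{\wp}},W'^{+}_{\dR,\sigma})=0$ for each $\sigma\in J$. This is where the non $J$-criticality finally enters: the characters of the induced triangulation on $W'$ are $\chi_i\chi_r^{-1}$ for $1\leq i\leq r-1$, and by assumption
\begin{equation*}
\wt(\chi_i\chi_r^{-1})_{\sigma}=\wt(\chi_i)_{\sigma}-\wt(\chi_r)_{\sigma}\geq 1 \quad\text{for all } i\leq r-1,\ \sigma\in J.
\end{equation*}
Because the short exact sequences of $B$-pairs in the filtration of $W'$ remain exact after applying $(\cdot)^{+}_{\dR,\sigma}$, a small sub-induction on rank reduces the vanishing to the rank one case, where $B_E(\chi_i\chi_r^{-1})^{+}_{\dR,\sigma}\cong t^{k}B_{\dR,\sigma}^{+}$ with $k\geq 1$ and $H^0(\Gal_{F_{\wp}},t^{k}B_{\dR,\sigma}^{+})=0$ (this last vanishing follows from the filtration $t^{k}B_{\dR,\sigma}^{+}\supset t^{k+1}B_{\dR,\sigma}^{+}\supset\cdots$ with graded pieces $\bC_{p,\sigma}(k')$ for $k'\geq k\geq 1$, together with $H^0(\Gal_{F_{\wp}},\bC_{p,\sigma}(k'))=0$).

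The argument is essentially organizational rather than technical: the substantive work has already been done in the partially de Rham cohomology machinery of \S\ref{sec: lpl-1.1}--\S\ref{sec: lpl-1.3}, and the only real choice to make is the twist by $\chi_r^{-1}$, which converts the strict inequalities in the non-criticality condition into strict positivity of the $\sigma$-weights on $W'$ so that Lem.\ref{lem: lpl-tse} becomes applicable. The only point that deserves slight care is the compatibility between the inductive description of the $B_{\dR,\sigma}^{+}$-lattice $W'^{+}_{\dR,\sigma}$ and the filtration coming from the triangulation, which is handled by exactness of $(\cdot)^{+}_{\dR}$ on short exact sequences of $B$-pairs.
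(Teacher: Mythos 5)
Your proposal is correct and follows essentially the same route as the paper: the paper also inducts along the triangulation, twists the top graded piece away by $\chi_{i+1}^{-1}$ so that the remaining $\sigma$-weights become strictly positive, deduces $H^0(\Gal_{F_{\wp}},(W_i')_{\dR,\sigma}^+)=0$, and invokes Lem.\ref{lem: lpl-tse} to conclude $H^1_{g,\sigma}=H^1$. The only (cosmetic) differences are that the paper fixes one $\sigma\in J$ at a time rather than working with $H^1_{g,J}$ directly, and it leaves the vanishing of $H^0$ of the twisted lattice as an assertion where you spell out the d\'evissage.
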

\begin{proof}
  It's sufficient to prove if $W$ is non-$\sigma$-critical, then $W$ is $\sigma$-de Rham for $\sigma \in J$. Let $\sigma\in J$, we would use induction on $1\leq i \leq r-1$: by Lem.\ref{lem: pdeR-iasr}, $W_1$ is $\sigma$-de Rham; assume now $W_i$ is $\sigma$-de Rham, we show $W_{i+1}$ is also $\sigma$-de Rham. Note $[W_{i+1}]\in \Ext^1\big(W_i, B_E(\chi_{i+1})\big)$, let $W_i':=W_i\otimes B_E(\chi_{i+1}^{-1})$, $W_{i+1}':=W_{i+1} \otimes B_E(\chi_{i+1}^{-1})$, by Lem.\ref{lem: pdeR-iasr}, $W_{i+1}$ is $\sigma$-de Rham if and only if $W_{i+1}'$ is $\sigma$-de Rham. One has $[W_{i+1}']\in H^1(\Gal_{F_{\wp}}, W_{i}')$. On the other hand, since $\wt(\chi_j)_{\sigma}>\wt(\chi_{i+1})_{\sigma}$ for $1\leq j \leq i$, we see $H^0(\Gal_{F_{\wp}}, (W_i')_{\dR,\sigma}^+)=0$, thus by Lem.\ref{lem: lpl-tse}, $H^1_{g,\sigma}(\Gal_{F_{\wp}},W_i')\xrightarrow{\sim}H^1(\Gal_{F_{\wp}}, W_{i}')$. So $W_{i+1}'$ is $\sigma$-de Rham, and the proposition follows.
\end{proof}
\begin{example}
  Let $\chi_{\LT}: \Gal_{F_{\wp}}\ra F_{\wp}^{\times}$ be a Lubin-Tate character, $\sigma: F_{\wp}\hookrightarrow E$, and consider $H^1(\Gal_{F_{\wp}},\sigma\circ \chi_{\LT})$. By Prop.\ref{prop: pdeR-tjL}, any element in $H^1(\Gal_{F_{\wp}},\sigma\circ \chi_{\LT})$ is $\sigma$-de Rham, which generalizes the well-known fact that any extension of the trivial character by cyclotomic character is de Rham. In fact, suppose $F_{\wp}\neq \Q_p$, using (\ref{equ: lpl-jgw}), one  can actually calculate: $\dim_E H^1_{g,J}(\Gal_{F_{\wp}},\sigma\circ \chi_{\LT})=d-|J\setminus \{\sigma\}|$.
\end{example}
\addtocontents{toc}{\protect\setcounter{tocdepth}{1}}
\subsection*{Partially de Rham overconvergent Hilbert modular forms}
Let $F$ be a totally real number field of degree $d_F$, $\Sigma_F$ the set of embeddings of $F$ in $\overline{\Q}$, $w\in \Z$, and $k_{\sigma}\in \Z_{\geq 2}$, $k_{\sigma}\equiv w\pmod{2}$ for all $\sigma\in \Sigma_F$. Let $\fc$ be a fractional ideal of $F$. Let $h$ be an overconvergent Hilbert eigenform of weights $(\ul{k}, w)$ (where we adopt Carayol's convention of weights as in \cite{Ca2})),  of tame level $N$ ($N\geq 4$, $p\nmid N$), of polarization $\fc$, with Hecke eigenvalues in $E$ (e.g. see  \cite[Def.1.1]{AIP2}, where $E$ is big enough to contain all the embeddings of $F$ in $\overline{\Q_p}$). For a place $\wp$ of $F$ above $p$, let $a_{\wp}$ denote the $U_{\wp}$-eigenvalue of $h$, and suppose $a_{\wp}\neq 0$ for all $\wp|p$. Denote by $\rho_h: \Gal_{F} \ra \GL_2(E)$ the associated (semi-simple)  Galois representation (enlarge $E$ if necessary) (e.g. see \cite[Thm.5.1]{AIP2}). For $\wp|p$, denote by $\rho_{h,\wp}$ the restriction of $\rho_h$ to the decomposition group at $\wp$, which is thus a continuous representation of $\Gal_{F_{\wp}}$ over $E$, where $F_{\wp}$ denotes the completion of $F$ at $\wp$. Let $\us_{\wp}: \overline{\Q_p} \ra \Q\cup\{+\infty\}$ be an additive valuation normalized by $\us_{\wp}(F_{\wp})=\Z \cup\{+\infty\}$. Denote by $\Sigma_{\wp}$ the set of embeddings of $F_{\wp}$ in $\overline{\Q_p}$. This section is devoted to prove
\begin{theorem}\label{thm: pdeR-jqa}
 With the above notation, and let $\emptyset \neq J \subseteq \Sigma_{\wp}$.

 (1) If $\us_{\wp}(a_{\wp})< \inf_{\sigma\in J}\{k_{\sigma}-1\}+\sum_{\sigma\in \Sigma_{\wp}} \frac{w-k_{\sigma}+2}{2}$,
  then $\rho_{h,\wp}$ is $J$-de Rham.

  (2) If $\us_{\wp}(a_{\wp})<\sum_{\sigma\in J} (k_{\sigma}-1) + \sum_{\sigma\in \Sigma_{\wp}} \frac{w-k_{\sigma}+2}{2}$,
  then there exists $\sigma\in J$ such that $\rho_{h,{\wp}}$ is $\sigma$-de Rham.
\end{theorem}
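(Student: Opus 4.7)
The strategy is to reduce Thm.\ref{thm: pdeR-jqa} to Prop.\ref{prop: pdeR-tjL} via the trianguline structure of $\rho_{h,\wp}$ supplied by global triangulation theory, and to translate the slope hypothesis into the non-critical condition of Def.\ref{def: pdeR-lir}.

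First I would realize $h$ as a closed $E$-point of the cuspidal Hilbert modular eigenvariety $\cE_F$ (constructed, e.g., by Andreatta-Iovita-Pilloni), so that $\rho_h$ arises as a specialization of the universal family of Galois representations on $\cE_F$. Applying the global triangulation theorem of Kedlaya-Pottharst-Xiao and Liu in an affinoid neighborhood of $h$ in $\cE_F$ and then specializing at $h$ yields a saturated triangulation
\begin{equation*}
  0 \lra \cR_E(\delta_1) \lra D_{\rig}(\rho_{h,\wp}) \lra \cR_E(\delta_2) \lra 0,
\end{equation*}
with continuous characters $\delta_1,\delta_2: F_{\wp}^{\times} \ra E^{\times}$ whose $\varpi_{\wp}$-values and weights are computable from $(\ul{k},w,a_{\wp})$, following the same pattern as Prop.\ref{prop: lpl-gpa}. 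Since the Hodge-Tate-Sen weights of $\rho_{h,\wp}$ at $\sigma$ are $h_{1,\sigma}:=\frac{w-k_{\sigma}+2}{2}<h_{2,\sigma}:=\frac{w+k_{\sigma}}{2}$, at each $\sigma\in\Sigma_{\wp}$ one has $\{-\wt(\delta_1)_{\sigma},-\wt(\delta_2)_{\sigma}\}=\{h_{1,\sigma},h_{2,\sigma}\}$. Let $S_c\subseteq\Sigma_{\wp}$ denote the critical set, that is, the $\sigma$ at which $-\wt(\delta_1)_{\sigma}=h_{2,\sigma}$ (equivalently $\wt(\delta_1)_{\sigma}<\wt(\delta_2)_{\sigma}$, so that the triangulation is critical at $\sigma$ in the sense of Def.\ref{def: pdeR-lir}). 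By Prop.\ref{prop: pdeR-tjL}, $\rho_{h,\wp}$ is $(\Sigma_{\wp}\setminus S_c)$-de Rham; thus it suffices to show that hypothesis (1) forces $J\cap S_c=\emptyset$ and that hypothesis (2) forces $J\not\subseteq S_c$.

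The key ingredient is the Newton-above-Hodge inequality for the rank-one saturated sub $\cR_E(\delta_1)$ of the \'etale $(\varphi,\Gamma)$-module $D_{\rig}(\rho_{h,\wp})$ over $\cR_{F_{\wp}}$. Combined with the explicit relation between $\delta_1(\varpi_{\wp})$ and the Hecke eigenvalue $a_{\wp}$ coming from the Hilbert eigenvariety, this yields
\begin{equation*}
  \us_{\wp}(a_{\wp})\;\geq\; \sum_{\sigma\in\Sigma_{\wp}}\frac{w-k_{\sigma}+2}{2}\;+\;\sum_{\sigma\in S_c}(k_{\sigma}-1),
\end{equation*}
since the left-hand side equals $\us_{\wp}(\delta_1(\varpi_{\wp}))$ up to an appropriate normalization and the right-hand side is $\sum_{\sigma}(-\wt(\delta_1)_{\sigma})$. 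In case (1), if some $\sigma_0\in J\cap S_c$ existed, then $\sum_{\sigma\in S_c}(k_{\sigma}-1)\geq k_{\sigma_0}-1\geq\inf_{\sigma\in J}(k_{\sigma}-1)$, contradicting the bound; hence $J\cap S_c=\emptyset$, and Prop.\ref{prop: pdeR-tjL} gives that $\rho_{h,\wp}$ is $J$-de Rham. Similarly in case (2), if $J\subseteq S_c$ then $\sum_{\sigma\in S_c}(k_{\sigma}-1)\geq\sum_{\sigma\in J}(k_{\sigma}-1)$, contradicting the weaker bound; thus some $\sigma\in J$ is non-critical, and Prop.\ref{prop: pdeR-tjL} applied with singleton $\{\sigma\}$ yields the conclusion.

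The principal obstacle is the displayed slope inequality, which decomposes into two pieces: (a) identifying precisely the relation between $a_{\wp}$ and $\delta_1(\varpi_{\wp})$ via the Hilbert eigenvariety construction, keeping careful track of normalizations and of the critical corrections in the analog of Prop.\ref{prop: lpl-gpa}; and (b) articulating the Newton-above-Hodge bound for saturated rank-one sub-$(\varphi,\Gamma)$-modules of \'etale $(\varphi,\Gamma)$-modules over $\cR_{F_{\wp}}$ via Kedlaya-Liu slope theory, where the ramification index $e=[F_{\wp}:F_{\wp,0}]$ and the conversion between $\us_{\wp}$ and Kedlaya's slope normalization must be handled correctly in the $F_{\wp}\neq\Q_p$ setting.
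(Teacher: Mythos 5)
Your proposal is correct and follows essentially the same route as the paper: extract the triangulation of $\rho_{h,\wp}$ from the eigenvariety via global triangulation theory, use Kedlaya's slope filtration (the paper phrases your ``Newton above Hodge'' bound as $\us_{\wp}(\delta_1(\varpi_{\wp}))\geq 0$ for the saturated rank-one sub of the \'etale module) to obtain exactly the inequality $\us_{\wp}(a_{\wp})\geq \sum_{\sigma\in\Sigma_{\wp}}\frac{w-k_{\sigma}+2}{2}+\sum_{\sigma\in S_c}(k_{\sigma}-1)$, and then conclude via Prop.\ref{prop: pdeR-tjL} that the slope hypotheses force $J\cap S_c=\emptyset$ (resp. $J\nsubseteq S_c$).
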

\begin{remark}
  This theorem gives evidence for Breuil's conjectures in \cite{Br00} (but in terms of Galois representations) (see in particular \cite[Prop.4.3]{Br00}). When $J=\Sigma_{\wp}$ (and $F_{\wp}$ unramified), the part (1) follows directly from the known classicality result in \cite{TX}.
\end{remark}
One has as in Prop.\ref{prop: lpl-gpa}
\begin{proposition}
 For $\wp|p$, $\rho_{h,\wp}$ is trianguline with a triangulation given by
 \begin{equation*}
   0 \ra B_E(\delta_1) \ra W(\rho_{h,\wp}) \ra B_E(\delta_2) \ra 0,
 \end{equation*}
 with
 \begin{equation*}
   \begin{cases}
     \delta_1=\unr_{\wp}(a_{\wp}) \prod_{\sigma\in \Sigma_{\wp}} \sigma^{-\frac{w-k_{\sigma}+2}{2}}\prod_{\sigma\in \Sigma_h} \sigma^{1-k_{\sigma}}, \\
     \delta_2=\unr_{\wp}(q_{\wp}b_{\wp}/a_{\wp}) \prod_{\sigma\in \Sigma_{\wp}} \sigma^{-\frac{w+k_{\sigma}}{2}} \prod_{\sigma\in \Sigma_h} \sigma^{k_{\sigma}-1},
   \end{cases}
 \end{equation*}
 where $\unr_{\wp}(z)$ denotes the unramified character of $F_{\wp}^{\times}$ sending uniformizers to $z$, $q_{\wp}:=p^{f_{\wp}}$ with $f_{\wp}$ the degree of the maximal unramified extension inside $F_{\wp}$ (thus $\us_{\wp}(q_{\wp})=d_{\wp}$, the degree of $F_{\wp}$ over $\Q_p$), and $\Sigma_h$ is a certain subset of $\Sigma_{\wp}$.
\end{proposition}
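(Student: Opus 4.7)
The plan is to prove this proposition by the same strategy used for Prop.\ref{prop: lpl-gpa}: embed $h$ as a point on an appropriate eigenvariety of overconvergent Hilbert modular forms, use that small slope classical points are Zariski-dense and accumulate at the given point, and then apply the global triangulation theorem of \cite{KPX} (together with \cite{Liu}) to propagate the trianguline structure from classical points to $h$.

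First, I would attach to $h$ a point $x_h$ on the eigenvariety $\cE^{\mathrm{Hilb}}$ of finite slope overconvergent Hilbert eigenforms of tame level $N$ and polarization $\fc$ constructed in \cite{AIP2}; the existence of $x_h$ uses precisely the assumption $a_{\wp}\neq 0$ for every $\wp\mid p$. On a suitable reduced affinoid neighborhood $U$ of $x_h$, the pseudo-character carried by $\cE^{\mathrm{Hilb}}$ interpolates the (semi-simple) Galois representations attached to classical points and gives, after possibly shrinking $U$, a family $\rho^{\mathrm{univ}}:\Gal_{F}\to \GL_2(\co(U))$ whose specialization at $x_h$ is $\rho_h$.

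Next I would recall that the classical Hilbert eigenforms of sufficiently small slope form a Zariski-dense subset of $\cE^{\mathrm{Hilb}}$ accumulating at $x_h$ (by the classicality criterion of \cite{TX,AIP2} and the Chenevier-type density argument of \cite[\S 6.4.5]{Che}, exactly as in Thm.\ref{thm: lpl-dense}). At any such classical point $h'$ of weights $(\ul{k}',w')$ and $U_{\wp}$-eigenvalue $a'_{\wp}$, the local representation $\rho_{h',\wp}$ is semi-stable by \cite{Sa}, hence trianguline, and the triangulation is explicitly known in terms of $a'_{\wp}$ and the Hodge--Tate weights $(\tfrac{w'-k'_{\sigma}+2}{2},\tfrac{w'+k'_{\sigma}}{2})$. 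I would then apply \cite[Thm.6.3.9]{KPX} to the restriction of $D_{\rig}(\rho^{\mathrm{univ}})$ to $\Gal_{F_{\wp}}$, taking the continuous $\co(U)^{\times}$-valued function coming from the $U_{\wp}$-eigenvalue as the parameter that cuts out the rank-one sub-$(\varphi,\Gamma)$-module. This produces an exact sequence of $(\varphi,\Gamma)$-modules over $\cR_{\co(U)}$
\begin{equation*}
0 \lra \cR_{\co(U)}(\widetilde{\delta}_1)\lra D_{\rig}\bigl(\rho^{\mathrm{univ}}|_{\Gal_{F_{\wp}}}\bigr)\lra \cR_{\co(U)}(\widetilde{\delta}_2)\lra Q \lra 0,
\end{equation*}
with $Q$ a torsion cokernel, whose parameters $\widetilde{\delta}_i$ continuously interpolate the classical trianguline parameters.

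Finally, I would specialize at $x_h$ to obtain a triangulation of $D_{\rig}(\rho_{h,\wp})$, and hence of $W(\rho_{h,\wp})$ via the equivalence \cite[Thm.2.2.7]{Ber08}. The shape of $\delta_i := x_h^*\widetilde{\delta}_i$ is forced by continuous interpolation of the explicit classical parameters together with the Hodge--Tate weights of $\rho_{h,\wp}$ being $(\tfrac{w-k_{\sigma}+2}{2},\tfrac{w+k_{\sigma}}{2})_{\sigma}$; the correction term $\prod_{\sigma\in\Sigma_h}\sigma^{1-k_{\sigma}}$ (resp.\ $\sigma^{k_{\sigma}-1}$) arises by the analysis of the torsion cokernel $Q$ at $x_h$ along the lines of \cite[Ex.6.3.14]{KPX}, with $\Sigma_h\subseteq \Sigma_{\wp}$ being the set of embeddings where the specialization of the family becomes critical (exactly as $\Sigma_z\subseteq C(\chi)$ arises in Prop.\ref{prop: lpl-gpa}). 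The main obstacle I expect is the bookkeeping needed to identify $\Sigma_h$ and match the weight and slope normalizations (Carayol's $(\ul{k},w)$ vs.\ the $(\varphi,\Gamma)$-module conventions of \cite{KPX}); the underlying geometric input, namely density of small slope classical points and the verification of the hypotheses of \cite[Thm.6.3.9]{KPX}, should go through verbatim as in the proof of Prop.\ref{prop: lpl-gpa}.
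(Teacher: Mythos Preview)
Your proposal is correct and follows essentially the same route as the paper's own proof: attach $h$ to a point on the AIP eigenvariety, use that classical points are Zariski-dense and accumulate (via classicality results), invoke Saito for semi-stability at classical points, and then apply the global triangulation theorem of \cite{KPX}/\cite{Liu} to specialize. The only minor discrepancies are in citations (the paper invokes \cite[Thm.6.3.13]{KPX} and \cite[Thm.4.4.2]{Liu} rather than \cite[Thm.6.3.9]{KPX}, and cites \cite{Bij} for classicality), and the paper additionally references Nakamura \cite[\S 4]{Na} for the explicit shape of the triangulation at semi-stable classical points.
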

\begin{proof}Consider the eigenvariety $\cE$ constructed  in \cite[Thm.5.1]{AIP2}, one can associate to $h$ a closed point $z_h$ in $\cE$.
  For classical Hilbert eigenforms, the result is known by Saito's results in \cite{Sa} and Nakamura's results on triangulations of $2$-dimensional semi-stable Galois representations (cf. \cite[\S 4]{Na}). Since the classical points are Zariski-dense in $\cE$ and accumulate over the point $z_h$ (here one uses the classicality results, e.g. in \cite{Bij}), the proposition follows from the global triangulation theory \cite[Thm.6.3.13]{KPX} \cite[Thm.4.4.2]{Liu}.
\end{proof}
Since $W(\rho_{\wp})$ is \'etale (purely of slope zero), by Kedlaya's slope filtration theroy (\cite[Thm.1.7.1]{Ked10}), one has (see also \cite[Lem.3.1]{Na})
\begin{lemma}
Let $\varpi_{\wp}$ be a uniformizer of $F_{\wp}$, then $\us_{\wp}(\delta_1(\varpi_{\wp}))\geq 0$.
\end{lemma}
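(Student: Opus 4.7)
The plan is to invoke Kedlaya's slope filtration theorem essentially directly, as the parenthetical reference to \cite[Lem.3.1]{Na} suggests. Since $\rho_{h,\wp}$ is a continuous finite-dimensional $E$-representation of $\Gal_{F_{\wp}}$, the associated $B$-pair $W(\rho_{h,\wp})$ (equivalently the $(\varphi,\Gamma)$-module $D_{\rig}(\rho_{h,\wp})$ over $B_{\rig,F_{\wp}}^{\dagger}\widehat{\otimes}_{\Q_p}E$) is \'etale, hence pure of slope $0$ in Kedlaya's sense.

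First I would observe that by Kedlaya's theorem \cite[Thm.1.7.1]{Ked10}, every saturated sub-object of a pure $(\varphi,\Gamma)$-module of slope $0$ has slope $\geq 0$: if there were a sub-object of strictly negative slope, the Harder--Narasimhan filtration of the ambient module would begin with a piece of negative slope, contradicting purity. Applied to the sub-$B$-pair $B_E(\delta_1)\hookrightarrow W(\rho_{h,\wp})$ coming from the given triangulation (and which is saturated, being itself a rank-$1$ $B$-pair with torsion-free quotient $B_E(\delta_2)$), this yields $\mu(B_E(\delta_1))\geq 0$, where $\mu$ denotes the Kedlaya slope.

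Next I would recall the explicit computation of $\mu(B_E(\delta))$ for a continuous character $\delta\colon F_{\wp}^{\times}\to E^{\times}$: up to a fixed positive normalising constant depending only on the conventions for the absolute ramification and residue degrees of $F_{\wp}/\Q_p$, one has $\mu(B_E(\delta))$ proportional to $\us_{\wp}(\delta(\varpi_{\wp}))$. This is exactly the content of \cite[Lem.3.1]{Na} (or can be verified directly from the construction in \cite[\S 2.1.2]{Na2} of $B_E(\delta)$, since the $\varphi$-action on a generator multiplies it by the image of the uniformizer up to unramified twists controlled by $\delta$ on $\co_{\wp}^{\times}$). Combining the two steps gives $\us_{\wp}(\delta_1(\varpi_{\wp}))\geq 0$.

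There is essentially no obstacle beyond keeping the normalisation of the slope straight; the only point worth double-checking is that the saturation of the sub-$B$-pair $B_E(\delta_1)$ coincides with $B_E(\delta_1)$ itself (so that one really extracts a slope-$0$ sub-object of the étale module rather than something slightly smaller), but this is automatic from the fact that the quotient is the rank-$1$ $B$-pair $B_E(\delta_2)$, which is torsion-free.
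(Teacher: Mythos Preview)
Your proposal is correct and follows exactly the approach indicated by the paper, which gives no proof beyond the sentence ``Since $W(\rho_{\wp})$ is \'etale (purely of slope zero), by Kedlaya's slope filtration theory (\cite[Thm.1.7.1]{Ked10}), one has (see also \cite[Lem.3.1]{Na})'' preceding the lemma. You have simply unpacked that reference: \'etaleness gives purity of slope $0$, semistability forces the slope of the saturated rank-$1$ sub $B_E(\delta_1)$ to be $\geq 0$, and \cite[Lem.3.1]{Na} identifies that slope with (a positive multiple of) $\us_{\wp}(\delta_1(\varpi_{\wp}))$.
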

\begin{proof}[Proof of Thm.\ref{thm: pdeR-jqa}]
  By the above lemma, one has $\us_{\wp}(a_{\wp}) \geq \sum_{\sigma\in \Sigma_h} (k_{\sigma}-1) + \sum_{\sigma\in \Sigma_{\wp}} \frac{w-k_{\sigma}+2}{2}$. Thus for $\emptyset \neq J \subseteq \Sigma_{\wp}$, if $\us_{\wp}(a_{\wp})< \inf_{\sigma\in J}\{k_{\sigma}-1\}+\sum_{\sigma\in \Sigma_{\wp}} \frac{w-k_{\sigma}+2}{2}$ \Big(resp. $\us_{\wp}(a_{\wp})<\sum_{\sigma\in S} (k_{\sigma}-1) + \sum_{\sigma\in \Sigma_{\wp}} \frac{w-k_{\sigma}+2}{2}$\Big), then $J\cap \Sigma_h=\emptyset$ \big(resp. $J\nsubseteq \Sigma_h$\big) and thus $\rho_{h,\wp}$ is non-$J$-critical \big(resp. there exists $\sigma\in J$ such that $\rho_{h,\wp}$ is non-$\sigma$-critical\big) (note $\Sigma_{F_{\wp}}\setminus \Sigma_h$ is exactly the set of embeddings where $\rho_{h,\wp}$ is non-critical). The theorem then follows from Prop.\ref{prop: pdeR-tjL}.
\end{proof}
We end this section by (conjecturally) constructing some partial de Rham families of Hilbert modular forms as closed subspaces of $\cE$ (\cite[Thm.5.1]{AIP2}). For $\wp|p$, denote by $\cW_{\wp}$ the rigid space over $E$ parameterizing locally $\Q_p$-analytic characters of $\co_{\wp}^{\times}$. One has a natural  morphism of rigid spaces $\cW_{\wp} \ra \bA^{|\Sigma_{\wp}|}$, $\chi\mapsto (\wt(\chi)_{\sigma})_{\sigma\in \Sigma_{\wp}}$. For $J\subseteq \Sigma_{\wp}$, $k_{\sigma}\in \Z$ for $\sigma\in J$, denote by $\cW_{\wp}(\ul{k}_J)$ the preimage of the rigid subspace of $\bA^{|\Sigma_{\wp}|}$ defined by fixing the $\sigma$-parameter to be $k_{\sigma}$ for $\sigma\in J$. Let $\cW_0$ denote the rigid space (over $E$) parameterizing locally $\Q_p$-analytic characters of $\Z_p^{\times}$. Recall (cf. \cite[Thm.5.1]{AIP2}), one has a natural morphism $\kappa: \cE \ra \prod_{\wp|p} \cW_{\wp} \times \cW_0$ (where the right hand is denoted by $\cW^{G}$ in \emph{loc. cit.}), mapping each point of $\cE$ (corresponding to overconvergent Hilbert eigenforms) to its weights.

Now fix $\wp|p$, $\emptyset \subsetneq J \subsetneq \Sigma_{\wp}$, $w\in \Z$, and $k_{\sigma}\in \Z_{\geq 2}$, $k_{\sigma}\equiv w\pmod{2}$ for all $\sigma\in J$. Consider the closed subspace
\begin{equation*}\cW_{\wp}(\ul{k}_{J}) \times \prod_{\substack{\wp'|p\\ \wp'\neq \wp}} \cW_{\wp'} \hooklongrightarrow \cW_{\wp} \times \prod_{\substack{\wp'|p\\ \wp'\neq \wp}} \cW_{\wp'} \hooklongrightarrow \prod_{\wp'|p} \cW_{\wp'} \times \cW_0
\end{equation*}
where the last map is induced by the $E$-point $(x\mapsto x^w)$ of $\cW_0$. Denote by $\cE(\ul{k}_J,w)'$ the pull-back of $\cW_{\wp}(\ul{k}_{J}) \times \prod_{\substack{\wp'|p\\ \wp'\neq \wp}} \cW_{\wp'}$ via $\kappa$, which is a closed rigid subspace of $\cE$ consisting of points with fixed weights $k_{\sigma}$ for $\sigma \in J$ and $w$. Let $\cE(\ul{k}_J,w)$ be the Zariski-closure of the classical points in $\cE(\ul{k}_J,w)'$.
\begin{conjecture}\label{conj: lpl-fyl}
  Keep the above notation, let $z\in \cE(\ul{k}_J,w)'(\overline{E})$, suppose the associated $\Gal_F$-representation $\rho_z$ is absolutely irreducible. Then $z\in \cE(\ul{k}_J,w)(\overline{E})$ if and only if $\rho_{z,\wp}:=\rho_z|_{\Gal_{F_{\wp}}}$ is $J$-de Rham.
\end{conjecture}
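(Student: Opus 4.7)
\textbf{Plan for Conjecture \ref{conj: lpl-fyl}.} The easier direction is: if $z$ lies in $\cE(\ul{k}_J,w)$, then $\rho_{z,\wp}$ is $J$-de Rham. Classical points of $\cE(\ul{k}_J,w)'$ correspond to classical Hilbert eigenforms of weight $(\ul{k}',w)$ with $k'_\sigma=k_\sigma$ for $\sigma\in J$, and their associated $\rho|_{\Gal_{F_\wp}}$ are de Rham by \cite{Sa}, hence $J$-de Rham. In analogy with Prop.\ref{prop: lpl-uoc}, on a reduced affinoid neighborhood $U$ of $z$ in $\cE(\ul{k}_J,w)'_{\red}$ one produces a Galois family $\rho_U:\Gal_F\to\GL_2(\co(U))$ whose Hodge-Tate-Sen weights at each $\sigma\in J$ are the constant integers $\bigl(\tfrac{w-k_\sigma+2}{2},\tfrac{w+k_\sigma}{2}\bigr)$. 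Shah's \cite[Thm.2.19]{Sha}, applied embedding by embedding, then shows that the $\sigma$-de Rham locus (where $D_{\dR}(\rho_U)_\sigma$ has maximal rank $2$) is Zariski closed in $U$ for each $\sigma\in J$. Intersecting these closed loci and using that they all contain the Zariski dense classical points forces $\cE(\ul{k}_J,w)$, the Zariski closure of classical points, to lie inside the $J$-de Rham locus.

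For the harder direction ($\rho_{z,\wp}$ being $J$-de Rham $\Rightarrow z\in\cE(\ul{k}_J,w)$), the plan is to construct a closed rigid subspace $\cE^{J\text{-}\dR}(\ul{k}_J,w)\subseteq\cE(\ul{k}_J,w)'$ characterized precisely by the $J$-de Rham condition, and then to show that classical points are Zariski dense in it. In analogy with the partially analytic eigenvariety $\cE(\ul{k}_J,w)_{\overline\rho}$ of \S\ref{sec: lpl-4.2.2}, one would construct $\cE^{J\text{-}\dR}(\ul{k}_J,w)$ from a partially analytic variant of the Andreatta-Iovita-Pilloni overconvergent Hilbert modular eigenvariety, using forms that are overconvergent and only $(\Sigma_\wp\setminus J)$-analytic at $\wp$ (rather than fully $\Q_p$-analytic). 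Families of Galois representations attached to such an eigenvariety would, by an analog of Thm.\ref{prop: lpl-ocj}, be automatically $J$-de Rham with prescribed Hodge-Tate weights $\bigl(\tfrac{w-k_\sigma+2}{2},\tfrac{w+k_\sigma}{2}\bigr)$ at $\sigma\in J$, giving both the embedding into $\cE(\ul{k}_J,w)'$ and the characterization. Zariski density of classical points in $\cE^{J\text{-}\dR}(\ul{k}_J,w)$ would follow by adapting Thm.\ref{thm: lpl-dense}: accumulate by points of integer weight at $\Sigma_\wp\setminus J$ satisfying a small-slope hypothesis along $\Sigma_\wp\setminus J$. At such a point, every weight is integral, the form is $J$-de Rham by the construction of the eigenvariety, and small slope along $\Sigma_\wp\setminus J$ (combined with Thm.\ref{thm: pdeR-jqa}(1) and a very classical analog of Prop.\ref{prop: lpl-cla}) should force genuine classicality.

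The main obstacle is the explicit construction of the partially analytic AIP eigenvariety: the existing AIP machinery uses a torsor over the Hodge-Tate period morphism that treats all embeddings of $F_\wp$ uniformly, and allowing partial analyticity along $J$ requires genuinely new input on the geometry of the Hilbert modular variety near the ordinary locus. As a fallback strategy, one could attempt a purely Galois-theoretic approach: starting from a $J$-de Rham point $z\in\cE(\ul{k}_J,w)'$, use global triangulation theory (as in Prop.\ref{prop: lpl-gpa}) and the partial non-criticality implies partial de Rhamness principle (Prop.\ref{prop: pdeR-tjL}) to deform $z$ along the weight directions at $\Sigma_\wp\setminus J$ while preserving the $J$-de Rham property and the trianguline parameters (in the spirit of the deformation arguments in the proof of Thm.\ref{thm: lpl-giao}), pushing the $\Sigma_\wp\setminus J$-weights into the large dominant integer range where Thm.\ref{thm: pdeR-jqa}(1) and classical small-slope classicality results yield accumulation by honestly classical points. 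Either route crucially depends on controlling how the $J$-de Rham condition behaves in families on $\cE$, which is precisely the subtlety highlighted by Rem.\ref{rem: lpl-mle}(2) in the main body of the paper.
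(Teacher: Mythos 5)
The statement you are trying to prove is stated in the paper as a \emph{conjecture}: the paper offers no proof of it, and the appendix ends immediately after its statement. So there is no argument of the author's to compare yours against; the only question is whether your proposal actually closes the conjecture, and it does not. Your treatment of the ``only if'' direction is reasonable and is in the spirit of what the paper does elsewhere (Prop.\ref{prop: lpl-uoc} and Thm.\ref{prop: lpl-ocj}): classical points are de Rham by \cite{Sa}, the locus where $D_{\dR}(\rho_U)_{\sigma}$ has full rank is Zariski closed on a family with constant integral $\sigma$-Hodge--Tate weights by \cite[Thm.2.19]{Sha}, and a Zariski closure of classical points therefore lands inside the $J$-de Rham locus. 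Modulo the existence of the family $\rho_U$ on $\cE(\ul{k}_J,w)'_{\red}$ (which needs a pseudo-character/density argument but is standard), that half is fine.

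The ``if'' direction, however, is the entire content of the conjecture and your proposal does not prove it. Your primary route requires constructing a partially analytic Andreatta--Iovita--Pilloni eigenvariety of $(\Sigma_\wp\setminus J)$-overconvergent forms, establishing that its Galois families are automatically $J$-de Rham, identifying it with the $J$-de Rham locus inside $\cE(\ul{k}_J,w)'$ (not merely mapping into it), and proving Zariski density of classical points on it; you yourself flag the first step as requiring ``genuinely new input,'' and the identification step is an additional nontrivial claim that a single closed point being $J$-de Rham forces it onto that new eigenvariety. The fallback Galois-theoretic route is circular in the decisive place: deforming $z$ along the $\Sigma_\wp\setminus J$ weight directions \emph{while preserving the $J$-de Rham property and staying on $\cE$} is exactly what one cannot do a priori --- as Rem.\ref{rem: lpl-mle}(2) emphasizes, partial de Rhamness is not controlled by fixing partial Hodge--Tate weights, and nothing in Prop.\ref{prop: pdeR-tjL} (which goes from non-criticality to de Rhamness, not the converse) or Thm.\ref{thm: pdeR-jqa} supplies the needed deformation. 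What you have is a research program with the central construction missing, not a proof; the statement should remain a conjecture.
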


%

\end{document}